\newcommand{\cp}{{\rm cap}}
\theoremstyle{plain}
\newtheorem{theorem}[equation]{Theorem}
\newtheorem{lemma}[equation]{Lemma}
\newtheorem{claim}[equation]{Claim}
\newtheorem{mlemma}[equation]{Main Lemma} 
\theoremstyle{definition}
\newtheorem{definition}[equation]{Definition}
\theoremstyle{remark}
\newtheorem{remark}[equation]{Remark}
\numberwithin{equation}{section}
\newcommand{\RR}{{\mathbb{R}}}
\newcommand{\ZZ}{{\mathbb{Z}}}
\newcommand{\eps}{\varepsilon}
\newcommand{\dist}{\operatorname{dist}}
\newcommand{\re}{\mathbb{R}}
\newcommand{\rn}{\mathbb{R}^n}
\newcommand{\reu}{\mathbb{R}^{n+1}_+}
\newcommand{\ree}{\mathbb{R}^{n+1}}
\newcommand{\dd}{\mathcal{D}}
\newcommand{\sbf}{{\mathsf T}}
\newcommand{\C}{\mathcal{C}}
\newcommand{\om}{\Omega}
\newcommand{\F}{\mathcal{F}}
\newcommand{\M}{\mathcal{M}}
\newcommand{\W}{\mathcal{W}}
\newcommand{\B}{\mathcal{B}}
\newcommand{\G}{\mathcal{G}}
\newcommand{\mut}{\mathfrak{m}}
\newcommand{\pom}{\partial\Omega}
\newcommand{\hm}{\omega}
\newcommand{\sub}{{\bf U}^*}
\renewcommand{\P}{\mathcal{P}}
\renewcommand{\emptyset}{\mbox{\textup{\O}}}
\DeclareMathOperator{\supp}{supp}
\DeclareMathOperator{\diam}{diam}
\DeclareMathOperator{\interior}{int}
\newcommand{\DD}{{\mathcal D}}
\newcommand{\HH}{{H}}
\newcommand{\EE}{{\mathcal E}}
\newcommand{\WW}{{\mathcal W}}
\newcommand{\TT}{{\mathsf T}}
\newcommand{\rf}[1]{{\eqref{#1}}}
\newcommand{\vphi}{{\varphi}}
\newcommand{\vv}{{\vspace{2mm}}}
\newcommand{\wt}[1]{{\widetilde{#1}}}
\newcommand{\wh}[1]{{\widehat{#1}}}
\newcommand{\noi}{\noindent}
\newcommand{\fG}{{\mathsf G}}
\newcommand{\LD}{{\mathsf{LD}}}
\newcommand{\HD}{{\mathsf{HD}}}
\newcommand{\fS}{{\mathsf{S}}}
\newcommand{\fH}{{\mathsf{H}}}
\newcommand{\Stop}{{\mathsf{Stop}}}
\newcommand{\sstop}{{\mathsf{SubStop}}}
\newcommand{\ttt}{{\mathsf{Top}}}
\newcommand{\tree}{{\mathsf{T}}}
\newcommand{\stree}{{\mathsf{SubTree}}}
\newcommand{\eend}{{\mathsf{End}}}
\newcommand{\bdy}{{\mathsf{Bdy}}}
\newcommand{\WA}{{{\mathsf{WA}}}}
\newcommand{\WSBC}{{{\mathsf{WSBC}}}}
\newcommand{\Con}{{\mathrm{Con}}}
\newcommand{\NC}{{\mathsf{NC}}}
\def\XXint#1#2#3{{\setbox0=\hbox{$#1{#2#3}{\int}$ }
\vcenter{\hbox{$#2#3$ }}\kern-.58\wd0}}
\def\avint{\fint}
\def\zed{{x}}
\def\exe{{z}}
\def\ball{{B}}
\begin{document}
\allowdisplaybreaks

\title[Harmonic measure and quantitative connectivity]{
	Harmonic measure and quantitative connectivity: geometric characterization of the $L^p$-solvability of the Dirichlet problem}

\author[J. Azzam]{Jonas Azzam}

\address{Jonas Azzam\\
School of Mathematics \\ University of Edinburgh \\ JCMB, Kings Buildings \\
Mayfield Road, Edinburgh,
EH9 3JZ, Scotland}
\email{j.azzam@ed.ac.uk}

\author[S. Hofmann]{Steve Hofmann}

\address{Steve Hofmann
\\
Department of Mathematics
\\
University of Missouri
\\
Columbia, MO 65211, USA} \email{hofmanns@missouri.edu}

\author[J.M. Martell]{Jos\'e Mar{\'\i}a Martell}

\address{Jos\'e Mar{\'\i}a Martell\\
Instituto de Ciencias Matem\'aticas CSIC-UAM-UC3M-UCM\\
Consejo Superior de Investigaciones Cient{\'\i}ficas\\
C/ Nicol\'as Cabrera, 13-15\\
E-28049 Madrid, Spain} \email{chema.martell@icmat.es}

\author[M. Mourgoglou]{Mihalis Mourgoglou}

\address{Mihalis Mourgoglou\\
Departamento de Matem\'aticas, Universidad del Pa\' is Vasco, Barrio Sarriena s/n 48940 Leioa, Spain and\\
Ikerbasque, Basque Foundation for Science, Bilbao, Spain
}
\email{michail.mourgoglou@ehu.eus}

\author[X. Tolsa]{Xavier Tolsa}
\address{Xavier Tolsa
\\
ICREA, Passeig Llu\'{\i}­s Companys 23 08010 Barcelona, Catalonia, and\\
Departament de Matem\`atiques and BGSMath
\\
Universitat Aut\`onoma de Barcelona
\\
08193 Bellaterra (Barcelona), Catalonia
}
\email{xtolsa@mat.uab.cat}

\thanks{S.H. was 
supported by NSF grant DMS-1664047.
J.M.M. acknowledges financial support from the Spanish Ministry of Economy and Competitiveness, through the ``Severo Ochoa" Programme for Centres of Excellence in R\&D (SEV-2015- 0554). He also acknowledges that the research leading to these results has received funding from the European Research Council under the European Union's Seventh Framework Programme (FP7/2007-2013)/ ERC agreement no. 615112 HAPDEGMT. 
In addition, S.H. and J.M.M. were supported by NSF Grant DMS-1440140 while in residence at the MSRI in Berkeley, California, during Spring semester 2017. M.M. was supported  by IKERBASQUE and partially supported by the grant MTM-2017-82160-C2-2-P of the Ministerio de Econom\'ia y Competitividad (Spain), and by  IT-641-13 (Basque Government). X.T. was supported by the ERC grant 320501 of the European Research Council (FP7/2007-2013) and partially supported by MTM-2016-77635-P,  MDM-2014-044 (MICINN, Spain), 2017-SGR-395 (Catalonia), and by Marie Curie ITN MAnET (FP7-607647).
}

\date{July 4, 2019. \textit{Revised}\textup{:} \today}

\subjclass[2000]{31B05, 35J25, 42B25, 42B37}

\keywords{
Harmonic measure, Poisson kernel,
uniform rectifiability, weak local John condition, big pieces of Chord-arc domains, Carleson measures.}

\begin{abstract}
 It is well-known that
quantitative, scale invariant absolute continuity (more precisely, the weak-$A_\infty$ property) 
of harmonic measure with respect to surface measure, 
on the boundary of an open set $ \Omega\subset \ree$ with Ahlfors-David regular boundary,
 is equivalent to the 
solvability of the Dirichlet problem in $\Omega$, with data in $L^p(\pom)$ for some $p<\infty$.
In this paper, we give a geometric characterization
of the weak-$A_\infty$ property, of harmonic measure, and hence of solvability of the $L^p$
Dirichlet problem for some finite $p$.  This characterization is obtained under background
hypotheses (an interior corkscrew condition, along with Ahlfors-David regularity of the boundary) that are natural, and
in a certain sense optimal:  we provide counter-examples in the absence of either of them (or even one of the two,
upper or lower, Ahlfors-David bounds);  moreover, the examples show 
that the upper and lower Ahlfors-David  bounds are each quantitatively sharp.
\end{abstract}

\maketitle

\setcounter{tocdepth}{1}

\tableofcontents

\section{Introduction}\label{s1}
A classical criterion of Wiener characterizes the domains in which one can solve
the Dirichlet problem for Laplace's equation
with continuous boundary data, and with continuity of the solution up to the boundary.
In this paper, 
we address the analogous issue in the case of singular data.  To be more precise, the present work
provides a purely geometric characterization
of the open sets for which $L^p$ solvability holds, for some $p<\infty$, and with 
non-tangential convergence to the data a.e., thus allowing for singular boundary data.  We establish this 
characterization in the presence of background hypotheses (an interior corkscrew condition
[see Definition \ref{def1.cork} below], and Ahlfors-David regularity of the boundary [Definition \ref{defadr}]) 
that are in the nature of best possible, in 
the sense that there are counter-examples in the absence of either of them (or of even one of the 
two, upper or lower, Ahlfors-David bounds); moreover, the examples show 
that the upper and lower
Ahlfors-David  bounds are each quantitatively sharp (see the discussion following Theorem \ref{t1}, as well as
Appendix \ref{appa}, for more details.

Solvability of the $L^p$ Dirichlet problem 
 is fundamentally tied to quantitative absolute continuity of harmonic measure
with respect to surface measure on the boundary:
indeed, {\em it is equivalent} to the so-called ``weak-$A_\infty$" property of the harmonic measure 
(see Definition \ref{defAinfty}).  It is through 
this connection to quantitative absolute continuity of harmonic measure
that we shall obtain our geometric characterization of $L^p$ solvability.  

The study of the relationship between the geometry of a domain, and absolute continuity properties of its
harmonic measure, has a long history.
A classical result of F. and M. Riesz \cite{RR} states that for a simply 
connected domain $\Omega$ 
in the complex plane,
rectifiability of $\pom$ implies that harmonic measure for $\Omega$ is absolutely continuous with respect to
arclength measure on the boundary.  A quantitative version of this theorem was later
proved by Lavrentiev \cite{Lav}.
More generally, if only a portion of the boundary is rectifiable, Bishop and Jones
\cite{BiJo} have shown that harmonic measure
is absolutely continuous with respect to arclength on that portion.  They also present a counter-example
to show that the result of \cite{RR}
may fail in the absence of some connectivity hypothesis
(e.g., simple connectedness).

In dimensions greater than 2, a fundamental result of Dahlberg \cite{Dah} establishes a quantitative version
of absolute continuity, namely that 
harmonic measure belongs to the class $A_\infty$ in an appropriate local sense  
(see Definitions \ref{defAinfty} and \ref{deflocalAinfty} below),
with respect to surface measure
on the boundary of a Lipschitz domain.

The result of Dahlberg was extended to the class of Chord-arc domains (see Definition \ref{def1.cad})
by David and Jerison \cite{DJe}, and independently by Semmes \cite{Se}.    The Chord-arc hypothesis
was weakened to that of a two-sided corkscrew condition (Definition \ref{def1.cork}) 
by Bennewitz and Lewis
\cite{BL}, who then drew the conclusion that harmonic measure is weak-$A_\infty$ 
(in an appropriate local sense, see Definitions \ref{defAinfty} and \ref{deflocalAinfty})
with respect to 
surface measure on the boundary; the latter condition is similar to the $A_\infty$ condition, but without
the doubling property, and is the best conclusion that can be obtained under the weakened
geometric conditions considered in \cite{BL}.  We note that weak-$A_\infty$ is still a quantitative,
scale invariant version of absolute continuity.

More recently, one of us (Azzam)
has given in  \cite{Az} a geometric
characterization of the $A_\infty$ property of harmonic measure with respect to 
surface measure for domains with $n$-dimensional Ahlfors-David regular ($n$-ADR) boundary  (see Definition \ref{defadr}).  
Azzam's results are related to those of the present paper, so let us describe them in a bit more detail.  
Specifically, 
he shows that
for a domain $\Omega$ with $n$-ADR boundary, harmonic measure is in $A_\infty$ with respect
to surface measure, if and only if 1) $\pom$ is uniformly rectifiable ($n$-UR)\footnote{This 
is a quantitative, 
scale-invariant version of rectifiability, see Definition \ref{defur} and the 
ensuing comments.},
and 2) $\Omega$ is semi-uniform in the sense of Aikawa and Hirata \cite{AH}.  The semi-uniform condition
is a connectivity condition which states that for some uniform constant $M$,
every pair of points $x\in \Omega$ and $y\in \pom$ may 
be connected by a rectifiable curve $\gamma=\gamma(y,x)$, with
$\gamma \setminus \{y\} \subset \Omega$,
with length $\ell(\gamma) \leq M|x-y|$, 
and which satisfies the ``cigar path" condition
\begin{equation}\label{cigar}
\min\left\{\ell\big(\gamma(y,z)\big),\ell\big(\gamma(z,x)\big)\right\} \,\leq \, 
M\dist(z,\pom)\,,\quad \forall \, z\in \gamma\,.
\end{equation}
Semi-uniformity is a weak version of the well known uniform condition, whose definition is similar, except 
that it applies to all pairs of points $x,y\in \Omega$.   For example, the unit disk centered at the origin, 
with the slit 
$\{-1/2\leq x\leq 1/2, y=0\}$
removed, is semi-uniform, but not uniform.
It was shown in \cite{AH} that for a domain satisfying a John condition and the
Capacity Density Condition (in particular, for a domain with an $n$-ADR boundary), 
semi-uniformity characterizes the
doubling property of harmonic measure.    
The method of \cite{Az} is, broadly speaking, related to that of
\cite{DJe}, and of \cite{BL}.  In \cite{DJe}, the authors show that a Chord-arc domain $\Omega$ 
may be approximated in a ``Big Pieces" sense
(see \cite{DJe} or \cite{BL} for a precise statement; also cf. 
Definition \ref{def1.john} below) by Lipschitz subdomains $\Omega'\subset \Omega$; 
this fact allows one to reduce 
matters to the result of Dahlberg via
the maximum principle (a method which, to the present authors' knowledge, first appears in \cite{JK} in the 
context of $BMO_1$ domains).   
The same strategy, i.e., Big Piece approximation by  Lipschitz subdomains, is employed in \cite{BL}.
Similarly, in \cite{Az}, matters are reduced to the result of \cite{DJe}, by showing that
for a domain $\Omega$ with an $n$-ADR boundary,  $\Omega$ is 
semi-uniform with a uniformly rectifiable boundary
if and only if it has ``Very Big Pieces" of Chord-arc subdomains (see \cite{Az} for a precise statement of the latter condition).
As mentioned above, the converse direction is also treated in \cite{Az}.  In that case, given an 
interior corkscrew condition (which holds automatically in the presence of the doubling property of harmonic measure),
and provided that $\pom$ is $n$-ADR, the $A_\infty$ (or even weak-$A_\infty$) property of harmonic measure was 
already known to imply uniform rectifiability of the boundary \cite{HM-4} (although the published version appears
in \cite{HLMN}; see also \cite{MT} for an alternative proof, and a somewhat more general result); as in \cite{AH}, 
semi-uniformity
follows from the doubling property, although in \cite{Az}, the author manages to show this while dispensing with
the John domain background assumption (given a harmlessly strengthened version of the doubling property).

Thus, in \cite{Az}, the 
connectivity condition (semi-uniformity), is tied to the doubling property 
of harmonic measure, and not to absolute continuity.  On the other hand,
in light of the example of \cite{BiJo}, and on account of the aforementioned
connection to solvability of the Dirichlet problem, it has been an important 
open problem to 
determine the minimal 
connectivity assumption which, in conjunction with uniform rectifiability of the boundary, 
yields quantitative
absolute continuity of harmonic measure with respect to surface measure.
In the present work, we present a connectivity condition,
significantly milder than semi-uniformity, which we call the 
{\em weak local John condition} (see
Definition  \ref{def1.john} below), and which solves this problem.  Thus,
we obtain a geometric 
characterization of the domains for which one has
quantitative absolute continuity of harmonic measure;  equivalently, for which one has
solvability of the Dirichlet problem with singular ($L^p$) data
(see Theorem \ref{tmain} below).  
In fact, we provide two geometric characterizations of such domains, one in terms of uniform
rectifiability combined with the weak local John condition, the other in terms of 
approximation of the boundary
in a big pieces sense, by boundaries
of Chord-arc subdomains.

Let us now describe the weak local John condition, which says, roughly speaking, that from each point
$x\in \Omega$, there is local non-tangential access to an ample portion of a surface ball
at a scale on the order of $\delta_{\Omega}(x):=\dist(x,\pom)$.  Let us make this a bit more precise.
A ``carrot path" (aka non-tangential path) joining
a point $x\in \om$, and a point $y \in \pom$, is a
connected rectifiable path $\gamma=\gamma(y,x)$, with endpoints $y$ and $x$,
such that
for some $\lambda\in(0,1)$ and for all $z\in \gamma$,
\begin{equation}\label{eq1.2}
\lambda\, \ell\big(\gamma(y,z)\big)\, \leq\,  \delta_{\Omega}(z)\,,
\end{equation} 
where $\ell\big(\gamma(y,z)\big)$ denotes the arc-length of the portion of the original path
with endpoints $y$ and $z$.  For $x\in \Omega$, and $N\geq 2$, set
$$\Delta_x = \Delta_x^N := B\big(x,N\delta_{\Omega}(x)\big)\cap\pom\,.$$
We assume that  every
point $x\in \Omega$ may be joined by a carrot path to each $y$ in
a  ``Big Piece"  of $\Delta_x$, i.e., to each $y$
in a Borel subset $F \subset \Delta_x$, with $\sigma(F)\geq \theta\sigma(\Delta_x)$, where 
$\sigma$ denotes surface measure on $\pom$, and where the parameters
$N\geq 2$, $\lambda\in (0,1)$, and $\theta\in (0,1]$ are uniformly controlled.
We refer to this condition as a ``weak local John
condition", although ``weak local semi-uniformity" would be equally appropriate.
See Definitions \ref{def1.carrot}, \ref{def1.johnpoint}
and \ref{def1.john} for more details.    We remark that a strong version of the local John condition
(i.e., with $\theta =1$) has appeared in \cite{HMT}, in connection with boundary Poincar\'e inequalities 
for non-smooth domains.  

Let us observe that the weak local John condition is strictly weaker than semi-uniformity:
for example, the unit disk centered a the origin, with either the cross 
$\{-1/2\leq x\leq 1/2, y=0\}\cup \{-1/2\leq y\leq 1/2, x=0\}$ removed, or with the slit
$\{0\leq x\leq 1, y=0 \}$ removed,
satisfies the weak local John condition, although semi-uniformity fails in each case.

The main result in the present work  is 
the following geometric characterization of
quantitative absolute continuity of harmonic measure, and of the $L^p$ solvability
of the Dirichlet problem.  
The terminology used here will be defined in the sequel.

\begin{theorem}\label{tmain} 
Let $\Omega\subset \ree$, $n\ge 1$, be an open set satisfying an interior 
corkscrew condition (see Definition \ref{def1.cork} below), and 
suppose that $\pom$ is $n$-dimensional Ahlfors-David
regular ($n$-ADR; see Definition \ref{defadr} below).   Then the following are equivalent:
\begin{enumerate}
\item $\pom$ is Uniformly Rectifiable ($n$-UR; see Definition \ref{defur} below) 
and $\Omega$ satisfies the weak local John condition (see Definition \ref{def1.john} below).
\item $\Omega$ satisfies an Interior Big Pieces of 
Chord-Arc Domains (IBPCAD) condition
 (see Definition \ref{def1.ibpcad} below).
 
\item Harmonic measure $\hm$ is locally in weak-$A_\infty$ (see Definition \ref{deflocalAinfty} below)
with respect to surface measure $\sigma$ on $\pom$.

\item The $L^p$ Dirichlet problem is solvable for some $p<\infty$,
i.e., for some $p<\infty$, there is a constant $C$ such that if
$g\in L^p(\pom)$, then the solution
to the
Dirichlet problem with data $g$,  is well defined as
$u(x):=\int_{\pom} g d\hm^x$ for each $x\in \Omega$, converges to
$g$ non-tangentially, and enjoys
the estimate
\begin{equation}\label{eq1.Dirichlet}
\|N_*u\|_{L^{p}(\pom)}\, \leq \,C\, \|g\|_{L^p(\pom)}\,,
\end{equation}
where $N_*u$ is a suitable version of the non-tangential maximal function of $u$.  
\end{enumerate}
\end{theorem}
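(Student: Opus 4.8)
The plan is to prove the cycle of implications $(1)\Rightarrow(2)\Rightarrow(3)\Rightarrow(4)\Rightarrow(1)$, leaning on known results for three of the four arrows and concentrating essentially all new work on the implication $(1)\Rightarrow(2)$.

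\textbf{The easy arrows.} The implication $(2)\Rightarrow(3)$ follows from the comparison principle together with Dahlberg's theorem (via \cite{DJe}): since the big-pieces Chord-arc subdomains $\Omega'\subset\Omega$ carry harmonic measure in $A_\infty$ with respect to their own surface measure, a standard maximum-principle argument — the one going back to \cite{JK} — transfers a lower bound on $\hm^x(F)$, for $F$ a subset of a surface ball with large $\sigma$-measure, up to $\Omega$, which is exactly the defining property of weak-$A_\infty$. The implication $(3)\Rightarrow(4)$ is the well-known equivalence between the weak-$A_\infty$ property of harmonic measure and $L^p$-solvability of the Dirichlet problem (for some finite $p$), obtained by extrapolating from the reverse-Hölder bound on the Poisson kernel implicit in weak-$A_\infty$ and running the usual non-tangential maximal function estimate; I would cite this and give only a brief indication. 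Finally $(4)\Rightarrow(1)$ splits into two halves: $L^p$-solvability implies $\hm\in\text{weak-}A_\infty$ (again the known equivalence), and then weak-$A_\infty$ of harmonic measure on an ADR boundary with interior corkscrews forces uniform rectifiability of $\pom$ by \cite{HM-4}/\cite{HLMN} (or \cite{MT}); the weak local John condition in this direction should be extracted from the quantitative absolute continuity by a corkscrew-plus-Harnack-chain construction internal to the big pieces, or alternatively deduced afterwards from $(2)$ once the full cycle is closed.

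\textbf{The hard arrow: $(1)\Rightarrow(2)$.} This is the heart of the paper. Given that $\pom$ is $n$-UR and $\Omega$ satisfies the weak local John condition, I must build, inside any fixed surface ball $\Delta=\Delta(x_0,r)$, a Chord-arc subdomain $\Omega_\Delta\subset\Omega$ whose boundary captures a fixed proportion (in $\sigma$) of $\Delta$. The natural strategy is a stopping-time / corona decomposition of the dyadic cubes of $\pom$: use uniform rectifiability to organize $\DD(\pom)$ into a bounded-overlap family of coronas on each of which $\pom$ is close to a Lipschitz graph (the bilateral weak geometric lemma, or the Bbeta/corona bound), and simultaneously use the weak local John condition to select, for each relevant cube $Q$, a corkscrew-type ``center'' point and a collection of carrot paths reaching a big piece of $\Delta_x$; these carrot paths, thickened by Whitney regions, are glued along a sawtooth-type region $\Omega_\Delta=\bigcup_{Q\in\mathcal{F}}U_Q$. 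One then has to verify that $\Omega_\Delta$ is Ahlfors-David regular from inside (lower bound from the carrot paths landing in a big piece, upper bound from the Whitney packing and UR), that it satisfies two-sided corkscrew and Harnack chain conditions (corkscrews from the Whitney regions, Harnack chains by concatenating the carrot paths through the tree structure), and that $\sigma(\pom\cap\partial\Omega_\Delta)\gtrsim\sigma(\Delta)$ — this last being the Carleson-packing estimate that quantifies how much of $\Delta$ is ``seen'' by the carrot paths, and is where the $\theta$ in the weak local John condition and the UR packing constants must be combined.

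\textbf{Where the real difficulty lies.} The main obstacle is precisely the interaction between the two hypotheses in $(1)$: uniform rectifiability controls the geometry of $\pom$ but says nothing about how $\Omega$ sits on the two sides of it, while the weak local John condition only gives access from interior points to a \emph{big piece} — not all — of the nearby boundary, and only with a weak (non-tangential-path, not chord-arc-curve) condition $\lambda\,\ell(\gamma(y,z))\le\delta_\Omega(z)$. Upgrading this weak, lossy, ``big-piece'' access into an honest Chord-arc subdomain with a \emph{lower} ADR bound on $\pom\cap\partial\Omega_\Delta$ requires a delicate iteration: one must repeatedly re-apply the weak local John condition at smaller scales inside the ``bad'' part of each $\Delta_x$ that the first round of carrot paths missed, control the error accumulated at each stage by a geometric series (so the losing constant $\theta$ does not degrade to zero), and reconcile the stopping regions coming from this connectivity iteration with the corona decomposition coming from UR. Managing these two stopping-time structures simultaneously — and extracting from them the Carleson measure estimate that gives the big-pieces lower bound — is the technical crux, and I expect it to occupy the bulk of the argument.
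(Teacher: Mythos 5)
Your sketch of $(1)\Rightarrow(2)$ is broadly in the right direction — the paper does indeed combine the bilateral corona decomposition of an $n$-UR set with the carrot-path data from the weak local John condition, thicken by Whitney regions, and run an iteration (in the paper's case a two-parameter induction in the style of the extrapolation of Carleson measures) to upgrade big-piece access into honest Chord-arc subdomains. Your identification of $(2)\Rightarrow(3)$ and $(3)\Leftrightarrow(4)$ as standard is also correct.

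The genuine gap is in $(4)\Rightarrow(1)$, and specifically in how you propose to recover the weak local John condition. You offer two options: (a) ``a corkscrew-plus-Harnack-chain construction internal to the big pieces,'' or (b) ``deduced afterwards from $(2)$ once the full cycle is closed.'' Option (b) is circular: in the cycle $(1)\Rightarrow(2)\Rightarrow(3)\Rightarrow(4)\Rightarrow(1)$, condition $(2)$ is not available to you while you are proving $(4)\Rightarrow(1)$; closing the cycle is exactly what establishes $(4)\Rightarrow(2)$, so you cannot use $(2)$ to close it. Option (a) is not an argument but a placeholder: weak-$A_\infty$ gives no Harnack chain condition and no big pieces of anything a priori, so there is nothing ``internal'' to appeal to. In fact the implication weak-$A_\infty\Rightarrow$ weak local John (Theorem \ref{teo1a}) is the \emph{other} main new result of this paper, and its proof occupies all of Part 2 (Sections \ref{s5}--\ref{sec8}). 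It does not reduce to a routine corkscrew argument; it requires a Green-function comparison lemma, the Alt--Caffarelli--Friedman monotonicity formula to pass between corkscrew points at different scales where the Green function stays comparably large (the ``short paths'' Lemma \ref{lemshortjumps}), a corona decomposition adapted to $\partial\Omega$ together with a secondary stopping time built around cubes with well-separated big corkscrews, a Geometric Lemma (\ref{lemgeom}) producing a pair of disjoint sawtooth-type sets whose boundaries lie where the Green function is small, and a Key Lemma (\ref{keylemma3}) proved by an integration-by-parts estimate over those boundaries. None of this is anticipated in your proposal, and without it $(4)\Rightarrow(1)$ — hence the whole equivalence — does not close.
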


Some explanatory comments are in order.   The proof has two main new ingredients:
the implication  (1) implies (2), and the fact that the weak-$A_\infty$ property
of harmonic measure implies the weak local John condition
 (this is the new part of (3) implies (1)).  In turn, we split these main new results into two
 theorems: the first implication  is the content of Theorem \ref{t1} below, and the second is the 
 content of Theorem \ref{teo1a}.
We remark that the interior corkscrew
condition is not needed for  (1) implies (2) (nor for (2) implies (3) 
if and only if (4)).   
Rather, it is crucial for (3) implies (1) (see Appendix \ref{appa}). 

As regards the other implications,
the fact that
(2) implies (3) follows by a well-known argument using the maximum principle 
and the result of
\cite{DJe} and \cite{Se} for Chord-arc domains\footnote{See, e.g., \cite[Proposition 13]{H} 
for the details in this context, but the proof originates in \cite{JK}.}, along with the criterion for
weak-$A_\infty$ obtained in \cite{BL}; the equivalence of (3) and (4) is well known, and
we refer the reader to, e.g., 
\cite[Section 4]{HLe}, and to \cite{H} for details. 
The implication (3) implies (1) has two parts.
As mentioned above,
the fact that weak-$A_\infty$ implies weak local John is new, and is the content of Theorem
\ref{teo1a}.  The remaining implication, namely
 that weak-$A_\infty$ implies
$n$-UR, is the main result of \cite{HM-4}; an alternative proof, with a more general result, appears in \cite{MT}, and see also
\cite{HLMN} for the final published version of the results of \cite{HM-4}, along with an extension to the $p$-harmonic setting.

We note that our background hypotheses
(upper and lower $n$-ADR, and interior corkscrew) are in the nature of best possible:  
one may construct a counter-example in
the absence of any one of them, for at least one direction of this chain of implications,
as we shall discuss in Appendix \ref{appa}.  In addition, in the case of the $n$-ADR
condition, given any $\eps>0$, the counter-examples for the upper (respectively, lower) $n$-ADR property
can be constructed in such a way 
as to show that no weaker condition of the form $H^n(B(x,r)\cap \pom) \lesssim
r^{n -\eps}$ (resp., $H^n(B(x,r)\cap \pom) \gtrsim
r^{n+ \eps}$), with $r<1$, may be substituted for a true $n$-ADR upper or lower bound. 
Moreover, the first example shows that one cannot substitute 
the Capacity Density Condition (CDC)\footnote{The CDC is a
scale invariant potential theoretic
``thickness" condition, i.e., a quantitative version of Weiner regularity; see, e.g., \cite{AH}.} in place of 
the $n$-ADR condition:
indeed, the example is an NTA domain, in particular, it satisfies an exterior corkscrew condition, and thus also the CDC.

As regards our assumption of the interior corkscrew condition, 
we point out that, as is well known, 
the $n$-ADR condition implies that the open set 
$\ree\setminus \pom$ satisfies a corkscrew condition, with constants depending only on $n$ and ADR, i.e.,
 at every scale $r$, and for every point $x\in\pom$, 
 there is at least one component of $\ree\setminus \pom$ containing a corkscrew point relative to the ball $B(x,r)$.
Our last example shows that such a component should lie inside $\Omega$ itself, for 
 each $x$ and $r$; i.e., that $\Omega$ should enjoy an interior corkscrew condition.

As explained above, the main new contributions of the present work
are contained in the following pair of theorems,

\begin{theorem}\label{t1}  
	Let $\Omega \subset \ree$,  $n\ge 1$, be an open set, not necessarily connected, with an 
	$n$-dimensional
Ahlfors-David regular ($n$-ADR) boundary.
Then the following are equivalent:
\begin{itemize}
\item[(i)]
$\pom$ is uniformly rectifiable ($n$-UR), and 
$\Omega$ satisfies the weak local John condition.
\smallskip
\item[(ii)]  $\Omega$
	satisfies an Interior Big Pieces of Chord-Arc Domains (IBPCAD) 
	condition.
\end{itemize}	
\end{theorem}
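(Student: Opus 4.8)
The plan is to prove the two implications separately, with (ii)$\Rightarrow$(i) being comparatively soft and (i)$\Rightarrow$(ii) carrying the bulk of the work. For (ii)$\Rightarrow$(i): given the IBPCAD condition, at every scale and location we have a Chord-arc subdomain $\Omega'\subset\Omega$ occupying a big piece of the corresponding Whitney region. Uniform rectifiability of $\pom$ follows because the big pieces of the boundaries $\partial\Omega'$ are $n$-UR (being boundaries of Chord-arc domains), so $\pom$ has big pieces of UR sets, and by the Davos/David--Semmes theory (together with the $n$-ADR hypothesis on $\pom$ itself) this upgrades to genuine $n$-UR. The weak local John condition follows because each Chord-arc subdomain $\Omega'$ is in particular a uniform (hence NTA) domain, so from an interior corkscrew point of $\Omega'$ one has genuine carrot-path (non-tangential) access within $\Omega'\subset\Omega$ to a big piece of $\partial\Omega'$; one then checks that a fixed proportion of $\partial\Omega'$ lies on $\pom$ and within the requisite surface ball, producing the required Borel subset $F\subset\Delta_x$ with $\sigma(F)\geq\theta\,\sigma(\Delta_x)$. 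The constants are tracked through the ADR, CAD, and corkscrew parameters; this is essentially a bookkeeping argument once the correct corkscrew point is chosen as the base point $x$.

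The substantive direction is (i)$\Rightarrow$(ii). Starting from $n$-UR of $\pom$ plus the weak local John condition, the goal is, for each surface ball $\Delta=\Delta(x_0,r)$, to construct a single Chord-arc subdomain $\Omega_\Delta\subset\Omega$ that captures a big piece (in $\sigma$-measure) of $\Delta$ on its boundary and has CAD character independent of $\Delta$. I would run a stopping-time/corona construction over the Christ--David dyadic cubes of $\pom$. Uniform rectifiability gives, via the David--Semmes characterization, a corona decomposition of $\pom$ into "good" trees on which $\pom$ is well approximated by Lipschitz graphs with small slope, together with a Carleson packing bound on the tops of these trees. On top of this one overlays a second stopping condition coming from the weak local John condition, which is the mechanism that supplies interior connectivity: from each cube one has carrot-path access to a fixed fraction of the nearby boundary. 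The two pieces of data must be combined so that, within each corona region, one can build a subdomain of sawtooth type whose boundary is ADR, which satisfies the two-sided corkscrew condition, and — crucially — which is genuinely connected with Harnack chains, i.e.\ an NTA (indeed Chord-arc) domain.

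The main obstacle, and the heart of the argument, is verifying the Harnack chain / uniform condition for the sawtooth subdomains: the weak local John condition only gives access from interior points to the \emph{boundary}, and only to a big piece of it rather than all of it, so one does not automatically get interior-to-interior connectivity, nor can one be sure that distinct carrot paths can be concatenated. The strategy to overcome this is a refined stopping-time argument in which, inside each good corona tree, one discards a Carleson-controlled collection of "bad" cubes where the carrot paths fail to cooperate (e.g.\ where the big-piece sets $F\subset\Delta_x$ from different cubes have too-small overlap, or where a carrot path wanders too far from the associated Lipschitz graph), and shows that on the complement the carrot paths, together with the Whitney regions over the good graph, can be glued into a connected region admitting Harnack chains. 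One then augments this region by the portion of $\Omega$ lying in the Whitney tubes over the good cubes to obtain a domain whose boundary contains the desired big piece of $\Delta$. Finally one checks ADR of $\partial\Omega_\Delta$ (from the ADR of $\pom$ and of the Lipschitz-graph pieces, plus the fact that the "new" interior boundary created by the construction is built from graphs), two-sided corkscrews, and the Harnack chain property, concluding that $\Omega_\Delta$ is CAD with uniform constants; summing the various Carleson packing estimates over the discarded cubes shows the big-piece bound $\sigma(\partial\Omega_\Delta\cap\Delta)\gtrsim\sigma(\Delta)$ holds with a constant independent of $\Delta$, which is exactly the IBPCAD condition.
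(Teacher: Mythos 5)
Your treatment of (ii)$\Rightarrow$(i) is correct and is essentially what the paper does (and says is immediate): IBPCAD hands you carrot access to a big piece of each $\Delta_x$ via the NTA structure of the Chord-arc subdomain, and ``big pieces of $n$-UR inside an $n$-ADR set'' upgrades to $n$-UR by David--Semmes.

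For (i)$\Rightarrow$(ii) your broad strategy — corona decomposition from $n$-UR, sawtooth regions over semi-coherent subtrees, glue in Whitney regions, check ADR/corkscrews/Harnack chains — matches the architecture of the paper's argument. But there is a genuine gap in the step where you propose to ``discard a Carleson-controlled collection of bad cubes where the carrot paths fail to cooperate (e.g.\ where the big-piece sets from different cubes have too-small overlap).'' The hypotheses do \emph{not} furnish a Carleson packing estimate for the family of cubes on which the accessible fraction drops: the weak local John condition is only a pointwise (per-cube) big-pieces statement with a fixed fraction $\theta$, and when you descend from a cube $Q$ to a stopping cube $Q'$, the set $F_{car}(x,Q)$ of accessible boundary points may well meet $Q'$ in a set of negligible measure, with no a priori packing bound on how often this happens. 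You cannot, therefore, simply discard those cubes and proceed. This is precisely the difficulty that the paper's two-parameter extrapolation scheme is designed to handle: the induction hypothesis $H[a,\theta]$ tracks both the local Carleson norm $a$ of a packing measure $\mathfrak m$ built from the $\mathsf{UR}$ corona (tops of trees plus bad cubes) \emph{and} the accessible fraction $\theta$. The argument first establishes $H[0,\theta]$ (trivial: zero Carleson norm forces all cubes into one tree), then raises $a$ to $M_0$ at $\theta=1$ by a Corona-type stopping lemma (\cite[Lemma 7.2]{HM-I}), and finally bootstraps $\theta$ downward by an $\mathbf{\eps}$-pigeonholing step (Claim 7.1) that exploits the fact that, at each stage, most children inherit a slightly worse but still controlled accessible fraction. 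Your proposal omits this double induction entirely, and without it the argument does not close. A secondary omission: you do not address the two-sidedness of the Whitney regions $U_Q=U_Q^+\cup U_Q^-$ relative to the approximating Lipschitz graph, and hence the question of which side the carrot paths land on; the paper needs Lemma 3.37 precisely to keep the ``$+$'' and ``$-$'' sawtooths from contaminating each other when gluing.
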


Only the direction (i) implies (ii) is new.
For the converse,  the fact that IBPCAD implies the  
weak local John condition is immediate from the definitions.
Moreover, the boundary of a Chord-arc domain is 
$n$-UR, and an $n$-ADR set with big pieces of $n$-UR is also $n$-UR (see \cite{DS2}). 
As noted above, that (ii) implies the weak-$A_\infty$ property follows by 
well known arguments.

\begin{theorem}\label{teo1a}
Let $\Omega\subset \ree$, $n\geq1$, be an open set satisfying an interior 
corkscrew condition and 
suppose that $\pom$ is $n$-dimensional Ahlfors-David
regular ($n$-ADR). If the harmonic measure for $\Omega$ satisfies the
weak-$A_\infty$ condition, then $\Omega$ satisfies the weak local John condition.
\end{theorem}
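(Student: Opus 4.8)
The plan is to fix a point $x\in\Omega$ and produce, from the weak-$A_\infty$ hypothesis, a Borel set $F\subset\Delta_x=B(x,N\delta_\Omega(x))\cap\pom$ of large surface measure, each point of which can be reached from $x$ by a carrot path with uniformly controlled parameter $\lambda$. The starting observation is that the corkscrew point $x_\Delta$ associated to a surface ball $\Delta=\Delta(\xi,r)$, for $\xi$ near $x$ and $r\sim\delta_\Omega(x)$, is comparable to $x$ for harmonic measure: by Bourgain's estimate (valid under interior corkscrew plus $n$-ADR, since ADR gives the Capacity Density Condition) one has $\hm^x(\Delta)\gtrsim 1$, so the weak-$A_\infty$ lower bound transfers from $\hm^{x_\Delta}$ to $\hm^x$ up to harmless constants. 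Thus it suffices to build carrot paths from a corkscrew point.

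The construction of the carrot paths will be carried out through a stopping-time argument on the dyadic lattice of $\pom$ adapted to the point $x$. Starting from the cube $Q_0$ containing $x$ at scale $\delta_\Omega(x)$, I build a family of dyadic descendants by descending from each cube $Q$ to its children, stopping the first time the harmonic measure density $\hm^{x_Q}(Q')/\sigma(Q')$ (or rather a suitable discrete version, e.g. the ratio of harmonic measures $\hm^{x_Q}(Q')/\hm^{x_Q}(Q)$ against $\sigma(Q')/\sigma(Q)$) deviates from $1$ by more than a fixed large factor. Weak-$A_\infty$, used iteratively along the chain of corkscrew points $x_{Q_0},x_{Q_1},\dots$ via the comparability of harmonic measures with different poles (Harnack chains inside the corkscrew "tunnel," or more robustly the change-of-pole estimates that follow from Bourgain's estimate and the maximum principle), forces the total $\sigma$-measure of the stopping cubes where the density has collapsed to be a small fraction of $\sigma(Q_0)$; this is precisely where the scale-invariant, doubling-free weak-$A_\infty$ inequality is exploited, so that at $\sigma$-a.e. point of a big piece $F\subset\Delta_x$ the descent never stops. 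Along such a non-stopping descent $Q_0\supset Q_1\supset\cdots\ni y$, the successive corkscrew points $x_{Q_j}$ satisfy $\delta_\Omega(x_{Q_j})\sim \operatorname{diam}Q_j\to 0$ and $|x_{Q_j}-y|\sim\operatorname{diam}Q_j$, and consecutive ones are connected by short Harnack chains (here one needs to know that there is genuine interior access, which again comes from Bourgain's estimate forcing harmonic measure to "see" the boundary only through the interior corkscrew region); concatenating these chains and adding the final segment to $y$ yields a rectifiable curve $\gamma(y,x)$ with $\ell(\gamma(y,z))\lesssim\delta_\Omega(z)$, i.e. a carrot path with a uniform $\lambda$.

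The main obstacle, and the technical heart of the argument, is to make the stopping-time bookkeeping genuinely quantitative \emph{without} the doubling property of $\hm$: one must control the aggregate of "bad" stopping cubes using only the one-sided weak-$A_\infty$ estimate and the one-sided comparability $\hm^x(\Delta)\gtrsim 1$, and then guarantee that the Harnack chains joining consecutive corkscrew points have length bounded by a fixed multiple of the relevant scale — which requires showing that the relevant portion of $\Omega$ between scales behaves like a (quantitatively connected) Harnack-chain region, at least along the selected descent. A clean way to organize this is to first prove a lemma asserting that if $\hm^x(F)\geq(1-\eta)\hm^x(\Delta)$ for $\eta$ small, with $F$ built as above, then every $y\in F$ admits the carrot path; the counting of stopping cubes then reduces to iterating the weak-$A_\infty$ inequality across a bounded (in terms of the weak-$A_\infty$ constants) number of "generations," after which the density cannot have degenerated on a set of more than, say, half the measure, and one iterates this with geometrically improving constants down every branch. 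Once the big piece $F$ is produced for one representative scale and point, translation/dilation invariance of all hypotheses gives the weak local John condition uniformly over all $x\in\Omega$.
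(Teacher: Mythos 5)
The central difficulty in this theorem, which your sketch identifies as ``the main obstacle,'' is also exactly where your argument has a genuine gap: you do not resolve it, and the stated resolution is incorrect. You claim that consecutive corkscrew points $x_{Q_j}, x_{Q_{j+1}}$ along a non-stopping descent can be joined by short Harnack chains because ``Bourgain's estimate forc[es] harmonic measure to `see' the boundary only through the interior corkscrew region.'' Bourgain's estimate gives a lower bound $\hm^y(B(x,2r))\gtrsim 1$ for $y\in B(x,r)\cap\Omega$; it says nothing whatsoever about connecting corkscrew points at adjacent scales by controlled Harnack chains. Under only interior corkscrew plus $n$-ADR, the open set $\Omega$ can be $\mathbb{R}^{n+1}\setminus E$ for a uniformly rectifiable set $E$, in which case corkscrew points at scale $Q_j$ and scale $Q_{j+1}$ may well lie on opposite ``sides'' of an approximating plane for $E$, with no short Harnack chain between them. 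Quantitative connectivity is not a background fact here — it is the entire content of the theorem — so any argument that invokes it as a consequence of the hypotheses is circular. In the same vein, your stopping rule is based purely on a density comparison; but the density stopping alone (analogous to the $\mathsf{HD}/\mathsf{LD}$ stopping in the paper) produces non-stopping cubes on which the \emph{measure-theoretic} behavior is controlled, without any handle on the geometry or connectivity inside those cubes.

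The paper's proof is organized quite differently precisely to address this. It first imports uniform rectifiability of $\partial\Omega$ (from weak-$A_\infty$ plus the background hypotheses), which allows a corona decomposition with a flatness ($b\beta$) stopping condition in addition to the density stopping. Within a flat stopping-time regime, one can pick two ``big corkscrews'' $z_Q^1, z_Q^2$ on the two sides of an approximating plane $L_Q$, and the key new notion is that of cubes with \emph{well separated big corkscrews} ($\mathsf{WSBC}$): cubes where $z_Q^1$ and $z_Q^2$ cannot be joined by a good Harnack chain. The Alt--Caffarelli--Friedman monotonicity formula is the engine that replaces your handwave at the connectivity step (Lemma \ref{lemshortjumps}): given a corkscrew at a small scale with a good lower bound on the Green function, ACF produces a corkscrew at a comparably larger scale, still with a good Green function lower bound, and connected to the first by a curve staying away from $\partial\Omega$. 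Then a Geometric Lemma builds two disjoint John-type open sets $V_1, V_2$ (one around each family of same-side corkscrews) whose boundary Whitney cubes live where the Green function is tiny, and the Key Lemma feeds this into an integration-by-parts identity for the Green function to show that, up to a small exceptional family, corkscrews of stopping cubes on one side can be joined upward to corkscrews of the top cube on that same side. None of these ingredients (ACF, flatness stopping, the two-sided bookkeeping of $\mathsf{WSBC}$ cubes, the $V_1/V_2$ construction) appears in your proposal, and without them the step from ``big piece of non-stopping cubes'' to ``carrot paths'' does not go through. If you want to salvage the top-down stopping-time format, you would still need to import the ACF step to certify each jump between scales, and a mechanism to keep track of which side of the boundary each corkscrew lies on.
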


Let us mention that the present paper is a combination of unpublished 
work of two different subsets of the present authors:  Theorem \ref{t1} is due to the
second and third authors, and was first posted in the draft manuscript
\cite{HM3}\footnote{An earlier version of this work \cite{HM4} gave a direct proof
of the fact that (1) implies (3) in Theorem \ref{tmain}, without passing through condition (2).};  Theorem \ref{teo1a}
is due to the first, fourth and fifth authors, and appeared first in the draft
manuscript \cite{AMT}.

The paper is organized as follows.  In  the next section, we set notation and give some 
definitions.    In Part \ref{part-1} of the paper (Sections \ref{s2}-\ref{s-step3}), we give the proof of Theorem \ref{t1}.
In Part \ref{part-2} of the paper (Sections \ref{s5}-\ref{sec8}) we give the proof of Theorem \ref{teo1a}.
Finally, in Appendix \ref{appa}, we discuss some counter-examples which show 
that our background hypotheses are in the nature of best possible.
\vspace{1mm}

We thank the referee for a careful reading of the paper, and for several helpful suggestions that have led us 
to clarify certain matters, and to make improvements in the presentation.

\section{Notation and definitions}

\begin{list}{$\bullet$}{\leftmargin=0.4cm  \itemsep=0.2cm}

\item Unless otherwise stated, we use the letters $c,C$ to denote harmless positive constants, not necessarily
the same at each occurrence, which depend only on dimension and the
constants appearing in the hypotheses of the theorems (which we refer to as the
``allowable parameters'').  We shall also
sometimes write $a\lesssim b$, $a \gtrsim b$, and $a \approx b$ to mean, respectively,
that $a \leq C b$, $a \ge c b$, and $0< c \leq a/b\leq C$, where the constants $c$ and $C$ are as above, unless
explicitly noted to the contrary.  In some occasions we will employ the notation $a\lesssim_\lambda b$, $a \gtrsim_\lambda b$ and $a \approx_\lambda b$ to emphasize that the previous implicit constants $c$ and/or $C$ may depend on some relevant parameter $\lambda$. At times, we shall designate by $M$ a particular constant whose value will remain unchanged throughout the proof of a given lemma or proposition, but
which may have a different value during the proof of a different lemma or proposition.

\item Our ambient space is $\ree$, $n\ge 1$.

\item $\Omega$ will always denote an open set in $\ree$,
not necessarily connected unless otherwise specified.

\item  We use the notation
$\gamma(x,y)$ to denote a rectifiable path with endpoints $x$ and $y$,
and its arc-length will be denoted $\ell(\gamma(x,y))$.  Given such a path,
if $z\in \gamma(x,y)$, we use the notation $\gamma(z,y)$ to denote the portion of the original path
with endpoints $z$ and $y$.  


\item We let $e_j$,  $j=1,2,\dots,n+1,$ denote the standard unit basis vectors in $\ree$.

\item The open $(n+1)$-dimensional Euclidean ball of radius $r$ will be denoted
$B(x,r)$.  For $x\in \pom$, a {\it surface ball} is denoted
$\Delta(x,r):= B(x,r) \cap\partial\Omega.$

\item Given a Euclidean ball $B$ or surface ball $\Delta$, its radius will be denoted
$r_B$ or $r_\Delta$, respectively.

\item Given a Euclidean or surface ball $B= B(x,r)$ or $\Delta = \Delta(x,r)$, its concentric
dilate by a factor of $\kappa >0$ will be denoted
$\kappa B := B(x,\kappa r)$ or $\kappa \Delta := \Delta(x,\kappa r).$

\item Given an open set $\om \subset \ree$, for $x \in \om$, we set $\delta_{\Omega}(x):= \dist(x,\pom)$.

\item We let $H^n$ denote $n$-dimensional Hausdorff measure, and let
$\sigma := H^n\big\lfloor_{\,\pom}$ denote the surface measure on $\pom$.

\item For a Borel set $A\subset \ree$, we let $\chi_A$ denote the usual
indicator function of $A$, i.e. $\chi_A(x) = 1$ if $x\in A$, and $\chi_A(x)= 0$ if $x\notin A$.

\item For a Borel set $A\subset \ree$,  we let $\interior(A)$ denote the interior of $A$.


\item Given a Borel measure $\mu$, and a Borel set $A$, with positive and finite $\mu$ measure, we
set $\fint_A f d\mu := \mu(A)^{-1} \int_A f d\mu$.

\item We shall use the letter $I$ (and sometimes $J$)
to denote a closed $(n+1)$-dimensional Euclidean dyadic cube with sides
parallel to the co-ordinate axes, and we let $\ell(I)$ denote the side length of $I$.
If $\ell(I) =2^{-k}$, then we set $k_I:= k$.
Given an $n$-ADR set $E\subset \ree$, we use $Q$ (or sometimes $P$ or $R$)
to denote a dyadic ``cube''
on $E$.  The
latter exist (see \cite{DS1}, \cite{Ch}, \cite{HK}), and enjoy certain properties
which we enumerate in Lemma \ref{lemmaCh} below.

\end{list}

\begin{definition}\label{defadr} ({\bf  $n$-ADR})  (aka {\it $n$-Ahlfors-David regular}).
We say that a  set $E \subset \ree$, of Hausdorff dimension $n$, is $n$-ADR
if it is closed, and if there is some uniform constant $C$ such that
\begin{equation} \label{eq1.$n$-ADR}
\frac1C\, r^n \leq \sigma\big(\Delta(x,r)\big)
\leq C\, r^n,\quad\forall r\in(0,\diam (E)),\ x \in E,
\end{equation}
where $\diam(E)$ may be infinite.
Here, $\Delta(x,r):= E\cap B(x,r)$ is the {\it surface ball} of radius $r$,
and as above, $\sigma:= H^n\lfloor_{\,E}$ 
is the ``surface measure" on $E$.
\end{definition}

\begin{definition}\label{defur} ({\bf $n$-UR}) (aka {\it $n$-uniformly rectifiable}).
An $n$-ADR (hence closed) set $E\subset \ree$
is $n$-UR if and only if it contains ``Big Pieces of
Lipschitz Images" of $\rn$ (``BPLI").   This means that there are positive constants $c_1$ and
$C_1$, such that for each
$x\in E$ and each $r\in (0,\diam (E))$, there is a
Lipschitz mapping $\rho= \rho_{x,r}: \rn\to \ree$, with Lipschitz constant
no larger than $C_1$,
such that 
$$
H^n\Big(E\cap B(x,r)\cap  \rho\left(\{z\in\rn:|z|<r\}\right)\Big)\,\geq\,c_1 r^n\,.
$$
\end{definition}

We recall that $n$-dimensional rectifiable sets are characterized by the
property that they can be
covered, up to a set of
$H^n$ measure 0, by a countable union of Lipschitz images of $\rn$;
we observe that BPLI  is a quantitative version
of this fact.

We remark
that, at least among the class of $n$-ADR sets, the $n$-UR sets
are precisely those for which all ``sufficiently nice" singular integrals
are $L^2$-bounded  \cite{DS1}.    In fact, for $n$-ADR sets
in $\ree$, the $L^2$ boundedness of certain special singular integral operators
(the ``Riesz Transforms"), suffices to characterize uniform rectifiability (see \cite{MMV} for the case $n=1$, and 
\cite{NToV} in general). 
We further remark that
there exist sets that are $n$-ADR (and that even form the boundary of a domain satisfying 
interior corkscrew and Harnack Chain conditions),
but that are totally non-rectifiable (e.g., see the construction of Garnett's ``4-corners Cantor set"
in \cite[Chapter 1]{DS2}).  Finally, we mention that there are numerous other characterizations of $n$-UR sets
(many of which remain valid in higher co-dimensions); cf. \cite{DS1,DS2}.

\begin{definition}\label{defurchar} ({\bf ``UR character"}).   Given an $n$-UR set $E\subset \ree$, its ``UR character"
is just the pair of constants $(c_1,C_1)$ involved in the definition of uniform rectifiability,
along with the ADR constant; or equivalently,
the quantitative bounds involved in any particular characterization of uniform rectifiability.
\end{definition}

\begin{definition} ({\bf Corkscrew condition}).  \label{def1.cork}
Following
\cite{JK}, we say that an open set $\Omega\subset \ree$
satisfies the {\it corkscrew condition} if for some uniform constant $c>0$ and
for every surface ball $\Delta:=\Delta(x,r),$ with $x\in \partial\Omega$ and
$0<r<\diam(\partial\Omega)$, there is a ball
$B(x_\Delta,cr)\subset B(x,r)\cap\Omega$.  The point $x_\Delta\subset \Omega$ is called
a {\it corkscrew point} relative to $\Delta.$  We note that  we may allow
$r<C\diam(\pom)$ for any fixed $C$, simply by adjusting the constant $c$.
In order to emphasize that $B(x_\Delta,cr) \subset \om$, we shall sometimes refer to this property as
the  {\it interior
corkscrew condition}.
\end{definition}

\begin{definition}({\bf Harnack Chains, and the Harnack Chain condition}  \cite{JK}).  \label{def1.hc} 
Given two points $x,x' \in \Omega$, and a pair of numbers $M,N\geq1$, 
an $(M,N)$-{\it Harnack Chain connecting $x$ to $x'$},  is a chain of
open balls
$B_1,\dots,B_N \subset \Omega$, 
with $x\in B_1,\, x'\in B_N,$ $B_k\cap B_{k+1}\neq \emptyset$
and $M^{-1}\diam (B_k) \leq \dist (B_k,\partial\Omega)\leq M\diam (B_k).$
We say that $\Omega$ satisfies the {\it Harnack Chain condition}
if there is a uniform constant $M$ such that for any two points $x,x'\in\om$,
there is an $(M,N)$-Harnack Chain connecting them, with $N$ depending only on $M$ and the ratio
$|x-x'|/\left(\min\big(\delta_{\Omega}(x),\delta_{\Omega}(x')\big)\right)$.  
\end{definition}

\begin{definition}({\bf NTA}). \label{def1.nta} Again following \cite{JK}, we say that a
domain $\Omega\subset \ree$ is NTA ({\it Non-tangentially accessible}) if it satisfies the
Harnack Chain condition, and if both $\Omega$ and
$\Omega_{\rm ext}:= \ree\setminus \overline{\Omega}$ satisfy the corkscrew condition.
\end{definition}

\begin{definition}({\bf CAD}). \label{def1.cad}  We say that a connected open set $\om \subset \ree$
is a CAD ({\it Chord-arc domain}), if it is NTA, and if $\pom$ is $n$-ADR.
\end{definition}

\begin{definition}({\bf Carrot path}).  \label{def1.carrot} Let $\Omega\subset \ree$ be an open set.
Given a point $x\in \om$, and a point $y \in \pom$, we say that 
a connected rectifiable path $\gamma=\gamma(y,x)$, with endpoints $y$ and $x$,
is a {\it carrot path} (more precisely, a {\it $\lambda$-carrot path}) connecting
$y$ to $x$, if $\gamma\setminus\{y\}\subset \om$, and if 
for some $\lambda\in(0,1)$ and for all $z\in \gamma$,
\begin{equation}\label{eq1.carrot}
\lambda\, \ell\big(\gamma(y,z)\big)\, \leq\,  \delta_{\Omega}(z)\,.
\end{equation}
With a slight abuse of terminology, we shall sometimes 
refer to such a path as a {\it $\lambda$-carrot path in}
$\om$, although of course the endpoint $y$ lies on $\pom$.
\end{definition}

A carrot path is sometimes referred to as a non-tangential path.

\begin{definition}({\bf $(\theta,\lambda,N)$-weak local John point}).  \label{def1.johnpoint} Let $x\in\om$, 
and for  constants $\theta\in (0,1]$, $\lambda\in (0,1)$, and $N\geq 2$, set
$$\Delta_x=\Delta_x^N:= B\big(x, N\delta_{\Omega}(x)\big)\cap\pom\,.$$
We say that a point $x\in\Omega$ is a {\it $(\theta,\lambda,N)$-weak local John point}
if there is a Borel set $F\subset \Delta^N_x$, with
$\sigma(F)\geq \theta\sigma(\Delta^N_x)$, such that for every $y\in F$, there is a 
$\lambda$-carrot path connecting $y$ to $x$.
\end{definition}

Thus, a weak local John point is non-tangentially connected to an ample portion of the boundary,
locally.  We observe that one can always choose $N$ smaller, for possibly different values of
$\theta$ and $\lambda$, by moving from $x$ to a point $x'$ on a line segment joining $x$ to the boundary.

\begin{remark}\label{remark2}
We observe that it is a slight abuse of notation to write
$\Delta_x$,
since the latter is not centered on $\pom$, and thus it is not a true surface ball; on the other hand,
there are true surface balls,  $\Delta'_x:=\Delta(\hat{x},(N-1)\delta_{\Omega}(x))$ and 
$\Delta''_x:=\Delta(\hat{x},(N+1)\delta_{\Omega}(x))$, centered at a ``touching point"
$\hat{x}\in\pom$ with $\delta_{\Omega}(x)=|x-\hat{x}|$,
which, respectively, are contained in, and contain, $\Delta_x$.   
\end{remark}

\begin{definition}({\bf Weak local John condition}).  \label{def1.john}
We say that $\om$ satisfies
a {\it weak local  John condition} if there are constants $\lambda\in (0,1)$,
$\theta\in(0,1]$, and $N\geq 2$, such that every $x\in\om$ is a $(\theta,\lambda,N)$-weak local John point.
\end{definition}

\begin{definition}({\bf IBPCAD}). \label{def1.ibpcad} We say that a connected open set $\om \subset \ree$
has {\it Interior Big Pieces of Chord-Arc Domains} (IBPCAD) if there exist positive constants $\eta$ and $C$, 
and $N\geq 2$, such that for every $x\in \Omega$, with $\delta_{\Omega}(x)<\diam(\pom)$,
there is a  Chord-arc domain $\Omega_x\subset \Omega$ satisfying
\begin{itemize}
\item $x\in \Omega_x$.
\item $\dist(x,\pom_x) \geq \eta \delta_{\Omega}(x)$.
\item $\diam(\Omega_x) \leq C\delta_{\Omega}(x)$.
\item $\sigma(\pom_x\cap \Delta^N_x) \geq \,\eta\, \sigma(\Delta^N_x) \,\approx_N\, \eta\,\delta_{\Omega}(x)^n$.
\item The Chord-arc constants of the domains $\Omega_x$ are uniform in $x$.
\end{itemize}
\end{definition}

\begin{remark}\label{remarkBPCAD}
In the presence of an interior corkscrew condition, Definition \ref{def1.ibpcad}
is easily seen to be essentially equivalent to the following more standard ``Big Pieces" condition: there are  positive constants $\eta$ and $C$
(perhaps slightly different to that in Definition \ref{def1.ibpcad}), such that
for each surface ball $\Delta:=\Delta(x,r) = B(x,r) \cap \pom$, $x\in \pom$ and
$r <\diam(\pom)$, and for any corkscrew point $x_\Delta$ relative to $\Delta$ there is a Chord-arc domain $\Omega_\Delta$ satisfying
\begin{itemize}
\item 	$x_\Delta\in \Omega_\Delta$

\item $\dist(x_\Delta,\pom_\Delta) \geq \eta r$.

\item $\Omega_\Delta\subset B(x,Cr) \cap\Omega$.

\item $\sigma(\pom_\Delta\cap \Delta(x,Cr)) \geq \,\eta\, \sigma(\Delta(x,Cr))\approx \eta r^n$.

\item The Chord-arc constants of the domains $\Omega_\Delta$ are uniform in $\Delta$.
\end{itemize}
\end{remark}

\begin{definition}\label{defAinfty}
({\bf $A_\infty$}, weak-$A_\infty$, and weak-$RH_q$). 
Given an $n$-ADR set $E\subset\ree$, 
and a surface ball
$\Delta_0:= B_0 \cap E$ centered on $E$,
we say that a Borel measure $\mu$ defined on $E$ belongs to
$A_\infty(\Delta_0)$ if there are positive constants $C$ and $s$
such that for each surface ball $\Delta = B\cap E$ centered on $E$, with $B\subseteq B_0$,
we have
\begin{equation}\label{eq1.ainfty}
\mu (A) \leq C \left(\frac{\sigma(A)}{\sigma(\Delta)}\right)^s\,\mu (\Delta)\,,
\qquad \mbox{for every Borel set } A\subset \Delta\,.
\end{equation}
Similarly, we say that $\mu \in$ weak-$A_\infty(\Delta_0)$ if 
for each surface ball $\Delta = B\cap E$ centered on $E$, with $2B\subseteq B_0$,
\begin{equation}\label{eq1.wainfty}
\mu (A) \leq C \left(\frac{\sigma(A)}{\sigma(\Delta)}\right)^s\,\mu (2\Delta)\,,
\qquad \mbox{for every Borel set } A\subset \Delta\,.
\end{equation}
We recall that, as is well known, the condition $\mu \in$ weak-$A_\infty(\Delta_0)$
is equivalent to the property that $\mu \ll \sigma$ in $\Delta_0$, and that for some $q>1$, the
Radon-Nikodym derivative $k:= d\mu/d\sigma$ satisfies
the weak reverse H\"older estimate
\begin{equation}\label{eq1.wRH}
\left(\fint_\Delta k^q d\sigma \right)^{1/q} \,\lesssim\, \fint_{2\Delta} k \,d\sigma\,
\approx\,  \frac{\mu(2\Delta)}{\sigma(\Delta)}\,,
\quad \forall\, \Delta = B\cap E,\,\, {\rm with} \,\, 2B\subseteq B_0\,,
\end{equation}
with $B$ centered on $E$.
We shall refer to the inequality in \eqref{eq1.wRH} as
a  ``weak-$RH_q$" estimate, and we shall say that $k\in$ weak-$RH_q(\Delta_0)$ if $k$ satisfies \eqref{eq1.wRH}.
\end{definition}

\begin{definition}\label{deflocalAinfty} ({\bf Local $A_\infty$ and local weak-$A_\infty$}).
We say that harmonic measure $\hm$  is locally in $A_\infty$ (resp., locally in
weak-$A_\infty$) on $\pom$,
if there are  uniform positive constants $C$ and $s$
such that for every ball $B=B(x,r)$ centered on $\pom$, 
with radius  $r<\diam(\pom)/4$, and associated surface ball $\Delta=B\cap\pom$,
\begin{equation}\label{eq1.localainfty}
\hm^p (A) \leq C \left(\frac{\sigma(A)}{\sigma(\Delta)}\right)^s\,\hm^p (\Delta)\,,
\qquad \forall \, p\in\om\setminus 4B\,, \,\,\forall \mbox{ Borel } A\subset \Delta\,,
\end{equation}
or, respectively, that
\begin{equation}\label{eq1.localwainfty}
\hm^p (A) \leq C \left(\frac{\sigma(A)}{\sigma(\Delta)}\right)^s\,\hm^p (2\Delta)\,,
\qquad \forall \, p\in\om\setminus 4B\,, \,\,\forall \mbox{ Borel } A\subset \Delta\,;
\end{equation}
equivalently,
if for every ball $B$ and surface ball $\Delta=B\cap\pom$ as above,
and for each  
point $p\in\om\setminus 4B$, $\hm^p\in$ $A_\infty(\Delta)$ (resp.,
$\hm^p\in$ weak-$A_\infty(\Delta)$) with uniformly controlled $A_\infty$ (resp., weak-$A_\infty$) constants. 
\end{definition}

\begin{lemma}\label{lemmaCh}\textup{({\bf Existence and properties of the ``dyadic grid''})
\cite{DS1,DS2, Ch}.}
Suppose that $E\subset \ree$ is an $n$-ADR set.  Then there exist
constants $ a_0>0,\, s>0$ and $C_1<\infty$, depending only on $n$ and the
ADR constant, such that for each $k \in \mathbb{Z},$
there is a collection of Borel sets (``cubes'')
$$
\dd_k:=\{Q_{j}^k\subset E: j\in \mathfrak{I}_k\},$$ where
$\mathfrak{I}_k$ denotes some (possibly finite) index set depending on $k$, satisfying

\begin{list}{$(\theenumi)$}{\usecounter{enumi}\leftmargin=.8cm
\labelwidth=.8cm\itemsep=0.2cm\topsep=.1cm
\renewcommand{\theenumi}{\roman{enumi}}}

\item $E=\cup_{j}Q_{j}^k\,\,$ for each
$k\in{\mathbb Z}$.

\item If $m\geq k$ then either $Q_{i}^{m}\subset Q_{j}^{k}$ or
$Q_{i}^{m}\cap Q_{j}^{k}=\emptyset$.

\item For each $(j,k)$ and each $m<k$, there is a unique
$i$ such that $Q_{j}^k\subset Q_{i}^m$.

\item $\diam\big(Q_{j}^k\big)\leq C_1 2^{-k}$.

\item Each $Q_{j}^k$ contains some ``surface ball'' $\Delta \big(x^k_{j},a_02^{-k}\big):=
B\big(x^k_{j},a_02^{-k}\big)\cap E$.

\item $H^n\big(\big\{x\in Q^k_j:{\rm dist}(x,E\setminus Q^k_j)\leq \vartheta \,2^{-k}\big\}\big)\leq
C_1\,\vartheta^s\,H^n\big(Q^k_j\big),$ for all $k,j$ and for all $\vartheta\in (0,a_0)$.
\end{list}
\end{lemma}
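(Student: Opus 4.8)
Since the $n$-ADR hypothesis makes $(E,|\cdot|,\sigma)$ a space of homogeneous type (indeed $\sigma$ is doubling on $E$ by \eqref{eq1.$n$-ADR}), the plan is to invoke the by-now classical dyadic cube construction of Christ \cite{Ch}, as refined by David--Semmes \cite{DS1,DS2} and Hyt\"onen--Kairema \cite{HK}, and then read off the stated properties. Because doubling constants and the geometric ratio can be rescaled freely, it suffices to build grids with $\ell(Q)\approx\delta^{k}$ for one fixed small $\delta\in(0,1)$ depending only on $n$ and the ADR constant, and afterwards relabel generations to pass from base $\delta$ to base $2$ (adjusting $C_1$ and $a_0$).

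First I would fix, for each $k\in\mathbb{Z}$, a maximal $\delta^{k}$-separated set $\{x_j^k\}_{j\in\mathfrak{I}_k}\subset E$ (existence by Zorn's lemma): maximality forces $E=\bigcup_j B(x_j^k,\delta^k)$, while the separation gives disjointness of the balls $B(x_j^k,\tfrac12\delta^k)$. Next I would organize these ``centers'' into a genealogical tree: each $x_j^k$ lies within $\delta^{k-1}$ of some $(k-1)$-center, so assign to it a parent among the $(k-1)$-centers (say the nearest, ties broken by a fixed well-ordering of $\bigsqcup_k\mathfrak{I}_k$), and take the transitive closure. One then defines $Q_j^k$ to be a Borel version of the union of the small balls $B(x_i^\ell,c\,\delta^\ell)$ over all descendants $(i,\ell)$ of $(j,k)$, with $c=c(\delta)$ small enough that these unions nest honestly across consecutive generations. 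With this definition (ii) and (iii) are immediate from the tree; the diameter bound (iv) follows by summing the geometric series $\sum_{\ell\ge k}\delta^\ell$ of radii along a branch; (v) holds because $Q_j^k\supset B(x_j^k,c\,\delta^k)$, which has $\sigma$-measure $\gtrsim\delta^{kn}$ by the lower ADR bound; and the covering property (i) is again a consequence of maximality of the nets.

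The delicate point, and the main obstacle, is the thin-boundary estimate (vi). Here I would run a stopping-time/pigeonhole argument down the generations: fix $Q=Q_j^k$ and, for small $\vartheta$, bound the $\sigma$-measure of the collar $L_\vartheta:=\{x\in Q:\dist(x,E\setminus Q)\le\vartheta\delta^k\}$. Using ADR one shows that at \emph{each} subsequent generation a definite fixed proportion of any near-boundary layer gets absorbed strictly inside a child cube (a child whose center sits well inside $Q$ carries a fixed fraction of the local mass, and its own boundary collar is in turn a fixed fraction thinner), so the mass of the collar of width $\delta^{k+m}$ decays like $(1-c)^m$ uniformly in $Q$; interpolating over the intermediate widths converts this geometric-in-$m$ decay into the power bound $\sigma(L_\vartheta)\le C_1\,\vartheta^{s}\,\sigma(Q)$ with $s=s(\delta,n,\mathrm{ADR})>0$. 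This is precisely where the two-sided ADR bound is used in an essential quantitative way.

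Finally, the construction above typically yields cubes partitioning $E$ only up to a $\sigma$-null ``seam'' where several collars meet. To obtain honest pairwise disjoint Borel sets with $E=\bigcup_j Q_j^k$ as stated, I would redistribute this exceptional null set among the cubes meeting it in a generation-consistent fashion (assigning each exceptional point, at every scale, to the cube of its nearest center); this affects none of (ii)--(vi), which are either purely combinatorial or stated in terms of $\sigma$, and $\sigma$ ignores null sets. Relabeling $\delta^k$ as $2^{-k}$ then gives the statement in the asserted base-$2$ normalization.
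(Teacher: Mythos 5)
The paper does not prove this lemma: it is stated as a classical fact, attributed to Christ \cite{Ch} (see also \cite{HK}) in the setting of general spaces of homogeneous type and to David--Semmes \cite{DS1,DS2} in the ADR setting, and the remarks immediately following the statement note that passing from Christ's generic scale ratio $\delta\in(0,1)$ to the genuine dyadic normalization $\delta=1/2$ requires a further argument, for which the paper points to the proof of Proposition 2.12 in \cite{HMMM}. So there is no in-paper proof to compare against; what you have written is a sketch of the cited construction. Your skeleton for (i)--(v) is sound and standard: maximal $\delta^{k}$-nets, a genealogical tree of centers with a fixed tie-breaking rule, cubes as Borel versions of unions of descendant balls with a small inflation constant, the diameter bound from a geometric series along a branch, and the inner ball from the lower ADR estimate.

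Two places deserve more care than your sketch gives. First, the closing ``relabel generations to pass from base $\delta$ to base $2$'' is not a literal reindexing, since $\delta^{k}$ and $2^{-m}$ are incommensurate; what actually works is to select, for each $m\in\ZZ$, the Christ generation $k(m)$ with $\delta^{k(m)}\le 2^{-m}<\delta^{k(m)-1}$ and set $\dd_m$ to be that generation (levels may repeat when $\delta$ is small, but $k(m)$ is nondecreasing so nesting (ii)--(iii) survives, and the bounded ratio is absorbed into $C_1$ and $a_0$); the paper's cited \cite{HMMM} argument instead produces an honest $\delta=1/2$ grid, which is cleaner. Second, the thin-boundary estimate (vi) is the only genuinely delicate point in Christ's theorem, and the iteration you describe is too compressed to check and, as phrased, not quite right: one cannot directly compare the collar of $Q$ with the collars of its children, because a child $Q'$ sitting near $E\setminus Q$ can contribute to the collar of $Q$ points that lie deep inside $Q'$ (far from $E\setminus Q'$), so the naive ``each child sheds a fixed fraction'' step does not go through without further work. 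Christ's actual proof of the geometric decay in $m$ (from which your interpolation to $\vartheta^{s}$ is correct) is a nontrivial stopping-time argument, and it is precisely the part the present paper chooses to cite rather than reprove; doing the same is the appropriate resolution unless you intend to reproduce Christ's argument in full.
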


A few remarks are in order concerning this lemma.

\begin{list}{$\bullet$}{\leftmargin=0.4cm  \itemsep=0.2cm}

\item In the setting of a general space of homogeneous type, this lemma has been proved by Christ
\cite{Ch} (see also \cite{HK}), with the
dyadic parameter $1/2$ replaced by some constant $\delta \in (0,1)$.
In fact, one may always take $\delta = 1/2$ (see  \cite[Proof of Proposition 2.12]{HMMM}).
In the presence of the Ahlfors-David
property \eqref{eq1.$n$-ADR}, the result already appears in \cite{DS1,DS2}. Some predecessors of this construction have appeared in \cite{David88} and \cite{David91}.

\item  For our purposes, we may ignore those
$k\in \mathbb{Z}$ such that $2^{-k} \gtrsim {\rm diam}(E)$, in the case that the latter is finite.

\item  We shall denote by  $\dd=\dd(E)$ the collection of all relevant
$Q^k_j$, i.e., $$\dd := \cup_{k} \dd_k,$$
where, if $\diam (E)$ is finite, the union runs
over those $k$ such that $2^{-k} \lesssim  {\rm diam}(E)$.

\item Properties $(iv)$ and $(v)$ imply that for each cube $Q\in\dd_k$,
there is a point $x_Q\in E$, a Euclidean ball $B(x_Q,r_Q)$ and a surface ball
$\Delta(x_Q,r_Q):= B(x_Q,r_Q)\cap E$ such that
$r_Q\approx 2^{-k} \approx {\rm diam}(Q)$
and \begin{equation}\label{cube-ball}
\Delta(x_Q,r_Q)\subset Q \subset \Delta(x_Q,Cr_Q),\end{equation}
for some uniform constant $C$. We shall refer to the point $x_Q$ as the ``center'' of $Q$.

\item For a dyadic cube $Q\in \dd_k$, we shall
set $\ell(Q) = 2^{-k}$, and we shall refer to this quantity as the ``length''
of $Q$. 
Evidently, by adjusting if necessary some parameters, we can assume that
$$\diam(Q)\leq \ell(Q)\lesssim \diam(Q).$$
We shall denote 
\begin{equation}\label{cube-ball2}
B_Q:= B(x_Q,4\ell(Q)) \,,\qquad\Delta_Q:= \Delta(x_Q,4\ell(Q)).
\end{equation}
Notice that $Q\subset \Delta_Q\subset B_Q$. 

\item For a dyadic cube $Q \in \dd$, we let $k(Q)$ denote the dyadic generation
to which $Q$ belongs, i.e., we set  $k = k(Q)$ if
$Q\in \dd_k$; thus, $\ell(Q) =2^{-k(Q)}$.

\item 
Given $R\in\DD$, we set
\begin{equation}\label{eq3.4a}
\DD(R):=\{Q\in\DD:Q\subset R\}\,.
\end{equation}
For $j\ge 1$, we also let
\begin{equation}\label{eq3.4aaa}
\dd_j(R):=\left\{Q\in\dd(R): \, \ell(Q)=2^{-j}\,\ell(R)\right\}\,.
\end{equation}

\item For a pair of cubes $Q',Q \in \dd$, if $Q'$ is a dyadic child of $Q$,
i.e., if $Q'\subset Q$, and $\ell(Q) =2\ell(Q')$, then we write $Q'\lhd Q$.

\item For $\lambda>1$, we write
$$\lambda Q = \bigl\{x\in E:\, \dist(x,Q)\leq (\lambda-1)\,\ell(Q)\bigr\}.$$

\end{list}

\vv

With the dyadic cubes in hand, we may now define the notion of a corkscrew point relative to a cube $Q$.

\begin{definition}({\bf Corkscrew point relative to $Q$}).  \label{def1.CScube}
Let $\om$ satisfy the corkscrew condition (Definition \ref{def1.cork}), suppose that $\pom$ is $n$-ADR,
and let $Q\in \dd(\pom)$.
A {\it corkscrew point relative to $Q$} is simply a corkscrew point relative to the surface ball
$\Delta(x_Q,r_Q)$ defined in \eqref{cube-ball}.
\end{definition}

\begin{definition}({\bf Coherency and Semi-coherency}). \cite{DS2}.
\label{d3.11}   
Let $E\subset \ree$ be an $n$-ADR set.
Let $\sbf\subset \dd(E)$. We say that $\sbf$ is
{\it coherent} if the following conditions hold:
\begin{itemize}\itemsep=0.1cm

\item[$(a)$] $\sbf$ contains a unique maximal element $Q(\sbf)$ which contains all other elements of $\sbf$ as subsets.

\item[$(b)$] If $Q$  belongs to $\sbf$, and if $Q\subset \widetilde{Q}\subset Q(\sbf)$, then 
$\widetilde{Q}\in {\sbf}$.

\item[$(c)$] Given a cube $Q\in \sbf$, either all of its children belong to $\sbf$, or none of them do.

\end{itemize}
We say that $\sbf$ is {\it semi-coherent} if conditions $(a)$ and $(b)$ hold.  We shall refer to a coherent or
semi-coherent collection $\sbf$ as a {\em tree}.
\end{definition}



\newcounter{parte}

\bigskip
\begin{center} 
\Large Part \refstepcounter{parte}\theparte\label{part-1}: Proof of Theorem \ref{t1}
\end{center}
\smallskip

\addcontentsline{toc}{section}{Part 1: Proof of Theorem \ref{t1}}

\section{Preliminaries for the Proof of Theorem \ref{t1}}\label{s2}

We begin by recalling a bilateral version of the
David-Semmes ``Corona decomposition" of an $n$-UR set.   We refer the reader to \cite{HMM} for the proof.

\begin{lemma}\label{lemma2.1}\textup{(\cite[Lemma 2.2]{HMM})}  
Let $E\subset \ree$ be an $n$-UR set.  Then given any positive constants
$\eta\ll 1$
and $K\gg 1$, there is a disjoint decomposition
$\dd(E) = \G\cup\B$, satisfying the following properties.
\begin{enumerate}
\item  The ``Good" collection $\G$ is further subdivided into
disjoint  trees, such that each such tree $\sbf$ is coherent (Definition \ref{d3.11}).

\smallskip
\item The ``Bad" cubes, as well as the maximal cubes $Q(\sbf)$, $\sbf\subset\G$, satisfy a Carleson
packing condition:
$$\sum_{Q'\subset Q, \,Q'\in\B} \sigma(Q')
\,\,+\,\sum_{\sbf\subset\G: Q(\sbf)\subset Q}\sigma\big(Q(\sbf)\big)\,\leq\, C_{\eta,K}\, \sigma(Q)\,,
\quad \forall Q\in \dd(E)\,.$$

\smallskip
\item For each $\sbf\subset\G$, there is a Lipschitz graph $\Gamma_{\sbf}$, with Lipschitz constant
at most $\eta$, such that, for every $Q\in \sbf$,
\begin{equation}\label{eq2.2a}
\sup_{x\in \Delta_Q^*} \dist(x,\Gamma_{\sbf} )\,
+\,\sup_{y\in B_Q^*\cap\Gamma_{\sbf}}\dist(y,E) < \eta\,\ell(Q)\,,
\end{equation}
where $B_Q^*:= B(x_Q,K\ell(Q))$ and $\Delta_Q^*:= B_Q^*\cap E$, and $x_Q$ is the ``center"
of $Q$ as in \eqref{cube-ball}-\eqref{cube-ball2}.
\end{enumerate}
\end{lemma}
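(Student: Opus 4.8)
The plan is to bootstrap the \emph{bilateral} estimate \eqref{eq2.2a} out of the classical \emph{one-sided} David--Semmes Corona decomposition of an $n$-UR set, feeding in the bilateral weak geometric lemma as the mechanism that produces good approximating planes from the side of $E$ opposite to the graph. \textbf{Step 1.} First I would invoke the one-sided David--Semmes decomposition \cite{DS1,DS2}, applied with an auxiliary smallness parameter $\eta_0\ll\eta$ (and the given $K$, suitably enlarged): this yields $\dd(E)=\G_0\cup\B_0$ with $\B_0$ together with the maximal cubes of the $\G_0$-trees satisfying a Carleson packing bound, each $\G_0$-tree $\sbf_0$ coherent, and, for each $\sbf_0$, a Lipschitz graph $\Gamma_{\sbf_0}$ of Lipschitz constant at most $\eta_0$ obeying only the first half of \eqref{eq2.2a}, namely $\sup_{x\in\Delta_Q^*}\dist(x,\Gamma_{\sbf_0})<\eta_0\,\ell(Q)$ for all $Q\in\sbf_0$.

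\textbf{Step 2.} Since $E$ is $n$-UR it satisfies the bilateral weak geometric lemma \cite{DS1,DS2}: for any prescribed $\eps>0$ the collection $\B_1$ of cubes $Q$ for which no $n$-plane is \emph{bilaterally} $\eps\,\ell(Q)$-close to $E$ inside $2B_Q^*$ (i.e. $E\cap 2B_Q^*$ close to the plane \emph{and} the plane$\,\cap\,2B_Q^*$ close to $E$) is a Carleson family. Set $\B:=\B_0\cup\B_1$, still Carleson. \textbf{Step 3 (the geometric heart).} Fix a $\G_0$-tree $\sbf_0$, write $\Gamma:=\Gamma_{\sbf_0}$, let $V$ be the plane over which $\Gamma$ is a graph, and take $Q\in\sbf_0\setminus\B_1$; let $P_Q$ be the plane furnished by Step 2. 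Using the ADR lower bound I would pick an $n$-dimensional net of points of $E$ spread through $B_Q^*$ at scale $\sim\eta_0\,\ell(Q)$; each such point is simultaneously within $\eta_0\,\ell(Q)$ of $\Gamma$ (Step 1) and within $\eps\,\ell(Q)$ of $P_Q$ (Step 2). Because $\Gamma$ has tiny Lipschitz constant, closeness of $\Gamma$ to a spanning net of $E$-points forces $V$ and $P_Q$ to be nearly parallel and $\Gamma$ to lie within $C(\eta_0+\eps)\,\ell(Q)$ of $P_Q$ throughout $B_Q^*$; combining with ``$P_Q\cap 2B_Q^*$ is $\eps\,\ell(Q)$-close to $E$'' gives $\sup_{y\in B_Q^*\cap\Gamma}\dist(y,E)\le C(\eta_0+\eps)\,\ell(Q)$. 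Choosing $\eta_0$ and $\eps$ small enough in terms of $\eta$ (and $K$), the estimate \eqref{eq2.2a} now holds in full for every $Q\in\sbf_0\setminus\B_1$.

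\textbf{Step 4.} Deleting the cubes of $\B_1$ from the coherent family $\sbf_0$ can destroy property $(c)$ of coherence, so I would split each $\sbf_0\setminus\B_1$ into its maximal coherent subtrees in the usual way, assigning to each the graph $\Gamma_{\sbf_0}$ of its ambient tree; the new maximal cubes are either maximal cubes of $\G_0$-trees or dyadic children of cubes of $\B$, so their packing is controlled. This produces the desired $\dd(E)=\G\cup\B'$ with $\B'$ Carleson and every $Q$ lying in a coherent tree satisfying \eqref{eq2.2a}. The main obstacle is Step 3: one must keep quantitative control, \emph{uniformly in} $Q$ and the tree, of the passage from ``$E$ one-sided-close to $\Gamma$'' plus ``$E$ bilaterally close to $P_Q$'' to ``$\Gamma$ bilaterally close to $E$'' --- in particular the comparison between the fixed per-tree plane $V$ and the scale- and location-dependent plane $P_Q$ --- and this is precisely where both the ADR lower bound and the smallness of the Lipschitz constants are essential. (A more self-contained route would bypass the bilateral weak geometric lemma and instead run a direct stopping-time argument inside each $\sbf_0$, declaring $Q$ terminal when $\Gamma_{\sbf_0}$ first pulls $\eta\,\ell(Q)$-far from $E$ near $Q$, charging such stops either to $\B_0$ or to a thin exceptional set detectable via ADR; the packing of those stopping cubes is then the analogue of the difficulty above.)
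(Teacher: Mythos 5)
The paper does not prove this lemma; it is quoted from \cite{HMM} with a citation. Your reconstruction follows the same route as the proof in that reference: begin from the unilateral David--Semmes Corona decomposition, superimpose the bilateral weak geometric lemma as a second Carleson family of bad cubes, derive the bilateral closeness estimate for cubes passing both tests (your Step~3), and then re-split into coherent subtrees (your Step~4), adding the new tops to the exceptional family and checking they still pack.

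The one gap is in Step~3, where the spread-out net of $E$-points in $B_Q^*$ is attributed to the ADR \emph{lower} bound alone. That bound concerns only balls centered on $E$ and does not, by itself, forbid $E\cap B_Q^*$ from clustering near a lower-dimensional affine set. What actually yields the spanning net is the combination of \emph{both} ADR bounds: if $E\cap B_Q^*$ lay within distance $\rho\,\ell(Q)$ of an $(n-1)$-plane with $\rho\ll K$, one could cover it by roughly $(K/\rho)^{n-1}$ balls of radius $\rho\,\ell(Q)$ centered on $E$, and the upper ADR bound would force $\sigma(E\cap B_Q^*)\lesssim K^{n-1}\rho\,\ell(Q)^n$, contradicting the lower ADR bound $\sigma(E\cap B_Q^*)\gtrsim (K\ell(Q))^n$. (Alternatively, one may extract the net directly from the ``$P_Q\cap 2B_Q^*$ is $\eps\,\ell(Q)$-close to $E$'' half of the bilateral weak geometric lemma, which you already invoke for a different purpose and which is available precisely for $Q\notin\B_1$.) With either fix, the comparison between $V$ and $P_Q$ and the transfer of $E$-closeness from $P_Q$ to $\Gamma$ go through as you describe, uniformly in $Q$, after a suitable enlargement of the ball in which the bilateral weak geometric lemma is applied.
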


We remark that in \cite{HMM}, the trees $\sbf$  were denoted by ${\bf S}$, and were called
``stopping time regimes" rather than trees.

We mention that David and Semmes, in \cite{DS1}, had
previously proved a unilateral version of Lemma \ref{lemma2.1}, 
in which the bilateral estimate \eqref{eq2.2a} 
is replaced by the unilateral bound
\begin{equation}\label{eq2.5}
\sup_{x\in \Delta_Q^*} \dist(x,\Gamma_{\sbf} )\,
<\, \eta\,\ell(Q)\,,\qquad \forall\,Q\in \sbf\,.
\end{equation} 

Next, we make a standard Whitney decomposition of $\Omega_E:=\ree\setminus E$, for a given $n$-UR set $E$
(in particular, 
$\om_E$ is open, since $n$-UR sets are closed by definition).
Let $\mathcal{W}=\W(\om_E)$ denote a collection
of (closed) dyadic Whitney cubes of $\om_E$, so that the cubes in $\mathcal{W}$
form a pairwise non-overlapping covering of $\om_E$, which satisfy
\begin{equation}\label{Whintey-4I}
4 \diam(I)\leq
\dist(4I,E)\leq \dist(I,E) \leq 40\diam(I)\,,\qquad \forall\, I\in \mathcal{W}\,\end{equation}
(just dyadically divide the standard Whitney cubes, as constructed in  \cite[Chapter VI]{St},
into cubes with side length 1/8 as large)
and also
$$\tfrac14\diam(I_1)\leq\diam(I_2)\leq 4\diam(I_1)\,,$$
whenever $I_1$ and $I_2$ touch.

We fix a small parameter $\tau_0>0$, so that
for any $I\in \W$, and any $\tau \in (0,\tau_0]$,
the concentric dilate
\begin{equation}\label{whitney1}
I^*(\tau):= (1+\tau) I
\end{equation} 
still satisfies the Whitney property
\begin{equation}\label{whitney}
\diam I\approx \diam I^*(\tau) \approx \dist\left(I^*(\tau), E\right) \approx \dist(I,E)\,, \quad 0<\tau\leq \tau_0\,.
\end{equation}
Moreover,
for $\tau\leq\tau_0$ small enough, and for any $I,J\in \W$,
we have that $I^*(\tau)$ meets $J^*(\tau)$ if and only if
$I$ and $J$ have a boundary point in common, and that, if $I\neq J$,
then $I^*(\tau)$ misses $(3/4)J$.


Pick two parameters $\eta\ll 1$ and $K\gg 1$ (eventually, we shall take
$K=\eta^{-3/4}$).   For $Q\in \dd(E)$, define
\begin{equation}\label{eq3.1}
\W^0_Q:= \left\{I\in \W:\,\eta^{1/4} \ell(Q)\leq \ell(I)
 \leq K^{1/2}\ell(Q),\ \dist(I,Q)\leq K^{1/2} \ell(Q)\right\}.
 \end{equation}
 
\begin{remark}\label{remark:E-cks} 
We note that $\W^0_Q$ is non-empty,
 provided that we choose $\eta$ small enough, and $K$ large enough, depending only on dimension and ADR,
since the $n$-ADR condition implies that $\om_E$ satisfies a corkscrew condition.  In the sequel, we shall always
assume that $\eta$ and $K$ have been so chosen.
 \end{remark}
 
Next, we recall a construction in  \cite[Section 3]{HMM}, leading up to and including in particular
\cite[Lemma 3.24]{HMM}.   We summarize this construction as follows. 
\begin{lemma}\label{lemma2.7}
Let $E\subset \ree$ be 
$n$-UR, and set $\om_E:= \ree\setminus E$.  Given positive constants
$\eta\ll 1$
and $K\gg 1$, as in \eqref{eq3.1} and Remark \ref{remark:E-cks},  
let $\dd(E) = \G\cup\B$, be the corresponding 
bilateral Corona decomposition of Lemma \ref{lemma2.1}. 
Then for each $\sbf\subset \G$, and for each $Q\in \sbf$, the collection 
$\W^0_Q$ in \eqref{eq3.1} has an augmentation $\W^*_Q\subset \W$ satisfying the following properties.
\begin{enumerate}
\item $\W^0_Q\subset \W^*_Q = \W_Q^{*,+}\cup\W_Q^{*,-}$,
where (after a suitable rotation of coordinates)
each $I \in \W_Q^{*,+}$ lies above the Lipschitz graph $\Gamma_{\sbf}$
of Lemma \ref{lemma2.1},  each $I \in \W_Q^{*,-}$ lies below $\Gamma_{\sbf}$.
Moreover, if $Q'$ is a child of $Q$, also belonging to
$\sbf$, then $\W_Q^{*,+}$ (resp. $\W_Q^{*,-}$) belongs to the same connected
component of
$\om_E$ as does $\W_{Q'}^{*,+}$ (resp. $\W_{Q'}^{*,-}$)
and  $\W_{Q'}^{*,+}\cap \W_{Q}^{*,+}\neq \emptyset$ (resp.,
$\W_{Q'}^{*,-}\cap\W_{Q}^{*,-}\neq \emptyset$). 

\smallskip
\item There are uniform constants $c$ and $C$ such that
\begin{equation}\label{eq2.whitney2}
\begin{array}{c}
c\eta^{1/2} \ell(Q)\leq \ell(I) \leq CK^{1/2}\ell(Q)\,, \quad \forall I\in \mathcal{W}^*_Q,
\\[5pt]
\dist(I,Q)\leq CK^{1/2} \ell(Q)\,,\quad\forall I\in \mathcal{W}^*_Q,
\\[5pt]
c\eta^{1/2} \ell(Q)\leq\dist(I^*(\tau),\Gamma_{\sbf})\,,\quad \forall I\in \mathcal{W}^*_Q\,,\quad \forall 
\tau\in (0,\tau_0]\,.
\end{array}
\end{equation}
\end{enumerate}

Moreover, given $\tau\in(0,\tau_0]$, set
\begin{equation}\label{eq3.3aa}
U^\pm_Q=U^\pm_{Q,\tau}:= \bigcup_{I\in \W^{*,\pm}_Q} {\rm int}\left(I^*(\tau)\right)\,,\qquad U_Q:= U_Q^+\cup U_Q^-\,,
\end{equation}
and given $\sbf'$, a semi-coherent subtree of $\sbf$, define 
\begin{equation}\label{eq3.2}
\Omega_{\sbf'}^\pm = \Omega_{\sbf'}^\pm(\tau) := \bigcup_{Q\in\sbf'} U_Q^{\pm}\,.
\end{equation}
Then 
each of $\Omega^\pm_{\sbf'}$ is a CAD, with Chord-arc constants
depending only on $n,\tau,\eta, K$, and the
ADR/UR constants for $\pom$ (see Figure \ref{figure:domains}).  
\end{lemma}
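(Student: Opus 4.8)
The plan is to verify directly the defining properties of a Chord-arc domain for $\Omega:=\Omega^+_{\sbf'}$ — the case of $\Omega^-_{\sbf'}$ being identical after reflecting across the Lipschitz graph $\Gamma_{\sbf}$. So one must check that $\Omega$ is open, connected, and bounded (with $\diam\Omega\lesssim_{\eta,K}\ell(Q(\sbf'))$, which is immediate from the distance bounds in \eqref{eq2.whitney2}), that $\Omega$ satisfies the interior and exterior corkscrew conditions and the Harnack chain condition, and that $\partial\Omega$ is $n$-ADR, all with constants controlled by $n,\tau,\eta,K$ and the ADR/UR character of $\pom$. The structural facts to lean on are: that $\sbf'$ is semi-coherent, so for each $Q\in\sbf'$ its chain of dyadic ancestors up to $Q(\sbf')$ remains in $\sbf'\subset\sbf$; the overlap property from part (1) of the lemma, $\W^{*,+}_{Q'}\cap\W^{*,+}_Q\ne\emptyset$ whenever $Q'\lhd Q$ with $Q,Q'\in\sbf$ (and, built into the augmentation of \cite{HMM}, that each $U_Q^+$ is connected and is a union of at most $C_{\eta,K}$ fattened Whitney cubes); the size/distance estimates \eqref{eq2.whitney2}; and that $\Gamma_{\sbf}$ has Lipschitz constant $\le\eta\ll1$ while each $U_Q^+$ sits strictly on its positive side at height $\gtrsim\eta^{1/2}\ell(Q)$.

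Openness is clear, and connectedness I would get from the tree structure: any $X\in\Omega$ lies in some ${\rm int}(I^*(\tau))$ with $I\in\W^{*,+}_Q$, $Q\in\sbf'$; walking up the ancestor chain $Q=Q_0\subset Q_1\subset\cdots\subset Q_m=Q(\sbf')$ and using at each step that $U^+_{Q_j}$ and $U^+_{Q_{j+1}}$ overlap connects $X$ inside $\Omega$ to a fixed point of $U^+_{Q(\sbf')}$. For the Harnack chain condition, the key observation is that the center $x_I$ of any Whitney cube $I$ occurring in the construction is automatically a corkscrew point at scale $\ell(I)$: since ${\rm int}(I^*(\tau))\subset\Omega$ and $I^*(\tau)=(1+\tau)I$ is concentric with $I$, one has $\dist(x_I,\partial\Omega)\ge\dist(x_I,\partial I^*(\tau))=(1+\tau)\ell(I)/2\ge\ell(I)/2$. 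These centers form a backbone; given $X\in{\rm int}(I^*(\tau))$ and $Y\in{\rm int}(J^*(\tau))$ with associated cubes $Q_X,Q_Y\in\sbf'$, I would route a Harnack chain through the dyadic path $Q_X\leadsto\cdots\leadsto Q(\sbf')\leadsto\cdots\leadsto Q_Y$ (or through a lower common ancestor of $Q_X,Q_Y$, whichever is shorter), spending $O_{\eta,K}(1)$ balls per Whitney cube and per overlap, the number of generations crossed being $\lesssim\log_2\big(|X-Y|/\min(\delta_{\Omega}(X),\delta_{\Omega}(Y))\big)+O_{\eta,K}(1)$, which is of the required form. The interior corkscrew condition comes out the same way: at $x\in\partial\Omega$ and scale $r<\diam\Omega$, locate via the Whitney structure a Whitney cube $I$ of the collection with $\ell(I)\approx r$ and $\dist(x,I)\lesssim r$, and take $x_I$.

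The hard part will be the exterior corkscrew condition and the lower $n$-ADR bound on $\partial\Omega$; both reduce to showing that at \emph{every} boundary location and scale the region $\Omega$ leaves a gap of comparable size, i.e.\ that the fattened Whitney cubes of the collection do not fill in a neighborhood of $\partial\Omega$ or of $\Gamma_{\sbf}$. For the exterior corkscrew at $x\in\partial\Omega$, $0<r<\diam\Omega$: when $r\lesssim\ell(I)$ for the Whitney cube $I$ realizing $x$, the smallness of $\tau\le\tau_0$ forces the fattened cubes to overlap only near shared faces while $\Omega$ stays a definite distance from $\Gamma_{\sbf}$ and from $\pom$, so $B(x,r)$ contains a ball of radius $\gtrsim\tau r$ disjoint from $\overline\Omega$ — lying in a seam between Whitney cubes, just below $\Gamma_{\sbf}$, or in a Whitney cube of $\ree\setminus\pom$ outside the collection; when $r\gtrsim\ell(Q(\sbf'))$ one uses boundedness of $\Omega$; and at intermediate scales one ascends the tree and invokes the Carleson packing of the maximal cubes $Q(\sbf)$ and of $\B$ from Lemma \ref{lemma2.1}, together with the coherence of $\sbf'$, to see that $B(x,Cr)$ either meets a stopped region (a sub-ball missing every $U^+_{Q''}$) of radius $\approx r$, or lies within $\approx r$ of $\Gamma_{\sbf}$, whose negative side then supplies the exterior ball. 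For the ADR property, the inclusion $\partial\Omega\subset\bigcup_{Q\in\sbf'}\bigcup_{I\in\W^{*,+}_Q}\partial I^*(\tau)$, the bounded cardinality of $\W^*_Q$, the comparability $\ell(I)\approx\ell(Q)$, the Carleson packing of Lemma \ref{lemma2.1}, and the $n$-ADR property of $\pom$ give the upper bound $H^n(B(x,r)\cap\partial\Omega)\lesssim r^n$; for the lower bound one exhibits at each $x$ and $r$ a Whitney-cube face of size $\approx r$ contributing a flat piece of $\partial\Omega$ inside $B(x,Cr)$, again using the absence of abrupt scale changes among the $U^+_Q$ and the lower estimate $\dist(I^*(\tau),\Gamma_{\sbf})\gtrsim\eta^{1/2}\ell(Q)$ from \eqref{eq2.whitney2}, which keeps that face exposed rather than buried in $\Omega$.

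In short, everything except the ``gap at every scale'' statements is a routine consequence of properties (1)--(2) and standard Whitney-cube bookkeeping; the delicate point is to rule out $\Omega$, or its collection of fattened cubes, filling in near $\partial\Omega$ or near $\Gamma_{\sbf}$, and this is precisely where coherence of $\sbf'$ (no missing ancestors, hence no abrupt scale jumps among the $U^+_Q$) must be combined with the Carleson packing of Lemma \ref{lemma2.1} and the smallness of the Lipschitz constant of $\Gamma_{\sbf}$. Since none of these estimates ever refers to $\sbf'$ or to $\Omega$ beyond the UR character of $\pom$ and the fixed parameters $\tau,\eta,K$, the Chord-arc constants so obtained are uniform, as claimed; the complete argument is carried out in \cite{HMM}.
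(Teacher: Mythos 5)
The paper itself offers no proof of this lemma: it explicitly \emph{recalls} the construction of \cite[Section 3, Lemma 3.24]{HMM} and merely summarizes the result. Your sketch therefore supplies more than the paper does, and its overall outline --- direct verification of the CAD axioms for the fattened-Whitney-cube sawtooth, using semi-coherence of $\sbf'$ for connectedness, the child/parent overlap property of item (1) for Harnack chains, the size/distance bounds \eqref{eq2.whitney2} for diameter, interior corkscrews and the upper ADR bound, and isolating the exterior corkscrew and lower ADR estimates as the delicate points --- does match the strategy actually carried out in \cite{HMM}, to which you (like the paper) ultimately defer.

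Two imprecisions are worth flagging, though neither is fatal given that you explicitly send the reader to \cite{HMM} for the full argument. First, the phrase ``seam between Whitney cubes'' is misleading: where adjacent Whitney cubes $I,J$ share a face, the dilates $I^*(\tau)$ and $J^*(\tau)$ \emph{overlap}, so that seam lies in the \emph{interior} of $\Omega^+_{\sbf'}$, not outside it. The small-scale exterior corkscrew instead comes from a neighboring Whitney cube $J$ of $\ree\setminus E$ that does \emph{not} belong to $\bigcup_{Q\in\sbf'}\W^{*,+}_Q$; since $\tau\le\tau_0$ is small, a definite fraction of $J$ escapes the neighboring dilates and is exterior to $\overline{\Omega^+_{\sbf'}}$. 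Second, the Carleson packing from Lemma~\ref{lemma2.1}(2) is a measure-theoretic estimate and does not by itself locate a gap inside a given ball $B(x,Cr)$; what supplies exterior corkscrews at the intermediate and large scales is rather the bilateral flatness \eqref{eq2.2a} together with the small Lipschitz constant of $\Gamma_\sbf$, which confine $\partial\Omega^+_{\sbf'}$ near $\Gamma_\sbf$ (or near $E$) so that the region just below $\Gamma_\sbf$ within $B_Q^*$ furnishes the exterior ball. With these corrections, your plan is essentially the construction of \cite{HMM}.
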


\begin{figure}[!ht]
	\begin{tikzpicture}[scale=.45]

\newcommand{\mysetE}{
	{ plot [smooth]  coordinates { (-12, -2) (-10, 1.2) (-8,- 1.1) (-5.8, 0.1)  (-5, 0) (-4.4, -0.05)  (-4, 0.1) (-3, 0.35) (-2, 0.3) (-1, 0.1) (0, -0.25) (.4, 0.85) 
			(.8, 0.2) (1.7, 0.5)
			(2,0.54) (3, 0.7) (4,0.6)  (4.3,0.7) (4.5,0.4) (5,0.8) (5.9,-0.1) (7.4, 1.5)
			(7.8, -1.4)
			(9, 0.3)
			(9.5, -0.2) 
			(10, 0.3)  
			(11, -0.7)
			(12, 1) 
			(13,-2)
			(14.5,1.5)}}}

\begin{scope}[yshift=-7, xshift=2]

\path[clip,yshift=7, xshift=-2] (-13,12)--(-13,-5)-- (-12,-2) \mysetE -- (15,3)--(15,12)--(-13,12);

\draw [clip] 
(-14,12)--
(-14,-5)--
(-1.53125,-5)--
(-1.53125,0.46875)
--++(.03125,0) --++(0,.03125)
--++(.0625,0)--++(0,.0625)--++(.0625,0)--++(0,.0625)
--++(.125,0)--++(0,.125)
--++(.25,0)--++(0,.25)
--++(.5,0)--++(0,.5)--++(.5,0)--++(0,.5) --++(.5,0)--++(0,-.5)--++(.5,0)--++(0,-.25)--++(.25,0)--++(0,-.25)--++(.25,0)--++(0,-.125)--++(.125,0)--++(0,-.0625)--++(.0625,0)--++(0,-.03125)--++(.03125,0)--++(0,-.03125)
--++(0,-6)--++(14,0)--++(0,17);

\draw[blue, fill=blue!10] 
(3.65625,-5)--(3.65625,.875)--++(0,+.03125)
--++(+.03125,0)--++(0,.03125)
--++(.0625,0)--++(0,.0625)--++(.0625,0)--++(0,.0625)--++(.0625,0)--++(0,.0625)
--++(.125,0)--++(0,.125)
--++(.25,0)--++(0,.25)
--++(.5,0)--++(0,.5)--++(.5,0)--++(0,.5)--++(.5,0)--++(0,.5)--++(1,0)--++(0,1)
--++(1,0)--++(0,-1)--++(0.5,0)--++(0,-0.5)	--++(0.5,0)--++(0,-0.5)	--++(0.25,0)--++(0,-0.25) --++(0.25,0)--++(0,-0.25)
--++(0.25,0)--++(0,0.25) --++(0.25,0)--++(0,0.25)--++(0.25,0)--++(0,0.25)--++(0.5,0)--++(0,0.5)--++(0.5,0)--++(0,0.5)--++(0.5,0)--++(0,0.5)				 				
--++(1,0)--++(0,1)
--++(2,0)--++(0,4)--++(-8,0)--++(0,2)--++(-8,0)--++(0,-2)--++(-2,0)--
(-12.75,8.75)--++(0,-2)--++(2,0)--++(0,-2)--++(2,0)--++(0,-1)--++(1,0)--++(0,-1)--++(1,0)--++(0,-1)--++(1,0)--++(0,-.5)--++(.5,0)--++(0,-.5)--++(.5,0)
--++(0,-.25)--++(.25,0)--++(0,-.125)--++(.125,0)--++
(0,-.0625)--++(.0625,0)--++(0,-.03125)--++(.03125,0)--++(0,-5);

\draw [blue] 
(-14,12)--
(-14,-5)--
(-1.53125,-5)--
(-1.53125,0.46875)
--++(.03125,0) --++(0,.03125)
--++(.0625,0)--++(0,.0625)--++(.0625,0)--++(0,.0625)
--++(.125,0)--++(0,.125)
--++(.25,0)--++(0,.25)
--++(.5,0)--++(0,.5)--++(.5,0)--++(0,.5) --++(.5,0)--++(0,-.5)--++(.5,0)--++(0,-.25)--++(.25,0)--++(0,-.25)--++(.25,0)--++(0,-.125)--++(.125,0)--++(0,-.0625)--++(.0625,0)--++(0,-.03125)--++(.03125,0)--++(0,-.03125)
--++(0,-6)--++(14,0)--++(0,19);

\node[blue] at (2,5) {$\Omega^+_{{\mathsf T}'}$};

\end{scope}

\begin{scope}[yshift=-7, xshift=2]

\path[clip, yshift=7, xshift=-2] (-13,-12)--(-13,-5)-- (-12,-2)  \mysetE -- (15.5,-12)--(-13,-12);

\draw[clip] 
(-14,4)--(-1.53125,4)--
(-1.53125,0.46875)--++(0,-.03125)
--++(.03125,0)--++(0,-.03125)--++(.03125,0)--++(0,-.03125)--++(.03125,0)--++(0,-.03125)
--++(.0625,0)--++(0,-.0625)--++(.0625,0)--++(0,-.0625)
--++(.125,0)--++(0,-.125)
--++(.25,0)--++(0,-.25)--++(.25,0)--++(0,-.25)--++(.25,0)--++(0,-.25)--++(.25,0)--++(0,-.25)--++(0.25,0)
--++(0,0.5)--++(0.5,0)--++(0,0.25)--++(0.25,0)--++(0,0.25)--++(0.25,0)--++(0,0.125)--++(0.125,0)--++(0,0.125)--++(0.125,0)--++(0,0.125)--++(0.125,0)--++(0,0.0625)--++(0.0625,0)--++(0,0.0625)--++(0.0625,0)--++(0,0.0625)--++(0.0625,0)--++(0,0.0625)--++(0.0625,0)--++(0,0.03125)--++(0.03125,0)--++(-.03125,0)--++(0,-.03125)--++(0,4)--++(15,0)
--++(0,-20)--++(-30,0);

\draw[red, fill=red!10] 
(-4.28125,2)--
(-4.28125,0.28125)--++(0,-.03125)	
--++(-.03125,0)--++(0,-.03125)--++(-.03125,0)--++(0,-.03125)--++(-.03125,0)--++(0,-.03125)
--++(-.0625,0)--++(0,-.0625)--++(-.0625,0)--++(0,-.0625)
--++(-.125,0)--++(0,-.125)--++(-.125,0)--++(0,-.125)--++(-.125,0)--++(0,-.125)
--++(-.25,0)--++(0,-.25)--++(-.25,0)--++(0,-.25)--++(-.25,0)--++(0,-.25)
--++(-.5,0)--++(0,-.5)--++(-.5,0)--++(0,-.5)
--++(-1,0)--++(0,-1)--++(-1,0)--++(0,-1)
--++(-2,0)--++(0,-2)--++(-2,0)--++(0,-2)
--++(5.24625,0)
--++(0,-2)
--++(14,0)
--++(0,2)
--++(8,0)
--++(0,4)--++(-4,0)
--++(0,1)--++(-2,0)--++(0,1)--++(-2,0)
--++(0,1)--++(-1,0)
--++(0,.5)--++(-.5,0)--++(0,.5)--++(-.5,0)
--++(0,.25)--++(-.25,0)--++(0,.25)--++(-.25,0)
--++(0,.125)--++(-.125,0)--++(0,.125)--++(-.125,0)
--++(0,.0625)--++(-.0625,0)--++(0,.0625)--++(-.0625,0)
--++(0,.03125)--++(-.03125,0)--++(0,.03125)--++(-.03125,0)--++(0,.03125)--++(-.03125,0)
--++(0,2)
;

\draw[red] 
(-1.53125,0.46875)--++(0,-.03125)
--++(.03125,0)--++(0,-.03125)--++(.03125,0)--++(0,-.03125)--++(.03125,0)--++(0,-.03125)
--++(.0625,0)--++(0,-.0625)--++(.0625,0)--++(0,-.0625)
--++(.125,0)--++(0,-.125)
--++(.25,0)--++(0,-.25)--++(.25,0)--++(0,-.25)--++(.25,0)--++(0,-.25)--++(.25,0)--++(0,-.25)--++(0.25,0)
--++(0,0.5)--++(0.5,0)--++(0,0.25)--++(0.25,0)--++(0,0.25)--++(0.25,0)--++(0,0.125)--++(0.125,0)--++(0,0.125)--++(0.125,0)--++(0,0.125)--++(0.125,0)--++(0,0.0625)--++(0.0625,0)--++(0,0.0625)--++(0.0625,0)--++(0,0.0625)--++(0.0625,0)--++(0,0.0625)--++(0.0625,0)--++(0,0.03125)--++(0.03125,0)--++(-.03125,0)--++(0,-.03125);

\node[red] at (-1,-5) {$\Omega^-_{{\mathsf T}'}$};

\end{scope}

\begin{scope}

\draw[violet!70, dashed, line width=0.3mm] plot [smooth]  coordinates {(-12, -0.2) (-8, -0.3) (-6,-0.2)  (-4.5,-0.08) (-4,0.1)};
\draw[violet!70, dashed, line width=0.3mm] plot [smooth]  coordinates {(-4,0.1) (-3, 0.35) (-2, 0.3)};
\draw[violet!70, dashed, line width=0.3mm] plot [smooth]  coordinates {(-2,0.3) (-1, 0.15) (0, 0.3)  (0.5, 0.4) (1, 0.5)};
\draw[violet!70, dashed, line width=0.3mm] plot [smooth]  coordinates {(1, 0.5) (2,0.54) (3, 0.7) (4,0.6)};
\draw[violet!70, dashed, line width=0.3mm] plot [smooth]  coordinates {(4,0.6) (4.9,0.4) (5.4,0.3) (5.9, 0.2) (7, 0.1) (12, 0)  (15,-0.1)};
\node[below, violet!70] at (15,-.3) {$\Gamma_{{\mathsf T}}$};

\node at (-11.2,-1.5) {$E$}; 
\draw \mysetE;

\end{scope}

\end{tikzpicture}	
\caption{The domains $\Omega_{\sbf'}^\pm$.}\label{figure:domains}
\end{figure}


\begin{remark}\label{remark2.12} In particular, for each $\sbf\subset \G$,
if $Q'$ and $Q$ belong to $\sbf$, and 
if $Q'$ is a dyadic child of $Q$, then $U_{Q'}^+\cup U_{Q}^+$ is Harnack Chain connected,
and every pair of points $x,y\in U_{Q'}^+\cup U_Q^+$ may be connected by a Harnack Chain 
 in $\Omega_E$ 
of length at most $C= C(n,\tau,\eta,K,\textup{ADR/UR})$.  The same is true for  
$U_{Q'}^-\cup U_{Q}^-$.
\end{remark}

\begin{remark}\label{remark2.13} Let $0<\tau\leq \tau_0/2$.
Given any $\sbf\subset \G$, and any semi-coherent subtree
$\sbf'\subset \sbf$, define $\om_{\sbf'}^\pm=\om_{\sbf'}^\pm(\tau)$ as in \eqref{eq3.2},
and similarly set $\widehat{\om}_{\sbf'}^\pm=\om_{\sbf'}^\pm(2\tau)$.  Then by construction, for
any $x\in \overline{\om_{\sbf'}^\pm}$, 
$$ 
\dist(x,E) \approx \dist(x, \partial \widehat{\om}_{\sbf'}^\pm)\,,$$
where of course the implicit constants depend on $\tau$.
\end{remark}

As in \cite{HMM}, it will be useful for us to extend the definition of the Whitney region $U_Q$ to the case that
$Q\in\B$, the ``bad" collection of Lemma \ref{lemma2.1}.   Let $\W_Q^*$ be the augmentation of $\W_Q^0$
as constructed in Lemma \ref{lemma2.7}, and set
\begin{equation}\label{Wdef}
\W_Q:=\left\{
\begin{array}{l}
\W_Q^*\,,
\,\,Q\in\G,
\\[6pt]
\W_Q^0\,,
\,\,Q\in\B
\end{array}
\right.\,.
\end{equation}
For $Q \in\G$ we shall henceforth simply write $\W_Q^\pm$ in place of $\W_Q^{*,\pm}$.
For arbitrary $Q\in \dd(E)$, good or bad, we may then define
\begin{equation}\label{eq3.3bb}
U_Q=U_{Q,\tau}:= \bigcup_{I\in \W_Q} {\rm int}\left(I^*(\tau)\right)\,.
\end{equation}
Let us note that for $Q\in\G$, the latter definition agrees with that in \eqref{eq3.3aa}. Note that by construction
\begin{equation}\label{dist:UQ-pom}
U_Q\subset\{y\in\Omega_E:\ \dist(y,E)> c\eta^{1/2}\ell(Q)\}\cap B(x_Q, CK^{1/2}\ell(Q)),
\end{equation}
for some uniform constants $C\ge 1$ and $0<c<1$ (see \eqref{Whintey-4I}, \eqref{eq3.1}, and \eqref{eq2.whitney2}).
In particular, for every $Q\in\dd$ if follows that
\begin{equation}\label{def:BQ*}
\bigcup_{Q'\in\dd(Q)} U_{Q'}
\subset B(x_Q,K\ell(Q))=:B_Q^*\,.
\end{equation}
where we recall that $\dd(Q)$ is defined in \eqref{eq3.4a}. 


For future reference, we introduce dyadic sawtooth regions as follows. 
Given a family $\mathcal{F}$ of disjoint cubes $\{Q_j\}\subset \dd$, we define
the {\it global discretized sawtooth} relative to $\F$ by
\begin{equation}\label{eq2.discretesawtooth1}
\dd_{\F}:=\dd\setminus \bigcup_{Q_j\in \F} \dd(Q_j)\,,
\end{equation}
i.e., $\dd_{\F}$ is the collection of all $Q\in\dd$ that are not contained in any $Q_j\in\F$.
 We may allow $\F$ to be empty, in which case $\dd_\F=\dd$. 
Given some fixed cube $Q$, we also define 
the {\it local discretized sawtooth} relative to $\F$ by
\begin{equation}\label{eq2.discretesawtooth2}
\dd_{\F}(Q):=\dd(Q)\setminus \bigcup_{Q_j\in \F} \dd(Q_j)=\dd_\F\cap\dd(Q).
\end{equation}
Note that with this convention, $\dd(Q)=\dd_{\textup{\O}}(Q)$ (i.e., if one takes $\F=\emptyset$
in \eqref{eq2.discretesawtooth2}).

\section{Step 1: the set-up}\label{s3}

In the proof of  Theorem \ref{t1}, we shall employ a two-parameter
induction argument, which is a refinement of the
method of ``extrapolation" of Carleson measures.   The latter is a bootstrapping scheme for
lifting the Carleson measure constant, developed by J. L. Lewis \cite{LM}, and based on
the corona construction of Carleson \cite{Car} and Carleson and Garnett \cite{CG}
(see also \cite{HL}, \cite{AHLT}, \cite{AHMTT}, \cite{HM-TAMS}, \cite{HM-I},\cite{HMM}).

\subsection{Reduction to a dyadic setting}\label{ss3.1}
To set the stage for the induction procedure, let us begin by making a preliminary reduction.
It will be convenient to work with a certain dyadic version of  Definition \ref{def1.ibpcad}.
To this end, 
 let $x\in \om$, with $\delta_{\Omega}(x) < \diam(\pom)$, and set
$\Delta_x=\Delta_x^N=B(x,N\delta_{\Omega}(x))\cap\pom$, for some fixed $N\geq 2$ as in Definition
\ref{def1.johnpoint}.
Let $\hat{x}\in\pom$ be a touching point for $x$, i.e., $|x-\hat{x}|=\delta_{\Omega}(x)$. 
Choose $x_1$ on the line segment joining $x$ to $\hat{x}$, with
$\delta_{\Omega}(x_1) = \delta_{\Omega}(x)/2$, and set 
$\Delta_{x_1}=B(x_1,N\delta_{\Omega}(x)/2)\cap \pom$.  Note that $B(x_1,N\delta_{\Omega}(x)/2)\subset
B(x,N\delta_{\Omega}(x))$, and furthermore,
$$\dist\Big(B(x_1,N\delta_{\Omega}(x)/2), \partial B(x,N\delta_{\Omega}(x)\Big) > \frac{N-1}{2}\delta_{\Omega}(x) \geq \frac12\delta_{\Omega}(x).$$
We may therefore 
cover $\Delta_{x_1}$ by a disjoint collection
$\{Q_i\}_{i=1}^M\subset\dd(\pom)$, of equal length $\ell(Q_i)\approx \delta_{\Omega}(x)$, 
such that each $Q_i\subset \Delta_x$, and such that
the implicit constants depend only on $n$ and ADR, and thus 
the cardinality $M$ 
of the collection depends on $n$, ADR, and $N$. 
With $E=\pom$, we make the Whitney decomposition of the set
$\om_E =\ree\setminus E$ as in Section \ref{s2} (thus, $\om\subset\om_E$). Moreover, for
sufficiently small $\eta$ and sufficiently large $K$ in \eqref{eq3.1},
we then have that $x\in U_{Q_i}$ for each $i=1,2,\dots,M$.  By hypothesis, there are constants
$\theta_0\in (0,1],\lambda_0\in (0,1)$, and $N\geq 2$ as above, such that every $z\in\om$ is a
$(\theta_0,\lambda_0,N)$-weak local John point (Definition \ref{def1.johnpoint}).  
In particular, this is true for
$x_1$, hence there is a Borel set $F\subset \Delta_{x_1}$, with
$\sigma(F) \geq \theta_0 \sigma(\Delta_{x_1})$, such that every
$y\in F$ may be connected to $x_1$ via a $\lambda_0$-carrot path.
By $n$-ADR, $\sigma(\Delta_{x_1})\approx \sum_{i=1}^M\sigma(Q_i)$
and thus by pigeon-holing, there is at least one $Q_i=:Q$ such that
$\sigma(F\cap Q) \geq \theta_1\sigma(Q)$, with $\theta_1$ depending only on
$\theta_0$, $n$ and ADR.  Moreover, the $\lambda_0$-carrot path connecting
each $y\in F$ to $x_1$ may be extended to a $\lambda_1$-carrot path
connecting $y$ to $x$, where $\lambda_1$ depends only on $\lambda_0$.

We have thus reduced matters to the following dyadic scenario:
let $Q\in\dd(\pom)$,  let $U_Q=U_{Q,\tau}$ be the associated Whitney region as in  \eqref{eq3.3bb},
with $\tau \leq \tau_0/2$ fixed, and suppose that $U_Q$ meets $\om$
(recall that by construction $U_Q\subset \om_E=\ree\setminus E$, with $E=\pom$).
For $x\in U_Q\cap\om$, and for a constant $\lambda \in (0,1)$, let
\begin{equation}\label{eqdefFcarQ}
F_{car}(x,Q)= F_{car}(x,Q,\lambda) 
\end{equation}
denote the set of $y\in Q$ which may be joined to $x$ by a $\lambda$-carrot path
$\gamma(y,x)$, and for $\theta \in (0,1]$, set
\begin{equation}\label{eqdefTQ}
T_Q=T_Q(\theta,\lambda):= \left\{ x\in U_Q\cap\om: \, \sigma\big(F_{car}(x,Q,\lambda)\big)\geq\theta\sigma(Q)\right\}.
\end{equation}
\begin{remark}\label{remark3.5}
Our goal is to prove that, given $\lambda \in (0,1)$ and $\theta\in (0,1]$,
there are
positive constants $\eta$ and $C$, depending on $\theta,\lambda$, and the 
allowable parameters, such that
for each  $Q\in\dd(\pom)$, and for each $ x\in T_Q(\theta,\lambda)$, 
there is a Chord-arc domain 
$\Omega_x$, with uniformly controlled Chord-arc constants,
constructed as a union $\cup_kI_k^*$ of fattened Whitney boxes $I_k^*$, 
 such that 
$$U^i_Q\subset\Omega_x\subset \Omega \cap B\big(x,C\delta_{\Omega}(x)\big)\,,$$ 
where $U_Q^i$ is the particular connected component of $U_Q$ containing $x$, and
\begin{equation}\label{eq3.1a}
\sigma(\pom_x\cap Q) \geq \eta \sigma(Q)\,.
\end{equation}   
For some $Q\in \dd(\pom)$, 
it may be that $T_Q$ is empty.  On the other hand,
by the preceding discussion, each $x\in \om$ belongs to $T_Q(\theta_1,\lambda_1)$ for suitable
$Q,\theta_1$ and $\lambda_1$, so that 
 \eqref{eq3.1a} (with $\theta=\theta_1, \lambda=\lambda_1$) implies 
\begin{equation*}
\sigma(\pom_x\cap \Delta_x) \geq \eta_1 \sigma(\Delta_x)\,,
\end{equation*}
with $\eta_1\approx \eta$, where $Q$ is the particular $Q_i$ selected 
in the previous paragraph.  Moreover, since $x\in T_Q\subset U_Q$, we can modify $\Omega_x$
if necessary, by adjoining to it one or more fattened Whitney boxes $I^*$ with $\ell(I) \approx \ell(Q)$, to ensure that
for the modified $\om_x$, it holds in addition that
$\dist(x,\pom_x) \gtrsim \ell(Q) \approx \delta_{\Omega}(x)$, and therefore
$\om_x$ verifies all the conditions in Definition \ref{def1.ibpcad}.
\end{remark}

The rest of this section is therefore devoted to proving that there exists, for a given $Q$ and for each
$x\in T_Q(\theta,\lambda)$, a Chord-arc domain $\Omega_x$ satisfying the stated properties
(when the set $T_Q(\theta,\lambda)$ is not vacuous).   To this end,
we let $\lambda \in(0,1)$
(by Remark \ref{remark3.5}, any fixed $\lambda \leq \lambda_1$ will suffice).
We also fix  positive numbers 
$K\gg \lambda^{-4}$, and $\eta \leq K^{-4/3}\ll \lambda^4$,
and for these values of $\eta$ and $K$, 
we make
the bilateral Corona decomposition of Lemma \ref{lemma2.1}, so that  $\dd(\pom)=\G\cup\B$. 
We also construct the Whitney collections $\W^0_Q$ in \eqref{eq3.1}, and $\W_Q^*$ of Lemma \ref{lemma2.7}
for this same choice of $\eta$ and $K$.


Given a cube $Q\in \dd(\pom)$, we set
\begin{equation}\label{eq5.8aa}
\dd_*(Q):=\left\{Q'\subset Q:\, \ell(Q)/4\leq \ell(Q')\leq \ell(Q)
\right\}\,.
\end{equation}
Thus, $\dd_*(Q)$ consists of the cube $Q$ itself, along with
its dyadic children and grandchildren.
Let
$$\M:=\{Q(\sbf)\}_{\sbf\subset\mathcal{G}}$$ denote the collection of   
cubes which are the maximal elements of the  trees $\sbf$ in $\G$.
We define
\begin{equation}\label{eq4.0}
\alpha_Q:= 
\begin{cases} \sigma(Q)\,,&{\rm if}\,\,  (\M\cup\B)\cap \dd_*(Q)\neq \emptyset, \\
0\,,& {\rm otherwise}.\end{cases}
\end{equation}
Given  any collection $\dd'\subset\dd(\pom)$, we set
\begin{equation}\label{eq4.1}
\mut(\dd'):= \sum_{Q\in\dd'}\alpha_{Q}.
\end{equation}
Then $\mut$ is a discrete Carleson measure, i.e.,
recalling that $\dd(R)$ is the discrete Carleson region
relative to $R$
defined in \eqref{eq3.4a}, we claim that there is a uniform constant $C$ such that
\begin{equation}\label{eq6.2}
\mut(\dd(R))\, =\sum_{Q\subset R}\alpha_{Q} \leq\, C\sigma(R)\,,\qquad \forall\,R\in \dd(\pom)\,.
\end{equation}
Indeed,  note that for any 
$Q'\in\dd$, there are at most 3 cubes $Q$ such that 
$Q'\in \dd_*(Q)$ (namely, $Q'$ itself, its dyadic parent, and its dyadic grandparent),
and that by $n$-ADR,
$\sigma(Q)\approx \sigma(Q')$, if $Q'\in\dd_*(Q)$.   Thus,
given any
$R\in\dd(\pom)$,
\begin{multline*}
\mut(\dd(R))\, =\sum_{Q\subset R}\alpha_Q\,
\leq \sum_{Q'\in \M\cup\B}\,\sum_{Q\subset R:\, Q'\in \dd_*(Q)}\sigma(Q) \\[4pt]
\lesssim\sum_{Q'\in \M\cup\B:\, Q' \subset R}\sigma(Q')\,\leq\,C \sigma(R)\,,
\end{multline*}
by Lemma \ref{lemma2.1} part (2).  Here, and throughout the remainder of this section, a
generic constant $C$, and implicit constants, are allowed to depend upon the choice of the parameters
$\eta$ and $K$ that we have fixed, along with the usual allowable parameters.

With \eqref{eq6.2} in hand, we therefore have
\begin{equation}\label{eq4.7a}
M_0:= 
\sup_{Q\in\dd(\Omega)}\frac{\mut(\dd(Q))}{\sigma(Q)}\leq C<\infty\,.
\end{equation}

\subsection{Induction Hypothesis and Outline of Proof} \label{ss-indhyp}
As mentioned above, our proof will be based on a 
two parameter induction scheme.  Given $\lambda\in (0,\lambda_1]$ fixed as above, we 
recall that the set $F_{car}(x,Q,\lambda)$
is defined in \eqref{eqdefFcarQ}.   The induction hypothesis, which we formulate for any $a\geq 0$, and any $\theta\in (0,1]$ is as follows:
\\[.3cm]
\null\hskip.1cm \fbox{\rule[8pt]{0pt}{0pt}$H[a,\theta]$}\hskip4pt \fbox{\ \parbox[c]{.75\textwidth}{%
%
\rule[10pt]{0pt}{0pt}\it  
There is a positive constant $c_{a}=c_a(\theta)<1$ 
such that for any given $Q\in\dd(\pom)$,  if
\begin{equation}\label{eq3.12}
\!\!\!\!\!\!\!\!\!\!\!\!\!\!\!\!\!\!\!\!\!\!\!\!\!\!\!\!\!\!\!\!\!\!\!\!\!\!\mut(\dd(Q))\le \, a\sigma(Q),
\end{equation} 
and if there is a subset $V_Q\subset U_Q\cap\om$ for which 
\begin{equation}\label{eq3.10}
\!\!\!\!\!\!\!\!\!\!\!\!\!\!\!\!\!\!\!\!\!\!\!\!\!\!\!\!\!\!\!\!\!\!\!\!\!
\sigma\left(\bigcup_{x\in V_Q}F_{car}(x,Q,\lambda) \right) \, \geq\, \theta \sigma(Q)\,,
\end{equation}
then there is a subset $V^*_Q\subset V_Q$, 
such that for each connected component $U_Q^i$ of $U_Q$ 
which meets $V^*_Q$, there is a  Chord-arc domain $\Omega_Q^i$ 
which is the interior of the union of a collection of fattened Whitney cubes $I^*$, and whose Chord-arc 
constants depend only on dimension, $\lambda$, $a$, $\theta$, and the ADR constants for $\Omega$.
Moreover, 
$U_Q^i\subset \Omega^i_Q\subset B_Q^*\cap\Omega=B(x_Q,K\ell(Q))\cap\Omega$,
and
$\sum_i \sigma (\pom^i_{Q}\cap Q)\geq  c_a\sigma(Q)$,
where the sum runs over those $i$ such that $U_Q^i$ meets $V^*_Q$.
}\ }

\bigskip

Let us briefly sketch the strategy of the proof.  We first fix $\theta=1$, and by induction on
$a$, establish $H[M_0,1]$.   We then show that there is a fixed $\zeta\in (0,1)$ such that
$H[M_0,\theta]$ implies $H[M_0,\zeta\theta]$, for every $\theta\in(0,1]$.   Iterating,
we then obtain $H[M_0,\theta_1]$ for any $\theta_1\in (0,1]$.  Now, by \eqref{eq4.7a},
we have \eqref{eq3.12} with $a=M_0$, for every $Q\in\dd(\pom)$.  Thus, $H[M_0,\theta_1]$ may be applied in
every cube $Q$ such that $T_Q(\theta_1,\lambda)$ (see \eqref{eqdefTQ}) is non-empty, with $V_Q=\{x\}$,
for any $x\in T_Q(\theta_1,\lambda)$.  
For $\lambda\leq \lambda_1$, and an appropriate choice
of $\theta_1$, by Remark \ref{remark3.5}, we obtain the existence of a Chord-arc domain $\Omega_x$ 
verifying the conditions of Definition \ref{def1.ibpcad}, and thus that
Theorem \ref{t1} holds, as desired.

\section{Some geometric observations}\label{s-observe}

We begin with some preliminary observations.  In what follows we have fixed 
$\lambda \in(0,\lambda_1]$ and two positive numbers 
$K\gg \lambda^{-4}$, and $\eta \leq K^{-4/3}\ll \lambda^4$, for which the bilateral Corona decomposition of $\dd(\pom)$ in Lemma \ref{lemma2.1} is applied. We now fix $k_0\in\mathbb{N}$, $k_0\ge 4$,  such that
\begin{equation}\label{eq3.15}
2^{-k_0} \,\leq\, \frac{\eta}{K}\, < \,2^{-k_0+1}\,.
\end{equation}

\begin{lemma}\label{lemma3.15}  Let $Q\in\dd(\pom)$, and suppose that $Q'\subset Q$, with
$\ell(Q')\leq 2^{-k_0}\ell(Q)$.  Suppose that there are points $x\in U_Q\cap\om$ and $y\in Q'$, that are connected by a $\lambda$-carrot
path $\gamma=\gamma(y,x)$ in $\om$.   Then $\gamma$ meets $U_{Q'}\cap\Omega$. 
\end{lemma}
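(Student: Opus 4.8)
The plan is to walk along the carrot path $\gamma$, starting from its endpoint $y\in Q'\subset\pom$, until we first reach a point $z_0$ whose distance to $\pom$ is comparable to $\ell(Q')$, and then to show that the Whitney cube containing $z_0$ belongs to $\W^0_{Q'}$, whence $z_0\in U_{Q'}$. To produce $z_0$, parametrize $\gamma=\gamma(t)$, $t\in[0,1]$, with $\gamma(0)=y$ and $\gamma(1)=x$, and consider $f(t):=\delta_\Omega(\gamma(t))$, which is continuous as the composition of the path $\gamma$ with the ($1$-Lipschitz) function $\delta_\Omega$. We have $f(0)=\delta_\Omega(y)=0$ since $y\in\pom$, while $f(1)=\delta_\Omega(x)=\dist(x,E)>c\,\eta^{1/2}\ell(Q)$ because $x\in U_Q$ (see \eqref{dist:UQ-pom}, recalling $E=\pom$). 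Since $\ell(Q')\le 2^{-k_0}\ell(Q)\le (\eta/K)\ell(Q)$ by \eqref{eq3.15}, and since $K\gg1$ and $\eta\ll1$, we get $\ell(Q')<f(1)$; hence by the intermediate value theorem there is $t^*\in(0,1)$ with $\delta_\Omega(z_0)=\ell(Q')$, where $z_0:=\gamma(t^*)$. Note $z_0\ne y$ (as $\delta_\Omega(z_0)>0$), so $z_0\in\gamma\setminus\{y\}\subset\Omega$, and moreover $z_0\notin E=\pom$, so $z_0$ lies in some Whitney cube $I\in\W=\W(\Omega_E)$.

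The next step is to verify $I\in\W^0_{Q'}$ (see \eqref{eq3.1}). Since $z_0\in I$ and $\dist(z_0,E)=\delta_\Omega(z_0)=\ell(Q')$, the Whitney property \eqref{Whintey-4I} gives $4\diam(I)\le\dist(I,E)\le\dist(z_0,E)=\ell(Q')$ and $\ell(Q')=\dist(z_0,E)\le\dist(I,E)+\diam(I)\le 41\diam(I)$, so that $\ell(I)\approx\diam(I)\approx\ell(Q')$ with purely dimensional implicit constants; hence $\eta^{1/4}\ell(Q')\le\ell(I)\le K^{1/2}\ell(Q')$ once $\eta$ is small (dimensionally) and $K\ge1$, as fixed in Section \ref{s3}. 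For the proximity condition, apply the carrot estimate \eqref{eq1.carrot} at $z_0$ together with $y\in Q'$ to obtain
\[
\dist(I,Q')\le\dist(z_0,Q')\le|z_0-y|\le\ell\big(\gamma(y,z_0)\big)\le\frac{\delta_\Omega(z_0)}{\lambda}=\frac{\ell(Q')}{\lambda}\le K^{1/2}\ell(Q')\,,
\]
where the last inequality uses $K\gg\lambda^{-4}$. Therefore $I\in\W^0_{Q'}\subset\W_{Q'}$ (the inclusion $\W^0_{Q'}\subset\W_{Q'}$ holds whether $Q'\in\G$ or $Q'\in\B$, by Lemma \ref{lemma2.7} and \eqref{Wdef}), so $\interior(I^*(\tau))\subset U_{Q'}$ by \eqref{eq3.3bb}; since $I\subset\interior(I^*(\tau))$, we conclude $z_0\in U_{Q'}$. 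Combined with $z_0\in\gamma\cap\Omega$, this shows $\gamma$ meets $U_{Q'}\cap\Omega$.

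This lemma is essentially a matter of bookkeeping scales, so I do not expect a genuine obstacle; the one point that requires care is precisely this bookkeeping. The target height $\ell(Q')$ must be small enough relative to $\delta_\Omega(x)$ that $\gamma$ actually attains it — this is exactly where the hypothesis $\ell(Q')\le 2^{-k_0}\ell(Q)$, together with $x\in U_Q$ and \eqref{eq3.15}, is used — while the resulting Whitney cube $I$ must simultaneously have side length in the admissible window $[\eta^{1/4}\ell(Q'),K^{1/2}\ell(Q')]$ and lie within distance $K^{1/2}\ell(Q')$ of $Q'$; the latter is forced by the carrot condition, which converts the vertical gain $\delta_\Omega(z_0)=\ell(Q')$ into a bound on the arclength from $y$, hence on $|z_0-y|$. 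All these comparisons are compatible with the standing choices $K\gg\lambda^{-4}$ and $\eta\le K^{-4/3}$ fixed earlier.
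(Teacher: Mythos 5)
Your proof is correct and takes essentially the same approach as the paper's. The only cosmetic difference is that you locate the intermediate point by prescribing its distance to $\partial\Omega$ exactly (via the intermediate value theorem applied to $\delta_\Omega\circ\gamma$) and then use the carrot condition to bound $\dist(z_0,Q')$, whereas the paper finds a point where $\gamma$ crosses the spatial annulus $B(y,2\ell(Q'))\setminus B(y,\ell(Q'))$ and derives the distance-to-boundary bounds from the carrot condition and $|z-y|\le 2\ell(Q')$ — both are the same scale bookkeeping and yield membership in $\W^0_{Q'}$.
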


\begin{proof}  
By construction (see \eqref{eq3.1}, Lemma \ref{lemma2.7}, \eqref{Wdef} and \eqref{eq3.3bb}), 
$x\in U_Q$ implies that
$$\eta^{1/2}\ell(Q) \lesssim \delta_{\Omega}(x) \lesssim K^{1/2}\ell(Q)\,.$$  Since $2^{-k_0}\ll\eta$, 
and $\ell(Q')\leq 2^{-k_0}\ell(Q)$, we then have that
$x\in \om\setminus B\big(y,2\ell(Q')\big)$, so $\gamma(y,x)$ meets
$B\big(y,2\ell(Q')\big)\setminus B\big(y,\ell(Q')\big)$, say at a point $z$.  Since
$\gamma(y,x)$ is a $\lambda$-carrot path, and since we have previously
specified that $\eta \ll\lambda^4$,
$$\delta_{\Omega}(z) \geq \lambda \ell\big(\gamma(y,z)\big) \geq \lambda|y-z|\geq\lambda \ell(Q')\gg 
\eta^{1/4}\ell(Q')\,.$$  On the other hand
$$\delta_{\Omega}(z)\leq \dist(z,Q') \leq |z-y| \leq 2\ell(Q') \ll K^{1/2} \ell(Q')\,.$$
In particular then, the Whitney box $I$ containing $z$ must belong to $\W^0_{Q'}$
(see \eqref{eq3.1}), so $z\in U_{Q'}$.  Note that $z\in\Omega$ since $\gamma\subset\Omega$.
\end{proof}

We shall also require the following.   We recall 
that by Lemma \ref{lemma2.7}, for $Q\in \sbf\subset \G$, the Whitney region $U_Q$
has the splitting $U_Q = U_Q^+\cup U_Q^-$, with $U_Q^+$ (resp. $U_Q^-$) lying
above (resp., below) the Lipschitz graph $\Gamma_\sbf$ of Lemma \ref{lemma2.1}.

\begin{lemma}\label{lemma3.37}  Let $Q'\subset Q$, 
and suppose that $Q'$ and $Q$ both belong to $\G$, and moreover
that both $Q'$ and $Q$ belong to the same 
 tree $\sbf\subset \G$.
Suppose that $y\in Q'$ and $x\in U_Q\cap\om$ are connected via a $\lambda$-carrot path
$\gamma(y,x)$ in $\om$,
and assume that there is a point $z\in \gamma(y,x) \cap U_{Q'}\cap\Omega$ 
(by Lemma \ref{lemma3.15} we know that such a $z$ exists provided $\ell(Q')\leq 2^{-k_0} \ell(Q)$).
Then $x\in U_Q^+$ if and only if $z\in U_{Q'}^+$ 
(thus, $x\in U_Q^-$  if and only if $z\in U_{Q'}^-$).
\end{lemma}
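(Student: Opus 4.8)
The plan is to show that the carrot path $\gamma(y,x)$ stays on one side of the Lipschitz graph $\Gamma_\sbf$, at least near its endpoint $z \in U_{Q'}$ and near $x \in U_Q$; since $z$ and $x$ lie in $U_{Q'}$ and $U_Q$ respectively (away from $\Gamma_\sbf$ by the third estimate in \eqref{eq2.whitney2}), the sign of the ``height'' coordinate relative to $\Gamma_\sbf$ is locally constant along $\gamma$ unless $\gamma$ actually crosses $\Gamma_\sbf$. So the key point is to rule out a crossing: I claim $\gamma(y,x)$ cannot meet $\Gamma_\sbf$ except possibly at the single boundary point $y$. Indeed, suppose $w \in \gamma(y,x) \cap \Gamma_\sbf$ with $w \neq y$. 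By the carrot condition \eqref{eq1.carrot}, $\delta_\Omega(w) \geq \lambda\, \ell(\gamma(y,w)) \geq \lambda |w - y|$. On the other hand, since both $Q'$ and $Q$ lie in the tree $\sbf$, the bilateral Corona estimate \eqref{eq2.2a} forces $\Gamma_\sbf$ to be $\eta\ell(Q)$-close to $E = \pom$ on the ball $B_Q^* = B(x_Q, K\ell(Q))$; combined with the fact that $w \in \Gamma_\sbf$ lies within $B_Q^*$ (which follows because $\gamma$ runs between $x \in U_Q \subset B_Q^*$ and $y \in Q' \subset Q$, and because the carrot condition bounds $\ell(\gamma)$, hence $\diam(\gamma)$, by $\lambda^{-1}\delta_\Omega(x) \lesssim \lambda^{-1}K^{1/2}\ell(Q)$ — one should check the geometric constants here so that $\gamma \subset B_Q^*$, enlarging $K$ or the definition of $B_Q^*$ if needed), we get a point $w' \in E$ with $|w - w'| < \eta\ell(Q)$. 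Hence $\delta_\Omega(w) \leq |w - w'| < \eta \ell(Q)$. Comparing the two bounds, $\lambda|w-y| < \eta\ell(Q)$, i.e. $|w - y| < (\eta/\lambda)\ell(Q) \ll \lambda^3 \ell(Q)$, since $\eta \ll \lambda^4$.

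This localizes any potential crossing point $w$ to a tiny ball around $y$. To finish, I would argue that even within this small ball no crossing can occur: $y \in Q' \subset \pom$, and the segment of $\gamma$ from $y$ out to distance comparable to $\lambda^{-1}\delta_\Omega(z)$ is, by the carrot condition, confined to a thin ``carrot'' region around $y$ which, by the bilateral estimate \eqref{eq2.2a} applied now at the scale of $Q'$ (using $Q' \in \sbf$), sits on a definite side of $\Gamma_\sbf$ — specifically, the side determined by $z \in U_{Q'}^\pm$, because $z$ itself is at distance $\gtrsim \eta^{1/2}\ell(Q')$ from $\Gamma_\sbf$ by \eqref{eq2.whitney2}, while the portion of $\gamma$ between $y$ and $z$ has length $\leq \lambda^{-1}\delta_\Omega(z) \lesssim \lambda^{-1}K^{1/2}\ell(Q')$ and therefore cannot reach around to the other side without $\delta_\Omega$ dropping below the clearance that $\Gamma_\sbf$ maintains from $E$. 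Running this comparison carefully (tracking the powers of $\eta$, $K$, $\lambda$ as in Lemma \ref{lemma3.15}) shows $\gamma(y,x) \setminus \{y\}$ lies entirely in the component of $\Omega_E \setminus \Gamma_\sbf$ containing $z$.

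Once $\gamma(y,x) \setminus \{y\}$ is connected and disjoint from $\Gamma_\sbf$, it lies in a single connected component of $\reu \setminus \Gamma_\sbf$ (equivalently, on a single side of the graph, since $\Gamma_\sbf$ is a Lipschitz graph with small constant $\eta$, so its complement in $\ree$ has exactly two components, ``above'' and ``below''). The point $z \in \gamma \cap U_{Q'}$ determines whether that side is the ``$+$'' side (if $z \in U_{Q'}^+$) or the ``$-$'' side (if $z \in U_{Q'}^-$); and $x \in \gamma \cap U_Q$ lies on the same side, so $x \in U_Q^+$ iff $z \in U_{Q'}^+$, which is exactly the assertion. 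The equivalence with the ``$-$'' cases follows since $U_Q = U_Q^+ \cup U_Q^-$ and $x \notin \Gamma_\sbf$ (again by \eqref{eq2.whitney2}).

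The main obstacle is the book-keeping in the first two paragraphs: one must verify that $\gamma$ genuinely stays inside the enlarged ball $B_Q^*$ where the bilateral approximation \eqref{eq2.2a} is valid, and that the chain of inequalities between $\eta$, $K^{1/2}$, $\lambda$, and the Whitney clearances $\eta^{1/2}\ell(Q)$ all point the right way — this is precisely the kind of parameter juggling already carried out in Lemma \ref{lemma3.15}, and the choices $K \gg \lambda^{-4}$, $\eta \leq K^{-4/3} \ll \lambda^4$ were made exactly to make it work. There is no deep new idea; the content is that a carrot path cannot cross a Lipschitz graph that hugs the boundary, because crossing would force the path to be non-tangentially close to $\pom$ at an interior point far (in the relevant scale-invariant sense) from its boundary endpoint.
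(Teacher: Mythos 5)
Your overall strategy — show the carrot path cannot cross the Lipschitz graph $\Gamma_\sbf$ and then read off the conclusion from connectedness — is the right idea, and your first paragraph correctly localizes any crossing point $w$ to a small ball around $y$. But your second paragraph is where the proof breaks down, and the breakdown is substantive rather than a matter of tracking constants. You try to rule out \emph{all} crossings of $\gamma(y,x)\setminus\{y\}$ with $\Gamma_\sbf$, including on the initial segment between $y$ and $z$. This is both unnecessary and, more importantly, not provable from the given hypotheses: as $w\to y$ along the carrot path, the only lower bound available is $\delta_\Omega(w)\geq\lambda|w-y|$, which goes to zero, while the bilateral estimate \eqref{eq2.2a} can only be applied at scales $\ell(P)$ for which the cube $P$ lies in $\sbf$; since $Q'$ is the bottom of the portion of the tree you control, the finest such bound is $\delta_\Omega(w)\leq\eta\,\ell(Q')$ whenever $w\in B_{Q'}^*$, and the two inequalities are perfectly compatible whenever $|w-y|\lesssim(\eta/\lambda)\ell(Q')$. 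In other words, crossings near $y$ at scales below $\ell(Q')$ simply cannot be excluded — the graph $\Gamma_\sbf$ and the boundary $E$ may be interleaved there, and nothing in the hypotheses prevents a carrot path emanating from $y\in\pom$ from threading through.

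The paper's proof avoids this entirely by considering only the first crossing $y_1\in\Gamma_\sbf$ that the path encounters \emph{after} $z$ on the way to $x$ (which is all that the conclusion requires). This one modification is decisive: since $y_1$ comes after $z$, one has $\ell(\gamma(y,y_1))\geq\ell(\gamma(y,z))\geq|y-z|\geq\dist(z,Q')\gtrsim\eta^{1/2}\ell(Q')$ by \eqref{eq2.whitney2}, hence $\delta_\Omega(y_1)\gtrsim\lambda\eta^{1/2}\ell(Q')$, which is a genuine lower bound that immediately rules out $y_1\in B_{Q'}^*$ (since that would force $\delta_\Omega(y_1)\leq\eta\,\ell(Q')$, incompatible with $\eta\ll\lambda^4$). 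The second missing ingredient is a multi-scale, scale-picking argument: one forms the dyadic chain $Q'=P_0\lhd P_1\lhd\cdots\lhd P_{M+1}=Q$, takes the \emph{smallest} $P=P_{i_0}$ with $y_1\in B_P^*$ (well-defined and $\geq 1$ by the step just described, and with $P\in\sbf$ by coherency), and applies the bilateral estimate at scale $P$ (giving $\delta_\Omega(y_1)\leq\eta\ell(P)$) together with the carrot estimate using the fact that $y_1\notin B_{P'}^*$ for the child $P'=P_{i_0-1}\supset Q'\ni y$ (giving $\delta_\Omega(y_1)\geq\lambda\dist(y_1,P')\gtrsim\lambda K\ell(P)$). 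Both estimates are now at the \emph{same} scale, yielding the clean contradiction $\lambda K\lesssim\eta$. Your version applies the bilateral estimate only at the coarse scale $\ell(Q)$ and the carrot estimate at varying small scales, so the two never meet; the chain argument is what matches the scales and closes the proof.
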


\begin{proof}
We suppose for the sake of contradiction that, e.g., $x\in U_Q^+$, and that $z\in U_{Q'}^-$.
Thus, in traveling from $y$ to $z$ and then to $x$ along the path
$\gamma(y,x)$, one must cross the Lipschitz graph $\Gamma_\sbf$ at least once between
$z$ and $x$.
Let $y_1$ be the first point on $\gamma(y,x)\cap\Gamma_\sbf$ that one 
encounters {\it after} $z$, when traveling toward $x$.
By Lemma \ref{lemma2.7},
\begin{equation*}
K^{1/2}\ell(Q) \gtrsim \delta_{\Omega}(x) \geq \lambda \ell\big(\gamma(y,x)\big)
\gg K^{-1/4} \ell\big(\gamma(y,x)\big)\,,
\end{equation*}
where we recall that we have fixed $K\gg \lambda^{-4}$.
Consequently, $\ell\big(\gamma(y,x)\big) \ll K^{3/4} \ell(Q)$, so in particular,
$\gamma(y,x)\subset B_Q^*:= B\big(x_Q,K\ell(Q)\big)$, as in Lemma \ref{lemma2.1}.
On the other hand, $y_1\notin B^*_{Q'}$.  Indeed,
$y_1\in\Gamma_\sbf$, so if $y_1 \in B^*_{Q'}$, then by
\eqref{eq2.2a},  $\delta_{\Omega}(y_1) \leq \eta \ell(Q')$.  However,
$$\delta_{\Omega}(y_1) \geq \lambda \ell\big(\gamma(y,y_1)\big)
\geq \lambda \ell\big(\gamma(y,z)\big) \geq \lambda |y-z| 
\geq \lambda \dist(z,Q')  \gtrsim \lambda \eta^{1/2} \ell(Q')\,,$$
where in the last step we have used Lemma \ref{lemma2.7}.  This contradicts
our choice of $\eta \ll \lambda^4$.

We now form a chain of consecutive dyadic cubes $\{P_i\}\subset \dd(Q)$, 
connecting $Q'$ to $Q$, i.e.,
$$ 
Q'
=P_0 \lhd P_1 \lhd P_2\lhd \dots \lhd P_M \lhd P_{M+1}=Q\,,$$
where the introduced notation $P_i\lhd P_{i+1}$ means that $P_i$ is the 
dyadic child of $P_{i+1}$, that is,   $P_i\subset P_{i+1}$ and $\ell(P_{i+1})=2\ell(P_i)$.  Let $P:=P_{i_0}$, $1\le i_0\le M+1$, be the smallest of the cubes $P_i$ such that 
$y_1 \in B_{P_i}^*$.    Setting $P':= P_{i_0-1}$, we then have that
$y_1 \in B_P^*$, and $y_1\notin B_{P'}^*$.   By the coherency of $\sbf$, it follows that $P\in \sbf$, so
by \eqref{eq2.2a},
\begin{equation}\label{eq3.38}
\delta_{\Omega}(y_1) \leq \eta\ell(P)\,.
\end{equation}  
On the other hand, 
$$\dist(y_1,P') \gtrsim K\ell(P') \approx K \ell(P)\,,$$ and therefore, since
$y\in Q' \subset P'$,
\begin{equation}\label{eq3.39}
\delta_{\Omega}(y_1)\geq \lambda \ell\big(\gamma(y,y_1)\big)
\geq \lambda|y-y_1| \geq \lambda\dist(y_1,P') \gtrsim
\lambda K\ell(P)\,.
\end{equation}
Combining \eqref{eq3.38} and \eqref{eq3.39}, we see that
$\lambda  \lesssim \eta/K$, which contradicts that we have fixed
$\eta \ll \lambda^4$, and $K\gg \lambda^{-4}$.
\end{proof}

\begin{lemma}\label{lemma:VQ}
Fix $\lambda\in(0,1)$. Given $Q\in\dd(\pom)$ and a non-empty set 
$V_Q\subset U_Q\cap \Omega$, such that each $ x \in V_Q$ may be 
connected by a $\lambda$-carrot path to some $y\in Q$, set
\begin{equation}\label{defi-FQ}
F_Q:= \bigcup_{x\in V_Q} F_{car}(x,Q,\lambda)\,,
\end{equation}
where  we recall that $F_{car}(x,Q,\lambda)$ is the 
set of $y\in Q$ that are connected via a $\lambda$-carrot path to $x$ (see \eqref{eqdefFcarQ}).  
Let $Q'\subset Q$ be such that $\ell(Q')\leq 2^{-k_0} \ell(Q)$ and $F_Q\cap Q'\neq\emptyset$. Then, there exists a non-empty set $V_{Q'}\subset U_{Q'}\cap \Omega$ such that if we 
define $F_{Q'}$ as in \eqref{defi-FQ} with $Q'$ 
replacing $Q$, then $ F_Q\cap Q'\subset  F_{Q'}$. Moreover, for 
every $y\in V_{Q'}$, there exist $x\in V_Q$, $y\in Q'$  (indeed $y\in F_Q\cap Q'$)  and 
a $\lambda$-carrot path $\gamma=\gamma(y,x)$ such that $y\in\gamma$.

\end{lemma}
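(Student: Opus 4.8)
The plan is to construct $V_{Q'}$ essentially as the set of all points where carrot paths from $V_Q$ to points in $F_Q \cap Q'$ first enter the Whitney region $U_{Q'}$. Concretely, first I would fix any $y \in F_Q \cap Q'$. By definition of $F_Q$ in \eqref{defi-FQ}, there is some $x \in V_Q \subset U_Q \cap \Omega$ and a $\lambda$-carrot path $\gamma = \gamma(y,x)$ in $\Omega$ joining $y$ to $x$. Since $\ell(Q') \leq 2^{-k_0}\ell(Q)$, Lemma \ref{lemma3.15} applies and guarantees that $\gamma$ meets $U_{Q'} \cap \Omega$, say at a point $z = z(y,x,\gamma)$. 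I would then set $V_{Q'}$ to be the collection of all such points $z$ obtained as $y$ ranges over $F_Q \cap Q'$ and over all admissible choices of $x \in V_Q$ and carrot paths $\gamma$; this is non-empty precisely because $F_Q \cap Q' \neq \emptyset$. By construction $V_{Q'} \subset U_{Q'} \cap \Omega$, and the ``moreover'' clause is immediate from the construction: each $z \in V_{Q'}$ lies on a $\lambda$-carrot path $\gamma(y,x)$ with $x \in V_Q$ and $y \in F_Q \cap Q' \subset Q'$.

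Next I would check the two required properties of $V_{Q'}$. First, each $z \in V_{Q'}$ must be connectable by a $\lambda$-carrot path to some point of $Q'$: indeed, the sub-path $\gamma(y,z)$ of the original carrot path $\gamma(y,x)$ is itself a $\lambda$-carrot path (the defining inequality \eqref{eq1.carrot} is inherited by any sub-path sharing the endpoint $y$, since $\ell(\gamma(y,w)) \leq \ell(\gamma(y,w'))$ whenever $w$ precedes $w'$), and it joins $z$ to $y \in Q'$. So $V_{Q'}$ satisfies the standing hypothesis of the lemma (it is a non-empty subset of $U_{Q'} \cap \Omega$ each of whose points connects by a $\lambda$-carrot path to $Q'$), and $F_{Q'} := \bigcup_{z \in V_{Q'}} F_{car}(z, Q', \lambda)$ is well defined.

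Finally, I would verify the containment $F_Q \cap Q' \subset F_{Q'}$. Take $y \in F_Q \cap Q'$. By construction there is a point $z = z(y,x,\gamma) \in V_{Q'}$ lying on a $\lambda$-carrot path $\gamma(y,x)$ with $x \in V_Q$; the reversed sub-path $\gamma(y,z)$ (traversed from $z$ to $y$, or equivalently thought of as a path with the endpoint $y$ on $\partial\Omega$) is a $\lambda$-carrot path connecting $y$ to $z$, hence $y \in F_{car}(z, Q', \lambda)$. Since $z \in V_{Q'}$, this gives $y \in F_{Q'}$, as desired.

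The only genuine subtlety — and the step I would treat most carefully — is making sure that $z$ genuinely lands in $U_{Q'} \cap \Omega$ rather than merely in the abstract Whitney region $U_{Q'} \subset \Omega_E = \ree \setminus E$; but this is exactly what Lemma \ref{lemma3.15} delivers (its conclusion is that $\gamma$ meets $U_{Q'} \cap \Omega$, with the point lying in $\Omega$ because the entire carrot path $\gamma$ lies in $\Omega$). A second minor point is the choice implicit in picking, for each $y$, a representative triple $(x,\gamma,z)$; this is harmless since we only need \emph{some} such $z$ to exist, but if one prefers to avoid choice one can simply define $V_{Q'}$ as the full set of all entry points $z \in U_{Q'} \cap \Omega$ lying on \emph{some} $\lambda$-carrot path from \emph{some} $x \in V_Q$ to \emph{some} $y \in F_Q \cap Q'$, which makes the construction canonical. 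Everything else is a direct unwinding of definitions together with the elementary fact that sub-paths of carrot paths anchored at the boundary endpoint are again carrot paths.
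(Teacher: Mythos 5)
Your proof is correct and takes essentially the same approach as the paper's: for each $y\in F_Q\cap Q'$ use the definition of $F_Q$ to produce a carrot path from some $x\in V_Q$, invoke Lemma \ref{lemma3.15} to find an entry point $z\in\gamma\cap U_{Q'}\cap\Omega$, collect these entry points as $V_{Q'}$, and observe that the sub-path $\gamma(y,z)$ is again a $\lambda$-carrot path. The only cosmetic difference is that the paper picks a single $y'(y)$ per $y$ rather than the full set of admissible entry points, which you correctly note is immaterial.
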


\begin{proof}
For every $y\in F_Q\cap Q'$, by definition of $F_Q$, there 
exist $x\in V_Q$ and a $\lambda$-carrot path $\gamma=\gamma(y,x)$. 
By Lemma \ref{lemma3.15},
there is a point $y'=y'(y)\in\gamma\cap U_{Q'}\cap\Omega$ (there can be more than one $y'$, but we just pick one). 
Note that the sub-path $\gamma(y,y')\subset \gamma(y,x)$ is also a $\lambda$-carrot path, for the 
same constant $\lambda$. All the conclusions in the lemma follow easily from the construction  
by letting $V_{Q'}=\bigcup_{y\in F_Q\cap Q'} y'(y)$. 
\end{proof}

\begin{remark}\label{remark-VQ-children}
It follows easily from the previous proof that under the same assumptions, if one further assumes that $\ell(Q')<2^{-k_0}\,\ell(Q)$, we can then repeat the argument with both $Q'$ and $(Q')^*$ (the dyadic parent of $Q'$) to obtain respectively $V_{Q'}$ and $V_{(Q')^*}$. Moreover, this can be done in such a way that every point in $V_{Q'}$ (resp. $V_{(Q')^*}$) belongs to a $\lambda$-carrot path which also meets $V_{(Q')^*}$ (resp. $V_{Q'}$), connecting $U_Q$ and $Q'$.
\end{remark}

Given a family $\F:=\{Q_j\}\subset \dd(\pom)$ of
pairwise disjoint cubes, we recall that the ``discrete  sawtooth" $\dd_\F$ is 
the collection of all cubes in $\dd(\pom)$ that are not contained in any $Q_j\in\F$ (see \eqref{eq2.discretesawtooth1}),
and we define the restriction of $\mut$ (cf. \eqref{eq4.0}, \eqref{eq4.1}) to the sawtooth $\dd_\F$ by
\begin{equation}\label{eq4.4x}
\mut_\F(\dd'):=\mut(\dd'\cap\dd_\F)= \sum_{Q\in\dd'\setminus \big(\cup_{Q_j\in \F} \,\dd(Q_j)\big)}\alpha_{Q}.
\end{equation}
We then set 
$$\|\mut_\F\|_{\C(Q)}:= \sup_{Q'\subset Q}\frac{\mut_\F\big(\dd(Q')\big)}{\sigma(Q')}.$$
Let us note that we may allow $\F$ to be empty, in which case $\dd_\F=\dd$ and $\mut_\F$ 
is simply $\mut$. We note that  the following claim, and others in the sequel,  remain true when $\F$ is empty; sometimes trivially so, and sometimes
with some straightforward changes that are left to the interested reader. 


\begin{claim}\label{claim3.16} Given $Q\in\dd(\pom)$, and 
a family $\F=\F_Q:=\{Q_j\}\subset \dd(Q)\setminus \{Q\}$ of
pairwise disjoint sub-cubes of $Q$, 
if $\|\mut_\F\|_{\C(Q)} \leq 1/2$, then each $Q' \in \dd_{\F}\cap \dd(Q)$, each $Q_j\in\F$, 
and every dyadic child $Q_j'$ of any $Q_j\in \F$, belong to the 
good collection $\G$, and moreover, every such cube belongs to the {\bf same}  tree
$\sbf\subset\G$.  In particular, $\sbf' :=\dd_\F\cap\dd(Q)$ is a semi-coherent subtree of $\sbf$, and so is
$\sbf'':= (\dd_\F\cup\F\cup\F')\cap\dd(Q)$, where $\F'$ denotes the collection of all
dyadic children of cubes in $\F$.
\end{claim}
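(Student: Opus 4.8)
The plan is to exploit the definition of $\alpha_Q$ in \eqref{eq4.0} together with the Carleson-packing smallness hypothesis $\|\mut_\F\|_{\C(Q)}\le 1/2$. First I would show that no cube in $\dd_\F\cap\dd(Q)$ can lie in $\M\cup\B$. Indeed, suppose $Q'\in\dd_\F\cap\dd(Q)$ were such that $(\M\cup\B)\cap\dd_*(Q')\ne\emptyset$; then by the very definition \eqref{eq4.0} we have $\alpha_{Q'}=\sigma(Q')$, and since $Q'\in\dd_\F$, this single cube already contributes $\sigma(Q')$ to $\mut_\F(\dd(Q'))$, forcing $\mut_\F(\dd(Q'))/\sigma(Q')\ge 1$, contradicting $\|\mut_\F\|_{\C(Q)}\le 1/2$. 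Hence for every $Q'\in\dd_\F\cap\dd(Q)$ we have $(\M\cup\B)\cap\dd_*(Q')=\emptyset$; in particular $Q'$ itself, together with all of its children and grandchildren, avoids $\M\cup\B$, so all these cubes lie in $\G$ and none of them is the maximal cube of a tree. The same reasoning applies verbatim to each $Q_j\in\F$: since $Q_j$ is a proper subcube of $Q$ (here I use $\F\subset\dd(Q)\setminus\{Q\}$), its dyadic parent belongs to $\dd_\F\cap\dd(Q)$, and $Q_j\in\dd_*((Q_j)^*)$ where $(Q_j)^*$ is that parent; if $Q_j\in\M\cup\B$ then $\alpha_{(Q_j)^*}=\sigma((Q_j)^*)$, again contradicting the packing bound applied at $(Q_j)^*$. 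Similarly every dyadic child $Q_j'$ of $Q_j$ lies in $\dd_*((Q_j)^*)$ (as a grandchild of $(Q_j)^*$), so the same argument shows $Q_j,Q_j'\in\G$ and neither is a tree-maximal cube.

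Next I would argue that all these cubes belong to the \emph{same} tree. Since $Q\in\dd_\F\cap\dd(Q)$ (as $\F$ does not contain $Q$), the cube $Q$ lies in $\G$ and is not tree-maximal, so it belongs to some tree $\sbf\subset\G$. Now take any cube $R$ among those listed in the claim — i.e.\ $R\in\dd_\F\cap\dd(Q)$, or $R\in\F$, or $R$ is a child of some $Q_j\in\F$. In all cases $R\subset Q$, and the entire chain of dyadic ancestors from $R$ up to $Q$ consists of cubes in $\dd_\F\cap\dd(Q)$ (if $R\in\dd_\F\cap\dd(Q)$ this is the definition of the sawtooth; if $R\in\F$ or $R$ is a child of $Q_j\in\F$, then every strict ancestor of $R$ inside $Q$, and $R$'s parent, lies in $\dd_\F\cap\dd(Q)$). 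By the first part, every cube in that ancestor chain is in $\G$ and is not tree-maximal. Using the coherency property (b) of Definition \ref{d3.11} repeatedly: $Q\in\sbf$, and each time we pass to a child that is still in $\G$, it must lie in the same tree $\sbf$, because a non-tree-maximal cube in $\G$ lies in exactly one tree and that tree is closed under passing to children that remain in $\G$ (property (c)). Walking down the chain from $Q$ to $R$'s parent shows that parent is in $\sbf$, and then $R$ itself is in $\sbf$ (again by (c), since $R\in\G$). Hence every cube listed in the claim lies in $\sbf$.

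Finally, the structural conclusions: $\sbf':=\dd_\F\cap\dd(Q)$ is semi-coherent because it has the unique maximal element $Q$ (condition (a)) and is closed under taking intermediate cubes between one of its members and $Q$ (condition (b) — this is immediate from the definition of $\dd_\F$, since if $Q''\subset\widetilde Q\subset Q$ and $Q''\notin\bigcup_j\dd(Q_j)$ then also $\widetilde Q\notin\bigcup_j\dd(Q_j)$). For $\sbf'':=(\dd_\F\cup\F\cup\F')\cap\dd(Q)$, the maximal element is still $Q$, and condition (b) holds: any cube between a member of $\sbf''$ and $Q$ is either already in $\dd_\F\cap\dd(Q)$, or equals some $Q_j\in\F$, or is a child of some $Q_j$ sitting between that child and $Q$ — in each case it lies in $\sbf''$. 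I expect the main obstacle to be purely bookkeeping: carefully tracking which ancestor/child cubes fall into $\dd_*(\cdot)$ of which cube so that the $\alpha$-contribution argument applies, and making sure the tree-membership propagation uses coherency in the correct direction (top-down). There is no analytic difficulty here — the whole claim is a consequence of the definition of $\alpha_Q$, the hypothesis $\|\mut_\F\|_{\C(Q)}\le 1/2$, and the coherency axioms.
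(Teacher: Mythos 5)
Your argument is correct and follows essentially the same route as the paper: you use the hypothesis $\|\mut_\F\|_{\C(Q)}\le 1/2$ together with the definition of $\alpha_Q$ to show that each $Q'\in\dd_\F\cap\dd(Q)$ contributes $\alpha_{Q'}=\sigma(Q')$ to $\mut_\F(\dd(Q'))$ if $Q'\in\M\cup\B$, forcing $\|\mut_\F\|_{\C(Q)}\ge 1$; and for $Q_j\in\F$ (respectively a child $Q_j'$) you pass to the dyadic parent (respectively grandparent), which lies in $\dd_\F\cap\dd(Q)$ and would then have $\alpha$-mass $\sigma(\cdot)$ — this is exactly the paper's proof, which it then leaves at ``the claim follows.'' Your elaboration of the tree-membership propagation is sound, though a small attribution slip: the step ``a non-maximal good child of a cube in $\sbf$ must itself lie in $\sbf$'' really rests on semi-coherency axiom (b) applied to the child's own tree together with the disjointness of the trees in $\G$, rather than on axiom (c) as you state — (c) alone only says the children go together, not which way; the rest of your walk-down argument and the semi-coherency checks for $\sbf'$ and $\sbf''$ are fine.
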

Indeed, if any $Q' \in \dd_{\F}\cap \dd(Q)$ were in $\M\cup\B$ (recall that
$\M:=\{Q(\sbf)\}_{\sbf\subset\G}$ is the collection of 
cubes which are the maximal elements of the  trees $\sbf$ in $\G$), then
by construction $\alpha_{Q'}=\sigma(Q')$ for that cube (see \eqref{eq4.0}), so by definition of
$\mut$ and $\mut_\F$, we would have 
$$1=\frac{\sigma(Q')}{\sigma(Q')} \leq \frac{\mut_\F\big(\dd(Q')\big)}{\sigma(Q')} \leq 
\|\mut_\F\|_{\C(Q)}\leq \frac12\,,$$
a contradiction.  Similarly, if some $Q_j\in\F$ (respectively, $Q_j'\in \F'$)
were in $\M\cup\B$, then its dyadic parent (respectively, dyadic grandparent)
$Q_j^*$ would belong to $\dd_{\F}\cap \dd(Q)$, and by definition $\alpha_{Q_j^*}=\sigma(Q_j^*)$, so again 
we reach a contradiction.  Consequently, $\F \cup \F'\cup
(\dd_\F\cap\dd(Q))$ does not meet $\M\cup\B$, and the claim follows.

\section{Construction of chord-arc subdomains}\label{s-cad}

For future reference, we now prove the following.  Recall that for $Q\in \G$, $U_Q$ has precisely two
connected components $U_Q^\pm$ in $\ree\setminus \pom$.

\begin{lemma}\label{lemmaCAD} Let  $Q\in\dd(\pom)$, let $k_1$ be such that
$2^{k_1}> 2^{k_0}\gg 100 K$, 
see \eqref{eq3.15}, 
and suppose that there is 
a family $\F=\F_Q:=\{Q_j\}\subset \dd(Q)\setminus \{Q\}$ of
pairwise disjoint sub-cubes of $Q$, 
with $\|\mut_\F\|_{\C(Q)} \leq 1/2$ (hence by Claim \ref{claim3.16},  there is some $\sbf\subset\G $ with
$\sbf \supset (\dd_\F\cup\F\cup\F')\cap\dd(Q)$), and  
 a non-empty subcollection
$\F^*\subset \F$, 
such that: 
\begin{itemize}
\item[(i)] $\ell(Q_j) \leq 2^{-k_1}\ell(Q)$, for each cube $Q_j\in \F^*$;
 \smallskip
\item[(ii)] the collection of balls $\big\{\kappa B^*_{Q_j}:= B\big(x_{Q_j},\kappa 
K \ell(Q_j)\big):\, Q_j \in \F^* \big\}$
is pairwise disjoint, where
$\kappa\gg K^4 $ is a sufficiently large positive constant; and
\smallskip
\item[(iii)] $\F^*$ has a disjoint decomposition
$\F^*=\F^*_+ \cup \F^*_-$, where for each $Q_j \in \F_\pm^*$, there is a
Chord-arc subdomain $\Omega_{Q_j}^\pm\subset \Omega$, consisting 
of a union of fattened Whitney cubes $I^*$, with
$U_{Q_j}^\pm\subset \Omega_{Q_j}^\pm\subset
B^*_{Q_j}:= B(x_{Q_j},K \ell(Q_j))$, and with uniform control of the Chord-arc constants. 
\end{itemize}
Define a semi-coherent subtree $\sbf^*\subset \sbf$ by
\[
	\sbf^*=\left\{Q'\in \dd(Q): Q_j\subset Q' \text{ for some }Q_j\in\F^*\right\}\,,
	\] 
and for each choice of $\pm$ for which $\F^*_\pm$ is non-empty, set
\begin{equation}\label{eqomdef}  \Omega_Q^\pm:= \Omega_{\sbf^*}^\pm \bigcup
 \left(\bigcup_{Q_j \,\in\, \F^*_\pm}\Omega^\pm_{Q_j}\right)
 \end{equation}	
	Then for $\kappa$ large enough, depending only on allowable parameters, 
$\om_Q^\pm$ is a Chord-arc domain, with chord arc constants depending only on the 
 uniformly controlled Chord-arc constants of $\Omega_{Q_j}^\pm$ and on the other allowable parameters.
 Moreover, $ \Omega_Q^\pm\subset  B_Q^*\cap\Omega=
 B(x_Q,K\ell(Q))\cap\Omega$, 
 and $\Omega_Q^\pm$ is a union of fattened Whitney cubes. 
\end{lemma}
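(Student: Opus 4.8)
The plan is to build $\Omega_Q^\pm$ as the interior of a union of fattened Whitney cubes, and then verify the three defining properties of a Chord-arc domain—ADR boundary, interior corkscrew, and Harnack chain (the exterior corkscrew condition comes for free once we know $\partial\Omega_Q^\pm$ is $n$-ADR and $\Omega_Q^\pm$ is built from Whitney cubes, since the complement of such a region always has ample room). First I would record the basic inclusions: by \eqref{dist:UQ-pom} and \eqref{def:BQ*}, each $U_{Q'}$ with $Q'\in\sbf^*$ lies in $B_Q^*$, and by hypothesis (iii) each $\Omega_{Q_j}^\pm\subset B_{Q_j}^*\subset B_Q^*$ (using (i) and $2^{k_1}>2^{k_0}\gg 100K$ to guarantee $B_{Q_j}^*$ is deep inside $B_Q^*$); hence $\Omega_Q^\pm\subset B_Q^*\cap\Omega$. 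The containment in $\Omega$ (not merely $\Omega_E$) is where one uses that the carrot-path construction placed the relevant Whitney cubes inside $\Omega$—but more simply, $\Omega_{\sbf^*}^\pm$ and the $\Omega_{Q_j}^\pm$ are all given to lie in $\Omega$, and a union of subsets of $\Omega$ is in $\Omega$.

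Next I would establish connectivity and the Harnack chain property. Lemma \ref{lemma2.7} and Remark \ref{remark2.12} already give that $\Omega_{\sbf^*}^\pm$ is a CAD with uniform constants, so in particular Harnack-chain connected; and each $\Omega_{Q_j}^\pm$ is a CAD with uniform constants by hypothesis. The point is to glue: since $Q_j\in\F^*\subset\sbf^*$ has its dyadic parent $Q_j^*$ also in $\sbf^*$ (this is exactly why $\F'$, the children of $\F$, were included in the tree in Claim \ref{claim3.16}, and why we defined $\sbf^*$ to contain all cubes strictly above the $Q_j$'s), the Whitney regions $U_{Q_j}^\pm\subset\Omega_{Q_j}^\pm$ and $U_{Q_j^*}^\pm\subset\Omega_{\sbf^*}^\pm$ overlap (by the child-parent overlap in part (1) of Lemma \ref{lemma2.7}). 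So the two pieces share interior points, $\Omega_Q^\pm$ is connected, and a Harnack chain between any two points is obtained by concatenating: go from the first point within its piece to a common overlap cube, then across, then within the second piece—each leg has uniformly bounded length because each piece is individually a CAD, and the number of pieces traversed is at most two. The geometric separation hypothesis (ii), that the dilated balls $\kappa B_{Q_j}^*$ are pairwise disjoint with $\kappa\gg K^4$, guarantees that distinct $\Omega_{Q_j}^\pm$ do not interfere with one another and, crucially, that each $\Omega_{Q_j}^\pm$ sits at distance $\gtrsim K\ell(Q_j)\gg\ell(Q_j)$ from every \emph{other} sawtooth cube's region, so no spurious short-circuits or pinch points are created.

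The main obstacle, and the bulk of the work, is the ADR property of $\partial\Omega_Q^\pm$. The boundary decomposes into a portion coming from $\partial\Omega_{\sbf^*}^\pm$ (minus the parts glued over) together with the portions of the $\partial\Omega_{Q_j}^\pm$ that remain on the boundary. For the lower bound one uses that $\partial\Omega_{\sbf^*}^\pm$ and each $\partial\Omega_{Q_j}^\pm$ are individually $n$-ADR, and that balls centered on the boundary of the glued region, at scale $r$, either see mostly one constituent piece (if $r$ is small compared to the relevant $\ell(Q_j)$ or comfortably inside $\Omega_{\sbf^*}^\pm$) or else straddle a gluing region, in which case hypothesis (ii) plus the fact that the gluing happens over a controlled number ($\lesssim$ dimensional constant, from bounded overlap of Whitney cubes) of fattened cubes of comparable size $\approx\ell(Q_j)$ lets one transfer the ADR bound from whichever piece dominates. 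For the upper bound, one counts fattened Whitney cubes of a given size meeting a ball $B(x,r)$ with $x\in\partial\Omega_Q^\pm$: bounded overlap of the Whitney family, together with the constituent ADR bounds and the separation (ii) (which prevents accumulation of many distinct $Q_j$-regions in a single small ball), gives $H^n(\partial\Omega_Q^\pm\cap B(x,r))\lesssim r^n$. One must be a little careful at the "seams" near each $x_{Q_j}$, where a ball of radius $r\approx\ell(Q_j)$ sees both $\Omega_{Q_j}^\pm$ and the sawtooth; but since at that scale only \emph{one} $Q_j$ is relevant (by (ii)) and the transition region consists of $O(1)$ cubes of size $\approx\ell(Q_j)$, the two-sided bound survives with constants depending only on the individual CAD constants, $K$, $\kappa$, $n$, and ADR. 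Finally, $\partial\Omega_Q^\pm$ and its interior and exterior corkscrew conditions follow: interior corkscrews exist because the region contains whole fattened Whitney cubes of every relevant size, and exterior corkscrews exist because, away from the $Q_j$ regions, $\Omega_Q^\pm$ stays (by Lemma \ref{lemma2.7}, cf. the third line of \eqref{eq2.whitney2}) a definite distance from the graph $\Gamma_\sbf$ on the other side, while near each $Q_j$ one inherits the exterior corkscrew of the CAD $\Omega_{Q_j}^\pm$, using (ii) to ensure the exterior corkscrew point is not swallowed by a neighboring piece. Collecting these, $\Omega_Q^\pm$ is NTA with $n$-ADR boundary, i.e., a Chord-arc domain with the asserted uniform constants.
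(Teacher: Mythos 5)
Your gluing strategy and the use of the child–parent overlap of Whitney regions to connect $\Omega_{\sbf^*}^\pm$ with the $\Omega_{Q_j}^\pm$ match the paper, and the Harnack chain sketch is in the right spirit (though the paper further subdivides the mixed case according to whether $|x-y|$ is small or large relative to $\ell(Q_j)$, since when $|x-y|\ll\ell(Q_j)$ one cannot afford to route the chain through $U_{Q_j}$). Your statement that $\Omega_{\sbf^*}^\pm$ is ``given to lie in $\Omega$'' is a minor slip: a priori it is only a subset of $\Omega_E=\ree\setminus\pom$; that it lands in $\Omega$ requires the observation that it is connected and meets $\Omega$ through $U_{Q_j}^\pm\subset\Omega_{Q_j}^\pm\subset\Omega$.

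The genuine gap is in the lower $n$-ADR bound and the exterior corkscrew condition. Your opening claim that ``the exterior corkscrew condition comes for free once we know $\partial\Omega_Q^\pm$ is $n$-ADR and $\Omega_Q^\pm$ is built from Whitney cubes'' is not correct: a union of fattened Whitney cubes can a priori come close to exhausting $\Omega_E$ locally, so the mere Whitney structure does not produce exterior balls, and having an $n$-ADR boundary does not by itself give a corkscrew on the exterior side. Your later, more detailed argument -- away from the $Q_j$'s use separation from $\Gamma_\sbf$, near a $Q_j$ inherit from the CAD $\Omega_{Q_j}^\pm$ -- also does not close the hardest case: when $x\in\partial\Omega_Q^+$ and $r$ is much larger than every $\ell(Q_j)$ for which $\partial\Omega_{Q_j}^+$ meets $B(x,r/2)$. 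There the exterior corkscrew of any single $\Omega_{Q_j}^+$ is far too small (its scale is $\ell(Q_j)\ll r$), and the exterior corkscrew ball of the sawtooth $\Omega_{\sbf^*}^+$ at scale $r$ can in principle be eaten into by the many adjoined small regions $\Omega_{Q_j}^+$. Controlling that loss requires a quantitative dichotomy, together with the separation hypothesis (ii) used in a sharp way (the annuli $\kappa^{1/4}B_{Q_j}^*\setminus B_{Q_j}^*$ are pairwise disjoint and miss every $\Omega_{Q_{j'}}^+$). Your phrase ``(ii) ensures the exterior corkscrew point is not swallowed'' names the phenomenon but does not supply an argument.

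The paper sidesteps a direct case-by-case ADR count by first proving the single volume estimate $|B(x,r)\setminus\overline{\Omega_Q^\pm}|\gtrsim r^{n+1}$ for all $x\in\partial\Omega_Q^\pm$ (this is where the dichotomy and hypothesis (ii) do the work), then obtaining the lower $n$-ADR bound from the interior corkscrew plus the relative isoperimetric inequality, and finally deducing exterior corkscrews from the two-sided ADR together with the same volume bound. If you want to keep a direct ADR argument instead, you must still prove something essentially equivalent to that volume estimate in the large-$r$ regime, and at present that step is missing from your plan.
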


\begin{remark} Note that we define $\om_Q^\pm$ if and only if $\F^*_\pm$ is non-empty.
It may be that one  of $\F^*_+, \F^*_-$ is empty, 
but $\F^*_+$ and $\F^*_-$ cannot both be empty, since $\F^*$ is non-empty by assumption.
\end{remark}

\begin{proof}[Proof of Lemma \ref{lemmaCAD}] 
Without loss of generality we may assume that $\Omega_{Q_j}\pm$ is not contained in  $\om_{\sbf^*}^\pm$ for 	
all $Q_j\in\F^*$ (otherwise we can drop those cubes from $\F^*$). On the other hand, we notice that $\Omega_Q^\pm$ 
is a union of (open) fattened Whitney cubes (assuming that it is non-empty):  each $\om_{Q_j}^\pm$ has this 
property by assumption, as does $\om_{\sbf^*}^\pm$  by construction.

We next observe that if $\Omega_Q^+$ (resp. $\Omega_Q^-$) is non-empty, then it is contained in
$\Omega$.   Indeed, by construction, $\Omega_Q^+$ is non-empty if and only if $\F^*_+$ is non-empty.
In turn, $\F^*_+$ is non-empty if and only if there is some $Q_j\in \F^*$ such that
$U_{Q_j}^+\subset \Omega_{Q_j}^+ \subset \Omega$, and moreover, the latter is true for every
$Q_j \in \F^*_+$, by definition.  But each such $Q_j$ belongs to $\sbf^*$, hence
$U_{Q_j}^+\subset  \om^+_{\sbf^*}$, again by construction (see \eqref{eq3.2}).  Thus, 
$\om_{\sbf^*}^+$ meets $\om$, and since $\om_{\sbf^*}^+\subset \ree\setminus \pom$, therefore
$\om_{\sbf^*}^+\subset \om$. Combining these observations, we see that $\Omega_Q^+\subset \om$.
Of course, the same reasoning applies to $\Omega_Q^-$, provided it is non-empty.  

In addition, 
since $\sbf^*\subset \sbf$, 
and since $K \gg K^{1/2}$, 
by Lemma \ref{lemma2.7} we have $\Omega_{\sbf^*}^\pm \subset 
B_Q^*= B(x_Q,K\ell(Q))$.  Furthermore,
$\Omega_{Q_j}^\pm\subset B^*_{Q_j}:= B(x_{Q_j},K \ell(Q_j))$, and since 
$\ell(Q_j) \leq 2^{-k_1}\ell(Q)\leq (100K)^{-1}\ell(Q)$, we obtain
\[\dist(\Omega_{Q_j}^\pm,Q) +\diam (\Omega_{Q_j}^\pm) \leq 3 K \ell(Q_j) \leq 3K 2^{-k_1} \ell(Q) \ll \ell(Q)\,.\]
Thus, in particular, $\Omega_{Q_j}^\pm\subset B_Q^*$, and therefore also 
$\om_Q^\pm\subset B_Q^*$.

It therefore remains to establish the Chord-arc properties.
It is straightforward to prove the interior corkscrew condition and the upper $n$-ADR bound, and we omit the
details.  Thus, we must verify the Harnack Chain condition,
 the lower $n$-ADR bound, and the exterior corkscrew condition.

\subsection{Harnack Chains}
Suppose, without loss of generality, 
that $\Omega_Q^+$ is non-empty, and let $x,y\in \Omega_Q^+$, with $|x-y|= r$.
If $x$ and $y$ both lie in $\Omega_{\sbf^*}^+$, or in the same $\om_{Q_j}^+$, then we can
connect $x$ and $y$ by a suitable Harnack path, since each of these domains is Chord-arc.
Thus, we may suppose either that 1) $x\in \Omega_{\sbf^*}^+$ and $y$ lies in some $\om_{Q_j}^+$,
or that 2) $x$ and  $y$  lie in two distinct $\om_{Q_{j_1}}^+$ and $\om_{Q_{j_2}}^+$.
We may reduce the latter case to the former case: 
by the separation property (ii) in Lemma \ref{lemmaCAD}, we must have
$r\gtrsim \kappa \max\big(\diam(\om_{Q_{j_1}}^+),\diam(\om_{Q_{j_2}}^+)\big)$, so given case 1), we
can connect $x\in \om_{Q_{j_1}}^+$ to the  center $z_1$ of some $I_1^*\subset U^+_{Q_1}$,
and
$y\in \om_{Q_{j_2}}^+$ to the center $z_2$ of some $I_2\subset U^+_{Q_2}$, 
where $Q_1,Q_2 \in \sbf^*$,
with $Q_{j_i}\subset Q_i\subset Q$, and
$\ell(Q_i)\approx r$, $i=1,2$.  Finally, we can connect $z_1$ and $z_2$ using that $\Omega_{\sbf^*}^+$ is Chord-arc. 

Hence, we need only construct a suitable Harnack Chain in Case 1).
We note that by assumption and construction,
$U_{Q_j}^+ \subset \Omega_{\sbf^*}^+ \cap \om_{Q_j}^+$.  

Suppose first that 
\begin{equation}\label{eqxyclose}
|x-y|=r \leq c' \ell(Q_j)\,,
\end{equation}
where $c'\leq 1$ is a sufficiently small positive
constant to be chosen.  Since $y \in \om^+_{Q_j} \subset B_{Q_j}^*$, we then have that
$x \in 2 B_{Q_j}^*$, so by the construction of $\Omega_{\sbf^*}^+$ and the separation
property (ii), it follows that 
$\delta_{\Omega}(x) \geq c \ell(Q_j)$, where $c$ is a uniform constant depending only on the allowable parameters
(in particular, this fact is true for all $x \in \om_{\sbf^*}^+ \cap 2 B_{Q_j}^*$, so it does not depend on the
choice of $c'<1$).  Now choosing $c'\leq c/2$ (eventually, it may be even smaller), 
we find that $\delta_{\Omega}(y) \geq (c/2)\ell(Q_j)$.  
Moreover, $y \in \om_{Q_j}^+ \subset B_{Q_j}^*$ implies that $\delta_{\Omega}(y) \le K\ell(Q_j)$. Also, since $x\in 2B_{Q_j}^*$ we have that $\delta_{\Omega}(x) \le 2K\ell(Q_j)$.
Since
$\om_{Q_j}^+$ and $\om_{\sbf^*}^+$ are each the interior of a union of fattened Whitney cubes,
it follows that there are Whitney cubes $I$ and $J$, with $x\in I^*$, $y\in J^*$, and
\[
\ell(I)\approx \ell(J) \approx \ell(Q_j)\,,\]
where the implicit constants depend on $K$.   
For  $c'$ small enough in \eqref{eqxyclose}, depending on the implicit
constants in the last display, and on the parameter $\tau$ in \eqref{whitney1},
this can happen only if $I^*$ and $J^*$ overlap (recall that we have fixed $\tau$ small enough that
$I^*$ and $J^*$ overlap if and only if $I$ and $J$ have a boundary point in common), in which case we may trivially connect
$x$ and $y$ by a suitable Harnack Chain.

On the other hand, suppose that 
\[
|x-y|=r \geq c' \ell(Q_j)\,.
\]
Let $z \in  U_{Q_j}^+ \subset \Omega_{\sbf^*}^+ \cap \om_{Q_j}^+$, with
$\dist(z, \pom_Q^+) \gtrsim \ell(Q_j)$ (we may find such a $z$, since $U_{Q_j}^+$ is a union of fattened
Whitney cubes, all of length $\ell(I^*) \approx \ell(Q_j)$; just take $z$ to be the center of
such an $I^*$).
We may then construct an appropriate Harnack Chain from $y$ to $x$ by connecting
$y$ to $z$ via a Harnack Chain in the Chord-arc domain $\Omega_{Q_j}^+$, and 
$z$ to $x$ via a Harnack Chain in the Chord-arc domain $\Omega_{\sbf^*}^+$.

\subsection{Lower $n$-ADR and exterior corkscrews}
We will 
establish these two properties essentially simultaneously.  
Again suppose that, e.g.,  $\Omega_Q^+$ is non-empty. 
Let $x \in \pom_Q^+$, and consider $B(x,r)$, with 
$r < \diam \om_Q^+ \approx_K\ell(Q)$. 
Our main goal at this stage is to prove the following:
\begin{equation}\label{eqvolumebound}
\big| B(x,r) \setminus \overline{\om_Q^+}\big| \geq c r^{n+1}\,,
\end{equation}
with $c$ a uniform positive constant depending only upon allowable parameters (including $\kappa$).
Indeed, momentarily taking this estimate for granted, we may combine \eqref{eqvolumebound}
with the interior corkscrew condition to deduce the lower $n$-ADR bound via the
relative isoperimetric inequality \cite[p. 190]{EG}.  In turn, with both the lower and upper 
$n$-ADR bounds in hand, \eqref{eqvolumebound} implies the existence of exterior corkscrews 
(see, e.g., \cite[Lemma 5.7]{HM-I}).  

Thus, it is enough to prove \eqref{eqvolumebound}.
We consider the following cases.

\smallskip

\noindent{\bf Case 1}:  $B(x,r/2)$ does not meet $\pom_{Q_j}^+$ for any $Q_j \in \F^*_+$.
In this case, the exterior corkscrew for $\om_{\sbf^*}^+$ associated with $B(x,r/2)$ easily implies 
\eqref{eqvolumebound}.

\smallskip

\noindent{\bf Case 2}: $B(x,r/2)$  meets $\pom_{Q_j}^+$ for at least one $Q_j \in \F^*_+$,
and $r \leq \kappa^{1/2} \ell(Q_{j_0})$, where $Q_{j_0}$ is chosen to have the largest
length $\ell(Q_{j_0})$ among those $Q_j$ such that 
$\pom_{Q_j}^+$  meets $B(x,r/2)$.  We now further split the present case into subcases.

\smallskip

\noindent{\bf Subcase 2a}:  $B(x,r/2)$  meets $\pom^+_{Q_{j_0}}$ at a point $z$ with 
$\delta_{\Omega}(z) \leq (M\kappa^{1/2})^{-1} \ell(Q_{j_0})$, where $M$ is a large number to be chosen.
Then $B(z, (M\kappa^{1/2})^{-1} r) \subset B(x,r)$, for $M$ large enough.  In addition, we claim that
$B(z, (M\kappa^{1/2})^{-1} r)$ misses $\om_{\sbf^*}^+\cup\big(\cup_{j\neq j_0} \om_{Q_j}^+\big)$.
The fact that $B(z, (M\kappa^{1/2})^{-1} r)$ misses every other $\om_{Q_j}^+, j\neq j_0$,
 follows immediately from the restriction  $r \leq \kappa^{1/2} \ell(Q_{j_0})$, and 
the separation property (ii).
To see that $B(z, (M\kappa^{1/2})^{-1} r)$ misses $\om_{\sbf^*}^+$,
note that if $|z-y|< (M\kappa^{1/2})^{-1} r$, then 
\[\delta_{\Omega}(y) \leq \delta_{\Omega}(z) + (M\kappa^{1/2})^{-1} r \leq \left((M\kappa^{1/2})^{-1}
+M^{-1}\right) \ell(Q_{j_0}) \ll  \ell(Q_{j_0})\,,\]
for $M$ large.  On the other hand, 
\[\delta_{\Omega}(y) \gtrsim \ell(Q_{j_0})\,, \qquad \forall \,y \in \om_{\sbf^*}^+ \cap
B\big(z, \kappa^{1/2} \ell(Q_{j_0})\big)
\,,\]  by the construction of
$\om_{\sbf^*}^+$ and the separation property (ii).  Thus, the claim follows, for a
sufficiently large (fixed) choice of $M$.  Since $B(z, (M\kappa^{1/2})^{-1} r)$ misses
$\om_{\sbf^*}^+$ and all other $\om_{Q_j}^+$, we inherit an exterior corkscrew point in the present 
case (depending on $M$ and $\kappa$) from the Chord-arc domain $\om_{Q_{j_0}}^+$.    Again \eqref{eqvolumebound} follows.

\smallskip

\noindent{\bf Subcase 2b}: $\delta_{\Omega}(z) \geq  (M\kappa^{1/2})^{-1} \ell(Q_{j_0})$,
for every $z\in B(x,r/2)\cap \pom_{Q_{j_0}}^+$ (hence $\delta_{\Omega}(z) \approx_{\kappa,K}
\ell (Q_{j_0})$, since $\Omega^+_{Q_{j_0}} \subset B^*_{Q_{j_0}}$).  We claim that
consequently,
$x\in \partial I^*$, for some $I$ with $\ell(I) \approx \ell(Q_{j_0}) \gtrsim r$,
such that $\text{int}\, I^* \subset \om_Q^+$.  To see this, observe that it is clear 
if $x\in \pom_{Q_{j_0}}^+$ (just take $z=x$).  Otherwise, by the separation property (ii),
the remaining 
possibility in the present scenario 
is that $x\in\partial U_{Q'}^+\cap \partial\om_{\sbf^*}^+$, for some $Q'\in \sbf^*$ with
$Q_{j_0}\subset Q'$,  in which case $\delta_{\Omega}(x)\approx\ell(Q')\ge \ell(Q_{j_0})$.  Since also
$\delta_{\Omega}(x)\le |x-z|+\delta_{\Omega}(z)\lesssim_{\kappa, K}\ell(Q_{j_0})$, for any
$z\in B(x,r/2)\cap\pom^+_{Q_{j_0}}$, the claim follows.

On the other hand, since 
$x\in \pom_Q^+$, there is a $J\in\W$ with $\ell(J) \approx \ell(Q_{j_0})$, such that
$J^*$ is not contained in $\om_Q^+$.  We then have an exterior corkscrew point in
$J^* \cap B(x,r)$, and \eqref{eqvolumebound} follows in this case.

\smallskip

\noindent{\bf Case 3}: $B(x,r/2)$  meets $\pom_{Q_j}^+$ for at least one $Q_j \in \F^*_+$,
and $r > \kappa^{1/2} \ell(Q_{j_0})$, where as above $Q_{j_0}$ has the largest
length $\ell(Q_{j_0})$ among those $Q_j$ such that 
$\pom_{Q_j}^+$  meets $B(x,r/2)$.   In particular then,
$r \gg 2K \ell(Q_{j_0})=\diam(B^*_{Q_{j_0}})\geq \diam(\om^+_{Q_{j_0}})$, since we assume
$\kappa \gg K^4$.

We next claim that $B(x,r/4)$ contains some $x_1 \in \pom_{\sbf^*}^+ \cap
\pom_Q^+$.  This is clear if $x\in \partial\om_{\sbf^*}^+$ by taking $x_1=x$. Otherwise,  $x\in\partial\Omega_{Q_j}^+$ for some $Q_j\in\F^*$. Note that $U_{Q_j}^{\pm}\subset B(x_{Q_j}, K\ell(Q_j))\subset B(x, 2 K\ell(Q_{j}))$. Also,  
$U_{Q_j}^{\pm}\subset \om_{\sbf^*}^\pm$, by construction. 
On the other hand we note that if 
$z\in U_Q^{\pm}$ we have by \eqref{dist:UQ-pom}  
\[
|z-x_{Q_j}|\ge \delta_{\Omega}(z)\gtrsim \eta^{1/2}\ell(Q) 
\ge \eta^{1/2}2^{k_1}\ell(Q_{j})
\gg K\ell(Q_{j})
\]  
by our choice of $k_1$. 
By this fact, and
the definition of $\om_{\sbf^*}$, we have
$$U_Q^{\pm}
\subset  \om_{\sbf^*}^\pm\setminus B(x, 3 K\ell(Q_{j}))\,.$$ 
Using then that $\om_{\sbf^*}^\pm$ is connected, 
we see that  a path within $\om_{\sbf^*}^\pm$ joining  
$U_{Q_j}^{\pm}$ with $U_{Q}^{\pm}$ must meet $\partial B(x,3K\ell(Q_{j}))$.
Hence we can find  $y^\pm\in \om_{\sbf^*}^\pm\cap \partial B(x,3K\ell(Q_{j}))$. 
By Lemma \ref{lemma2.7},  $\om_{\sbf^*}^+$ and $\om_{\sbf^*}^-$ are disjoint 
(they live respectively above and below the graph $\Gamma_{\sbf}$), 
so a path joining 
$y^+$ and $y^-$  within   $\partial B(x,3K\ell(Q_{j}))$ meets some 
$x_1\in \partial \om_{\sbf^*}^+\cap \partial B(x,3K\ell(Q_{j}))$. 
On the other hand, 
$x_1\notin \overline{\Omega_{Q_j}^+}$, since 
$\overline{\Omega_{Q_j}^+}\subset \overline{B_{Q_j}^*}\subset 
B(x,3K\ell(Q_{j}))$. Furthermore,
$x_1\in \partial B(x,3K\ell(Q_{j}))\subset \kappa B_{Q_j}^*$, 
so by assumption (ii), we necessarily have that $x_1\notin \overline{\Omega_{Q_k}^+}$ for $k\neq j$. Thus, 
$x_1\in\partial\Omega_Q^+$, and moreover,
since  $B(x,r/2)$ meets $\partial \Omega_{Q_j}^+$ (at $x$) we have
$\ell(Q_j)\le\ell(Q_{j_0})$.   Therefore, 
$x_1$ is the claimed point, since in the current case 
$3K\ell(Q_{j})\le 3K\ell(Q_{j_0}) \ll r$.

With the point $x_1$ in hand, we note that 
\begin{equation}\label{eqballcontain}
B(x_1,r/4) \subset B(x,r/2)\quad \text{and } \quad B(x_1,r/2) \subset B(x,r)\,.
\end{equation}
By the exterior corkscrew condition for $\om_{\sbf^*}^+$, 
\begin{equation}\label{eqcs1}
\big|B(x_1,r/4) \setminus \overline{\om_{\sbf^*}^+} \,\big| \geq \,c_1 r^{n+1}\,,
\end{equation}
for some constant $c_1$ depending only on $n$ and the ADR/UR constants for $\pom$, by 
Lemma \ref{lemma2.7}.
Also, for each $\om^+_{Q_j}$ whose boundary meets 
$B(x_1,r/4) \setminus \overline{\om_{\sbf^*}^+}$ (and thus meets $B(x,r/2)$),
\begin{equation}\label{eq3.26}
\kappa^{1/4}\diam(B^*_{Q_j}) \leq 
\kappa^{1/4}\diam(B^*_{Q_{j_0}})\leq 2K\kappa^{1/4}\ell(Q_{j_0})
 \leq \frac{2K r}{\kappa^{1/4}} \ll r\,, 
 \end{equation}
in the present scenario. 
Consequently, $\kappa^{1/4}B_{Q_j}^* \subset B(x_1,r/2)$, for all such $Q_j$.

We now make the following claim.

\begin{claim}\label{claim1-ch}
On has
\begin{equation}\label{eqcs2}
\big|B(x_1,r/2) \setminus \overline{\om^+_Q}
\,\big| \geq \,c_2 r^{n+1}\,,
\end{equation}
for some $c_2>0$ depending only on allowable parameters.   
\end{claim}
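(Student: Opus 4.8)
The plan is to start from the exterior‑corkscrew volume bound \eqref{eqcs1} for $\Omega_{\sbf^*}^+$ and to show that enlarging $\Omega_{\sbf^*}^+$ to $\Omega_Q^+=\Omega_{\sbf^*}^+\cup\bigcup_{Q_j\in\F^*_+}\Omega^+_{Q_j}$ destroys only a negligible part of the region counted by \eqref{eqcs1}, the decisive ingredient being the strong separation hypothesis (ii) of Lemma \ref{lemmaCAD}.

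First I would set $E:=B(x_1,r/4)\setminus\overline{\Omega_{\sbf^*}^+}$, so that $|E|\geq c_1\,r^{n+1}$ with $c_1$ an allowable constant by \eqref{eqcs1}, and $E\subset B(x_1,r/4)\subset B(x_1,r/2)$. Since the upper $n$-ADR bound for $\Omega_Q^+$ has already been established, $\partial\Omega_Q^+$ has locally finite $H^n$ measure, hence $(n+1)$-dimensional Lebesgue measure zero; as $\Omega_Q^+$ is open this gives $|B(x_1,r/2)\setminus\overline{\Omega_Q^+}|=|B(x_1,r/2)\setminus\Omega_Q^+|$. Because $E$ is disjoint from $\Omega_{\sbf^*}^+$ (being disjoint from its closure) and $E\subset B(x_1,r/2)$, we get from \eqref{eqomdef} that $E\setminus\bigcup_{Q_j\in\F^*_+}\Omega^+_{Q_j}\subset B(x_1,r/2)\setminus\Omega_Q^+$, and hence
$$|B(x_1,r/2)\setminus\overline{\Omega_Q^+}|\ \geq\ |E|-\sum_{Q_j}|\Omega^+_{Q_j}\cap E|\,,$$
where the sum runs over those $Q_j\in\F^*_+$ with $\Omega^+_{Q_j}\cap E\neq\emptyset$.

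The second step is to make that sum small. Fix such a $Q_j$. Since $x_1\in\partial\Omega_Q^+$, the connected open set $\Omega^+_{Q_j}\subset\Omega_Q^+$ cannot contain a neighborhood of $x_1$, so $\Omega^+_{Q_j}$ meets $B(x_1,r/4)$ without containing it; a clopen‑in‑$B(x_1,r/4)$ argument then forces $\partial\Omega^+_{Q_j}$ to meet $B(x_1,r/4)\subset B(x,r/2)$. Consequently $\ell(Q_j)\leq\ell(Q_{j_0})$ by the choice of $Q_{j_0}$, and then, exactly as in the derivation of \eqref{eq3.26} (which uses that we are in Case 3, $r>\kappa^{1/2}\ell(Q_{j_0})$ and $\kappa\gg K^4$), one obtains $\kappa^{1/4}B^*_{Q_j}\subset B(x_1,r/2)$. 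Since $\kappa^{1/4}<\kappa$, hypothesis (ii) of Lemma \ref{lemmaCAD} makes the balls $\kappa^{1/4}B^*_{Q_j}$ over these indices pairwise disjoint and all contained in $B(x_1,r/2)$; using $\Omega^+_{Q_j}\subset B^*_{Q_j}$ this yields $\sum_{Q_j}|\Omega^+_{Q_j}\cap E|\leq\sum_{Q_j}|B^*_{Q_j}|=\kappa^{-(n+1)/4}\sum_{Q_j}|\kappa^{1/4}B^*_{Q_j}|\leq\kappa^{-(n+1)/4}|B(x_1,r/2)|\leq C_n\,\kappa^{-(n+1)/4}\,r^{n+1}$. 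Combining with $|E|\geq c_1 r^{n+1}$ and choosing $\kappa$ large enough in terms of $c_1$ and $n$ — legitimate, since $\kappa$ is a free parameter fixed only at the end of the proof and $c_1$ depends only on the allowable parameters — gives $|B(x_1,r/2)\setminus\overline{\Omega_Q^+}|\geq\tfrac12 c_1\,r^{n+1}=:c_2\,r^{n+1}$.

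I expect the main obstacle to be conceptual rather than computational: the family $\F^*_+$ may be infinite and the subdomains $\Omega^+_{Q_j}$ may accumulate (necessarily only toward $\pom$), so one must not argue naively that a point lying outside $\overline{\Omega_{\sbf^*}^+}$ and outside every $\overline{\Omega^+_{Q_j}}$ lies outside $\overline{\Omega_Q^+}$. Both difficulties are handled by the same device used above: passing to the open set $\Omega_Q^+$ and discarding its Lebesgue‑null boundary, together with the $\kappa$-separation of (ii), which is built into the hypothesis precisely to make the total volume of the $\Omega^+_{Q_j}$ inside $B(x_1,r/2)$ negligible against $|E|$. A secondary point requiring care is that $\Omega^+_{Q_j}\cap E\neq\emptyset$ really does entail the size/location control $\kappa^{1/4}B^*_{Q_j}\subset B(x_1,r/2)$ needed here; this is the same book‑keeping as for \eqref{eq3.26}, now applied to the open subdomain and leaning on $x_1\in\partial\Omega_Q^+$.
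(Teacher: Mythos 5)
Your proof is correct, and it takes a genuinely different, more direct route than the paper's.  The paper splits into two cases according to whether \eqref{eqcs3} holds: if it does, it subtracts directly; if it fails, it invokes the auxiliary Claim \ref{claim2-ch} (exterior volume in the annuli $\kappa^{1/4}B_{Q_j}^*\setminus B_{Q_j}^*$), combined with the packing bound \eqref{eqcs4}, to recover a definite amount of exterior volume.  You bypass both the case split and Claim \ref{claim2-ch} by observing that the separation hypothesis (ii) makes the inflated balls $\kappa^{1/4}B_{Q_j}^*$ (for the relevant $j$) pairwise disjoint and contained in $B(x_1,r/2)$, so that $\sum_j|B_{Q_j}^*|\leq\kappa^{-(n+1)/4}|B(x_1,r/2)|$, which can be made $\leq \tfrac12 c_1 r^{n+1}$ by taking $\kappa$ large.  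The price is that $\kappa$ must be tuned against the exterior‑corkscrew constant $c_1$ of $\om_{\sbf^*}^+$; but $c_1$ depends only on $n$ and the ADR/UR constants by Lemma \ref{lemma2.7}, and Lemma \ref{lemmaCAD} expressly permits $\kappa$ to be fixed ``large enough, depending only on allowable parameters,'' so this is legitimate.  Your route is shorter and eliminates Claim \ref{claim2-ch}; the paper's version has the modest advantage of working for every $\kappa\gg K^4$ without re‑tuning $\kappa$ against $c_1$.  One small remark: rather than appealing to the upper $n$-ADR bound for $\partial\Omega_Q^+$ (which the paper asserts but does not prove) to discard its Lebesgue measure, it is slightly cleaner to note directly that $\partial\Omega_Q^+$ is contained in $\pom$ together with the countable union of the faces of the fattened Whitney cubes, both of which are Lebesgue‑null.
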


Observe that by the second
containment in \eqref{eqballcontain},  we obtain \eqref{eqvolumebound} as an  immediate
consequence of \eqref{eqcs2}, and thus the proof will be complete once we have
established Claim \ref{claim1-ch}.

\begin{proof}[Proof of Claim \ref{claim1-ch}]

 To prove the claim, we
suppose first that
\begin{equation}\label{eqcs3}
\sum \big| B_{Q_j}^*  \setminus \overline{\om_{\sbf^*}^+} \,\big| \leq\, \frac{c_1}{2}\, r^{n+1}\,,
\end{equation}
where the sum runs over those $j$ such that
$\overline{B_{Q_j}^*}$ meets $B(x_1,r/4)\setminus   \overline{\om_{\sbf^*}^+}$, 
and $c_1$ is the constant in \eqref{eqcs1}.  In that case, \eqref{eqcs2} holds with
$c_2 = c_1/2$  (and even with
$B(x_1,r/4)$), by definition of $\om^+_Q$  (see \eqref{eqomdef}), and the fact that
$\om_{Q_j} \subset B_{Q_j}^*$.  
On the other hand, if
\eqref{eqcs3} fails, then summing over the same subset of indices $j$, we have
\begin{equation}\label{eqcs4}
C K \sum \ell(Q_j)^{n+1}   \geq \sum
\big| B_{Q_j}^*  \setminus \overline{\om_{\sbf^*}^+} \,\big| \geq\, \frac{c_1}{2}\, r^{n+1}
\end{equation}
We now make a second claim:

\begin{claim}\label{claim2-ch}
For $j$ appearing in the previous sum, we have 
\begin{equation}\label{eqcs5}
 \big| \left(\kappa^{1/4}B_{Q_j}^*\setminus B_{Q_j}^*\right)  
 \setminus \overline{\om_{\sbf^*}^+} \,\big| \geq\, c\, \ell(Q_j)^{n+1}\,,
\end{equation}
for some uniform $c>0$.  
\end{claim}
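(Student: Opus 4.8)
The plan is to exploit the fact that $\om^+_{\sbf^*}$ is confined to one side of the Lipschitz graph $\Gamma_{\sbf}$, whereas the spherical shell $\kappa^{1/4}B^*_{Q_j}\setminus B^*_{Q_j}$ is so much larger than $B^*_{Q_j}$ that a definite proportion of it lies on the opposite side of $\Gamma_{\sbf}$, hence outside $\overline{\om^+_{\sbf^*}}$.

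First I would locate $Q_j$ in the Corona structure. Since $Q_j\in\F^*\subset\F$, Claim \ref{claim3.16} gives $Q_j\in\G$ with $Q_j\in\sbf$, so \eqref{eq2.2a} applies to $Q_j$; as $x_{Q_j}\in\Delta^*_{Q_j}$, this yields $\dist(x_{Q_j},\Gamma_{\sbf})<\eta\,\ell(Q_j)$. After the rotation of coordinates in Lemma \ref{lemma2.7}, write $\Gamma_{\sbf}=\{(z,\varphi(z)):z\in\rn\}$ with $\|\nabla\varphi\|_\infty\le\eta$, and let $\mathcal{O}^-:=\{(z,t)\in\ree:t<\varphi(z)\}$ denote the open region strictly below $\Gamma_{\sbf}$, i.e.\ on the side opposite to the ``$+$'' Whitney regions.

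The next step is the key containment. Because $\sbf^*\subset\sbf$, Lemma \ref{lemma2.7} says that for every $Q'\in\sbf^*$ the cubes of $\W_{Q'}^{*,+}$ lie above $\Gamma_{\sbf}$, and by the third estimate in \eqref{eq2.whitney2} the fattened cubes $I^*(\tau)$ stay at distance $\ge c\eta^{1/2}\ell(Q')>0$ from $\Gamma_{\sbf}$; hence each $U^+_{Q'}$ lies above $\Gamma_{\sbf}$. Therefore $\om^+_{\sbf^*}=\bigcup_{Q'\in\sbf^*}U^+_{Q'}$ — and so its closure — is contained in $\{(z,t):t\ge\varphi(z)\}$, which is disjoint from $\mathcal{O}^-$. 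Consequently
\[
\big(\kappa^{1/4}B^*_{Q_j}\setminus B^*_{Q_j}\big)\cap\mathcal{O}^-\ \subset\ \big(\kappa^{1/4}B^*_{Q_j}\setminus B^*_{Q_j}\big)\setminus\overline{\om^+_{\sbf^*}}\,,
\]
and it is enough to bound the Lebesgue measure of the left-hand side below by $c\,\ell(Q_j)^{n+1}$.

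Finally I would carry out the (elementary) volume count. Put $R:=\kappa^{1/4}K\ell(Q_j)$, so $\kappa^{1/4}B^*_{Q_j}=B(x_{Q_j},R)$ and $B^*_{Q_j}=B(x_{Q_j},K\ell(Q_j))$. Since $\dist(x_{Q_j},\Gamma_{\sbf})<\eta\,\ell(Q_j)\ll R$ and $\varphi$ has slope $\le\eta\ll1$, a routine flat-graph estimate (the graph cuts the ball $B(x_{Q_j},R)$ essentially through its center) gives $\big|\mathcal{O}^-\cap B(x_{Q_j},R)\big|\ge c_n R^{n+1}$ for a dimensional $c_n>0$, while trivially $\big|\mathcal{O}^-\cap B^*_{Q_j}\big|\le\big|B^*_{Q_j}\big|=c_n'(K\ell(Q_j))^{n+1}$. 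Subtracting and recalling $R=\kappa^{1/4}K\ell(Q_j)$,
\[
\big|\big(\kappa^{1/4}B^*_{Q_j}\setminus B^*_{Q_j}\big)\cap\mathcal{O}^-\big|\ \ge\ \big(c_n\,\kappa^{(n+1)/4}-c_n'\big)(K\ell(Q_j))^{n+1}\ \ge\ \frac{c_n}{2}\,(K\ell(Q_j))^{n+1}\ \ge\ c\,\ell(Q_j)^{n+1}
\]
once $\kappa$ is taken large enough in terms of $n$ (hence of allowable parameters), which is consistent with the standing hypothesis $\kappa\gg K^4$. Together with the containment above this gives \eqref{eqcs5}. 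I expect the only point requiring care to be the containment step: one must verify that \emph{all} of $\overline{\om^+_{\sbf^*}}$ — including limit points created by taking the closure, and the contributions of large cubes $Q'\in\sbf^*$ — stays weakly above $\Gamma_{\sbf}$, which is exactly what the uniform above/below splitting and the quantitative separation in Lemma \ref{lemma2.7} provide; the remaining geometry is a straightforward consequence of the scale gap between $K\ell(Q_j)$ and $\kappa^{1/4}K\ell(Q_j)$.
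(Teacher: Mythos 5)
Your argument is correct, and it takes a genuinely different route from the paper's. You argue directly from the bilateral corona structure: since $Q_j\in\F\subset\sbf$ (by Claim \ref{claim3.16}), the bilateral flatness estimate \eqref{eq2.2a} places $x_{Q_j}$ within $\eta\,\ell(Q_j)$ of $\Gamma_{\sbf}$, while Lemma \ref{lemma2.7} together with the third line of \eqref{eq2.whitney2} confines $\om^+_{\sbf^*}$ (and hence its closure) to the closed region on or above the graph; a flat-graph volume count in the large ball $\kappa^{1/4}B^*_{Q_j}$ then wins because $\eta\,\ell(Q_j)\ll R$ and the slope $\eta\ll1$. The paper instead proceeds by a short dichotomy: either $\tfrac12\kappa^{1/4}B^*_{Q_j}$ is already entirely outside $\overline{\om^+_{\sbf^*}}$ (trivial), or it meets $\pom^+_{\sbf^*}$ at some $z$, in which case the exterior corkscrew condition for the Chord-arc domain $\om^+_{\sbf^*}$ applied to $B(z,2^{-1}\kappa^{1/4}K\ell(Q_j))$ produces the lower volume bound, and the inclusion $B(z,2^{-1}\kappa^{1/4}K\ell(Q_j))\subset\kappa^{1/4}B^*_{Q_j}$ together with $\kappa\gg1$ finishes. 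Your route is more elementary and self-contained (it does not invoke the derived Chord-arc/exterior-corkscrew property of $\om^+_{\sbf^*}$, only the raw corona data), whereas the paper's argument is shorter to write and does not rely on knowing $x_{Q_j}$ is close to $\Gamma_{\sbf}$. Both are valid under the standing hypotheses $\eta\leq K^{-4/3}\ll1$ and $\kappa\gg K^4$; the one point you flagged as delicate -- that the entire closure $\overline{\om^+_{\sbf^*}}$ stays weakly above $\Gamma_{\sbf}$ -- is indeed exactly what the quantitative separation $\dist(I^*(\tau),\Gamma_{\sbf})\geq c\eta^{1/2}\ell(Q')$ provides, so that step is sound.
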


Taking the latter claim for granted momentarily, we insert 
estimate \eqref{eqcs5} into \eqref{eqcs4} and sum, to obtain
\begin{equation}\label{eqcs6}
\sum  \big| \left(\kappa^{1/4}B_{Q_j}^*\setminus B_{Q_j}^*\right)  
 \setminus \overline{\om_{\sbf^*}^+} \,\big|  \gtrsim r^{n+1}\,.
\end{equation}
By the separation property (ii), the balls $\kappa^{1/4} B^*_{Q_{j}}$ are pairwise disjoint, and
by assumption $\om_{Q_j}^+\subset B_{Q_j}^*$.  Thus,
for any given $j_1$,  $\kappa^{1/4} B^*_{Q_{j_1}} \setminus \overline{B^*_{Q_{j_1}}}$ misses 
$\cup_j \overline{\Omega^+_{Q_j}}$.   Moreover, as noted above (see \eqref{eq3.26}
and the ensuing comment), $\kappa^{1/4}B_{Q_j}^* \subset B(x_1,r/2)$ for each $j$
under consideration in \eqref{eqcs3}-\eqref{eqcs6}.  Claim \ref{claim1-ch} now follows.
\end{proof}

\begin{proof}[Proof of Claim \ref{claim2-ch}]
There are two cases:  if $\frac12 \kappa^{1/4} B_{Q_j}^* \subset
\ree\setminus \overline{\om_{\sbf^*}^+}$, then \eqref{eqcs5} is trivial, since $\kappa \gg 1$.  Otherwise,
$\frac12 \kappa^{1/4} B_{Q_j}^*$ contains a point $z\in \pom_{\sbf^*}^+$.  In the latter case,
by the exterior corkscrew condition for $\om_{\sbf^*}^+$, 
\[\big| B\big(z, 2^{-1}\kappa^{1/4}K \ell(Q_j)\big) \setminus \overline{\om_{\sbf^*}^+}\,\big|
\,\gtrsim \,\kappa^{(n+1)/4}\big(K\ell(Q_j)\big)^{n+1}  \gg \, |B_{Q_j}^*|\,,
\]
since $\kappa \gg 1$.   On the other hand, $ B\big(z, 2^{-1}\kappa^{1/4}K \ell(Q_j)\big)
\subset \kappa^{1/4} B_{Q_j}^*$, and \eqref{eqcs5} follows, finishing the proof of Claim  \ref{claim2-ch}.
\end{proof}

Next, \eqref{eqballcontain} and \eqref{eqcs2} yield \eqref{eqvolumebound} in the present case and hence
the proof of Lemma \ref{lemmaCAD} is complete.
\end{proof}

\section{Step 2:  Proof of $H[M_0,1]$}\label{ss3.2}  We shall deduce $H[M_0,1]$ (see Section \ref{ss-indhyp})
from the following pair of claims.

\begin{claim}\label{claim3.17} $H[0,\theta]$ holds for every $\theta\in(0,1]$.
\end{claim}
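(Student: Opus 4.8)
The plan is to exploit that, since $a=0$, the hypothesis \eqref{eq3.12} of $H[0,\theta]$ forces $\mut(\dd(Q))=0$, and hence (as $\alpha_{Q'}\ge 0$ for all $Q'$) $\mut(\dd(Q'))=0$ for every $Q'\subset Q$, i.e.\ $\|\mut\|_{\C(Q)}=0\le 1/2$. Applying Claim \ref{claim3.16} with $\F=\emptyset$ then tells us that every cube of $\dd(Q)$ belongs to one and the same tree $\sbf\subset\G$, and that $\dd(Q)$ is a semi-coherent subtree of $\sbf$. So the construction preceding Lemma \ref{lemma2.7} produces the domains
\[
\Omega_Q^\pm:=\Omega^\pm_{\dd(Q)}=\bigcup_{Q'\in\dd(Q)}U_{Q'}^\pm ,
\]
each of which, by Lemma \ref{lemma2.7}, is a Chord-arc domain and a union of fattened Whitney cubes, with Chord-arc constants depending only on $n,\tau,\eta,K$ and the ADR/UR constants of $\pom$ --- hence, since $\eta,K$ were fixed in terms of $\lambda$, only on the allowable parameters. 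Moreover $U_Q^\pm\subset\Omega_Q^\pm$ (because $Q\in\dd(Q)$) and $\Omega_Q^\pm\subset B_Q^*$ by \eqref{def:BQ*}. Finally each $\Omega_Q^\pm$ is connected and contained in $\ree\setminus\pom=\Omega\cup(\ree\setminus\overline{\Omega})$, a disjoint union of two open sets; so whenever $\Omega_Q^\pm$ meets $\Omega$ it is entirely contained in $\Omega$ --- in particular this happens whenever $U_Q^\pm$ meets $V_Q$. Taking $V_Q^*:=V_Q$ and assigning to each component $U_Q^i$ of $U_Q$ that meets $V_Q$ the corresponding domain $\Omega_Q^i\in\{\Omega_Q^+,\Omega_Q^-\}$ with $U_Q^i\subset\Omega_Q^i$, all of the structural requirements of $H[0,\theta]$ are satisfied except the surface-measure lower bound.

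The one substantive point I would then prove is that the set $F_Q:=\bigcup_{x\in V_Q}F_{car}(x,Q,\lambda)\subset Q$ (which has $\sigma(F_Q)\ge\theta\sigma(Q)$ by \eqref{eq3.10}) is contained in $\pom_Q^+\cup\pom_Q^-$, with the sign dictated by the component of $U_Q$ containing the carrot-path endpoint. Concretely: fix $y\in F_Q$ and a $\lambda$-carrot path $\gamma=\gamma(y,x)$ with $x\in V_Q$; for $j\ge k_0$ (see \eqref{eq3.15}) let $Q^{(j)}\ni y$ be the dyadic descendant of $Q$ with $\ell(Q^{(j)})=2^{-j}\ell(Q)$. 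By Lemma \ref{lemma3.15}, $\gamma$ meets $U_{Q^{(j)}}\cap\Omega$ at a point $z_j$; by Lemma \ref{lemma3.37} (valid since $Q^{(j)},Q\in\G$ lie in the common tree $\sbf$) the sign of $z_j$ equals the fixed sign $\epsilon=\epsilon(x)$ of the component $U_Q^\epsilon\ni x$, so $z_j\in U_{Q^{(j)}}^\epsilon\subset\Omega_Q^\epsilon$. Since \eqref{dist:UQ-pom} yields $|z_j-y|\lesssim_K 2^{-j}\ell(Q)\to 0$, we get $y\in\overline{\Omega_Q^\epsilon}$; and since $y\in\pom$ while $\Omega_Q^\epsilon\cap\pom=\emptyset$, this forces $y\in\partial\Omega_Q^\epsilon=\pom_Q^\epsilon$, as wanted.

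To finish, split $V_Q=V_Q^+\cup V_Q^-$ according to which component of $U_Q$ each point lies in and put $F_Q^\pm:=\bigcup_{x\in V_Q^\pm}F_{car}(x,Q,\lambda)$, so that $F_Q=F_Q^+\cup F_Q^-$ and, by the previous paragraph, $F_Q^\pm\subset\pom_Q^\pm\cap Q$ whenever $V_Q^\pm\ne\emptyset$ (in which case $U_Q^\pm$ does meet $V_Q^*$). Hence
\[
\sum_{i:\ U_Q^i\cap V_Q^*\ne\emptyset}\sigma(\pom_Q^i\cap Q)\ \ge\ \sigma(F_Q^+)+\sigma(F_Q^-)\ \ge\ \sigma(F_Q)\ \ge\ \theta\,\sigma(Q),
\]
so $H[0,\theta]$ holds with $c_0(\theta):=\theta/2<1$. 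I expect the limiting argument of the middle paragraph --- the one identifying points of $F_Q$ with boundary points of $\Omega_Q^\pm$, which has to weave together Lemmas \ref{lemma3.15} and \ref{lemma3.37} with the Whitney localization \eqref{dist:UQ-pom} --- to be the only non-routine ingredient; everything else is immediate once one notices that $a=0$ confines all of $\dd(Q)$ to a single good tree, where the Chord-arc subdomains of Lemma \ref{lemma2.7} are already available. This is really the easy ``ground case'' of the two-parameter induction.
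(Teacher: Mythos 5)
Your proof is correct, and the front half (reducing via $\|\mut\|_{\C(Q)}=0$ and Claim~\ref{claim3.16} with $\F=\emptyset$ to a single tree, then forming $\Omega_Q^\pm=\Omega^\pm_{\dd(Q)}$ and checking they are CADs inside $B_Q^*\cap\Omega$) matches the paper. Where you diverge is in the measure estimate. The paper disposes of it in one line by invoking \cite[Proposition A.14]{HMM}, which gives the \emph{full} inclusion $Q\subset\pom^\pm_{\sbf'}\cap\pom$ whenever $\sbf'$ is a coherent subtree; this yields $\sigma(\pom^\pm_{\sbf'}\cap Q)=\sigma(Q)$, so the required lower bound holds trivially (with constant $1$, not just $\theta$) without ever touching the carrot-path hypothesis. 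You instead prove the weaker statement $F_Q\subset\pom_Q^+\cup\pom_Q^-$ from scratch, by following the carrot path down through the descendant cubes $Q^{(j)}\ni y$, using Lemma~\ref{lemma3.15} to intersect $U_{Q^{(j)}}$, Lemma~\ref{lemma3.37} to pin down the sign, and \eqref{dist:UQ-pom} to pass to the limit $z_j\to y\in\overline{\Omega_Q^\epsilon}$. That argument is sound (the one small technicality is that $|z_j-y|\lesssim_K 2^{-j}\ell(Q)$ requires combining the containment of $U_{Q^{(j)}}$ in $B(x_{Q^{(j)}},CK^{1/2}\ell(Q^{(j)}))$ with $y\in Q^{(j)}$, which you've implicitly done correctly) and yields $\ge\theta\sigma(Q)$, which suffices. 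Your route is longer but has the virtue of being self-contained --- it avoids the external citation --- and of rehearsing the same carrot-path/Whitney-localization machinery (Lemmas~\ref{lemma3.15}, \ref{lemma3.37}) that the non-base cases of the induction lean on heavily; the paper's route is cleaner and shows the base case is independent of the carrot-path structure entirely, which is why the authors call it ``trivial.''
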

\begin{proof}[Proof of Claim \ref{claim3.17}] 
	If $a=0$ in \eqref{eq3.12}, then $\|\mut\|_{\C(Q)}=0$, whence it follows by Claim \ref{claim3.16}, with
	$\F=\emptyset$, that there is a tree $\sbf\subset \G$, with $\dd(Q)\subset \sbf$.  
	Hence $\sbf':=\dd(Q)$ is a coherent subtree of $\sbf$,  so by 
	Lemma \ref{lemma2.7}, 
	each of $\om_{\sbf'}^\pm$ 
	is a CAD, containing $U_Q^\pm$, respectively, with $\om_{\sbf'}^\pm\subset B_Q^*$ by \eqref{def:BQ*}.  
	 Moreover,  by \cite[Proposition A.14]{HMM}
$$
Q \subset
\pom_{\sbf'}^\pm \cap\pom\,,
$$ 
so that $\sigma(Q)\leq \sigma(\pom_{\sbf'}^\pm \cap\pom).$ 
	Thus, $H[0,\theta]$ holds trivially.
\end{proof}

\begin{claim}\label{claim3.18}
	There is a uniform constant $b>0$ such that
	$H[a,1] \implies H[a+b,1]$, for all $a\in [0,M_0)$.
\end{claim}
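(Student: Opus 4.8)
The plan is to run the extrapolation self-improvement step by a stopping-time decomposition of $\dd(Q)$ into subcubes on which the induction hypothesis $H[a,1]$ is available, together with a sawtooth region linking them to the Whitney region $U_Q$. First dispose of the trivial case: if $\mut(\dd(Q))\le a\sigma(Q)$ then $H[a,1]$ applies verbatim and $H[a+b,1]$ holds with $c_{a+b}:=c_a$, so we may assume $a\sigma(Q)<\mut(\dd(Q))\le(a+b)\sigma(Q)$, whence the excess $\mut(\dd(Q))-a\sigma(Q)$ is at most $b\sigma(Q)$. Then carry out the stopping-time construction: select a pairwise disjoint family $\F=\F_Q=\{Q_j\}\subset\dd(Q)\setminus\{Q\}$ of maximal subcubes, of scale $\ell(Q_j)\le 2^{-k_1}\ell(Q)$ (as required by Lemma \ref{lemmaCAD}), at which the Carleson mass has descended back to the level $a$, i.e.\ $\mut(\dd(Q_j))\le a\sigma(Q_j)$, and then thin the family by a Vitali-type sub-selection so that the dilates $\kappa B^*_{Q_j}$ are pairwise disjoint while $\bigcup_j Q_j$ still retains a definite fraction of $\sigma(Q)$. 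Calibrating the stopping thresholds so that one simultaneously has $\mut(\dd(Q_j))\le a\sigma(Q_j)$ (so that $H[a,1]$ applies in each $Q_j$) \emph{and} $\|\mut_\F\|_{\C(Q)}\le 1/2$ (so that Claim \ref{claim3.16} applies and puts all of $\dd_\F\cap\dd(Q)$, together with $\F$ and the children of cubes in $\F$, inside a single tree $\sbf\subset\G$), using the packing bound \eqref{eq6.2} for the excess mass, is the crux: it is precisely this balance that fixes the admissible universal value of $b$, and I expect it to be the main technical obstacle.

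Granting this, $\sbf^*:=\{Q'\in\dd(Q):Q_j\subset Q'\text{ for some }Q_j\in\F\}$ is a semi-coherent subtree of $\sbf$, so $\Omega^\pm_{\sbf^*}$ are Chord-arc domains by Lemma \ref{lemma2.7}. Next push the connectivity hypothesis down: since $\theta=1$, a.e.\ point of $Q$ — hence a.e.\ point of each $Q_j\subset Q$ — lies in $F_Q=\bigcup_{x\in V_Q}F_{car}(x,Q,\lambda)$, so by Lemma \ref{lemma:VQ} (and Remark \ref{remark-VQ-children}) there is a nonempty $V_{Q_j}\subset U_{Q_j}\cap\Omega$ with $F_{Q_j}\supset F_Q\cap Q_j$, whence $\sigma(F_{Q_j})\ge\sigma(Q_j)$: the hypothesis $\theta=1$ is preserved, and each carrot-path segment realizing $V_{Q_j}$ runs up toward $U_Q$. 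Applying $H[a,1]$ in $Q_j$ produces a nonempty $V^*_{Q_j}\subset V_{Q_j}$ and, for each component $U^i_{Q_j}$ of $U_{Q_j}$ meeting $V^*_{Q_j}$, a Chord-arc domain $\Omega^i_{Q_j}$ with $U^i_{Q_j}\subset\Omega^i_{Q_j}\subset B^*_{Q_j}\cap\Omega$ and $\sum_i\sigma(\pom^i_{Q_j}\cap Q_j)\ge c_a\sigma(Q_j)$; by Lemma \ref{lemma3.37} the $\pm$ label of $\Omega^i_{Q_j}$ matches the label of the component of $U_Q$ from which the corresponding carrot path emanates, so the $\Omega^\pm_{Q_j}$ attach coherently to $\Omega^\pm_{\sbf^*}$. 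Feeding $\sbf^*$, $\F$, and the $\Omega^\pm_{Q_j}$ into Lemma \ref{lemmaCAD}, with $\F^*_\pm$ the subfamily of $Q_j$ that produced a domain of the given sign, yields Chord-arc domains $\Omega^\pm_Q\subset B^*_Q\cap\Omega$ containing $U^\pm_Q$.

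Finally, set $V^*_Q$ to be the collection of the points of $V_Q$ lying on the carrot paths through the various $V^*_{Q_j}$, so that a component $U^\pm_Q$ of $U_Q$ meets $V^*_Q$ exactly when $\F^*_\pm\neq\emptyset$, i.e.\ exactly when $\Omega^\pm_Q$ is defined. For the measure estimate, note (by a boundary-identification of the type in \cite[Proposition A.14]{HMM}) that $\pom^\pm_Q\cap Q$ contains $Q\setminus\bigcup_j Q_j$, while $\pom^\pm_Q\cap Q_j\supset\pom^\pm_{Q_j}\cap Q_j$ for $Q_j\in\F^*_\pm$; summing over the relevant signs and using $\sum_i\sigma(\pom^i_{Q_j}\cap Q_j)\ge c_a\sigma(Q_j)$ together with $\sum_j\sigma(Q_j)\le\sigma(Q)$ and $c_a<1$ gives $\sum_i\sigma(\pom^i_Q\cap Q)\ge c_a\sigma(Q)$, so $H[a+b,1]$ holds with $c_{a+b}:=c_a$ (in particular $c_a$ does not degrade along the iteration towards $H[M_0,1]$). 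A routine separate check handles the degenerate situation where $Q$, its dyadic parent, or its dyadic grandparent already lies in $\M\cup\B$, so that $\alpha_Q=\sigma(Q)$ obstructs $\|\mut_\F\|_{\C(Q)}\le 1/2$ at the top scale: one simply passes first to a suitable dyadic descendant of $Q$ of bounded generation, which is harmless by scale invariance.
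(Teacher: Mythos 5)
Your overall blueprint — stopping-time decomposition, $[HM$-$I]$-style Corona lemma, pushing the $\theta=1$ hypothesis to the stopping cubes via Lemma \ref{lemma:VQ}, building $\Omega^\pm_Q$ from $\Omega^\pm_{\sbf^*}$ and the inductively produced $\Omega^\pm_{Q_j}$ through Lemma \ref{lemmaCAD} — is the right framework, and your final measure estimate is in the right spirit. But the step you flag as the crux, the ``calibration'' of a single stopping time achieving simultaneously $\mut(\dd(Q_j))\le a\sigma(Q_j)$, $\ell(Q_j)\le 2^{-k_1}\ell(Q)$, ampleness of $\cup Q_j$, and $\|\mut_\F\|_{\C(Q)}\le 1/2$, is not a matter of tuning $b$: it genuinely cannot be done by one stopping family, and this is the real content of the claim. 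The maximal cubes with $\mut(\dd(Q_j))\le a\sigma(Q_j)$ can be as large as the children of $Q$, which violates the scale requirement of Lemma \ref{lemmaCAD}; forcing them to be at scale $\le 2^{-k_1}\ell(Q)$ destroys the maximality and with it both the Carleson bound on the sawtooth and the ampleness. The paper resolves this with a dichotomy. \textbf{Case 1}: there is some $Q'\in\dd_{k_1}(Q)$ with $\mut(\dd(Q'))\le a\sigma(Q')$. Then one bypasses the Corona decomposition entirely, applies the IH to that single $Q'$, and joins each $\Omega^i_{Q'}$ to $U^i_Q$ by adjoining the Whitney cubes along a $\lambda$-carrot path from $Q'$ up to $U_Q$ (this is where the bound $\ell(Q')=2^{-k_1}\ell(Q)\approx\ell(Q)$ and the carrot condition are exploited to stay inside $B^*_Q$). \textbf{Case 2}: every $Q'\in\dd_{k_1}(Q)$ has $\mut(\dd(Q'))>a\sigma(Q')$; only now does one invoke Lemma \ref{lemma:Corona}, and the point of \eqref{eq:erwgte} is that \emph{because} Case 1 fails, the good stopping cubes are \emph{automatically} of scale $\le 2^{-k_1}\ell(Q)$ by pigeonholing. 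Without this dichotomy your plan does not close.

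A second gap: you apply $H[a,1]$ directly to the stopping cubes $Q_j$, but the good stopping cubes produced by Lemma \ref{lemma:Corona} only satisfy $\mut(\dd(Q_j)\setminus\{Q_j\})\le a\sigma(Q_j)$ — the cube $Q_j$ itself may carry $\alpha_{Q_j}=\sigma(Q_j)$, making $\mut(\dd(Q_j))\le a\sigma(Q_j)$ false for small $a$. The paper pigeonholes to a dyadic \emph{child} $Q_j'$ with $\mut(\dd(Q_j'))\le a\sigma(Q_j')$, applies the IH there, and then glues $\Omega^\pm_{Q_j}=\Omega^\pm_{Q_j'}\cup U^\pm_{Q_j}$ using that the components $U^\pm_{Q_j'}$ and $U^\pm_{Q_j}$ overlap (all cubes in $\F\cup\F'$ lie in one tree $\sbf$ by Claim \ref{claim3.16}). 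Finally, Case 2 splits further: if $F_0=Q\setminus\cup_j Q_j$ is a definite fraction, one takes $\Omega^\pm_Q=\Omega^\pm_{\sbf'}$ with $\sbf'=\dd_\F\cap\dd(Q)$ (no $\F^*$ and no Lemma \ref{lemmaCAD} needed), and only when $G_0=\cup_{\F_{good}}Q_j$ is ample does one run the Vitali selection and Lemma \ref{lemmaCAD}. Your claimed inclusion $\pom^\pm_Q\cap Q\supset Q\setminus\bigcup_j Q_j$ does not hold once you have thinned to $\F^*$: the subtree $\sbf^*$ reaches only down to the Vitali-selected cubes, so the boundary you inherit from $\Omega^\pm_{\sbf^*}$ does not see all of $F_0$. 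In the paper's Case 2b the lower bound comes solely from $\sum_{Q_j\in\F^*_{good}}\sigma(Q_j\cap\pom_{Q_j})\gtrsim\sigma(Q)$; the $F_0$ contribution is what saves Case 2a when $\F_{good}$ is thin. These two contributions must be kept separate rather than summed as you propose.
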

Combining Claims \ref{claim3.17} and \ref{claim3.18}, we find that $H[M_0,1]$ holds.

To prove Claim \ref{claim3.18}, we shall require the following.
\begin{lemma}[{\cite[Lemma 7.2]{HM-I}}]\label{lemma:Corona}
	Suppose that $E$ is an $n$-ADR set,
	and let $\mut$ be a discrete Carleson measure, as in \eqref{eq4.1}-\eqref{eq4.7a}
	above.   Fix $Q\in \dd(E)$.
	Let $a\geq 0$ and $b>0$, and suppose that
	$\mut\big(\dd(Q)\big)\leq (a+b)\,\sigma(Q).$
	Then there is a family $\F=\{Q_j\}\subset\dd(Q)$
	of pairwise disjoint cubes, and a constant $C$ depending only on $n$
	and the ADR constant such that
	\begin{equation} \label{Corona-sawtooth}
	\|\mut_\F\|_{\C(Q)}
	\leq C b,
	\end{equation}
	\begin{equation}
	\label{Corona-bad-cubes}
	\sigma \bigg(\bigcup_{\F_{bad}}Q_j\bigg)
	\leq \frac{a+b}{a+2b}\, \sigma(Q)\,,
	\end{equation}
	where $\F_{bad}:=
	\{Q_j\in\F:\,\mut\big(\dd(Q_j)\setminus \{Q_j\}\big)>\,a\sigma(Q_j)\}$.
\end{lemma}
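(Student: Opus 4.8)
The statement is the discrete ``corona'' (stopping time) decomposition of Carleson measures, namely \cite[Lemma 7.2]{HM-I}, and I would simply reproduce that argument; let me indicate its structure. One runs a stopping time downward from $Q$ and builds the family $\F=\{Q_j\}$ of maximal stopping cubes, where a cube $Q_j\subsetneq Q$ is stopped as soon as, along the branch of $\dd(Q)$ through $Q_j$, one of the following first occurs: (a) the renormalized mass $\mut(\dd(Q_j))/\sigma(Q_j)$ exceeds the level $a+2b$ (the ``heavy'' stopping), or (b) the $\mut$-mass of the portion of the sawtooth region $\dd_\F$ already produced inside the relevant ancestor reaches a fixed small multiple of $b\,\sigma(Q_j)$ (the ``accumulation'' stopping). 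Maximality makes the $Q_j$ pairwise disjoint, and since $\mut(\dd(Q))\le(a+b)\,\sigma(Q)<(a+2b)\,\sigma(Q)$ the cube $Q$ is never heavy. Rule (b) is exactly what forces every sub-box of the complementary sawtooth to be light.

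Granting this construction, \eqref{Corona-sawtooth} is essentially built in: if $Q'\subseteq Q$ is contained in some $Q_j\in\F$ then $\dd(Q')\cap\dd_\F=\emptyset$, so $\mut_\F(\dd(Q'))=0$; and if $Q'\in\dd_\F$, rule (b) guarantees that the $\mut$-mass carried by the cubes of $\dd_\F$ lying inside $Q'$ is at most a fixed multiple $Cb$ of $\sigma(Q')$, with $C$ depending only on $n$ and the ADR constant (the ADR constant enters when comparing the surface measure of a cube with that of its dyadic parent). Taking the supremum over $Q'\subseteq Q$ gives \eqref{Corona-sawtooth}. For \eqref{Corona-bad-cubes}, note that $\F_{bad}\subseteq\F$ by definition and that, by the way the two rules are interleaved, each $Q_j\in\F_{bad}$ is in fact a heavy cube, so $\mut(\dd(Q_j))>(a+2b)\,\sigma(Q_j)$. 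Since the boxes $\dd(Q_j)$, $Q_j\in\F_{bad}$, are pairwise disjoint subcollections of $\dd(Q)$ and all $\alpha_P\ge0$,
\[
(a+2b)\sum_{Q_j\in\F_{bad}}\sigma(Q_j)\;<\;\sum_{Q_j\in\F_{bad}}\mut\big(\dd(Q_j)\big)\;\le\;\mut\big(\dd(Q)\big)\;\le\;(a+b)\,\sigma(Q),
\]
and dividing by $a+2b$, together with disjointness of the $Q_j$, yields \eqref{Corona-bad-cubes}.

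The one genuinely delicate point is the sawtooth bound \eqref{Corona-sawtooth}: a single heavy-cube stopping at level $a+2b$ gives the packing estimate but only bounds $\|\mut_\F\|_{\C(Q)}$ by $a+2b$, which is useless when $a$ is of the size of the global Carleson constant $M_0$. Making rule (b) cooperate with rule (a) --- so that the accumulation stopping drives the sawtooth norm down to a universal multiple of $b$ while the heavy stopping still controls the packing with the sharp fraction $\tfrac{a+b}{a+2b}$ --- is the technical content of \cite[Lemma 7.2]{HM-I}, and this is the step I would have to carry out with care.
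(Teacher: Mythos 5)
The paper does not prove this lemma; it simply records it and directs the reader to \cite[Lemma~7.2]{HM-I}, noting only that the argument there depends just on the dyadic structure of $E$ and the dyadic doubling of $\sigma$. So there is no internal proof for your proposal to be measured against, only the cited one.

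Your outline has roughly the right shape, but the pivotal assertion used to derive \eqref{Corona-bad-cubes} is unjustified. You write that ``by the way the two rules are interleaved, each $Q_j\in\F_{bad}$ is in fact a heavy cube, so $\mut(\dd(Q_j))>(a+2b)\,\sigma(Q_j)$.'' This does not follow from either of your rules. A cube $Q_j$ stopped because the accumulated ancestor mass reached your threshold (rule (b)) can perfectly well satisfy $\mut(\dd(Q_j)\setminus\{Q_j\})>a\,\sigma(Q_j)$, i.e.\ belong to $\F_{bad}$, without being heavy: upgrading this to $\mut(\dd(Q_j))>(a+2b)\,\sigma(Q_j)$ would require $\alpha_{Q_j}\geq 2b\,\sigma(Q_j)$, and rule (b) (a statement about the whole ancestor chain) gives no control over the single term $\alpha_{Q_j}$. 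Without heaviness of all of $\F_{bad}$, your Chebyshev step has a hole and the factor $\tfrac{a+b}{a+2b}$ is not obtained. In fact the heavy rule (a) is a red herring: a single accumulation stopping at level $2b$ suffices. Take $\F$ to be the maximal $Q_j\subseteq Q$ with $\sum_{R:\,Q_j\subseteq R\subseteq Q}\alpha_R/\sigma(R)\geq 2b$. Then \eqref{Corona-sawtooth} holds with $C=2$: writing $\mut_\F(\dd(Q'))=\int_{Q'}\bigl(\sum_{P\ni x,\,P\in\dd(Q')\cap\dd_\F}\alpha_P/\sigma(P)\bigr)\,d\sigma(x)$, for $\sigma$-a.e.\ $x\in Q'$ the inner sum runs along a chain of non-stopped cubes and is therefore $<2b$. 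And \eqref{Corona-bad-cubes} also drops out of the stopping condition via the same linearization: summing $\sigma(Q_j)\sum_{R\supseteq Q_j}\alpha_R/\sigma(R)\geq 2b\,\sigma(Q_j)$ over $\F$ and interchanging the order of summation yields $2b\sum_j\sigma(Q_j)\leq \mut_\F(\dd(Q))+\sum_j\alpha_{Q_j}=\mut(\dd(Q))-\sum_j\mut(\dd(Q_j)\setminus\{Q_j\})$, and restricting the subtraction to $\F_{bad}$ gives $(a+2b)\sum_{\F_{bad}}\sigma(Q_j)\leq\mut(\dd(Q))\leq(a+b)\,\sigma(Q)$. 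The ``gain'' of $2b$ in the packing estimate thus comes from the accumulation rule itself via Fubini, not from any heaviness of the bad stopping cubes.
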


We refer the reader to \cite[Lemma 7.2]{HM-I} for the proof.   We remark that the lemma is stated
in \cite{HM-I} in the case that $E$ is the boundary of a connected domain, but the proof actually requires only that
$E$ have a dyadic cube structure, and that $\sigma$ be a
non-negative, dyadically doubling Borel measure on $E$.  In our case, we shall of course apply the lemma with
$E=\pom$, where $\om$ is open, but not necessarily connected.

\begin{proof}[Proof of Claim \ref{claim3.18}]  We assume that $H[a,1]$ holds, for some  $a\in[0,M_0)$.
	Let us set $b= 1/(2C)$, where $C$ is the constant in
	\eqref{Corona-sawtooth}.
	Consider a cube $Q\in \dd(\pom)$ with
	$\mut(\dd(Q)) \leq (a+b) \sigma(Q)$.   Suppose that there is a set $V_Q\subset U_Q\cap\om$ such that
	\eqref{eq3.10} holds with $\theta=1$.  We fix $k_1>k_0$ (see \eqref{eq3.15}) large 
	enough so that $2^{k_1}> 100 K$. 

	\noindent{\bf Case 1}: There exists $Q'\in\dd_{k_1}(Q)$ (see \eqref{eq3.4aaa})
	with  $\mut\big(\dd(Q')\big) \le a\sigma(Q')$.  
	
	In the present scenario $\theta=1$, that is, $\sigma(F_Q)=\sigma(Q)$ (see 
	\eqref{eq3.10} and \eqref{defi-FQ}), which implies $\sigma(F_Q\cap Q')=\sigma(Q')$. We apply Lemma \ref{lemma:VQ} to obtain $V_{Q'}\subset U_{Q'}\cap\Omega$ and the corresponding $F_{Q'}$ which satisfies $\sigma(F_{Q'})=\sigma(Q')$. That is, \eqref{eq3.10} holds for $Q'$, with $\theta=1$.  Consequently, we may 
	apply the induction hypothesis
	to $Q'$, to find $V^*_{Q'}\subset V_{Q'}$, such 
	that for each $U_{Q'}^i$ meeting $V^*_{Q'}$, there is a Chord-arc domain 
	$\Omega_{Q'}^i \supset U_{Q'}^i$ formed by a union of fattened Whitney cubes with $\Omega^i_{Q'}\subset B(x_Q',K\ell(Q'))\cap\Omega$, and 
\begin{equation}\label{eq3.34}
\sum_{i:  U_{Q'}^i \text{ meets } V^*_{Q'}}\sigma (\pom^i_{Q'}\cap Q')\geq  c_a\sigma(Q')\,.
\end{equation}
	By  Lemma \ref{lemma:VQ}, and since $k_1>k_0$, each $y\in V^*_{Q'}$ lies 
	on a $\lambda$-carrot path connecting some $y\in Q'$ to some $x\in V_Q$; let $V^{**}_Q$ 
	denote the 
	set of all such $x$, and let ${\bf U}^{**}_Q$ (respectively, $\sub_{Q'}$) denote the collection of
	connected components of $U_Q$ (resp., of
	$U_{Q'}$) which meet $V^{**}_Q$ (resp., $V^*_{Q'}$).  
	By construction, each component $U_{Q'}^i \in \sub_{Q'}$
	may be joined to some corresponding component in ${\bf U}^{**}_Q$,
	via one of the 
	carrot paths.
	After possible
	renumbering, we designate this component as $U_Q^i$,  we let
	$x_i, y_i$ denote the points in $V_Q^{**}\cap U_Q^i$
	and in $V_Q^*\cap U_{Q'}^i$, respectively, that are joined by this carrot path,
	and we let $\gamma_i$ be the portion of the carrot path joining $x_i$ to $y_i$
	(if there is more than one such path or component, we just pick one).  We also let
	$V_Q^*=\{x_i\}_i$ be the collection of all of the selected points $x_i$.  We let 
	$\W_i$ be the collection of Whitney cubes meeting
	$\gamma_i$, and we then define 
$$\Omega_Q^i:= \Omega^i_{Q'} \bigcup \interior\bigg(\bigcup_{I\in\W_i}\,I^*\bigg) \bigcup
U_{Q}^i\,.$$
By the definition of a $\lambda$-carrot path, since 
$\ell(Q')\approx_{k_1}\ell(Q)$, and since  $ \Omega^i_{Q'}$ is a CAD,
one may readily verify that $\Omega^i_Q$ is also a CAD consisting of a union $\cup_k I_k^*$ 
of fattened Whitney cubes $I_k^*$.
We omit the details.  Moreover, by construction, 
$$\pom_Q^i\cap Q \supset \pom_{Q'}^i\cap Q',$$
so that the analogue of \eqref{eq3.34} holds with $Q'$ replaced by $Q$, and with $c_a$ replaced by 
$c_{k_1} c_a$.

It remains to verify that $\Omega_Q^i \subset B_Q^* = B(x_Q, K \ell(Q))$.
	By the induction hypothesis, and our choice of $k_1$,  since $\ell(Q')=2^{-k_1}\ell(Q)$ we have
	\[
	\Omega^i_{Q'}\subset B_{Q'}^*\cap\Omega = B(x_{Q'},K\ell(Q'))\cap\Omega
\subset B_Q^*\cap\Omega.
	\]
Moreover, $U_Q\subset B_Q^*$, by \eqref{def:BQ*}.   We therefore need only to consider $I^*$ with $I\in\W_i$.
For such an $I$, by definition there is a point $z_i\in I\cap\gamma_i$ and $y_i\in Q'$, so that
	$z_i\in\gamma(y_i,x_i)$ and thus, 
	\[
	\delta_{\Omega}(z_i)
	\le
	|z_i-y_i|
	\le
	\ell(y_i,z_i)
	\le
	\ell(y_i,x_i)
	\le
	\lambda^{-1}\delta_{\Omega}(x_i)
	\le
	\lambda^{-1}|x_i-x_Q|
	\le
	\lambda^{-1} CK^{1/2}\ell(Q)\,,
	\]
where in the last inequality we have used \eqref{dist:UQ-pom} and the fact that $x_i\in U_Q$.	
Hence, for every $z\in I^*$ by \eqref{Whintey-4I}
	\[
	|z-x_Q|
	\le
	\diam(2I)+|z_i-y_i|+|y_i-x_Q|
	\le
	C|z_i-y_i|+\diam(Q)
	<K\ell(Q),
	\]
	by our choice of the parameters $K$ and $\lambda$. 

We then obtain the conclusion of $H[a+b,1]$ in the present case.
	
	\smallskip

	\noindent{\bf Case 2}:  $\mut\big(\dd(Q')\big) > a\sigma(Q')$ for every $Q'\in\dd_{k_1}(Q)$.

	In this case, we apply Lemma \ref{lemma:Corona}
	to obtain a pairwise disjoint family $\F=\{Q_j\}\subset \dd(Q)$ such 
	that \eqref{Corona-sawtooth} and \eqref{Corona-bad-cubes} hold.	
	In particular, by our choice of $b=1/(2C)$,
	\begin{equation}\label{eq3.27bb}
	\|\mut_\F\|_{\C(Q)}\leq 1/2\,,
	\end{equation}
	so that the conclusions of Claim \ref{claim3.16} hold. 
	
	We set 
	\begin{equation}\label{eq3.27aa}
	F_0:= Q\setminus \bigg(\bigcup_\F Q_j\bigg)\,,
	\end{equation}
	define
	\begin{equation}\label{eq3.27a}\F_{good}:= \F\setminus \F_{bad}=
	\left\{Q_j\in\F:\,\mut\left(\dd(Q_j)\setminus \{Q_j\}\right)\leq\,a\sigma(Q_j)\right\}\,,
	\end{equation}
	and let
	$$G_0:= \bigcup_{\F_{good}}Q_j\,.$$
	Then by \eqref{Corona-bad-cubes}
	\begin{equation}\label{eq3.25}
	\sigma(F_0 \cup G_0 )\, \geq \,\rho\sigma(Q)\,,
	\end{equation}
	where $\rho\in (0,1)$ is defined by
	\begin{equation}\label{eq3.42aa} \frac{a+b}{a+2b} \leq \frac{M_0+b}{M_0+2b}=:1-\rho \in (0,1)\,.
	\end{equation}
	We claim that 
	\begin{equation}\label{eq:erwgte}
	\ell(Q_j)\le 2^{-k_1}\,\ell(Q),
	\qquad
	\forall\,Q_j\in\F_{good}.
	\end{equation}
	Indeed,  were this not true for some $Q_j$, 
	then by definition of $\F_{good}$ and pigeon-holing there 
	will be $Q_j'\in\dd(Q_j)$ with $\ell(Q_j')=2^{-k_1}\,\ell(Q)$ such 
	that $\mut\big(\dd(Q_j')\big)\le a\,\sigma(Q_j')$. This contradicts the assumptions of the current case. 
	
	Note also that $Q\notin\F_{good}$ by \eqref{eq:erwgte} and $Q\notin\F_{bad}$ by \eqref{Corona-bad-cubes}, hence  $\F\subset\dd(Q)\setminus\{Q\}$. By \eqref{eq3.27bb} and Claim \ref{claim3.16}, 
	there is some  tree $\sbf\subset\G$ so that 
	$\sbf''=(\dd_\F\cup\F\cup\F')\cap\dd(Q)$  
	is a semi-coherent subtree of $\sbf$,  where $\F'$ denotes the collection of all
	dyadic children of cubes in $\F$.
	
	\smallskip
	
	\noindent{\bf Case 2a}:   $\sigma(F_0) \geq \frac12\,  \rho \sigma(Q)$.
	
	In this case,
	$Q$ has an ample overlap with the boundary 
	of a Chord-arc domain with controlled Chord-arc constants. Indeed, 
	let $\sbf'=\dd_\F\cap\dd(Q)$ which, by \eqref{eq3.27bb} and Claim \ref{claim3.16}, is a 
	semi-coherent subtree of some $\sbf\subset\G$. Hence, by Lemma \ref{lemma2.7},
	each of $\om_{\sbf'}^\pm$ is a CAD
with constants depending on the allowable parameters, formed by the union of fattened Whitney boxes, 
which satisfies
$\om_{\sbf'}^\pm\subset B_Q^*\cap\Omega$  (see \eqref{eq3.3aa}, \eqref{eq3.2}, and \eqref{def:BQ*}). Moreover, by 
\cite[Proposition A.14]{HMM} and \cite[Proposition 6.3]{HM-I} and our current assumptions,
	\[ 
	\sigma(Q\cap \pom_{\sbf'}^\pm)= \sigma(F_0)\geq \frac{\rho}{2} \sigma(Q)\,.
	\]
	Recall that in establishing $H[a+b,1]$, we assume that 
	there is a set $V_Q\subset U_Q\cap\om$ for which
	\eqref{eq3.10} holds with $\theta=1$. Pick then $x\in V_Q$ and set $V_Q^*:=\{x\}\subset V_Q$. Note that since $U_Q=U_Q^+\cup U_Q^-$ it follows that $x$ belongs to either $U_Q^+\cap\Omega$ or $U_Q^-\cap\Omega$. For the sake of specificity assume that $x\in U_Q^+\cap\Omega$ hence, in particular, 
	$U_Q^+\subset\om_{\sbf'}^+\subset \om$.  Note also that $U_Q^+$ is the only component of $U_Q$ meeting $V_Q^*$. All these together give at once that the conclusion of $H[a+b,1]$ holds in the present case.
	
	\smallskip
	
	\noindent{\bf Case 2b}:   $\sigma(F_0) < \frac12 \,\rho \sigma(Q)$.
	
	In this case  by \eqref{eq3.25} 
	\begin{equation}\label{eq3.33}
	\sigma(G_0)\,\geq \,\frac{\rho}{2}\, 
	\sigma(Q)\,.
	\end{equation}
	In addition, by the definition of $\F_{good}$ \eqref{eq3.27a}, and pigeon-holing, every $Q_j\in\F_{good}$ has a dyadic
	child $Q_j'$ (there could be more children satisfying this, but we just pick one) so that 
	\begin{equation}\label{eq3.27}
	\mut\big(\dd(Q'_j)\big) \leq a\sigma(Q'_j)\,.
	\end{equation}
	Under the present assumptions $\theta=1$, that is, $\sigma(F_Q)=\sigma(Q)$ (see
	\eqref{eq3.10} and \eqref{defi-FQ}), 
	hence $\sigma(F_Q\cap Q_j')=\sigma(Q_j')$. 
	We apply Lemma \ref{lemma:VQ} (recall \eqref{eq:erwgte}) to obtain  $V_{Q_j'}\subset U_{Q_j'}\cap \Omega$ and 
	$F_{Q_j'}$ which satisfies $\sigma(F_{Q_j'})=\sigma(Q_j')$. That is, \eqref{eq3.10} holds 
	for $Q_j'$, with $\theta=1$. 
	Consequently, recalling that 
	$Q_j'\in\sbf\subset\G$ (see Claim \ref{claim3.16}),
	and applying the induction hypothesis
	to $Q_j'$, we find $V^*_{Q_j'}\subset V_{Q_j'}$, such 
	that for each $U_{Q_j'}^\pm$ meeting $V^*_{Q_j'}$, there is a Chord-arc domain 
	$\Omega_{Q_j'}^\pm \supset U_{Q_j'}^\pm$ formed by a union of fattened Whitney cubes with $\Omega^\pm_{Q_j'}\subset B_{Q_j'}^*\cap\Omega$. 
	Moreover, since in particular, the cubes in $\F$ along with all of their children
	belong to the same tree $\sbf$ (see Claim \ref{claim3.16}),
	 the connected component $U_{Q_j}^\pm$ overlaps with
the corresponding component $U^\pm_{Q_j'}$ for its child,  so we may augment
$\Omega_{Q_j'}^\pm$ by adjoining to it the appropriate component $U_{Q_j}^\pm$, to form
a chord arc domain 
\begin{equation}\label{cadbuild}
\Omega^\pm_{Q_j} := \Omega_{Q_j'}^\pm \cup U_{Q_j}^\pm\,.
\end{equation}
Moreover, since  $K\gg 1$, and since $Q_j'\subset Q_j$, we have that
$B^*_{Q_j'} \subset B^*_{Q_j}$, hence $\Omega^\pm_{Q_j}\subset B_{Q_j}^*$ by construction.  

	
By a covering lemma argument, for a sufficiently large constant $\kappa\gg K^4$, 
	we may extract a 
	subcollection $\F_{good}^*\subset \F_{good}$ so that 
	$ \{ \kappa B_{Q_j}^*\}_{Q_j\in \F_{good}^*}$ is a pairwise disjoint family, and 
	\[
	\bigcup_{ Q_j\in \F_{good}} Q_j \subset\bigcup_{ Q_j\in \F_{good}^*} 5\kappa B_{Q_j}^*.
	\]
	In particular, by \eqref{eq3.33},
	\begin{equation}\label{G0*}
\sum_{ Q_j\in \F_{good}^*}\sigma( Q_j)
	\gtrsim_{\kappa,K}
\sum_{ Q_j\in \F_{good}}\sigma( Q_j) =\sigma(G_0)
	\gtrsim
\rho\sigma(Q),
	\end{equation}
	where the implicit constants depend  on ADR, $K$, and  
	the dilation factor $\kappa$. 



By the induction hypothesis, and by construction \eqref{cadbuild} and $n$-ADR,
\begin{equation}\label{eqx}
\sigma (Q_j\cap \pom_{Q_j})  \gtrsim \sigma(Q'_j)\gtrsim \sigma(Q_j)\,,
\end{equation}
where  $\Omega_{Q_j} $ is equal either to $\Omega_{Q_j}^+$ or to $\Omega_{Q_j}^-$
(if \eqref{eqx} holds for both choices, we arbitrarily set $\Omega_{Q_j}=\Omega_{Q_j}^+$).

Combining \eqref{eqx} with \eqref{G0*},
we obtain
\begin{equation}\label{eqxx}
\sum_{Q_j \in \F^*_{good}} \sigma (Q_j\cap \pom_{Q_j})  \gtrsim \sigma(Q)\,.
\end{equation}

We now assign each $Q_j\in \F_{good}^*$ either to
$\F^*_+$ or to $\F^*_-$, depending on whether we chose $\Omega_{Q_j}$ satisfying
\eqref{eqx} to be $\Omega_{Q_j}^+$, or $\Omega_{Q_j}^-$.  
We note that at least one of the sub-collections $\F^*_\pm$ 
is non-empty, since
for each $j$, there was at least one choice of ``+' or ``-" such that
\eqref{eqx} holds for the corresponding choice of $\Omega_{Q_j}$.  Moreover, the 
two collections are disjoint, since we have arbitrarily designated $\Omega_{Q_j} = \Omega_{Q_j}^+$
in the case that there were two choices for a particular $Q_j$.


	To proceed, as in Lemma \ref{lemmaCAD} we set 
	\[
	\sbf^*=\left\{Q'\in \dd(Q): Q_j\subset Q' \text{ for some }Q_j\in\F_{good}^*\right\}
	\] 
which is semi-coherent by construction. 
For $\F^*_\pm$ non-empty, we now define
\begin{equation}\label{defi:Omega-Q-pm}
	\Omega_{Q}^{\pm}
	=
	\Omega_{\sbf^*}^{\pm}\bigcup 
	\Big(\bigcup_{Q_j\in \F_{\pm}^{*}} \Omega_{Q_j}\Big).
	\end{equation}
Observe that by the induction hypothesis, 
 and our construction (see \eqref{cadbuild} and the ensuing comment),
 for an appropriate choice of $\pm$,
 $U_{Q_j}^\pm\subset \Omega_{Q_j} \subset B_{Q_j}^*$, and since $\ell(Q_j) \leq 2^{-k_1} \ell(Q)$,
by \eqref{eqxx} and Lemma  \ref{lemmaCAD},  with $\F^*=\F^*_{good}$,
each (non-empty) choice of $\Omega_Q^\pm$
defines a Chord-arc domain with the requisite properties.

	Thus, we have proved Claim \ref{claim3.18}
	and therefore, as noted above, it follows that $H[M_0,1]$ holds.
\end{proof}

\section{Step 3:  bootstrapping $\theta$} \label{s-step3}
In this last step, we shall prove 
that there is a uniform constant $\zeta\in (0,1)$ such that for each $\theta\in (0,1]$,
$H[M_0,\theta]\implies H[M_0,\zeta\theta]$.  Since we have already established
$H[M_0,1]$, we then conclude that $H[M_0,\theta_1]$ holds for any given
$\theta_1\in (0,1]$.  As noted above, it then follows that Theorem
\ref{t1} holds, as desired.

In turn, it will be enough to verify the following.  
\begin{claim}\label{claim3.52}   There is a uniform constant $\beta \in(0,1)$ such that
for every $a\in [0,M_0)$, $\theta\in (0,1]$, $\vartheta\in (0,1)$, and $b=1/(2C)$ as in Step 2/Proof of Claim \ref{claim3.18}, 
if $H[M_0,\theta]$ holds, then
$$H[a,(1-\vartheta)\theta]\implies H[a+b, (1-\vartheta\beta)\theta]\,.$$
\end{claim}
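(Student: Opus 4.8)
The plan is to run the same dichotomy as in the proof of Claim \ref{claim3.18} (Case 1 versus Case 2/Case 2a/Case 2b), but now carrying an extra parameter $\theta$ along with $a$, and using the already-established statement $H[M_0,\theta]$ (with full Carleson bound $M_0$ but reduced density $\theta$) at the places where in Step 2 we invoked $H[a,1]$. Fix $a\in[0,M_0)$, $\theta\in(0,1]$, $\vartheta\in(0,1)$, and $b=1/(2C)$ with $C$ the constant from \eqref{Corona-sawtooth}. Assume $H[M_0,\theta]$ and $H[a,(1-\vartheta)\theta]$. Given $Q\in\dd(\pom)$ with $\mut(\dd(Q))\le(a+b)\sigma(Q)$ and a set $V_Q\subset U_Q\cap\om$ with $\sigma(F_Q)\ge(1-\vartheta\beta)\theta\,\sigma(Q)$ (where $F_Q=\bigcup_{x\in V_Q}F_{car}(x,Q,\lambda)$), we must produce the chord-arc subdomains $\Omega_Q^i$ with the stated containment $U_Q^i\subset\Omega_Q^i\subset B_Q^*\cap\Omega$ and $\sum_i\sigma(\pom_Q^i\cap Q)\gtrsim_{a,\theta}\sigma(Q)$. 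Fix $k_1>k_0$ with $2^{k_1}>100K$ as before.

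First I would handle \textbf{Case 1}: there is $Q'\in\dd_{k_1}(Q)$ with $\mut(\dd(Q'))\le a\sigma(Q')$. Here I want to say that $F_Q\cap Q'$ still occupies a fixed fraction of $Q'$, so that Lemma \ref{lemma:VQ} yields $V_{Q'}$ with $\sigma(F_{Q'})\ge c\,\sigma(Q')$ for a fraction $c$ depending only on allowable parameters and $\theta$; then apply $H[a,(1-\vartheta)\theta]$ to $Q'$ — this is the subtle point, since $\sigma(F_Q\cap Q')$ need not be close to $\sigma(Q')$, only $\sigma(F_Q)\ge(1-\vartheta\beta)\theta\sigma(Q)$, so $\sigma(F_Q\cap Q')$ could in principle be small for this particular $Q'$. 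The fix is that Case 1 should be phrased not for an \emph{arbitrary} such $Q'$, but (after pigeonholing over $\dd_{k_1}(Q)$) for one on which $F_Q$ has density at least $(1-\vartheta)\theta$; one checks that the cubes of $\dd_{k_1}(Q)$ on which $F_Q$ has density $<(1-\vartheta)\theta$ cover at most $(1-\vartheta)\theta\,\sigma(Q)$, hence the cubes $Q'\in\dd_{k_1}(Q)$ with $\mut(\dd(Q'))\le a\sigma(Q')$ \emph{and} $\sigma(F_Q\cap Q')\ge(1-\vartheta)\theta\sigma(Q')$ carry positive mass provided $(1-\vartheta\beta)\theta > (1-\vartheta)\theta + (\text{mass of the bad-Carleson cubes})$; but this last competition is exactly what Case 2 is for. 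So in Case 1 one applies $H[a,(1-\vartheta)\theta]$ to each such good $Q'$, gets the domains $\Omega_{Q'}^i$, then attaches the carrot-path Whitney tubes and the component $U_Q^i$ exactly as in Case 1 of the proof of Claim \ref{claim3.18} (using $\ell(Q')\approx_{k_1}\ell(Q)$ and the $\lambda$-carrot estimate to keep everything inside $B_Q^*$ and chord-arc), summing $\sigma(\pom_{Q'}^i\cap Q')\gtrsim\sigma(Q')$ over those good $Q'$ to get $\gtrsim\sigma(Q)$.

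For \textbf{Case 2} — $\mut(\dd(Q'))>a\sigma(Q')$ for \emph{every} $Q'\in\dd_{k_1}(Q)$ — I would apply Lemma \ref{lemma:Corona} with parameters $a,b$ to get a disjoint family $\F=\{Q_j\}\subset\dd(Q)\setminus\{Q\}$ with $\|\mut_\F\|_{\C(Q)}\le1/2$ (so Claim \ref{claim3.16} applies and all relevant cubes sit in one tree $\sbf\subset\G$) and $\sigma(\bigcup_{\F_{bad}}Q_j)\le\frac{a+b}{a+2b}\sigma(Q)\le(1-\rho)\sigma(Q)$ with $\rho$ as in \eqref{eq3.42aa}. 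Write $F_0=Q\setminus\bigcup_\F Q_j$ and $G_0=\bigcup_{\F_{good}}Q_j$; then $\sigma(F_0\cup G_0)\ge\rho\sigma(Q)$. Intersecting with $F_Q$ and using $\sigma(F_Q)\ge(1-\vartheta\beta)\theta\sigma(Q)$ one gets $\sigma\big(F_Q\cap(F_0\cup G_0)\big)\ge\big((1-\vartheta\beta)\theta-(1-\rho)\big)\sigma(Q)$; I will choose $\beta$ small enough (depending only on $\rho$, hence only on $M_0,b$, hence only on allowable parameters) that this quantity is $\ge\frac{\rho}{4}\theta\sigma(Q)$, say — this is where $\beta$ gets pinned down and is essentially the only genuinely new bookkeeping. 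In \textbf{Case 2a}, $\sigma(F_Q\cap F_0)\ge\frac{\rho}{8}\theta\sigma(Q)$: then $\sbf'=\dd_\F\cap\dd(Q)$ is a semi-coherent subtree of $\sbf$, $\om_{\sbf'}^\pm$ are CAD inside $B_Q^*\cap\Omega$ by Lemma \ref{lemma2.7}, and by \cite[Proposition A.14]{HMM}, \cite[Proposition 6.3]{HM-I}, $\sigma(Q\cap\pom_{\sbf'}^\pm)=\sigma(F_0)\ge\frac{\rho}{8}\theta\sigma(Q)$; picking any $x\in V_Q$, putting $V_Q^*=\{x\}$, and taking the appropriate sign $\pm$ so $U_Q^\pm\ni x$ finishes this subcase as in Claim \ref{claim3.18}. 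In \textbf{Case 2b}, $\sigma(F_Q\cap G_0)\ge\frac{\rho}{8}\theta\sigma(Q)$: exactly as before each $Q_j\in\F_{good}$ has a child $Q_j'$ with $\mut(\dd(Q_j'))\le a\sigma(Q_j')$ and (by the current-case hypothesis and pigeonholing, cf.\ \eqref{eq:erwgte}) $\ell(Q_j)\le2^{-k_1}\ell(Q)$; now one pigeonholes inside those $Q_j$ with $\sigma(F_Q\cap Q_j)$ of controlled density — keeping only the subfamily $\F_{good}'\subset\F_{good}$ of cubes on which $F_Q$ has density at least, say, $\frac{\rho}{16}\theta$, which still satisfies $\sum_{\F_{good}'}\sigma(Q_j)\gtrsim_{\rho,\theta}\sigma(Q)$ — so that on each such $Q_j$, hence on its child $Q_j'$, $F_Q$ has density bounded below by a fixed fraction; applying Lemma \ref{lemma:VQ} gives $V_{Q_j'}$ with $\sigma(F_{Q_j'})\gtrsim_{\rho,\theta}\sigma(Q_j')$, so $H[a,(1-\vartheta)\theta]$ applies to $Q_j'$ \emph{provided we also first shrink $\vartheta\beta$} so that the density we produce on $Q_j'$ is $\ge(1-\vartheta)\theta$ — this may force $\beta$ to depend on $\rho$ and on allowable constants but not on $\theta$ or $\vartheta$, which is allowed. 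From $H[a,(1-\vartheta)\theta]$ one obtains $\Omega_{Q_j'}^\pm$, augments to $\Omega_{Q_j}^\pm=\Omega_{Q_j'}^\pm\cup U_{Q_j}^\pm$ inside $B_{Q_j}^*$ with $\sigma(Q_j\cap\pom_{Q_j})\gtrsim\sigma(Q_j')\gtrsim\sigma(Q_j)$, runs the covering-lemma argument to extract $\F^*_{good}$ with $\{\kappa B_{Q_j}^*\}$ disjoint and $\sum_{\F^*_{good}}\sigma(Q_j)\gtrsim_{\kappa,K}\sigma(Q)$, splits into $\F^*_\pm$, and invokes Lemma \ref{lemmaCAD} with $\F^*=\F^*_{good}$ to assemble the chord-arc domains $\Omega_Q^\pm$ with $U_Q^\pm\subset\Omega_Q^\pm\subset B_Q^*\cap\Omega$ and $\sum\sigma(\pom_Q^\pm\cap Q)\gtrsim_{a,\theta}\sigma(Q)$.

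The main obstacle I anticipate is purely the parameter arithmetic: choosing a \emph{single} universal $\beta\in(0,1)$ that simultaneously (i) in Case 1 leaves enough room after subtracting both the $(1-\vartheta)\theta$ threshold and the bad-Carleson mass, and (ii) in Case 2b, after two successive pigeonholings (passing from $Q$ to $Q_j$ to the child $Q_j'$), still delivers density $\ge(1-\vartheta)\theta$ on $Q_j'$ so that $H[a,(1-\vartheta)\theta]$ is genuinely applicable — all with $\beta$ depending only on $M_0$, $b$, $n$, and ADR. The key observation making this possible is that $\rho$ in \eqref{eq3.42aa} is bounded below by a constant depending only on $M_0$ and $b$, and that the carrot-path/geometry machinery (Lemmas \ref{lemma3.15}, \ref{lemma3.37}, \ref{lemma:VQ}, \ref{lemmaCAD}) is entirely density-agnostic — it works for any positive density with constants blowing up only polynomially — so every ``$\gtrsim\sigma(Q)$'' above is allowed to carry an $a,\theta$-dependent implicit constant, exactly as permitted by the statement of $H[a,\theta]$. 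Everything else is a verbatim repetition of Steps 2's constructions, and I would explicitly say ``as in the proof of Claim \ref{claim3.18}, we omit the details'' at each such spot.
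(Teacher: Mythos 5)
Your proposal identifies the right skeleton (the Case 1 / Case 2 dichotomy via the Corona Lemma, the CAD assembly via Lemma~\ref{lemmaCAD}, the parameter $\beta$ pinned down by the dyadic doubling constant and $\rho$), but it has a genuine gap: you never actually \emph{use} the hypothesis $H[M_0,\theta]$ in your case analysis, and this is precisely the ingredient the paper's proof leans on most heavily. The point is that there is a recurring scenario in which you land on a sub-cube with density $\geq\theta$ but with \emph{no useful control of its Carleson norm beyond the universal bound $M_0$}; in that scenario the only available tool is $H[M_0,\theta]$, and your argument has nothing to invoke.

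Concretely, here is where this bites. In your Case 1 you want to find a single $Q'\in\dd_{k_1}(Q)$ carrying \emph{both} $\mut(\dd(Q'))\leq a\sigma(Q')$ and $\sigma(F_Q\cap Q')\geq(1-\vartheta)\theta\sigma(Q')$, and you note, correctly, that this may fail. Your proposed remedy (push the failure into Case 2) breaks the dichotomy: the paper's Case 2 hypothesis is that \emph{every} $Q'\in\dd_{k_1}(Q)$ has $\mut(\dd(Q'))>a\sigma(Q')$, which is strictly stronger than ``no $Q'$ has both properties.'' The paper instead keeps the Carleson dichotomy intact and handles Case 1b (good-Carleson $Q'$ has low density) by an averaging identity: write $(1-\vartheta\beta)\theta=(1-\beta)\vartheta\theta+(1-\vartheta\beta)\theta-(1-\beta)\vartheta\theta$ and use $\sigma(Q)\leq M_1\sigma(Q')$ to show that if $\beta\leq 1/(M_1+1)$, the siblings $Q''\in\dd_{k_1}(Q)\setminus\{Q'\}$ have average $F_Q$-density $\geq\theta$ (not $(1-\vartheta)\theta$, which is smaller); a pigeonhole then produces $Q_0''$ with density $\geq\theta$, to which $H[M_0,\theta]$ applies. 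The same algebraic trick reappears in Case 2b. Your proposal has neither the identity nor the $H[M_0,\theta]$ invocation, and the density floor $\frac{\rho}{16}\theta$ you extract in Case 2b is far too small: $H[a,(1-\vartheta)\theta]$ requires density $\geq(1-\vartheta)\theta$, which for $\vartheta$ near $0$ is close to $\theta$, not to $\rho\theta$. ``Shrinking $\beta$'' does not help here, because $\beta$ controls the global hypothesis on $Q$, not the local density on individual $Q_j$; only the sibling-pigeonhole converts the global surplus into a local $\geq\theta$ density on some cube. The paper also refines $\F=\F_{good}\cup\F_{bad}$ into four sub-families $\F_{good}^{(1,2)},\F_{bad}^{(1,2)}$ via the threshold $(1-4\vartheta\beta\rho^{-1})\theta$ for the good cubes and $\theta$ for the bad cubes, then proves a nontrivial counting claim (Claim~\ref{claim3.70}) that at least one of $\F_{good}^{(1)}$, $\F_{bad}^{(1)}$ carries $\gtrsim\sigma(Q)$ mass; for $\F_{bad}^{(1)}$ one applies $H[M_0,\theta]$ directly, and for $\F_{good}^{(1)}$ one repeats the Case 1 pattern (good-Carleson child or sibling-pigeonhole). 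None of this counting structure appears in your sketch. In short: the machinery you list (Lemmas~\ref{lemma:VQ}, \ref{lemmaCAD}, the Corona Lemma, Claim~\ref{claim3.16}) is indeed what is used, but the heart of the claim is the interplay between the two induction hypotheses $H[a,(1-\vartheta)\theta]$ and $H[M_0,\theta]$ via a sharp sibling-averaging argument, and that is the piece your proposal is missing.
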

Let us momentarily take Claim \ref{claim3.52} for granted.  Recall that by Claim \ref{claim3.17},
$H[0,\theta]$ holds for all $\theta\in (0,1]$.  In particular, given $\theta \in (0,1]$ fixed, for
which $H[M_0,\theta]$ holds, we have that
$H[0,\theta/2]$ holds.    
Combining the latter fact with Claim \ref{claim3.52}, and iterating,
we obtain that $H[kb, (1-2^{-1}\beta^k)\theta]$ holds.  We eventually reach
$H[M_0,(1-2^{-1}\beta^\nu)\theta]$, with $\nu \approx M_0/b$.  The conclusion of Step 3 now follows,
with $\zeta := 1-2^{-1}\beta^\nu$.

\begin{proof}[Proof of Claim \ref{claim3.52}]
The proof will be a refinement of that of Claim \ref{claim3.18}.  We are given some $\theta\in(0,1]$ such that
$H[M_0,\theta]$ holds, and
we assume that $H[a,(1-\vartheta)\theta]$ holds, for some  $a\in[0,M_0)$ and $\vartheta\in (0,1)$.
Set $b= 1/(2C)$, where as before $C$ is the constant in
\eqref{Corona-sawtooth}.
Consider a cube $Q\in \dd(\pom)$ with
$\mut(\dd(Q)) \leq (a+b) \sigma(Q)$.   Suppose that there is a set $V_Q\subset U_Q\cap\om$ such that
\eqref{eq3.10} holds with $\theta$ replaced by
$(1-\vartheta\beta)\theta$, for some $\beta\in (0,1)$ to be determined.
Our goal is to show that for a sufficiently small, but uniform choice of $\beta$,
we may deduce the conclusion of the induction hypothesis, with $\C_{a+b}, c_{a+b}$ in place of 
$C_a,c_a$.

By assumption, and recalling the definition of $F_Q$ in \eqref{defi-FQ}, 
we have that \eqref{eq3.10} holds with constant $(1-\vartheta\beta)\theta$, i.e.,
\begin{equation}\label{eq3.56}
\sigma(F_Q) \geq (1-\vartheta\beta)\theta \sigma(Q)\,.
\end{equation}

As in the proof of Claim \ref{claim3.18}, we fix $k_1>k_0$ (see \eqref{eq3.15}) large 
enough so that $2^{k_1}> 100K$. 
There are two principal cases.  The first is as follows.

\smallskip


\noindent{\bf Case 1}: There exists $Q'\in\dd_{k_1}(Q)$ (see \eqref{eq3.4aaa}) with  
$\mut\big(\dd(Q')\big) \le a\sigma(Q')$.


We split \textbf{Case 1} into two subcases.

\smallskip

\noindent{\bf Case 1a}:  $\sigma(F_Q\cap Q') \geq (1-\vartheta)\theta \sigma(Q')$.

In this case, we follow the \textbf{Case 1} argument for $\theta=1$ in 
Section \ref{ss3.2} {\it mutatis mutandis}, so we merely sketch the proof.
By Lemma \ref{lemma:VQ}, we may construct $V_{Q'}$ and $F_{Q'}$ so that
$F_Q\cap Q' = F_{Q'}$ and hence $\sigma(F_{Q'})\ge (1-\vartheta)\theta \sigma(Q')$. 
We may then apply the induction hypothesis
$H[a,(1-\vartheta)\theta]$ in $Q'$, and then proceed exactly as in \textbf{Case 1}  in 
Section \ref{ss3.2}  to construct
a subset $V_Q^*\subset V_Q$ and a family of Chord-arc domains $\Omega_Q^i$ satisfying the
various desired properties, and such that
$$\sum_{i:  U_{Q}^i \text{ meets } V^*_{Q}}\sigma (\pom^i_{Q}\cap Q)\geq c_a\sigma(Q')
\gtrsim_{k_1}c_a \sigma(Q)\,.$$

The conclusion of $H[a+b, (1-\vartheta\beta)\theta]$ then holds in the present scenario.

\smallskip

\noindent{\bf Case 1b}:  $\sigma(F_Q\cap Q') < (1-\vartheta)\theta \sigma(Q')$.

By \eqref{eq3.56}
$$(1-\vartheta\beta)\theta \sigma(Q)\, \leq \,\sigma(F_Q) \,=\, \sigma(F_Q\cap Q') + 
\sum_{Q''\in\dd_{k_1}(Q)\setminus\{Q'\}}\sigma(F_Q\cap Q'')\,.$$
In the scenario of \textbf{Case 1b}, this leads to
\begin{multline*}
(1-\vartheta\beta)\theta \sigma(Q')+ (1-\vartheta\beta)\theta
\sum_{Q''\in\dd_{k_1}(Q)\setminus\{Q'\}}\sigma(Q'')=
(1-\vartheta\beta)\theta \sigma(Q)\\[4pt] \leq \, (1-\vartheta)\theta \sigma(Q') + 
\sum_{Q''\in\dd_{k_1}(Q)\setminus\{Q'\}}\sigma(F_Q\cap Q'')\,,
\end{multline*}
that is,
\begin{equation}\label{eq3.60}
(1-\beta)\vartheta\theta \sigma(Q')+ (1-\vartheta\beta)\theta
\sum_{Q''\in\dd_{k_1}(Q)\setminus\{Q'\}}\sigma(Q'')\,\leq\, \sum_{Q''\in\dd_{k_1}(Q)\setminus\{Q'\}}\sigma(F_Q\cap Q'')\,.
\end{equation}
Note that we have the dyadic doubling estimate
$$\sum_{Q''\in\dd_{k_1}(Q)\setminus\{Q'\}}\sigma(Q'') \leq \sigma(Q) \leq M_1 \sigma(Q')\,,$$
where $M_1=M_1(k_1,n, ADR)$.  Combining this estimate with \eqref{eq3.60}, we obtain
$$
\left[(1-\beta)\frac{\vartheta}{M_1} + (1-\vartheta\beta)\right] \theta\sum_{Q''\in\dd_{k_1}(Q)
\setminus\{Q'\}}\sigma(Q'')
\leq \sum_{Q''\in\dd_{k_1}(Q)\setminus\{Q'\}}\sigma(F_Q\cap Q'')\,.$$
We now choose $\beta\leq 1/(M_1+1)$, so that $(1-\beta)/M_1 \geq \beta$, and therefore the
expression in square brackets is at least 1.  Consequently, by pigeon-holing, there exists
a particular $Q''_0\in \dd_{k_1}(Q)\setminus\{Q'\}$ such that 
\begin{equation}\label{eq3.62}
\theta\sigma(Q''_0) \leq \sigma(F_Q \cap Q_0'')\,.
\end{equation}
By Lemma \ref{lemma:VQ}, we can find $V_{Q''_0}$ such that
$F_Q\cap Q_0'' = F_{Q_0''}$,  where the latter is defined as in \eqref{defi-FQ}, with
$Q_0''$ in place of $Q$.  By assumption, $H[M_0,\theta]$ holds, so combining
 \eqref{eq3.62} with the fact that \eqref{eq3.12} holds with
$a=M_0$ for every $Q\in\dd(\pom)$, we find that there exists a subset 
$V^*_{Q''_0}\subset V_{Q''_0}$, along with a family of Chord-arc domains 
$\{\Omega_{Q_0''}^i\}_i$ enjoying all the appropriate properties relative to $Q_0''$.
Using that $\ell(Q_0'')\approx_{k_1} \ell(Q)$, we may now proceed
exactly as in \textbf{Case 1a} above, and also \textbf{Case 1} in  
Section \ref{ss3.2},
to construct $V_Q^*$ and $\{\Omega_Q^i\}_i$ such that the conclusion of 
$H[a+b, (1-\vartheta\beta)\theta]$ holds in the present case also.

\smallskip

\noindent{\bf Case 2}:  $\mut\big(\dd(Q')\big) > a\sigma(Q')$ for every $Q'\in\dd_{k_1}(Q)$.

In this case, we apply Lemma \ref{lemma:Corona}
to obtain a pairwise disjoint family $\F=\{Q_j\}\subset \dd(Q)$ such 
that \eqref{Corona-sawtooth} and \eqref{Corona-bad-cubes} hold.
In particular, by our choice of $b=1/(2C)$,
$\|\mut_\F\|_{\C(Q)}\leq 1/2$. 

Recall that 
$F_Q$ is defined in \eqref{defi-FQ}, and satisfies \eqref{eq3.56}. 
We define $F_0  =Q\setminus (\bigcup_\F Q_j)$ as in \eqref{eq3.27aa},
and $\F_{good}:= \F\setminus \F_{bad}$ as in \eqref{eq3.27a}.
Let
$G_0:= \bigcup_{\F_{good}}Q_j$.  Then as above (see \eqref{eq3.25}),
\begin{equation}\label{eq3.63}
\sigma(F_0 \cup G_0 )\, \geq \,\rho\sigma(Q)\,,
\end{equation}
where again $\rho=\rho(M_0,b)\in (0,1)$ is defined as in \eqref{eq3.42aa}.  Just 
as in \textbf{Case 2} for $\theta=1$ in 
Section \ref{ss3.2}, we have that
\begin{equation}
\ell(Q_j)\le 2^{-k_1}\,\ell(Q),
\quad
\forall\,Q_j\in\F_{good},
\qquad
\mbox{and}
\qquad
\F\subset\dd(Q)\setminus\{Q\}
\label{eq:wefwerfr}
\end{equation}
(see \eqref{eq:erwgte}).
Hence,  the conclusions of Claim \ref{claim3.16} hold.

We first observe that if
$\sigma(F_0) \geq \eps \sigma(Q)$, for some $\eps>0$ to be chosen
(depending on allowable parameters), then 
the desired conclusion holds.  Indeed, in this case, we may proceed exactly as in
the analogous scenario in \textbf{Case 2a} in Section \ref{ss3.2}:  the promised Chord-arc domain is again
simply one of $\Omega_{\sbf'}^\pm$, since at least one of these contains a point in $V_Q$ 
and hence in particular is a subdomain of $\Omega$.
The constant $c_{a+b}$ in our conclusion will depend on $\eps$, but in the end this will be harmless,
since $\eps$ will be chosen to depend only on allowable parameters.

We may therefore suppose that 
\begin{equation}\label{eq3.65}
\sigma(F_0) < \eps \sigma(Q)\,.
\end{equation}
Next, we refine the decomposition $\F = \F_{good}\cup \F_{bad}$. 
With $\rho$ as in \eqref{eq3.42aa} and \eqref{eq3.63}, we choose $\beta< \rho/4$.
Set
$$\F^{(1)}_{good}:= \left\{Q_j\in\F_{good}:\, 
\sigma(F_Q\cap Q_j)\geq \big(1-4\vartheta\beta\rho^{-1}\big)\theta\sigma(Q_j)\right\}\,,$$
and define $\F^{(2)}_{good}:= \F_{good}\setminus \F_{good}^{(1)}$.
Let
$$\F^{(1)}_{bad}:= \left\{Q_j\in\F_{bad}:\,
\sigma(F_Q\cap Q_j)\geq \theta\sigma(Q_j)\right\}\,,$$
and define $\F^{(2)}_{bad}:= \F_{bad}\setminus \F_{bad}^{(1)}$.

We split the remaining part of \textbf{Case 2} into two subcases.  The first of these will be easy,
based on our previous arguments.

\smallskip

\noindent{\bf Case 2a}:  There is $Q_j\in \F^{(1)}_{bad}$ such that $\ell(Q_j)>2^{-k_1}\,\ell(Q)$.

By definition of $\F_{bad}^{(1)}$, one has $\sigma(F_Q\cap Q_j)\geq \theta\sigma(Q_j)$. By 
pigeon-holing, $Q_j$ has a descendant
$Q'$ with $\ell(Q') = 2^{-k_1}\ell(Q)$, such that $\sigma(F_Q\cap Q')\geq \theta\sigma(Q')$. 
We may then  apply $H[M_0,\theta]$ in $Q'$, and proceed exactly as we did in \textbf{Case 1b} above with
the cube $Q_0''$, which enjoyed precisely the same properties as does our current $Q'$.
Thus, we draw the desired conclusion in the present case.

\smallskip

The main case is the following.

\noindent{\bf Case 2b}:  Every $Q_j\in \F^{(1)}_{bad}$ satisfies $\ell(Q_j)\le 2^{-k_1}\,\ell(Q)$.

Observe that by definition,
\begin{equation}\label{eq3.67}
\sigma(F_Q \cap Q_j) \leq \big(1-4\vartheta\beta\rho^{-1}\big)\theta \sigma(Q_j)\,,\qquad \forall \, Q_j \in \F^{(2)}_{good}\,,
\end{equation}
and also
\begin{equation}\label{eq3.68a}
\sigma(F_Q \cap Q_j) \leq \theta \sigma(Q_j)\,,\qquad \forall \, Q_j \in \F^{(2)}_{bad}\,,
\end{equation}

Set $\F_*:=\F\setminus \F_{good}^{(2)}$.  For future reference, we shall derive a certain ampleness estimate for
the cubes in $\F_*$. By \eqref{eq3.56},
\begin{multline}\label{eq3.68}
(1-\vartheta\beta)\theta \sigma(Q)
\leq \, 
\sigma(F_Q)\, \leq \, \sigma(F_0) + \sum_{\F_*}\sigma(Q_j) + \sum_{\F_{good}^{(2)}}\sigma(F_Q\cap Q_j)\\[4pt]
\leq\, \eps\sigma(Q) + \sum_{\F_*}\sigma(Q_j) +
\left(1-4\vartheta\beta\rho^{-1}\right)\theta \sigma(Q)
\,,
\end{multline}
where in the last step have used \eqref{eq3.65} and \eqref{eq3.67}. Observe that
\begin{equation}\label{eq3.70a}
(1-\vartheta\beta)\theta = \left(4\rho^{-1}-1\right)\vartheta\beta\theta +\left(1-4\vartheta\beta\rho^{-1}\right)\theta \,.
\end{equation}
Using \eqref{eq3.68} and \eqref{eq3.70a}, for $\eps \ll\big(4\rho^{-1}-1\big)\vartheta\beta\theta$,
we obtain
$$
2^{-1}\left(4\rho^{-1}-1\right)\vartheta\beta\theta\sigma(Q)\le \sum_{\F_*}\sigma(Q_j)
$$
and thus
\begin{equation}\label{eq3.69}
\sigma(Q) \leq \,C(\vartheta,\rho,\beta,\theta)  \sum_{\F_*}\sigma(Q_j)\,.
\end{equation}


We now make the following claim.
\begin{claim}\label{claim3.70} For $\eps$ chosen sufficiently small,
$$\max\left(\sum_{\F_{good}^{(1)}}\sigma(Q_j)\,,\sum_{\F_{bad}^{(1)}}\sigma(Q_j)\right)\geq \eps\sigma(Q)\,.$$
\end{claim}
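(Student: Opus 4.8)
The plan is to argue by contradiction. Suppose that $\sum_{Q_j\in\F^{(1)}_{good}}\sigma(Q_j) < \eps\,\sigma(Q)$ and $\sum_{Q_j\in\F^{(1)}_{bad}}\sigma(Q_j) < \eps\,\sigma(Q)$; I will derive $\vartheta\beta\theta\,\sigma(Q) < 3\eps\,\sigma(Q)$, which is impossible once $\eps$ is taken small in terms of $\vartheta,\beta,\theta$. The ingredients are the lower bound $\sigma(F_Q)\ge(1-\vartheta\beta)\theta\,\sigma(Q)$ of \eqref{eq3.56}, the running assumption $\sigma(F_0)<\eps\,\sigma(Q)$ of \eqref{eq3.65}, and the ampleness estimate $\sigma(F_0\cup G_0)\ge\rho\,\sigma(Q)$ of \eqref{eq3.63}. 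Note that \eqref{eq3.69} itself is \emph{not} needed here; it will be used only afterwards, when one of the two surviving families is selected to build the Chord-arc domain.

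First I would split $\sigma(F_Q)$ along the disjoint partition $Q=F_0\cup\bigcup_{Q_j\in\F}Q_j$, and then along $\F=\F^{(1)}_{good}\cup\F^{(2)}_{good}\cup\F^{(1)}_{bad}\cup\F^{(2)}_{bad}$. On $F_0$ and on the families $\F^{(1)}_{good},\F^{(1)}_{bad}$ I use only $\sigma(F_Q\cap A)\le\sigma(A)$, so by $\sigma(F_0)<\eps\sigma(Q)$ and the contradiction hypothesis these three contributions total at most $3\eps\,\sigma(Q)$; on $\F^{(2)}_{good}$ I use the defining inequality \eqref{eq3.67} and on $\F^{(2)}_{bad}$ the defining inequality \eqref{eq3.68a}. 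Writing $S_2:=\sum_{Q_j\in\F^{(2)}_{good}}\sigma(Q_j)$ and $B_2:=\sum_{Q_j\in\F^{(2)}_{bad}}\sigma(Q_j)$, this yields
\[
(1-\vartheta\beta)\theta\,\sigma(Q)\,\le\,3\eps\,\sigma(Q)+(1-4\vartheta\beta\rho^{-1})\theta\,S_2+\theta\,B_2\,.
\]

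The key step is to turn the ``gain'' $4\vartheta\beta\rho^{-1}$ in the coefficient of $S_2$ into an actual improvement, and for that I need a lower bound for $S_2$. Since $F_0\cup G_0$ is the disjoint union of $F_0$ and the cubes of $\F^{(1)}_{good}\cup\F^{(2)}_{good}$, \eqref{eq3.63} together with the contradiction hypothesis forces $S_2>(\rho-2\eps)\,\sigma(Q)$. Combining this with the trivial bound $S_2+B_2\le\sigma(Q)$ (disjointness of the $Q_j$ inside $Q$), I rewrite $(1-4\vartheta\beta\rho^{-1})\theta\,S_2+\theta\,B_2=\theta(S_2+B_2)-4\vartheta\beta\rho^{-1}\theta\,S_2$ and bound it, using the two previous facts, by $\theta\,\sigma(Q)-4\vartheta\beta\rho^{-1}\theta(\rho-2\eps)\,\sigma(Q)$, which for $\eps<\rho/4$ is at most $\theta(1-2\vartheta\beta)\,\sigma(Q)$; here the standing hypothesis $\beta<\rho/4$ guarantees $1-4\vartheta\beta\rho^{-1}>0$, so the rearrangement is legitimate. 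Plugging this back into the displayed inequality and cancelling $\theta(1-2\vartheta\beta)\,\sigma(Q)$ from both sides leaves $\vartheta\beta\theta\,\sigma(Q)<3\eps\,\sigma(Q)$, the promised contradiction whenever $3\eps\le\vartheta\beta\theta$.

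The computation is elementary, so the only real obstacle is the bookkeeping of constants: one must check that the smallness asked of $\eps$ here ($\eps<\rho/4$ and $3\eps\le\vartheta\beta\theta$) is compatible with the requirement $\eps\ll(4\rho^{-1}-1)\vartheta\beta\theta$ used earlier to derive \eqref{eq3.69} — which it is, since $4\rho^{-1}-1>3$ — and one should keep in mind, as already noted in the text, that allowing the eventual constant $c_{a+b}$ to depend on $\eps$, and hence on $\vartheta,\beta,\theta$, is harmless: these are held fixed throughout the proof of Claim \ref{claim3.52}, and only finitely many values of $\vartheta$ occur in the iteration that closes the argument.
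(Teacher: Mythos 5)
Your proof is correct, and its ingredients coincide with the paper's: you decompose $\sigma(F_Q)$ over $F_0$ and the four-way partition $\F=\F^{(1)}_{good}\cup\F^{(2)}_{good}\cup\F^{(1)}_{bad}\cup\F^{(2)}_{bad}$, use the density upper bounds \eqref{eq3.67} and \eqref{eq3.68a} on the type-$(2)$ families, use $\sigma(F_0)<\eps\sigma(Q)$ and \eqref{eq3.63} to get the ampleness $S_2\gtrsim(\rho-2\eps)\sigma(Q)$, and close with $S_2+B_2\le\sigma(Q)$. The only presentational difference is that you argue by contradiction (assuming both type-$(1)$ sums are small), whereas the paper proves the contrapositive: it assumes only $\sum_{\F^{(1)}_{good}}\sigma(Q_j)<\eps\sigma(Q)$ and derives the lower bound $\frac14\vartheta\beta\theta\,\sigma(Q)\le\sum_{\F^{(1)}_{bad}}\sigma(Q_j)$, keeping $\sum_{\F^{(1)}_{bad}}\sigma(Q_j)$ as an explicit unknown and using the algebraic rewrite \eqref{eq3.70a} rather than your direct regrouping of $(1-4\vartheta\beta\rho^{-1})\theta S_2+\theta B_2$. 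These are the same computation in a different order. Your bookkeeping of $\eps$ is also consistent: $\eps<\rho/4$ and $3\eps\le\vartheta\beta\theta$ sit comfortably alongside the earlier $\eps\ll(4\rho^{-1}-1)\vartheta\beta\theta$ and $\eps\ll\rho$, and you correctly note that $\beta<\rho/4$ and $\vartheta\le 1$ keep $1-4\vartheta\beta\rho^{-1}$ positive so that the sign manipulations in the rearrangement are valid.
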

\begin{proof}[Proof of Claim \ref{claim3.70}]
If $\sum_{\F_{good}^{(1)}}\sigma(Q_j)\geq \eps\sigma(Q)$, then we are done.
Therefore, suppose that
\begin{equation}\label{eq3.71}
\sum_{\F_{good}^{(1)}}\sigma(Q_j)< \eps\sigma(Q)\,.
\end{equation}
We have made the decomposition 
\begin{equation}\label{eq3.74aa}
\F = \F_{good}^{(1)}\cup \F_{good}^{(2)} \cup \F^{(1)}_{bad} \cup\F_{bad}^{(2)}. 
\end{equation} 
Consequently
$$
\sigma(F_Q) \,\le\, 
  \sum_{\F_{good}^{(2)}}\sigma(F_Q\cap Q_j)
+  \sum_{\F_{bad}}\sigma(F_Q\cap Q_j)
+
O\left(\eps\sigma(Q)\right)
\,, 
$$
where we have used \eqref{eq3.65}, and 
\eqref{eq3.71} to estimate the contributions of $F_0$,  and of
$\F_{good}^{(1)}$, respectively.  This,  \eqref{eq3.56}, \eqref{eq3.67}, and \eqref{eq3.68a} yield
\begin{multline*}
(1-\vartheta\beta)\theta \left( \sum_{\F_{good}^{(2)}}\sigma(Q_j)
+  \sum_{\F_{bad}^{(2)}}\sigma(Q_j) \right)
\leq \, (1-\vartheta\beta)\theta \sigma(Q)\leq\,\sigma(F_Q) 
\\[4pt]
\leq\,
\left(1-4\vartheta\beta\rho^{-1}\right)\theta  \sum_{\F_{good}^{(2)}}\sigma(Q_j)
+ \sum_{\F_{bad}^{(1)}}\sigma(Q_j)
+\theta\sum_{\F_{bad}^{(2)}}\sigma(Q_j) 
+ O\left(\eps\sigma(Q)\right)
\,.
\end{multline*}
In turn, applying  \eqref{eq3.70a} in the latter estimate, and rearranging terms,
we obtain
\begin{equation}\label{eq3.74}
(4\rho^{-1} -1)\vartheta\beta\theta  \sum_{\F_{good}^{(2)}}\sigma(Q_j)
- \vartheta\beta\theta \sum_{\F_{bad}^{(2)}}\sigma(Q_j) 
\leq\,
 \sum_{\F_{bad}^{(1)}}\sigma(Q_j)
+ O\left(\eps\sigma(Q)\right)
\,.
\end{equation}
Recalling that $G_0=\cup_{\F_{good}}Q_j$, and that
$\F_{good}= \F^{(1)}_{good} \cup \F^{(2)}_{good}$,
we further note that by \eqref{eq3.63}, choosing $\eps \ll \rho$, and using
\eqref{eq3.65} and 
\eqref{eq3.71}, we find in particular that
\begin{equation}\label{eq3.75} 
\sum_{\F_{good}^{(2)}}\sigma(Q_j) \,\geq\, \frac{\rho}{2} \sigma(Q). 
\end{equation}
Applying  
\eqref{eq3.75} and the trivial estimate
$ \sum_{\F_{bad}^{(2)}}\sigma(Q_j)\leq\sigma(Q)$ in \eqref{eq3.74}, we then have 
\begin{multline*}
\vartheta\beta\theta\left[1-\frac{\rho}{2}\right]\,\sigma(Q)
=
\left[ \big(4\rho^{-1}-1\big)\vartheta\beta\theta\frac{\rho}{2} -\vartheta\beta\theta \right]\,\sigma(Q)
\\
\leq\, 
\big(4\rho^{-1}-1\big)\vartheta\beta\theta\sum_{\F_{good}^{(2)}}\sigma(Q_j) 
-\vartheta\beta\theta \sum_{\F_{nbad}^{(2)}}\sigma(Q_j) 
\le
\sum_{\F_{bad}^{(1)}}\sigma(Q_j) 
+ O\left(\eps\sigma(Q)\right)\,.
\end{multline*}
Since $\rho<1$, we conclude, 
for $\eps\le (4C)^{-1}\vartheta\beta\theta$, that 
$$
\frac14\vartheta\beta\theta\, \sigma(Q)
\,\le\,
\sum_{\F_{bad}^{(1)}}\sigma(Q_j)\,,
$$
and
Claim \ref{claim3.70}  follows.
\end{proof}

With Claim \ref{claim3.70} in hand, let us return to the proof of \textbf{Case 2b} of Claim \ref{claim3.52}. We begin by noting that by definition of $\F_{bad}^{(1)}$, and Lemma \ref{lemma:VQ}, we can apply 
$H[M_0,\theta]$ to any $Q_j \in \F_{bad}^{(1)}$, hence for each
such $Q_j$ there is a family of Chord-arc domains $\{\Omega^i_{Q_j}\}_i$ satisfying the desired properties.

Now consider $Q_j\in\F^{(1)}_{good}$.  Since $\F^{(1)}_{good}\subset \F_{good}$, 
by pigeon-holing $Q_j$ has a dyadic
child $Q_j'$  satisfying 
\begin{equation}\label{eq3.60*}
\mut\big(\dd(Q'_j)\big) \leq a\sigma(Q'_j)\,,
\end{equation}
(there may be more than one such child, but we just pick one).
Our immediate goal is to find a child $Q_j''$ of $Q_j$, which may or may not equal $Q_j'$,
for which we may construct a family of Chord-arc domains  $\{\Omega^i_{Q''_j}\}_i$ satisfying the desired properties.
To this end, we assume first that $Q_j'$ satisfies 
\begin{equation}\label{eq3.61}
\sigma(F_Q\cap Q_j') \ge  (1-\vartheta)\theta \sigma (Q_j')\,.
\end{equation}
In this case, we set $Q_j'':=Q_j'$, and using Lemma \ref{lemma:VQ},
by the induction hypothesis
$H[a,(1-\vartheta)\theta]$, we obtain the desired family of Chord-arc domains.

We therefore consider the case 
\begin{equation}\label{eq3.80}
\sigma(F_Q\cap Q_j') < (1-\vartheta)\theta \sigma (Q_j')\,.
\end{equation}
In this case, we shall select $Q_j''\neq Q_j'$.
Recall that we use the notation $Q''\lhd Q$ to mean that $Q''$ is a dyadic child of $Q$.
Set $$\F_j'':=\left\{Q_j''\lhd Q_j: \, Q_j''\neq Q_j'\right\}\,.$$
Note that we have the dyadic doubling estimate
\begin{equation}
\sum_{Q_j''\in \F_j''}\sigma(Q_j'') \leq \sigma(Q_j) \leq M_1 \sigma(Q_j')\,,
\label{eq:dd-est}
\end{equation}
where $M_1=M_1(n, ADR)$. 
We also note that
\begin{equation}\label{eq3.81}
\big(1-4\vartheta\beta\rho^{-1}\big) \theta= \big(1-4\beta\rho^{-1}\big)\vartheta\theta + (1-\vartheta)\theta\,.
\end{equation}
By definition of $\F_{good}^{(1)}$, 
$$\big(1-4\vartheta\beta\rho^{-1}\big) \theta \sigma(Q_j)\, \leq \,\sigma(F_Q\cap Q_j) \,=\, \sigma(F_Q\cap Q_j') + 
\sum_{Q_j''\in \F_j''}\sigma(F_Q\cap Q_j'')\,.$$
By \eqref{eq3.80}, it follows that
\begin{multline*}\big(1-4\vartheta\beta\rho^{-1}\big)\theta \sigma(Q_j')+ \big(1-4\vartheta\beta\rho^{-1}\big)\theta \!\!
\sum_{Q_j''\in\F_j''}\sigma(Q_j'')=\big(1-4\vartheta\beta\rho^{-1}\big)\theta \sigma(Q_j)\\[4pt] \leq \, (1-\vartheta)\theta \sigma(Q_j') + 
\sum_{Q_j''\in\F_j''}\sigma(F_Q\cap Q_j'')\,.
\end{multline*}
In turn, using \eqref{eq3.81}, we obtain
$$\big(1-4\beta\rho^{-1}\big)\vartheta\theta\sigma(Q_j') + \big(1-4\vartheta\beta\rho^{-1}\big)\theta \!\!
\sum_{Q_j''\in\F_j''}\sigma(Q_j'') \,\leq\, \sum_{Q_j''\in\F_j''}\sigma(F_Q\cap Q_j'')\,.$$
By the dyadic doubling estimate \eqref{eq:dd-est}, this leads to
$$
\left[\big(1-4\beta\rho^{-1}\big)\vartheta M_1^{-1} + \big(1-4\vartheta\beta\rho^{-1}\big) \right]\theta
\sum_{Q_j''\in\F_j''}\sigma(Q_j'') \,\leq\, \sum_{Q_j''\in\F_j''}\sigma(F_Q\cap Q_j'')\,.
$$
Choosing $\beta \leq \rho/ (4 (M_1+1))$, we find that the
expression in square brackets is at least 1, and therefore, by pigeon holing,
we can pick $Q_j''\in\F_j''$ satisfying
\begin{equation}\label{eq3.82}
\sigma(F_Q\cap Q_j'')\geq \theta \sigma(Q_j'')\,.
\end{equation}  

Hence, using Lemma \ref{lemma:VQ}, we see that
the induction hypothesis 
$H[M_0,\theta]$ holds for $Q''_j\in \F_j''$, and once again we obtain the desired family of Chord-arc domains.

Recall that we have constructed our packing measure $\mut$ in such a way that
each $Q_j\in \F$, as well as all of its children, along with the cubes in $\dd_\F\cap\dd(Q)$,
belong to the same  tree $\sbf$; see Claim \ref{claim3.16}.  This means in particular
that for each such $Q_j$, the Whitney region $U_{Q_j}$ has exactly two components $U_{Q_j}^\pm\subset
\Omega_\sbf^\pm$, and the analogous statement is true for each child of $Q_j$.   This fact has the following 
consequences:

\begin{remark}\label{remark3.66}
For each $Q_j \in \F_{bad}^{(1)}$, and for the selected child $Q_j''$ of each $Q_j\in \F_{good}^{(1)}$,
the conclusion of the induction hypothesis produces at most two 
Chord-arc domains $\Omega^\pm_{Q_j} \supset U_{Q_j}^\pm$ (resp. $\Omega_{Q_j''}^\pm
\supset U_{Q_j''}^\pm$), which we enumerate as $\Omega^i_{Q_j}$ (resp. $\Omega_{Q_j''}^i)$,
$i=1,2$, with $i=1$ corresponding 
 ``+", and  $i=2$ corresponding to ``-", respectively.   
\end{remark}
 
\begin{remark}\label{remark3.67}
For each $Q_j\in \F_{good}^{(1)}$, the connected component $U_{Q_j}^\pm$ overlaps with
the corresponding component $U^\pm_{Q_j''}$ for its child,  so we may augment
$\Omega_{Q_j''}^i$ by adjoining to it the appropriate component $U_{Q_j}^\pm$, to form
a chord arc domain $$\Omega^i_{Q_j} := \Omega_{Q_j''}^i \cup U_{Q_j}^i\,.$$  
\end{remark}


By the induction hypothesis, for each $Q_j \in \F_{bad}^{(1)} \cup \F_{good}^{(1)}$
(and by $n$-ADR, in the case of $\F_{good}^{(1)}$),
the Chord-arc domains $\Omega^i_{Q_j}$ that we have constructed satisfy
$$\sum_i \sigma (Q_j\cap \pom_{Q_j}^i) \gtrsim \sigma(Q_j)\,,$$
where the sum has either one or two terms,  and where 
the implicit constant depends either on $M_0$ and $\theta$, or on $a$ and $(1-\vartheta)\theta$,
depending on which part of the induction hypothesis we have used.
In particular, for each such $Q_j$, there is at least one choice of index $i$  such that
$\Omega^i_{Q_j}=:\Omega_{Q_j}$ satisfies
\begin{equation}\label{eq3.69a}
\sigma (Q_j\cap \pom_{Q_j}) \gtrsim \sigma(Q_j)
\end{equation}
(if the latter is true for both choices $i=1,2$, we arbitrarily choose $i=1$, 
which we recall corresponds to ``+").
Combining the latter bound with Claim \ref{claim3.70}, and recalling that $\eps$ has now 
been fixed depending only on allowable parameters,
we see that
\begin{equation*}
\sum_{Q_j\, \in \, \F_{bad}^{(1)} \,\cup\, \F_{good}^{(1)}}\sigma (Q_j\cap \pom_{Q_j}) \gtrsim \sigma(Q)
\end{equation*}
 For $Q_j \in  \F_{bad}^{(1)} \,\cup\, \F_{good}^{(1)}$, as above set 
$B^*_{Q_j}:= B(x_{Q_j}, K \ell(Q_j))$. 
 By a covering lemma argument, 
we may extract a subfamily 
$\F^*\subset \F_{bad}^{(1)} \,\cup\, \F_{good}^{(1)}$ 
such that $ \{\kappa B^*_{Q_j}\}_{Q_j\in \F^*}$ is 
pairwise disjoint, where again $\kappa\gg K^4$ is a large dilation factor, and such that 
\begin{equation}\label{eq3.69*}
\sum_{Q_j\, \in \, \F^*}\sigma (Q_j\cap \pom_{Q_j}) \gtrsim_\kappa \sigma(Q)
\end{equation}

Let us now build (at most two) Chord-arc domains $\Omega^i_Q$ satisfying the desired properties.
Recall that for each $Q_j \in \F^*$, we defined the corresponding Chord-arc
domain $\Omega_{Q_j}:= \Omega_{Q_j}^i$, where the choice of index $i$ (if there was a choice),
was made so that \eqref{eq3.69a} holds.  We then assign each $Q_j\in \F^*$ either to
$\F^*_+$ or to $\F^*_-$, depending on whether we chose $\Omega_{Q_j}$ satisfying
\eqref{eq3.69a} to be $\Omega_{Q_j}^1=\Omega_{Q_j}^+$, or $\Omega_{Q_j}^2
=\Omega_{Q_j}^-$.  We note that at least one of the sub-collections $\F^*_\pm$ 
is non-empty, since
for each $j$, there was at least one choice of index $i$ such that
\eqref{eq3.69a} holds with $\Omega_{Q_j}:= \Omega_{Q_j}^i$.  Moreover, the 
two collections are disjoint, since we have arbitrarily designated $\Omega_{Q_j} = \Omega_{Q_j}^1$
(corresponding to ``+") in the case that there were two choices for a particular $Q_j$.
We further note that if $Q_j \in \F^*_\pm$, then $\Omega_{Q_j} = \Omega_{Q_j}^\pm\supset
U_{Q_j}^\pm$. 

We are now in position to apply Lemma \ref{lemmaCAD}.  Set
\[
	\sbf^*=\left\{Q'\in \dd(Q): Q_j\subset Q' \text{ for some }Q_j\in\F^*\right\}\,,
	\] 
which is a semi-coherent subtree of $\sbf$, with maximal cube $Q$.	
Without loss of generality, we may suppose that $\F^*_+$ is non-empty, and we then define
$$  \Omega_Q^+:= \Omega_{\sbf^*}^+ \bigcup
 \left(\bigcup_{Q_j \,\in\, \F^*_+}\Omega_{Q_j}\right)\,,$$
 and similarly with ``+" replaced by ``-",  provided that $\F^*_-$ is also non-empty.
 Observe that by the induction hypothesis, 
 and our construction (see Remarks \ref{remark3.66} and \ref{remark3.67}, and Lemma \ref{lemma2.7}),
 for an appropriate choice of ``$\pm$'',
 $U_{Q_j}^\pm\subset \Omega_{Q_j} \subset B_{Q_j}^*$, and since $\ell(Q_j) \leq 2^{-k_1} \ell(Q)$,
by \eqref{eq3.69*} and Lemma  \ref{lemmaCAD}, 
each (non-empty) choice defines a Chord-arc domain with the requisite properties. This completes the proof of \textbf{Case 2b} of Claim \ref{claim3.52} and hence that of Theorem \ref{t1}.
 \end{proof}

\medskip

\vspace{1cm}



\begin{center} 
\Large Part \refstepcounter{parte}\theparte\label{part-2}:  Proof of Theorem \ref{teo1a}
\end{center}
\smallskip

\addcontentsline{toc}{section}{Part 2:  Proof of Theorem \ref{teo1a}}


\section{Preliminaries for the Proof of Theorem \ref{teo1a}}\label{s5}

\subsection{Uniform rectifiability}

Recall the definition of $n$-uniform rectifiable ($n$-UR) sets in Definition \ref{defur}. Given a ball $B\subset \RR^{n+1}$, we denote
\begin{equation}\label{defbbeta}
b\beta_E(B) = \inf_L \frac1{r(B)}\Bigl(\sup_{y\in E\cap B} \dist(y,L) + \sup_{y\in L\cap B}\dist(y,E)\Bigr),
\end{equation}
where the infimum is taken over all the affine $n$-planes that intersect $B$.
The following result is due to David and Semmes:

\begin{theorem}\label{teods}
Let $E\subset\RR^{n+1}$ be $n$-ADR. Denote $\sigma=\HH^n\lfloor_{\,E}$ and let $\DD$ be the associated dyadic lattice. Then, $E$ is $n$-UR if and only if, for any $\eps>0$,
$$\sum_{\substack{Q\in\DD:Q\subset R,\\ b\beta(3\ball_Q)>\eps}} \sigma(Q) \leq C(\eps)\,\sigma(R)\quad \mbox{ for all $R\in\DD$.}$$
\end{theorem}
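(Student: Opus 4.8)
The statement is essentially due to David and Semmes \cite{DS1,DS2}, and my plan is to argue the two implications separately. For ``$n$-UR $\Rightarrow$ Carleson bound'', fix $\eps>0$ and apply the bilateral corona decomposition of Lemma \ref{lemma2.1} with $\eta\ll\eps$ and $K$ large (say $K\geq 12$), obtaining $\DD(E)=\G\cup\B$. If $Q$ lies in a tree $\sbf\subset\G$, then $3\ball_Q\subset B_Q^*=B(x_Q,K\ell(Q))$, so by \eqref{eq2.2a} the set $E$ and the Lipschitz graph $\Gamma_{\sbf}$ are within $\eta\ell(Q)$ of each other throughout $3\ball_Q$; since $\Gamma_{\sbf}$ has Lipschitz constant at most $\eta$, over $3\ball_Q$ it lies within $C\eta\ell(Q)$ of its base $n$-plane, and combining these two facts gives $b\beta_E(3\ball_Q)\leq C\eta<\eps$ once $\eta$ is small. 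Hence every $Q$ with $b\beta_E(3\ball_Q)>\eps$ lies in $\B\cup\{Q(\sbf):\sbf\subset\G\}$, and the Carleson bound, with $C(\eps)=C_{\eta,K}$, follows from the packing estimate in Lemma \ref{lemma2.1}(2).

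The reverse direction, ``Carleson bound $\Rightarrow$ $n$-UR'', is the substantial one. Fix a target Lipschitz constant and choose a correspondingly small $\eps>0$; set $\B_\eps:=\{Q\in\DD:b\beta_E(3\ball_Q)>\eps\}$, which by hypothesis satisfies a Carleson packing condition. The plan is to manufacture, from the cubes outside $\B_\eps$, a corona decomposition of $E$ by Lipschitz graphs --- i.e. a disjoint family of trees, each carrying a small-constant Lipschitz graph $\Gamma_{\sbf}$ obeying estimates of the type \eqref{eq2.2a}, with the maximal cubes of the trees together with the ``bad'' cubes packing --- and then to invoke the David--Semmes theorem that any set carrying such a decomposition is $n$-UR (equivalently, one extracts ``big pieces of Lipschitz graphs'' directly). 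To construct a tree, fix $Q\notin\B_\eps$, let $L_Q$ nearly realize the infimum in $b\beta_E(3\ball_Q)$, and run a stopping-time descent from $Q$: a cube is kept in the tree $\sbf=\sbf(Q)$ as long as it avoids $\B_\eps$ and the angle between its best-approximating plane and $L_Q$ stays below a fixed threshold $\alpha$. The stopping cubes are of two kinds: those in $\B_\eps$, which pack by hypothesis, and those at which the approximating plane has rotated by $\gtrsim\alpha$. On each tree, the smallness of $b\beta_E$ together with the angle control yields, by the standard argument, a single Lipschitz graph $\Gamma_{\sbf}$ over $L_{Q(\sbf)}$ with Lipschitz constant $\lesssim\eps+\alpha$ and with $E$ trapped within $C\eps\ell(Q)$ of $\Gamma_{\sbf}$ for every $Q\in\sbf$.

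I expect the main obstacle to be the packing estimate for the ``large rotation'' stopping cubes, that is, a Carleson-type bound on the accumulated tilting of the approximating planes as one descends a branch. The key point is that for comparable cubes $Q'\subset Q$ with $b\beta_E(3\ball_Q)$ and $b\beta_E(3\ball_{Q'})$ both small, $n$-ADR (which forces $E\cap\ball_Q$ to spread in all $n$ directions) makes the angle between their best planes controlled by $b\beta_E(3\ball_Q)+b\beta_E(3\ball_{Q'})$; squaring and summing along branches, the total rotation over $\DD(R)$ is dominated by $\sum_{Q\subset R}b\beta_E(3\ball_Q)^2$, which is $\lesssim\sigma(R)$ upon applying the hypothesis to a geometric sequence of thresholds $\eps_k=2^{-k}$. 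It is here that the \emph{bilateral} nature of $b\beta_E$ is used: the term $\sup_{y\in L\cap B}\dist(y,E)$ forbids $E$ from withdrawing from the approximating plane, which is what permits $\Gamma_{\sbf}$ to be realized as an honest Lipschitz graph of small slope rather than merely a set lying close to one, and what keeps the descent under control. Once both packing bounds are in hand, the trees form a corona decomposition and $n$-UR follows; I would refer to \cite{DS1,DS2} for the details of which this is the outline.
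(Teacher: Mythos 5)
The paper does not prove this theorem at all: it cites it as a (slight variant of a) result of David and Semmes, \cite[Theorem I.2.4]{DS2}, and only observes that the two formulations are straightforwardly equivalent. So the question is whether your outline could stand in for the David--Semmes argument. Your forward direction ($n$-UR $\Rightarrow$ Carleson packing) is fine as stated, via the bilateral corona decomposition of Lemma \ref{lemma2.1}: inside a good tree $\sbf$, \eqref{eq2.2a} together with the small Lipschitz constant of $\Gamma_\sbf$ gives $b\beta(3\ball_Q)\lesssim\eta<\eps$, and the packing of $\B$ (and of the maximal cubes, which in any case are superfluous here) does the rest.

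The reverse direction contains a genuine gap precisely at the point you flag. You need to control the ``large rotation'' stopping cubes, and you propose to do this by establishing the strong geometric lemma $\sum_{Q\subset R}b\beta(3\ball_Q)^2\,\sigma(Q)\lesssim\sigma(R)$ from the hypothesis by summing over dyadic thresholds $\eps_k=2^{-k}$. This does not work: the hypothesis only provides $\sum_{b\beta(3\ball_Q)>\eps_k}\sigma(Q)\leq C(\eps_k)\,\sigma(R)$ with no quantitative control whatsoever on the growth of $C(\eps)$ as $\eps\to0$, so the series $\sum_k 2^{-2k}\,C(2^{-k})$ need not converge. (In the end $C(\eps)$ does grow polynomially, but that is a consequence of $n$-UR --- which is exactly what you are trying to prove --- so invoking it here is circular.) A secondary issue is the claimed domination of ``total rotation'' by $\sum b\beta^2$: since angle increments between adjacent generations are only bounded by the first powers $b\beta(3\ball_Q)+b\beta(3\ball_{Q'})$, the natural estimate on cumulative rotation along a branch involves $\sum b\beta$, not $\sum b\beta^2$, and squaring is not free. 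The actual David--Semmes route from the bilateral weak geometric lemma to uniform rectifiability does not pass through the strong geometric lemma at all; it extracts big pieces of Lipschitz graphs directly by a measure-theoretic argument (choosing $\eps$ small relative to a fixed rotation/height threshold, showing that the set of points in $R$ whose descent never meets a bad cube and never accumulates too much rotation or vertical displacement has measure $\geq c\sigma(R)$, and exhibiting a Lipschitz graph over that set). That argument needs to be supplied, or the rotation-packing step needs to be replaced, before this outline closes up.
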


 For the proof, see
\cite[Theorem 2.4, p.32]{DS2} (this provides a slight variant of Theorem \ref{teods}, and it is straightforward to check that both formulations are equivalent).
Remark that the constant $3$ multiplying $\ball_Q$ in the estimate above can be replaced by any number larger than $1$.

Recall also the following result (see \cite{HLMN} or \cite{MT}).

\begin{theorem}\label{teo*}
Let $\Omega\subset\RR^{n+1}$, $n\ge 1$, be an open set satisfying an interior corkscrew condition, 
with $n$-ADR boundary, such that the harmonic measure in $\Omega$ belongs to
weak-$A_\infty$. Then $\partial\Omega$ is $n$-UR.
\end{theorem}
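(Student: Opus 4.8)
The plan is to verify the David--Semmes $b\beta$-criterion recalled in Theorem~\ref{teods}: fixing $\eps>0$, I would show that $\sum_{Q\in\DD,\ Q\subset R,\ b\beta(3\ball_Q)>\eps}\sigma(Q)\le C(\eps)\,\sigma(R)$ for every $R\in\DD$, which by that theorem gives $n$-uniform rectifiability of $\partial\Omega$ (Definition~\ref{defur}). The weak-$A_\infty$ hypothesis enters through a Carleson measure estimate for bounded harmonic functions. Concretely, using the well-known equivalence of weak-$A_\infty$ for $\omega$ with solvability of the $L^p$ Dirichlet problem for some finite $p$ (so that $\|N_\ast u\|_{L^p(\sigma)}\lesssim\|g\|_{L^p(\sigma)}$ for the harmonic extension $u$ of $g\in L^p(\partial\Omega)$), together with standard square-function and good-$\lambda$ arguments, I would first establish that there is a constant $C$ with
\begin{equation*}
\iint_{\ball(x,r)\cap\Omega}|\nabla u(X)|^2\,\delta_\Omega(X)\,dX\ \le\ C\,\|u\|_{\infty}^2\,r^n
\end{equation*}
for every surface ball $\ball(x,r)\cap\partial\Omega$ with $0<r<\diam(\partial\Omega)$ and every bounded harmonic $u$ on $2\ball(x,r)\cap\Omega$.

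Next, I would pass from this Carleson measure estimate to the $b\beta$-packing condition. If $b\beta(3\ball_Q)>\eps$, then $\partial\Omega$ cannot be $\eps\,\ell(Q)$-close, inside $3\ball_Q$, to any affine $n$-plane. For each such bad cube $Q$ one designs a competitor --- for instance a smooth truncation of the distance to a best approximating hyperplane, or the harmonic measure of a suitably chosen half-space-type Borel subset of $\partial\Omega$ --- and patches these local competitors, via a stopping-time/corona decomposition, into a single globally defined bounded harmonic function $u$ for which non-flatness at $Q$ forces $\iint_{T_Q}|\nabla u|^2\,\delta_\Omega\,dX\gtrsim_\eps\sigma(Q)$ over a Whitney subregion $T_Q$ of the Carleson box associated with $Q$. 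Summing over the bad $Q\subset R$, which have bounded overlap, and invoking the Carleson measure estimate of the previous step then yields $\sum_{Q\ \mathrm{bad}}\sigma(Q)\lesssim_\eps\sigma(R)$, as required. (This last implication --- that a Carleson measure estimate for bounded harmonic functions forces uniform rectifiability of an ADR boundary of an open set with interior corkscrews --- is itself a known fact.)

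An alternative route, closer to \cite{MT}, would use the Nazarov--Tolsa--Volberg theorem \cite{NToV}: since $E=\partial\Omega$ is $n$-ADR, it is $n$-UR if and only if the $n$-dimensional Riesz transform is bounded on $L^2(\sigma)$. One would control the Riesz transform, tested against interior corkscrew points, by gradients of harmonic functions whose boundary data are $\sigma$-controlled, using Green's identity; the weak reverse H\"older property of $k=d\omega/d\sigma$ supplied by weak-$A_\infty$, together with the Carleson measure estimate above, then provides the $T1$-type testing bounds needed for $L^2(\sigma)$-boundedness.

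The hard part will be the absence of any connectivity assumption beyond interior corkscrews. The chain ``quantitative absolute continuity of $\omega$ $\Rightarrow$ square-function estimates $\Rightarrow$ Carleson measure estimate'' is classical on NTA (or one-sided NTA) domains, where Harnack chains let one compare the Green function and harmonic measure freely; here there are no Harnack chains, so every such comparison must be localized to corkscrew balls and propagated through the dyadic ADR structure of $\partial\Omega$. Carrying out this localization and bookkeeping is precisely the additional content of \cite{HM-4} and \cite{HLMN} (and is bypassed in part in \cite{MT} by working directly with Riesz transforms and a corona decomposition).
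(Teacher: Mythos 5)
The paper does not reprove Theorem~\ref{teo*}; it is imported verbatim from \cite{HLMN} (and, via a different argument, from \cite{MT}), so you should be comparing your sketch against those sources rather than against anything in the present manuscript.

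Your first route --- derive a Carleson measure estimate (CME) for bounded harmonic functions from weak-$A_\infty$, then cite the implication ``CME $\Rightarrow$ $n$-UR'' as a black box --- is logically admissible, but it conceals a near-circularity you should be aware of: on an open set with interior corkscrews and $n$-ADR boundary, the CME for bounded harmonic functions is actually \emph{equivalent} to $n$-UR of the boundary (the direction $n$-UR $\Rightarrow$ CME is \cite{HMM}; the converse is the Garnett--Mourgoglou--Tolsa / Hofmann--Martell--Mayboroda theorem you are invoking). So your Step~1 is not a smaller intermediate problem; it is, up to that equivalence, the theorem itself, and your ``standard square-function and good-$\lambda$ arguments'' phrase papers over exactly the hard point. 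In particular, without Harnack chains the $S\lesssim N$ direction of the square-function/nontangential-maximal comparison is not classical, and this is where \cite{HLMN} in fact diverges from your plan: they prove a CME not for arbitrary bounded harmonic functions but for the Green function $g(p,\cdot)$ with a fixed corkscrew pole $p$, and combine that with the weak-$RH_q$ estimate to extract a packing of non-flat scales directly, without passing through the general-bounded-function CME or its converse. Your decomposition is thus a post-hoc modularization rather than a reconstruction of the argument.

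Two smaller points. The first competitor you propose, a ``smooth truncation of the distance to a best approximating hyperplane,'' is not harmonic, so it cannot be fed into the CME as written; your second suggestion (harmonic measure of a half-space-like Borel subset viewed from a distant pole) is closer to what the cited works actually do, and is also closer in spirit to the Green-function route. And the ``patching local competitors via a stopping-time/corona decomposition into a single bounded harmonic function'' step is delicate: the David--Semmes corona construction you would naturally reach for presupposes $n$-UR of the boundary, which is what you are trying to establish. The actual mechanism in \cite{HLMN} is a stopping-time on the Green function's oscillation rather than a geometric corona. Your alternative route through the Nazarov--Tolsa--Volberg Riesz-transform criterion is indeed the shape of \cite{MT}, but you should add that the ACF monotonicity formula plays a central role there (as it does in the GMT proof of ``CME $\Rightarrow$ UR''); without it, the ``$T1$-type testing bounds'' you allude to are hard to close. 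Your diagnosis of the main difficulty --- no Harnack chains, hence no free Green function/harmonic measure comparability --- is correct and well stated.
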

\vv

\subsection{Harmonic measure}
From now on we assume that $\Omega\subset\RR^{n+1}$ is an open set with $n$-ADR boundary such that the harmonic measure in $\Omega$ belongs to
weak-$A_\infty$. We denote by $\sigma$ the surface measure in $\partial\Omega$, that is, $\sigma = \HH^n\lfloor_{\,\partial\Omega}$.
We also consider the dyadic lattice $\DD$ associated with $\sigma$ as in Lemma \ref{lemmaCh}.
The AD-regularity constant of $\partial\Omega$ is denoted by $C_0$.

We denote by $\omega^p$ the harmonic measure with pole at $p$ of $\Omega$, and by $g(\cdot,\cdot)$ the Green function. Much as before we write $\delta_\Omega(x) = \dist(x,\partial\Omega)$.

The following well known result is sometimes called ``Bourgain's estimate":

\begin{lemma}\cite{B}.\label{l:bourgain}
Let $\Omega\subsetneq \RR^{n+1}$ be open with $n$-ADR boundary,  $x\in \partial\Omega$, and $0<r\leq\diam(\partial\Omega)/2$.  Then 
\begin{equation}\label{e:bourgain}
\omega^y(B(x,2r))\geq c >0, \;\; \mbox{ for all }y\in \Omega\cap \overline B(x,r)
\end{equation}
where $c$ depends on $n$ and the $n$-ADRity constant of $\partial\Omega$.
\end{lemma}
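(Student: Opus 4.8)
The plan is to follow Bourgain's original potential-theoretic argument, whose two pillars are: (a) the lower Ahlfors--David bound forces the complement of $\Omega$ to be \emph{capacitarily thick} at the scale $r$ near $x$, and (b) the equilibrium potential of this thick piece can be compared, via the maximum principle, with the harmonic measure of a boundary ball. Concretely, set $F:=\overline B(x,r)\cap\partial\Omega$, a compact subset of $\RR^{n+1}\setminus\Omega$ (since $\Omega$ is open). The restriction $\mu:=\sigma\lfloor F$ has mass $\mu(F)=\sigma(\overline B(x,r)\cap\partial\Omega)\approx r^n$ by the ADR property, and, by the upper ADR bound and a dyadic annular decomposition, its Riesz energy satisfies $\iint |z-w|^{1-n}\,d\mu(z)\,d\mu(w)\lesssim r\,\mu(F)\lesssim r^{n+1}$; hence $\operatorname{Cap}(F)\ge \mu(F)^2\big/\iint|z-w|^{1-n}\,d\mu\,d\mu\gtrsim r^{n-1}$, with constants depending only on $n$ and the ADR constant $C_0$. (This is the standard fact that $n$-ADR implies the Capacity Density Condition.) Throughout I assume $n\ge 2$; the case $n=1$ is classical and is handled identically using the logarithmic equilibrium potential.

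Let $p_F$ be the equilibrium potential of $F$: $0\le p_F\le 1$ on $\RR^{n+1}$, $p_F\equiv 1$ quasi-everywhere on $F$, $p_F(y)=c_n\int |y-z|^{1-n}\,d\nu_F(z)$ where the equilibrium measure $\nu_F$ has total mass $\operatorname{Cap}(F)$ and is supported in $F\subset\overline B(x,r)$; in particular $p_F$ is harmonic off $F$, hence harmonic in $\Omega$ and in every open subset of $\Omega$. From $|y-z|\le 2r$ for $y\in\overline B(x,r)$, $z\in\supp\nu_F$, we get $p_F(y)\ge c_n(2r)^{1-n}\operatorname{Cap}(F)\ge c_1$ on $\overline B(x,r)$, with $c_1=c_1(n,C_0)>0$; and from $\operatorname{Cap}(F)\le\operatorname{Cap}(\overline B(x,r))\approx r^{n-1}$ together with $|y-z|\ge |y-x|-r$ for $z\in\supp\nu_F$, we get $p_F(y)\le c_n\bigl(r/(|y-x|-r)\bigr)^{n-1}$, so $p_F\le c_1/2$ on $\partial B(x,Nr)$ once $N=N(n,C_0)$ is large enough. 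Now apply the maximum principle on the bounded open set $D:=\Omega\cap B(x,Nr)$, in which $p_F$ is harmonic: on $\partial D$ one has $p_F\le \chi_{\partial\Omega\cap B(x,Nr)}+\tfrac{c_1}{2}\chi_{\partial B(x,Nr)\cap\Omega}$, which is the boundary datum of the harmonic function $\omega^{\cdot}_D\bigl(\partial\Omega\cap B(x,Nr)\bigr)+\tfrac{c_1}{2}\,\omega^{\cdot}_D\bigl(\partial B(x,Nr)\cap\Omega\bigr)$; hence $p_F(y)\le \omega^{y}_D(\partial\Omega\cap B(x,Nr))+\tfrac{c_1}{2}$ for all $y\in D$, and since $p_F\ge c_1$ on $\overline B(x,r)\cap\Omega\subset D$ this forces $\omega^{y}_D(\partial\Omega\cap B(x,Nr))\ge c_1/2$ there. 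Finally, because $D\subset\Omega$, the maximum principle (equivalently the strong Markov property, using that $n$-ADR boundaries are everywhere Wiener-regular) gives $\omega^{y}(B(x,Nr)\cap\partial\Omega)\ge \omega^{y}_D(\partial\Omega\cap B(x,Nr))\ge c_1/2$.

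This establishes the estimate with $B(x,2r)$ replaced by $B(x,Nr)$ for some $N=N(n,C_0)\ge 2$; the stated form with the factor $2$ follows by a routine rescaling: if $\delta_\Omega(y)\le r/(N+1)$, apply the above at the nearest boundary point $\xi$ of $y$ and at scale $\delta_\Omega(y)$, so that $B(\xi,N\delta_\Omega(y))\subset B(y,r)\subset B(x,2r)$; if $|y-x|\le 2r/N$, apply it at $x$ and scale $2r/N$; the intermediate range is dispatched by composing these two moves (or one simply invokes \cite{B} for the sharp constant). I expect the maximum-principle comparison to be the only real obstacle: the potential $p_F$ does not vanish on $\partial\Omega\setminus B(x,r)$, it only decays there, so it cannot be dominated directly by $\omega^{\cdot}(B(x,2r)\cap\partial\Omega)$ on all of $\Omega$. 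It is precisely to neutralize this tail that one truncates to $D=\Omega\cap B(x,Nr)$ and exploits the dimensional decay of the Newtonian kernel, and this is also the point where both ADR bounds are used essentially --- the lower bound through $\operatorname{Cap}(F)\gtrsim r^{n-1}$, and the upper bound through the energy estimate.
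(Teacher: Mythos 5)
The paper does not prove this lemma; it simply records the statement and cites Bourgain's original paper \cite{B}, so there is no in-text proof to compare yours against. Your argument is, in substance, the canonical capacity-and-maximum-principle proof, and all the essential pieces are in place: the lower ADR bound gives $\mu(F)\gtrsim r^n$ for $\mu=\sigma\lfloor F$, the upper ADR bound controls the Riesz energy $\iint|z-w|^{1-n}\,d\mu\,d\mu\lesssim r^{n+1}$ so that $\mathrm{cap}(F)\gtrsim r^{n-1}$ (this is exactly the $n$-ADR $\Rightarrow$ CDC implication the paper itself invokes after Lemma \ref{lem1}); the trivial bound $\mathrm{cap}(F)\lesssim r^{n-1}$ makes the equilibrium potential $p_F$ bounded below on $\overline{B}(x,r)$ and decaying like $(r/|y-x|)^{n-1}$, so $p_F\le c_1/2$ on $\partial B(x,Nr)$ once $N=N(n,C_0)$ is large; and the comparison on $D=\Omega\cap B(x,Nr)$, followed by the domination $\omega^y(\partial\Omega\cap\overline{B}(x,Nr))\ge\omega^y_D(\partial\Omega\cap\overline{B}(x,Nr))$, finishes the estimate at scale $Nr$.

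Two points need tightening before this is a complete proof. First, the boundary inequality $p_F\le\chi_{\partial\Omega\cap B(x,Nr)}+\tfrac{c_1}{2}\chi_{\partial B(x,Nr)\cap\Omega}$ fails on the ``corner'' set $\partial\Omega\cap\partial B(x,Nr)$, where the right-hand side vanishes but $p_F$ may equal $c_1/2$; replace the datum by $\chi_{\partial\Omega\cap\overline{B}(x,Nr)}+\tfrac{c_1}{2}$ and, if one wishes to be scrupulous, pick the truncation radius so that $\sigma(\partial B(x,Nr)\cap\partial\Omega)=0$ (possible for a.e.\ radius). Second, the ``intermediate range'' $\delta_\Omega(y)\gtrsim r/N$, $|y-x|\gtrsim r/N$ of your rescaling is not handled by ``composing the two moves'': a clean argument is to slide along the segment $[y,x]$ to the first point $y'$ with $\delta_\Omega(y')=r/(4N)$ (the segment stays in $\Omega$ up to that point since $\delta_\Omega$ is $1$-Lipschitz), Harnack-chain from $y$ to $y'$ through $O(N)$ balls of radius comparable to $r/N$, and then apply the $\delta_\Omega$-small case at $y'$; this costs a harmless $C(n,C_0)$ factor in the final constant. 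With these repairs the proof is correct and self-contained.
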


The following is also well known.

\begin{lemma}\label{lem1}
 Let $\Omega\subsetneq \RR^{n+1}$ be open with $n$-ADR boundary. Let $p,q\in\Omega$ be such $|p-q|\geq 4\,\delta_\Omega(q)$.
Then,
$$g(p,q)\leq C\,\frac{\omega^p(B(q,4\delta_\Omega(q)))}{\delta_\Omega(q)^{n-1}}.$$
\end{lemma}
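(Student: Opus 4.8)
The plan is to prove Lemma~\ref{lem1} by comparing the Green function $g(\cdot,q)$ with the harmonic measure $\omega^{(\cdot)}(B(q,4\delta_\Omega(q)))$ in the annular region $\Omega\cap\big(B(q,2\delta_\Omega(q))\setminus \overline{B(q,\delta_\Omega(q))}\big)$, or rather in the ``outer'' domain $\Omega\setminus \overline{B(q,\delta_\Omega(q))}$, via the maximum principle. Write $\delta:=\delta_\Omega(q)$ and $B_r:=B(q,r)$. First I would note that, since $|p-q|\ge 4\delta$, the pole $p$ lies outside $B_{4\delta}$, so both functions $u(x):=g(x,q)$ and $v(x):=\omega^x(B_{4\delta})$ are nonnegative and harmonic in $\Omega\setminus\overline{B_\delta}$ as functions of $x$ (the Green function $g(\cdot,q)$ is harmonic in $\Omega\setminus\{q\}$, and $q\in B_\delta$).

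The key comparison is carried out on $\partial\big(\Omega\setminus\overline{B_\delta}\big)$, which splits into two pieces: the part lying on $\partial\Omega$, where $u=g(\cdot,q)=0$ while $v\ge 0$; and the part lying on $\partial B_\delta\cap\Omega$, where I need the pointwise bound
\begin{equation}\label{eq:lem1-sphere}
g(x,q)\le \frac{C}{\delta^{n-1}}\,\omega^x(B_{4\delta})\,,\qquad x\in\partial B_\delta\cap\Omega\,.
\end{equation}
For the left side, the standard upper bound for the Green function gives $g(x,q)\lesssim |x-q|^{1-n}=\delta^{1-n}$ uniformly for $x\in\partial B_\delta$ (this uses only that $g(\cdot,q)$ is dominated by the fundamental solution, which holds for arbitrary open sets). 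For the right side, every $x\in\partial B_\delta\cap\Omega$ satisfies $\dist(x,\partial\Omega)\le |x-\hat q|\le 2\delta$ where $\hat q\in\partial\Omega$ is a touching point for $q$ (indeed any point of $\partial\Omega$ realizing $\delta$), so $x\in\overline{B(\hat q,2\delta)}$ with $2\delta\le \diam(\partial\Omega)/2$ after harmlessly adjusting constants; then Bourgain's estimate (Lemma~\ref{l:bourgain}) applied at the boundary point $\hat q$ and radius $2\delta$ yields $\omega^x(B(\hat q,4\delta))\ge c>0$, and since $B(\hat q,4\delta)\subset B(q,5\delta)$, a further application of Bourgain (or monotonicity plus a Harnack chain, or simply running the argument at radius $4\delta$ directly so that $B(\hat q,8\delta)$ appears) gives $\omega^x(B_{4\delta})\ge c'>0$. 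Combining these two bounds establishes \eqref{eq:lem1-sphere} with $C$ depending only on $n$ and the ADR constant. (A cosmetic point: to land exactly on $B_{4\delta}$ rather than a slightly larger ball, one runs Bourgain at $\hat q$ with radius $2\delta$, noting $B(\hat q,4\delta)\subseteq B(q,5\delta)$; if one prefers to avoid the mismatch, replace $4\delta$ by $8\delta$ throughout and absorb the difference into $C$, since the statement is only an upper bound — but I would instead keep track of constants and accept $B(q,5\delta)\supset B(\hat q,4\delta)$, so $\omega^x(B_{4\delta})\ge \omega^x(B(\hat q,4\delta)/?)$ — cleanest is to simply prove it for $B(q,8\delta)$ and note the lemma as stated follows after renaming.)

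With \eqref{eq:lem1-sphere} in hand, consider the function $w(x):= \dfrac{C}{\delta^{n-1}}\,\omega^x(B_{4\delta}) - g(x,q)$ on the open set $D:=\Omega\setminus\overline{B_\delta}$. It is harmonic in $D$, and on $\partial D$ it is nonnegative: on $\partial\Omega\cap\partial D$ we have $g(\cdot,q)=0$ and $\omega^{(\cdot)}(B_{4\delta})\ge 0$; on $\partial B_\delta\cap\Omega$ we have $w\ge 0$ by \eqref{eq:lem1-sphere}; and as $x\to\partial\Omega$ within $D$, $g(x,q)\to 0$ (the boundary behaviour of the Green function, valid since $q$ is an interior point away from $\partial D$). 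Since $w$ is bounded below near infinity if $\Omega$ is unbounded (here $g(\cdot,q)\to 0$ at infinity and $\omega^{(\cdot)}(B_{4\delta})\ge 0$), the maximum principle for the (possibly unbounded) domain $D$ — applied to $-w$, which is subharmonic and bounded above — gives $w\ge 0$ throughout $D$. Evaluating at $x=p$ (which lies in $D$ because $|p-q|\ge 4\delta>\delta$) yields precisely $g(p,q)\le \frac{C}{\delta^{n-1}}\,\omega^p(B_{4\delta})$, as claimed.

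The main obstacle, and the only place where genuine care is needed, is the lower bound for $\omega^x(B_{4\delta})$ at points $x\in\partial B_\delta\cap\Omega$: one must locate an appropriate boundary point near $x$ and apply Bourgain's estimate at the correct scale, keeping the radii consistent with the ball $B(q,4\delta)$ appearing in the statement (this is the ``$B(\hat q,4\delta)$ versus $B(q,4\delta)$'' bookkeeping mentioned above, resolved by working at a slightly larger radius and absorbing it into $C$). A secondary technical point is justifying the maximum principle on an unbounded $\Omega$, which is handled by the decay of $g(\cdot,q)$ at infinity together with nonnegativity of harmonic measure, or by first working on $D\cap B(q,R)$ and letting $R\to\infty$.
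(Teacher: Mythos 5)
The paper does not actually prove this lemma: it simply cites \cite[Lemmas 3.4 and 3.5]{AH}, noting that the result holds for $n>1$ without $n$-ADR and for $n=1$ under $n$-ADR (or, more generally, the CDC). Your direct comparison argument --- bounding $g(\cdot,q)$ by a multiple of $\omega^{(\cdot)}(B(q,4\delta))$ on the exterior of a small sphere about $q$ and invoking the maximum principle --- is the standard way to prove such estimates, so the approach is sound.

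There is, however, a genuine gap in the radius bookkeeping, which you noticed but did not resolve correctly. Running Bourgain at the touching point $\hat q$ with $r=2\delta$ gives $\omega^x\bigl(B(\hat q,4\delta)\bigr)\ge c$ for $x\in\partial B(q,\delta)\cap\Omega$, but $B(\hat q,4\delta)$ is \emph{not} contained in $B(q,4\delta)$ (only in $B(q,5\delta)$), so this does not give the needed lower bound $\omega^x\bigl(B(q,4\delta)\bigr)\ge c$. Your proposed fix --- ``prove it for $B(q,8\delta)$ and note the lemma as stated follows after renaming'' --- does not work: since $\omega^p\bigl(B(q,8\delta)\bigr)\ge\omega^p\bigl(B(q,4\delta)\bigr)$, an estimate in terms of the larger ball is strictly \emph{weaker} than the lemma's conclusion, and the difference cannot be absorbed into the multiplicative constant $C$ (it sits in the measure of a set, not in a constant factor). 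The correct fix is to run the comparison on the smaller sphere $\partial B(q,\delta/2)$: for $x$ there one has $|x-\hat q|\le \tfrac12\delta+\delta=\tfrac32\delta$, so Bourgain at $\hat q$ with $r=\tfrac32\delta$ gives $\omega^x\bigl(B(\hat q,3\delta)\bigr)\ge c$, and since $|q-\hat q|=\delta$ one has $B(\hat q,3\delta)\subset B(q,4\delta)$, hence $\omega^x\bigl(B(q,4\delta)\bigr)\ge c$ with the correct ball; on the same sphere $g(x,q)\le c_n|x-q|^{1-n}\approx\delta^{1-n}$. Two smaller points you should tidy up: (i) the bound $g(x,q)\le c_n|x-q|^{1-n}$ via domination by the fundamental solution is elementary only for $n\ge 2$ --- when $n=1$ the fundamental solution $\tfrac1{2\pi}\log\tfrac1{|x|}$ changes sign and cannot serve as a barrier, so the sphere bound genuinely requires the ADR/CDC hypothesis (consistent with the paper's remark that the $n=1$ case needs it); and (ii) the vanishing of $g(\cdot,q)$ at $\partial\Omega$ that you invoke at boundary points of $D$ is not because ``$q$ is away from $\partial D$'' but because $n$-ADR implies the CDC, hence Wiener regularity of every boundary point. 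Finally, note that Bourgain requires $r\le\diam(\partial\Omega)/2$, so as in the paper's own usage one is implicitly in the regime $\delta_\Omega(q)\lesssim\diam(\partial\Omega)$.
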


We remark that the previous lemma is also valid in the case $n>1$ without the $n$-ADR assumption. In the case $n=1$ this holds under the $1$-ADR assumption, and also
in the more general situation where $\Omega$ satisfies the CDC. This follows easily from \cite[Lemmas 3.4 and 3.5]{AH}. Notice that $n$-ADR implies the CDC in $\RR^{n+1}$ (for any $n$), by standard arguments.

The following lemma is also known. See \cite[Lemma 3.14]{HLMN}, for example.

\begin{lemma}\label{lem1'}
Let $\Omega\subsetneq \RR^{n+1}$ be open with $n$-ADR boundary and let $p\in\Omega$. Let $B$ be a ball centered at 
$\partial\Omega$ such that $p\not\in 8B$. Then
$$\avint_B g(p,x)\,dx \leq C\,\frac{\omega^p(4B)}{r(B)^{n-1}}.$$
\end{lemma}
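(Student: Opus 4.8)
The statement to prove is Lemma~\ref{lem1'}: for $\Omega\subsetneq\RR^{n+1}$ open with $n$-ADR boundary, $p\in\Omega$, and $B$ a ball centered on $\partial\Omega$ with $p\notin 8B$, one has $\avint_B g(p,x)\,dx \lesssim \omega^p(4B)/r(B)^{n-1}$.

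\medskip

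The plan is to reduce the averaged Green function estimate to the pointwise bound of Lemma~\ref{lem1} via a Whitney-type decomposition of $B\cap\Omega$. First I would dispose of the trivial part: $g(p,\cdot)\equiv 0$ on $\RR^{n+1}\setminus\overline\Omega$ (and on $\partial\Omega$ in the appropriate sense), so the integral over $B$ is really an integral over $B\cap\Omega$. Then I would decompose $B\cap\Omega$ into Whitney cubes $\{I_k\}$ with $\ell(I_k)\approx\dist(I_k,\partial\Omega)$; for each $k$ pick a point $q_k\in I_k$, so that $\delta_\Omega(q_k)\approx\ell(I_k)$ and $|p-q_k|\gtrsim\delta_\Omega(q_k)$ provided $I_k$ is not too large — and since every $I_k\subset B$ with $p\notin 8B$, we automatically have $|p-q_k|\geq 7r(B) \gg \ell(I_k)$, certainly $\geq 4\delta_\Omega(q_k)$ after passing to a point of $I_k$ (or a slightly dilated Whitney cube) if needed. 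On each $I_k$ the Green function $g(p,\cdot)$ is a nonnegative harmonic function, so by Harnack's inequality $g(p,x)\approx g(p,q_k)$ for all $x\in I_k$ (up to a dimensional constant, since $x$ and $q_k$ lie in a fixed Whitney cube well inside $\Omega$). Applying Lemma~\ref{lem1} at $q_k$ gives $g(p,q_k)\lesssim \omega^p(B(q_k,4\delta_\Omega(q_k)))/\delta_\Omega(q_k)^{n-1}$.

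\medskip

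Next I would sum:
\[
\int_B g(p,x)\,dx = \sum_k \int_{I_k} g(p,x)\,dx \lesssim \sum_k \ell(I_k)^{n+1}\,\frac{\omega^p(B(q_k,4\delta_\Omega(q_k)))}{\delta_\Omega(q_k)^{n-1}} \approx \sum_k \ell(I_k)^{2}\,\omega^p\big(B(q_k,4\delta_\Omega(q_k))\big),
\]
using $|I_k|\approx\ell(I_k)^{n+1}$ and $\delta_\Omega(q_k)\approx\ell(I_k)$. Now the balls $B(q_k,4\delta_\Omega(q_k))$ have bounded overlap (a standard property of Whitney cubes: each point lies in only a dimensional-constant number of the dilated balls), and for $I_k\subset B$ they are all contained in a fixed dilate of $B$, say $CB$ with $C$ absolute — but we need $4B$, so I would be a little careful: if $\ell(I_k)$ is comparable to $r(B)$ this needs the right constants, but one can always split off the finitely many ``large'' Whitney cubes and handle them directly by Harnack from a corkscrew-type point, or simply observe that $q_k\in B$ and $\delta_\Omega(q_k)\le\dist(q_k,\partial\Omega)\le r(B)$ forces $B(q_k,4\delta_\Omega(q_k))\subset 5B$; replacing $4B$ by $5B$ is harmless since harmonic measure of $\partial\Omega$ is finite and one can rescale, or alternatively one reruns the argument with $B$ replaced by $\tfrac45 B$ at the cost of absolute constants. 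Since $\ell(I_k)\le r(B)$, we bound $\ell(I_k)^2 \le r(B)\,\ell(I_k)$, and then using bounded overlap together with $\sum_k\ell(I_k)^{n+1}\le C r(B)^{n+1}$ is not quite what we want — instead I would use $\ell(I_k)^2\,\omega^p(B(q_k,\ldots)) \le r(B)^{2}\,\omega^p(B(q_k,\ldots))$ directly and sum the harmonic measures by bounded overlap to get $\sum_k \omega^p(B(q_k,4\delta_\Omega(q_k))) \lesssim \omega^p(5B)$. This yields $\int_B g(p,x)\,dx \lesssim r(B)^2\,\omega^p(5B)$, hence $\avint_B g(p,x)\,dx = |B|^{-1}\int_B g \lesssim r(B)^{-(n+1)}\,r(B)^2\,\omega^p(5B) = r(B)^{1-n}\,\omega^p(5B)$, which is the claimed estimate up to the dilation factor $5$ versus $4$.

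\medskip

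The main obstacle, and the only genuinely delicate point, is bookkeeping the dilation constants so that the surface-ball appearing on the right is exactly $4B$ rather than some larger $CB$ — and more substantively, making sure the bounded-overlap summation of $\omega^p\big(B(q_k,4\delta_\Omega(q_k))\big)$ over all Whitney cubes in $B\cap\Omega$ is legitimate. This requires knowing that $\sum_k \chi_{B(q_k,4\delta_\Omega(q_k))} \lesssim 1$ pointwise, which is a purely geometric Whitney-decomposition fact (balls of radius $\approx\ell(I_k)\approx\dist(I_k,\partial\Omega)$ centered inside $I_k$ overlap boundedly), and that all these balls sit inside a fixed dilate of $B$; both are standard but must be invoked cleanly. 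The hypothesis $p\notin 8B$ is used precisely to guarantee $|p-q_k|\ge 4\delta_\Omega(q_k)$ uniformly (indeed $|p-q_k|\ge 7r(B)\ge 7\delta_\Omega(q_k)$), so that Lemma~\ref{lem1} applies to every term without exception; I would flag that this is where that hypothesis enters, and note that, as remarked after Lemma~\ref{lem1}, in dimension $n=1$ this all still works under the $n$-ADR (or CDC) assumption, which is in force here.
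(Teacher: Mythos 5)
Your overall plan — Whitney decomposition of $B\cap\Omega$, Harnack on each cube, Lemma~\ref{lem1} at a point of each cube, and a summation — is the right one, and it is presumably close to what \cite{HLMN} does; the paper itself gives no proof, only the citation. But there is a genuine gap in the summation step that would make the argument as written produce $\infty$ on the right-hand side.

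The problem is the sentence asserting that the balls $B(q_k,4\delta_\Omega(q_k))$ have bounded overlap. The ``standard Whitney property'' you invoke is that \emph{mildly} dilated Whitney cubes $\lambda I_k$ (say $\lambda\le 1+\tau_0$), which are still contained in $\Omega$, have bounded overlap. The balls $B(q_k,4\delta_\Omega(q_k))$ are an entirely different animal: their radius is $4\delta_\Omega(q_k)\gtrsim 16\diam(I_k)$, so they cross $\partial\Omega$ by a distance comparable to their radius. A fixed point $y\in\partial\Omega$ then lies in one such ball for essentially every dyadic scale present near $y$, so $\sum_k\chi_{B(q_k,4\delta_\Omega(q_k))}(y)=\infty$ whenever infinitely many Whitney cubes of $B\cap\Omega$ approach $y$. (Test on the half-space: the balls at scale $2^{-j}$ centred at height $\approx 2^{-j}$ all contain the origin for a bounded range of horizontal translates, uniformly in $j$.) In particular the sum $\sum_k\omega^p\bigl(B(q_k,4\delta_\Omega(q_k))\bigr)$, taken over all Whitney cubes meeting $B$, diverges. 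Your derivation throws away exactly the quantity that should save it: by replacing $\ell(I_k)^2$ with the crude bound $r(B)^2$ you destroy the geometric decay in scale.

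The fix is to keep $\ell(I_k)^2$ and sum dyadically in scale. At a \emph{fixed} scale $2^{-j}$, the centres $q_k$ of Whitney cubes of side $\approx 2^{-j}$ are $2^{-j}$-separated, so the balls $B(q_k,4\delta_\Omega(q_k))$ (all of radius $\approx 2^{-j}$) do have overlap bounded by a dimensional constant, and they all sit inside a fixed dilate $CB$ (since $\delta_\Omega(q_k)\lesssim r(B)$ for cubes meeting $B$). Hence
\[
\sum_k \ell(I_k)^2\,\omega^p\bigl(B(q_k,4\delta_\Omega(q_k))\bigr)
\;\lesssim\; \sum_{j:\,2^{-j}\lesssim r(B)} 2^{-2j}\!\!\!\sum_{k:\,\ell(I_k)=2^{-j}}\!\!\!\omega^p\bigl(B(q_k,4\delta_\Omega(q_k))\bigr)
\;\lesssim\; \sum_{j:\,2^{-j}\lesssim r(B)} 2^{-2j}\,\omega^p(CB)
\;\approx\; r(B)^2\,\omega^p(CB),
\]
and then division by $|B|\approx r(B)^{n+1}$ gives $\avint_B g(p,\cdot)\lesssim r(B)^{1-n}\omega^p(CB)$. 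Everything else in your argument (the verification that $|p-q_k|\ge 4\delta_\Omega(q_k)$ from $p\notin 8B$, Harnack on $I_k$, the application of Lemma~\ref{lem1}) is correct. The residual issue that this produces $\omega^p(CB)$ with $C$ somewhat bigger than $4$ is cosmetic by comparison — it can be fixed by a slightly sharper version of Lemma~\ref{lem1}, or is simply harmless for every application in the paper — but the unbounded overlap is a real gap you should repair.
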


\vv

\begin{lemma}\label{lem333}
Let $\Omega\subsetneq\RR^{n+1}$ be open with $n$-ADR boundary. Let $x\in\partial\Omega$ and $0<r<\diam(\Omega)$.
Let $u$ be a non-negative harmonic function in $B(x,4r)\cap \Omega$ and continuous in $B(x,4r)\cap \overline\Omega$
such that $u\equiv 0$ in $\partial\Omega\cap B(x,4r)$. Then extending $u$ by $0$ in $B(x,4r)\setminus \overline\Omega$,
there exists a constant $\alpha>0$ such that, for all $y,z\in B(x,r)$,
$$|u(y)-u(z)|\leq C\,\left(\frac{|y-z|}r\right)^\alpha \!\sup_{B(x,2r)}u
\leq C\,\left(\frac{|y-z|}r\right)^\alpha \;\avint_{B(x,4r)}u,
$$
where $C$ and $\alpha$ depend on $n$ and the AD-regularity of $\partial \Omega$.
In particular,
$$u(y)\leq C\,\left(\frac{\delta_\Omega(y)}r\right)^\alpha \!\sup_{B(x,2r)}u
\leq C\,\left(\frac{\delta_\Omega(y)}r\right)^\alpha \;\avint_{B(x,4r)}u.$$
\end{lemma}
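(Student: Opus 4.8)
\textbf{Proof proposal for Lemma \ref{lem333}.}
The plan is to prove the first (Hölder continuity) display, and then observe that the ``in particular'' statement follows immediately by taking $z=\hat y$ a touching point for $y$ on $\partial\Omega\cap B(x,2r)$ (so $u(z)=0$ and $|y-z|=\delta_\Omega(y)$), together with the trivial comparison $\sup_{B(x,2r)}u \le C\avint_{B(x,4r)}u$, which in turn is just the mean value property applied at points of $B(x,2r)$ using that balls $B(w,r)\subset B(x,4r)$ for $w\in B(x,2r)$ and that $u\ge 0$. So the real content is the boundary Hölder estimate for the zero-extension of $u$.

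First I would reduce to a scale-invariant statement: by translating and dilating we may take $x=0$ and $r=1$, and by the maximum principle (since $u\ge0$ and $u\equiv 0$ on $\partial\Omega\cap B(0,4)$) we may normalize $\sup_{B(0,2)}u=1$. The key geometric input is that $\partial\Omega$ satisfies the Capacity Density Condition: as remarked after Lemma \ref{lem1}, $n$-ADR implies the CDC in $\RR^{n+1}$ (for every $n$), with constants depending only on $n$ and the ADR constant. Thus for every $\xi\in\partial\Omega\cap B(0,2)$ and every $\rho\le 1$ the complement $\RR^{n+1}\setminus\Omega$ is ``thick'' near $\xi$ at scale $\rho$ in the sense of capacity density. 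This is precisely the hypothesis needed to run the classical oscillation-decay argument for harmonic functions vanishing on a CDC set.

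The main step is then the standard iteration. Fix $\xi\in\partial\Omega\cap B(0,2)$ and $\rho\le 1$. Let $v$ be the (zero-extended) function $u$, which is subharmonic in $B(0,4)$ — indeed $u$ is harmonic in $\Omega$ and the extension by $0$ across $\partial\Omega$, where $u$ is continuous and vanishes, is subharmonic since it is the upper envelope behavior; more precisely $\max(u,0)$ extended is subharmonic, and here $u\ge0$ so the extension itself is subharmonic, with $\limsup$ at boundary points equal to $0$. Let $M(\rho):=\sup_{B(\xi,\rho)}v$. Using the CDC at $\xi$ and scale $\rho$, the set $\{v=0\}\supset (\RR^{n+1}\setminus\Omega)\cap B(\xi,\rho)$ has capacity $\gtrsim \rho^{n-1}=\cp(B(\xi,\rho))$-proportion, so by the quantitative maximum principle for subharmonic functions (equivalently, the estimate $\omega^{y}_{B(\xi,2\rho)\setminus(\RR^{n+1}\setminus\Omega)}(\partial B(\xi,2\rho))\le 1-c$ for $y\in B(\xi,\rho)$, a Wiener-type bound) one gets
\begin{equation*}
M(\rho/2)\,\le\,(1-c)\,M(\rho),
\end{equation*}
with $c\in(0,1)$ depending only on $n$ and the ADR/CDC constants. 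Iterating this decay estimate from scale $1$ down to scale $|y-z|$ gives, for $y\in B(\xi,|y-z|)$, that $v(y)\le M(|y-z|)\le C|y-z|^\alpha M(1)\le C|y-z|^\alpha$, with $\alpha=\log_2\frac1{1-c}>0$. Finally, for arbitrary $y,z\in B(0,1)$: if $|y-z|\ge \tfrac12\delta_\Omega(y)$ (or the same for $z$), pick $\xi\in\partial\Omega$ with $|y-\xi|=\delta_\Omega(y)\le 2|y-z|$, apply the above at $\xi$ and scale comparable to $|y-z|$ to bound both $v(y)$ and $v(z)$ by $C|y-z|^\alpha$; if instead $|y-z|<\tfrac12\delta_\Omega(y)$, then the segment from $y$ to $z$ lies in $B(y,\delta_\Omega(y)/2)\subset\Omega$ where $u$ is harmonic, and interior gradient estimates give $|u(y)-u(z)|\lesssim \frac{|y-z|}{\delta_\Omega(y)}\sup_{B(y,\delta_\Omega(y))}u$; bounding $\sup_{B(y,\delta_\Omega(y))}u$ by the already-established $C\delta_\Omega(y)^\alpha$ and using $\alpha\le 1$ yields $|u(y)-u(z)|\lesssim |y-z|^\alpha$ as well. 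Undoing the normalization inserts the factors $r^{-\alpha}$ and $\sup_{B(x,2r)}u$, completing the proof.

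The step I expect to be the main obstacle — or at least the one requiring the most care — is the decay estimate $M(\rho/2)\le(1-c)M(\rho)$: one must correctly invoke the CDC to produce a uniform lower bound on the harmonic measure (in the perforated domain $B(\xi,2\rho)\cap\Omega$) of the portion of $\partial B(\xi,2\rho)$, equivalently an upper bound $1-c$ on the probability of exiting through the ``far'' part of the boundary, and then transfer this to a pointwise bound on the subharmonic extension $v$ via comparison with the harmonic function having the same boundary data on $\partial(B(\xi,2\rho)\cap\Omega)$. This is classical (it is exactly the mechanism behind Hölder continuity up to a Wiener-regular boundary, cf.\ the arguments in \cite{AH}), but it is where the hypotheses are genuinely used, so I would present it carefully and cite the relevant potential-theoretic facts rather than reprove them.
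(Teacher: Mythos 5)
The paper states Lemma \ref{lem333} without proof, treating it (like Lemmas \ref{lem1} and \ref{lem1'} around it) as a known fact; there is therefore no in-paper argument to compare against, and I can only assess your proof on its own terms. It is correct, and it is the classical proof. You correctly note that the zero extension $v$ of $u$ is continuous and subharmonic in $B(x,4r)$ (harmonic where $u>0$, identically zero outside $\overline\Omega$, nonnegative, so the sub-mean-value inequality holds across $\partial\Omega$); that $n$-ADR implies the CDC; that the CDC furnishes, at each $\xi\in\partial\Omega$ near $x$ and each small scale $\rho$, the uniform bound $\omega^y_{B(\xi,2\rho)\cap\Omega}\bigl(\partial B(\xi,2\rho)\bigr)\leq 1-c$ for $y\in B(\xi,\rho)$; that comparing the subharmonic $v$ with the corresponding Perron solution yields the decay $\sup_{B(\xi,\rho/2)}v\leq(1-c)\sup_{B(\xi,\rho)}v$; and that iterating and splitting into the boundary-near case and the interior case (interior gradient estimates plus the already-established $\sup$ bound) gives the two-point H\"older estimate, with the ``in particular'' statement following by taking $z$ to be a touching point.

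Two small points of hygiene, neither of which is a genuine gap. First, the inequality $\sup_{B(x,2r)}u\leq C\avint_{B(x,4r)}u$ is the sub-mean-value property of the subharmonic extension $v$, not ``the mean value property applied at points of $B(x,2r)$'': for $w\in B(x,2r)$ near $\partial\Omega$ the ball $B(w,r)$ exits $\Omega$, so the averaging must be done for $v$, and one uses $u(w)=v(w)\leq\avint_{B(w,r)}v\leq 4^{n+1}\avint_{B(x,4r)}v$. Second, the touching point of $y\in B(x,r)$ lies only in $B(x,2r)$, so you should run the oscillation decay for $\xi\in\partial\Omega\cap B(x,2r)$ and start the iteration at a scale $\rho_0$ slightly smaller than $r$ (with the regime $|y-z|\gtrsim r$ handled trivially), so that the supremum produced by the iteration is indeed over $B(x,2r)$. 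Both are routine bookkeeping adjustments; the argument is sound.
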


\vv
The next result provides a partial converse to Lemma \ref{lem1}.

\begin{lemma}\label{lem2}
Let $\Omega\subsetneq\RR^{n+1}$ be open with $n$-ADR boundary. 
Let $p\in\Omega$ and let $Q\in\DD$ be such that $p\not\in 2B_Q$. 
Suppose that $\omega^p(Q)\approx \omega^p(2Q)$.
Then there exists some $q\in\Omega$ such that
$$\ell(Q)\lesssim \delta_\Omega(q)\approx \dist(q,Q)\leq 4\diam(Q)$$
and 
$$\frac{\omega^p(2Q)}{\ell(Q)^{n-1}}\leq c\,g(p,q).$$
\end{lemma}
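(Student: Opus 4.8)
The statement is a quantitative, dyadic version of the standard fact that the Green function is comparable to harmonic measure divided by $\ell(Q)^{n-1}$ at a corkscrew scale, but here we do not have an \emph{a priori} corkscrew point inside $\Omega$ relative to $Q$; instead we must \emph{produce} a point $q$ at distance $\approx\ell(Q)$ from $Q$ at which the Green function is large, using only the non-degeneracy hypothesis $\omega^p(Q)\approx\omega^p(2Q)$. The plan is to argue by a maximum-principle comparison on the ball $B:=B(x_Q,C\ell(Q))$ for a suitable absolute constant $C$ (large enough that $2Q\subset B\cap\partial\Omega$, say $C=8$), combined with Bourgain's estimate (Lemma \ref{l:bourgain}) applied at the scale of $Q$.

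First I would record the two competing estimates. On the one hand, Lemma \ref{lem1'} gives the upper bound $\avint_{\frac14 B} g(p,\cdot)\,dx \lesssim \omega^p(B)/\ell(Q)^{n-1}\lesssim \omega^p(2Q)/\ell(Q)^{n-1}$ after choosing $B$ so that $\frac14 B$ has radius $\approx\ell(Q)$ and $B\cap\partial\Omega\subset CQ$ with $\omega^p(CQ)\lesssim\omega^p(2Q)$ (here the hypothesis $\omega^p(Q)\approx\omega^p(2Q)$, iterated a bounded number of times via dyadic doubling of $\omega^p$ on controlled dilates — or simply the outer regularity trick of passing to $2Q$ — lets us absorb the dilation factor $C$). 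On the other hand, we need a \emph{lower} bound forcing $g(p,\cdot)$ to be of size $\gtrsim\omega^p(2Q)/\ell(Q)^{n-1}$ somewhere in $B$. For this, consider the harmonic function $u(\cdot):=g(p,\cdot)$ on $B\cap\Omega$ (it vanishes continuously on $\partial\Omega\cap B$ since $p\notin 2B$, so $p\notin B$), extend by $0$ across $\partial\Omega$, and compare it on $\partial(\tfrac12 B)\cap\Omega$ with a multiple of harmonic measure: by the maximum principle and Lemma \ref{l:bourgain} applied with the ball $\tfrac12 B$ (radius $\approx\ell(Q)$), one gets $\sup_{\frac12 B\cap\Omega} g(p,\cdot)\gtrsim \ell(Q)^{1-n}\,\omega^p(\Delta)$ for an appropriate surface ball $\Delta$ of radius $\approx\ell(Q)$ centered on $\partial\Omega$ near $x_Q$; since such $\Delta$ contains a fixed dyadic descendant of $Q$ of comparable size, $\omega^p(\Delta)\gtrsim\omega^p(Q')$ for some $Q'\subset Q$ with $\ell(Q')\approx\ell(Q)$, and again dyadic doubling plus the hypothesis give $\omega^p(\Delta)\approx\omega^p(2Q)$.

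Putting these together: there is a point $q\in\tfrac12 B\cap\Omega$ with $g(p,q)\gtrsim\omega^p(2Q)/\ell(Q)^{n-1}$, which is the desired inequality. It remains to verify the location estimates $\ell(Q)\lesssim\delta_\Omega(q)\approx\dist(q,Q)\le 4\diam(Q)$. The upper bound $\dist(q,Q)\le 4\diam(Q)$ is immediate once $C$ is chosen appropriately ($q\in\tfrac12 B\subset B(x_Q,4\diam(Q))$ if we take $C\le 8$), as is $\delta_\Omega(q)\le\dist(q,Q)\lesssim\ell(Q)$. The lower bound $\delta_\Omega(q)\gtrsim\ell(Q)$ is \emph{not} automatic and must be extracted from the argument: since $u$ vanishes on $\partial\Omega$ and is Hölder continuous up to the boundary (Lemma \ref{lem333}), the bound $u(q)\gtrsim\omega^p(2Q)/\ell(Q)^{n-1}$ together with the upper bound $u(y)\lesssim (\delta_\Omega(y)/\ell(Q))^\alpha\,\avint_{B} u \lesssim (\delta_\Omega(y)/\ell(Q))^\alpha\,\omega^p(2Q)/\ell(Q)^{n-1}$ at $y=q$ forces $\delta_\Omega(q)\gtrsim\ell(Q)$; and then $\dist(q,Q)\ge\delta_\Omega(q)\gtrsim\ell(Q)\approx\diam(Q)\ge\dist(q,Q)$ up to constants, giving $\delta_\Omega(q)\approx\dist(q,Q)$.

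\textbf{Main obstacle.} The delicate point is that Bourgain's estimate produces largeness of $g(p,\cdot)$ only at \emph{some} point of $\tfrac12 B\cap\Omega$, with no control on how close that point is to $\partial\Omega$; naively such a point could sit in a thin tentacle of $\Omega$ with $\delta_\Omega$ much smaller than $\ell(Q)$, which would ruin the conclusion. The Hölder-decay bound of Lemma \ref{lem333} is exactly what rescues this: it is incompatible with $g(p,\cdot)$ being large at a point very close to the (vanishing) boundary, so the same point at which $g$ is large is automatically forced to be at distance $\gtrsim\ell(Q)$ from $\partial\Omega$. Making the interplay between the lower bound (from maximum principle $+$ Bourgain) and the upper bound (from Lemma \ref{lem1'} $+$ Lemma \ref{lem333}) quantitatively consistent — i.e. checking that the implied constants genuinely close up and that the repeated use of $\omega^p(Q)\approx\omega^p(2Q)$ plus dyadic doubling of $\omega^p$ legitimately controls $\omega^p$ on all the intermediate dilates that appear — is the part that requires the most care, though each ingredient is standard.
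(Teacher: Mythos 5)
Your overall structure is close to the paper's: both proofs upper-bound $g(p,\cdot)$ near the boundary via the H\"older decay of Lemma~\ref{lem333} and via Lemma~\ref{lem1'}, and deduce that the ``mass'' of $g$ which produces $\omega^p(Q)$ must live at points with $\delta_\Omega\gtrsim\ell(Q)$. However, there are two genuine gaps.

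\textbf{The lower bound for $g$ is not established by the cited tools.} You assert that ``by the maximum principle and Lemma~\ref{l:bourgain}\ldots one gets $\sup_{\frac12 B\cap\Omega} g(p,\cdot)\gtrsim \ell(Q)^{1-n}\,\omega^p(\Delta)$.'' This is the heart of the lemma and it does not follow from those two ingredients alone. The natural comparison ($g(p,\cdot)\leq (M/c)\,\omega^{\cdot}(\Delta')$ on the boundary of the relevant region, with $M=\sup g$) cannot be propagated by the maximum principle to the pole $p$ because $g(p,\cdot)$ blows up there, and restricting to an annulus reintroduces an uncontrolled outer boundary term. The CFMS-type comparison that would give this inequality presupposes the existence of a corkscrew point with $\delta_\Omega\gtrsim r$ --- which is exactly what Lemma~\ref{lem2} is supposed to produce in the absence of an interior corkscrew hypothesis. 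The paper instead proves the needed bound $\omega^p(P)\lesssim\ell(P)^{n-1}\avint_{3\ball_P}g(p,\cdot)$ directly, by testing $\omega^p$ against a bump $\vphi_P$ with $\supp\vphi_P\subset\ball_P$, integrating by parts ($\int\vphi_P\,d\omega^p=-\int\nabla_y g(p,y)\cdot\nabla\vphi_P(y)\,dy$), and applying Caccioppoli plus subharmonicity. That Riesz-representation/Caccioppoli step is the essential new ingredient your write-up is missing; ``maximum principle $+$ Bourgain'' is not a valid substitute for it here.

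\textbf{You assume more doubling than is given, and the paper's scale-descent is the fix.} The hypothesis is only $\omega^p(Q)\approx\omega^p(2Q)$; there is no control of $\omega^p(CQ)$ for $C>2$ and no ``dyadic doubling of $\omega^p$'' to invoke. Since Lemma~\ref{lem1'} yields $\avint_B g\lesssim\omega^p(4B)/r(B)^{n-1}$ and Lemma~\ref{lem333} needs the average over $4B$, working at the scale $r\approx\ell(Q)$ (your choice $B=B(x_Q,C\ell(Q))$ with $C=8$) produces dilates like $16B$ and $32B$ that stick out of $2Q$, so $\omega^p$ of those sets is genuinely uncontrolled. The paper avoids this by first pigeonholing to a descendant $P\in\DD(Q)$ with $\ell(P)=2^{-k_0}\ell(Q)$ and $\omega^p(P)\approx_{k_0}\omega^p(Q)$, choosing $k_0$ large enough that $24\ball_P\cap\partial\Omega\subset 2Q$ (and also that $p\notin 50\ball_P$). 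All dilates then stay inside $2Q$, so only the stated hypothesis is used. Your parenthetical remark that the dilation factor ``is absorbed'' by ``the outer regularity trick of passing to $2Q$'' glosses over precisely the step that requires this descent; as written, the argument does not close. Note also that the location estimate $\dist(q,Q)\le 4\diam(Q)$ is delivered for free once one works inside $3\ball_P\subset B(x_Q,C'\ell(Q))$ with $C'$ small; with your choice of ball radius that containment would also need adjusting.
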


\begin{proof}
For a given $k_0\geq 2$ to be fixed below, we can pick $P\in\DD(Q)$ with $\ell(P)=
2^{-k_0}\ell(Q)$ such that 
$$\omega^p(P)\approx_{k_0}\omega^p(Q).$$
 Let $\vphi_P$ be a $C^\infty$ function supported in $\ball_P$,  $\vphi_P\equiv 1$  on $P$, and such that
 $\|\nabla\vphi_P\|_\infty\lesssim 1/\ell(P)$.
Then, choosing $k_0$ small enough so that $p\not\in 50\ball_P$, say, and applying Caccioppoli's inequality,
\begin{align*}
\omega^p(2Q)&\approx \omega^p(Q)
\approx_{k_0} \omega^p(P) \leq \int \vphi_P\,d\omega^p = -\int\nabla_y g(p,y)\,\nabla\vphi_P(y)\,dy\\
& \lesssim \frac1{\ell(P)}\int_{\ball_P} |\nabla_y g(p,y)|\,dy \lesssim \ell(P)^n
\left(\;\avint_{\ball_P} |\nabla_y g(p,y)|^2\,dy \right)^{1/2}\\
&\lesssim \ell(P)^{n-1}
\left(\;\avint_{2\ball_P} |g(p,y)|^2\,dy \right)^{1/2}\lesssim \ell(P)^{n-1}
\;\avint_{3\ball_P} g(p,y)\,dy.
 \end{align*}
Applying  now Lemmas \ref{lem333} and \ref{lem1'} and taking $k_0$ small enough so that $24\ball_P\cap\partial\Omega\subset2Q$,  for any $a\in (0,1)$ we get
$$\;\avint_{y\in3\ball_P:\delta_\Omega(y)\leq a\ell(P)} g(p,y)\,dy \lesssim a^\alpha \;\avint_{6\ball_P} g(p,y)\,dy
\lesssim a^\alpha\,\frac{\omega^p(24\ball_P)}{\ell(P)^{n-1}}\lesssim a^\alpha\,\frac{\omega^p(2Q)}{\ell(P)^{n-1}}.
$$
From the estimates above we infer that
$$\omega^p(2Q)\lesssim_{k_0}	\ell(P)^{n-1}\;
\avint_{y\in3\ball_P:\delta_\Omega(y)\geq a\ell(P)} g(p,y)\,dy + a^\alpha\,\omega^p(2Q).$$
Hence, for $a$ small enough, we derive
$$\omega^p(2Q)\lesssim_{k_0}	\ell(P)^{n-1}\;
\avint_{y\in3\ball_P:\delta_\Omega(y)\geq a\ell(P)} g(p,y)\,dy,$$
which implies the existence of the point $q$ required in the lemma.
\end{proof}

\vv

\subsection{Harnack chains and carrots}

It will be more convenient for us to work with Harnack chains instead of curves. The existence of a carrot curve is equivalent to having what we call a good chain between points.

Let $x\in \Omega$, $y\in\overline\Omega$ be such that $\delta_\Omega(y)\leq \delta_\Omega(x)$, and let $C>1$.
A {\it $C$-good chain} (or $C$-good Harnack chain) from $x$ to $y$ is a sequence of balls $B_{1},B_{2},\dots$ (finite or infinite)  contained in $\Omega$ such that 
$x\in B_1$ and either
\begin{itemize}
\item $\lim_{j\to\infty} \dist(y,B_j)=0$ if $y\in \partial\Omega$, or
\item $y\in B_N$ if $y\in \Omega$, where $N$ is the number of elements of the sequence if this is finite,
\end{itemize}
and moreover the following hold:
\begin{itemize}
\item $B_j\cap B_{j+1}\neq \varnothing$ for all $j$, 
\item $C^{-1}\,\dist(B_j,\partial\Omega) \leq r(B_j)\leq C\,\dist(B_j,\partial\Omega)$ for all $j$,
\item $r(B_j)\leq C\,r(B_i)$ if $j>i$,
\item for each $t>0$ there are at most $C$ balls $B_j$ such that $t< r(B_j)\leq2t$.
\end{itemize}
Abusing language, sometimes we will omit the constant $C$ and we will just say ``good chain'' or ``good Harnack chain''.

\vv
Observe that in the definitions of carrot curves and good chains, the order of $x$ and $y$ is important: having a carrot curve from $x$ to $y$ is not equivalent to having one from $y$ to $x$, and similarly with good chains. 

\begin{lemma}\label{l:johntochain}
There is a carrot curve from $x\in \Omega$ to $y\in \overline{\Omega}$ if and only if there is a good Harnack chain from $x$ to $y$. 
\end{lemma}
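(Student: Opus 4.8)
The plan is to prove the two implications separately, in each case constructing the desired object (good chain or carrot curve) directly from the given one, with all parameters depending only on the carrot/chain constant $\lambda$ (resp.\ $C$) and on dimension.

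First I would prove that a carrot curve gives a good chain. Suppose $\gamma = \gamma(y,x)$ is a $\lambda$-carrot path from $x\in\Omega$ to $y\in\overline\Omega$. The idea is to cover $\gamma$ by balls $B_j = B(z_j, \tfrac12\delta_\Omega(z_j))$ centered at a carefully chosen sequence of points $z_j\in\gamma$, and then extract a ``chain'' structure. Concretely, I would parametrize $\gamma$ by arc length from $y$, set $t_j := 2^{-j}\ell(\gamma)$ (so $t_j\downarrow 0$), and let $z_j$ be the point on $\gamma$ at arc-length parameter $t_j$ from $y$. By the carrot condition \eqref{eq1.carrot}, $\delta_\Omega(z_j)\ge \lambda t_j$, while trivially $\delta_\Omega(z_j)\le \ell(\gamma(y,z_j)) + \delta_\Omega(y)\lesssim t_j$ when $y\in\partial\Omega$ (and one argues similarly near $x$); hence $\delta_\Omega(z_j)\approx_\lambda t_j$. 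Since consecutive points $z_j,z_{j+1}$ satisfy $|z_j-z_{j+1}|\le t_j - t_{j+1} = t_{j+1}\lesssim \delta_\Omega(z_j)/\lambda$, one checks that after possibly inserting finitely many extra balls of comparable radius between $z_j$ and $z_{j+1}$ (a bounded number, depending on $\lambda$, since the sub-arc of $\gamma$ between them has length $\le t_{j+1}$ and stays at distance $\gtrsim\lambda t_{j+1}$ from $\partial\Omega$), the resulting family of balls is contained in $\Omega$, consecutive balls overlap, radii are comparable to $t_j\approx 2^{-j}\ell(\gamma)$ hence decreasing up to a bounded factor, and only boundedly many balls have radius in any dyadic band $(t,2t]$. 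This yields a $C$-good chain from $x$ to $y$ with $C = C(\lambda,n)$. The endpoint behavior is built in: if $y\in\Omega$ the chain terminates at a ball containing $y$; if $y\in\partial\Omega$ then $\dist(y,B_j)\le t_j\to 0$.

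For the converse, suppose $\{B_j\}$ is a $C$-good chain from $x$ to $y$. I would build a carrot curve by concatenating short arcs: since $B_j\cap B_{j+1}\neq\varnothing$, pick a point $w_j\in B_j\cap B_{j+1}$ and join $w_{j-1}$ to $w_j$ by a segment inside $B_j$ (and join $x$ to $w_1$ inside $B_1$); call the resulting path $\gamma$, with $\gamma$ accumulating at $y$ (when $y\in\partial\Omega$) or ending at $y$ (when $y\in\Omega$). On $B_j$ one has $\delta_\Omega(\cdot)\approx r(B_j)$, so for $z$ on the portion of $\gamma$ inside $B_j$, $\delta_\Omega(z)\gtrsim r(B_j)$; on the other hand $\ell(\gamma(y,z))$ is controlled by $\sum_{i\ge j} \diam(B_i)$, and the ``bounded number of balls per dyadic scale'' property together with $r(B_i)\lesssim r(B_j)$ for $i\ge j$ forces this tail sum to be $\lesssim r(B_j)$ — here one sums a geometric-type series in the dyadic bands. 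Hence $\ell(\gamma(y,z))\lesssim \delta_\Omega(z)$, i.e.\ $\gamma$ is a $\lambda$-carrot path with $\lambda = \lambda(C,n)$. Rectifiability of $\gamma$ follows from the same tail-sum estimate.

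The main obstacle, and the step I would be most careful about, is the tail-sum control in both directions: showing that the total arc length of the ``remaining'' path from a given point down to $y$ is comparable to the local distance-to-boundary. In the forward direction this is essentially automatic from the geometric decay $t_j = 2^{-j}\ell(\gamma)$, but one must be sure the finitely-many interpolating balls between $z_j$ and $z_{j+1}$ can indeed be chosen inside $\Omega$ with comparable radii — this uses that the sub-arc of $\gamma$ there is both short and uniformly far from $\partial\Omega$. In the backward direction the delicate point is precisely the fourth defining property of a good chain (at most $C$ balls per dyadic scale) combined with $r(B_j)\lesssim r(B_i)$ for $j>i$: without these one could not sum $\sum_{i\ge j}\diam(B_i)$ against $r(B_j)$, and the carrot inequality would fail. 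I would isolate this summation as a short separate estimate and then the rest is bookkeeping.
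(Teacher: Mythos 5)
Your proposal is correct and takes essentially the same approach as the paper: in the forward direction both arguments cover the carrot curve by Whitney-type balls of radius comparable to $\delta_\Omega$ (the paper via a Vitali subcovering of $\{B(z,\delta_\Omega(z)/10)\}$ followed by a careful ordering along $\gamma$, you via a dyadic arc-length parametrization with a bounded amount of interpolation), and in the backward direction both join centers (or points in consecutive intersections) by segments and control the tail $\sum_{i\ge j} r(B_i)\lesssim_C r(B_j)$ using the ``bounded number of balls per dyadic scale'' property together with monotonicity of radii. The only caveat is the one you already flag: when $y\in\Omega$ with $\delta_\Omega(y)$ not small compared to $\ell(\gamma)$, the dyadic scheme must be truncated, which is harmless (the paper handles the two cases by instead reducing $y\in\partial\Omega$ to $y\in\Omega$ by a limiting argument).
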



\begin{proof}
Let $\gamma$ be a carrot curve from $x$ to $y$.  We can assume $y\in \Omega$, since if $y\in \partial\Omega$, we can obtain this case by taking a limit of points $y_{j}\in \Omega$ converging to $y$. Let $\{B_{j}\}_{j=1}^{N}$ be a Vitali subcovering  of 
the family 
$\{B(z,\delta_{\Omega}(z)/10): z\in \gamma\}$ and let $r_{B_j}$ stand for the radius  and $x_{B_j}$ for the center of $B_j$. So the balls $B_j$ are disjoint and $5B_{j}$ 
cover $\gamma$. Note that for $t>0$, if $t< r_{B_{j}}\leq 2t$, 
\[
|x_{B_{j}}-y|
\leq \HH^1(\gamma(x_{B_{j}},y))
\lesssim \delta_\Omega (x_{B_{j}})
\approx r_{B_{j}}\leq 2t.
\]
In particular, since the $B_{j}$'s are disjoint, by volume considerations, there can only be boundedly many $B_{j}$ of radius between $t/2$ and $t$, say. Moreover, 
 we may order the balls $B_j$ so that $x\in 5B_1$ and 
$B_{j+1}$ is a ball $B_k$ such that $5B_k\cap 5B_j\neq\varnothing$
and $5\overline{B_k}$ contains the point from $\gamma\cap \bigcup_{h:5B_h\cap 5B_j\neq\varnothing}5\overline B_h$ which is maximal
in the natural order induced by $\gamma$ (so that $x$ is the minimal point in $\gamma$).
Then for $j>i$,
$$r_{B_{j}} \approx \delta_\Omega(x_{B_j}) \leq   |x_{B_j}- x_{B_i}|+ \delta_\Omega(x_{B_i})
\leq 
\HH^1(\gamma(x_{B_{i}},y)) + \delta_\Omega(x_{B_i}) 
\lesssim r_{B_{i}}.
$$
This implies $5B_1,5B_2,\ldots$ is a $C$-good chain for a sufficiently big $C$.

Now suppose that we can find a good chain from $x$ to $y$, call it $B_{1},\dots,B_{N}$. Let $\gamma$ be the path obtained by connecting their centers in order. Let $z\in \gamma$. Then there is a $j$ such that $z\in [x_{B_{j}},x_{B_{j+1}}]$, the segment joining $x_{B_{j}}$ and $x_{B_{j+1}}$. Since $\{B_{i}\}_i$ is a good chain,\[
\HH^1(\gamma(z,y))
\leq |z-x_{B_{j+1}}| + \HH^1(\gamma(x_{B_{j+1}},y))
\leq r_{B_{j+1}}+\sum_{i=j}^{N} 2r_{B_{i}}
\lesssim r_{B_{j}}\approx \delta_\Omega(z).
\]
We would like to note that the implicit constants do not depend on $N$. Indeed, from the properties of the good chain it easily follows that
\[
\sum_{i=j}^{N} r_{B_{i}}
\le
\sum_{k=0}^\infty \sum_{i: 2^{-k-1}< \frac{r_{B_{i}}}{Cr_{B_{j}}}\le  2^{-k}} r_{B_{i}}
\le 2\,C^2\,r_{B_{j}}.
\]
Thus, $\gamma$ is a carrot curve from $x$ to $y$. 
\end{proof}

\vv



\section{The Main Lemma for the proof of Theorem \ref{teo1a}}\label{s6}

Because of the absence of doubling conditions on harmonic measure under the weak-$A_\infty$ assumption, to prove Theorem \ref{teo1a} we cannot use arguments similar to the ones in \cite{AH} or \cite{Az}. 
Instead,  we  prove a local result which involves 
only one pole and one ball which has its own interest. This is the Main Lemma \ref{lemhc} below. 
 
Let $B\subset\RR^{n+1}$ be a ball centered at $\partial\Omega$ and let $p
\in\Omega$.
 We restate Definition \ref{deflocalAinfty} in the following form: $\omega^p$ satisfies the {\it weak-$A_\infty$ condition in $B$} if for every 
$\eps_0 \in (0,1)$
 there exists $\delta_0 \in (0,1)$ such that the
 following holds: for any subset $E \subset B\cap\partial\Omega$,
 \begin{equation}\label{fwFRQAFRQAF}
\sigma(E)\leq \delta_0\,\sigma(B\cap\partial\Omega) \quad \Longrightarrow\quad \omega^{p}(E)\leq \eps_0\,\omega^{p}(2B).
\end{equation}

In the next sections we will prove the following.

\begin{mlemma}\label{lemhc} 
Let $\Omega\subset\RR^{n+1}$ have $n$-uniformly rectifiable boundary. 
Let $R_0\in\DD$ and let $p\in\Omega\setminus 4\ball_{R_0}$ be a point such that 
$$c\,\ell(R_0)\leq \dist(p,\partial\Omega)\leq \dist(p,R_0)\leq c^{-1}\,\ell(R_0)$$
and $\omega^{p}(R_0)\geq c'>0$. Suppose that
$\omega^p$ satisfies the weak-$A_\infty$ condition in $\ball_{R_0}$.
Then there exists a subset $\Con(R_0)\subset R_0$ and a constant $c''>0$ with
$\sigma(\Con(R_0))\geq c''\,\sigma(R_0)$ such that each point $x\in \Con(R_0)$ can be joined to $p$ by a carrot curve.
The constant $c''$ and the constants involved in the carrot condition only depend on $c,c',n$, the weak-$A_\infty$ condition, and the $n$-UR character of $\partial\Omega$. 
\end{mlemma}

The notation $\Con(\cdot)$ stands for ``connectable''.

It is easy to check that Theorem \ref{teo1a} follows from this result. First notice that the assumptions of the theorem imply that $\partial\Omega$
is $n$-uniformly rectifiable by Theorem \ref{teo*}.
Consider now any $x\in\Omega$ and take a point $\xi\in\partial\Omega$ such that $|x-\xi|=\delta_\Omega(x)$. Then we consider the point $p$ in the segment  $[x,\xi]$ such 
that $|p-\xi|=\frac1{16}\,\delta_\Omega(x)$. By  Lemma \ref{l:bourgain}, we have
$$
\omega^p(B(\xi,\tfrac18\delta_\Omega(x)))\gtrsim 1,
$$
because $p\in \overline{\frac12B(\xi,\tfrac18\delta_\Omega(x))}$. Hence, by covering $B(\xi,\tfrac18\delta_\Omega(x))\cap\partial\Omega$ with cubes $R\in\DD$ contained in $B(\xi,\tfrac14\delta_\Omega(x))\cap\partial\Omega$ with side length comparable to $\delta_\Omega(x)$
we deduce that at least one these cubes, call it $R_0$, satisfies $\omega^p(R_0)\gtrsim 1$. Further, by taking the side length small enough,
we may also assume that $p\not\in 4\ball_{R_0}$. Since  $\omega^p$ satisfies the weak-$A_\infty$ property in 
$B_{R_0}$ (by the assumptions in Theorem \ref{teo1a}), we can apply the Main Lemma \ref{lemhc} above and infer that there exists a subset $F:=\Con(R_0)\subset R_0$ with
$\sigma(F)\geq c'\,\sigma(R_0)\gtrsim \delta_\Omega(x)^n$ such that all $y\in F$ can be joined to $x$ by
a carrot curve, which proves that $\Omega$
satisfies the weak local John condition and concludes the proof of Theorem \ref{teo1a}.
\vv

Two essential ingredients of the proof of the Main Lemma \ref{lemhc} 
are a corona type decomposition (whose existence is ensured
by the $n$-uniform rectifiability of the boundary) and the Alt-Caffarelli-Friedman monotonicity formula
\cite{ACF}. This formula is used in some of the connectivity arguments below. This allows to connect by carrot curves corkscrew points where the Green function is not too small to other corkscrew points at a larger distance from the boundary
where the Green function is still not too small (see Lemma \ref{lemshortjumps} for the precise statement).
The use of the Alt-Caffarelli-Friedman formula is not new to problems involving harmonic
measure and connectivity (see, for example, \cite{AGMT}). However, the way it is applied here 
is new, as far as we know.

Two important steps of the proof of the Main Lemma \ref{lemhc} (and so of Theorem \ref{teo1a}) are 
the Geometric Lemma \ref{lemgeom} and the Key Lemma \ref{keylemma}. 
An essential idea consists of distinguishing cubes with ``two well separated
big corkscrews'' (see Section \ref{subsepcork} for the precise definition).
In the Geometric Lemma \ref{lemhc}
we construct two disjoint open sets satisfying a John condition associated to trees involving this type of cubes, so that the boundaries of
the open sets are located in places where the Green function is very small. This construction
is only possible because the associated tree involves  only cubes with two well separated
big corkscrews. 
The existence of these cubes is an obstacle for the construction of carrot curves. However, in a sense,
in the Key Lemma \ref{keylemma} we take advantage of their existence to obtain some delicate estimates
for the Green function on some corkscrew points.

\vv

We claim now that to prove he Main Lemma \ref{lemhc}  {\bf we can assume that $\Omega=\RR^{n+1}\setminus \partial\Omega$}. To check this, let $\Omega$, $p$, and $R_0$ satisfy the assumptions in the Main Lemma.
Consider the open set $V=\re^{n+1}\setminus \partial\Omega$. Then the harmonic measure $\omega^p$ in $\Omega$ coincides with the harmonic measure $\omega_V^p$ in $V$ (the fact that $V$ is not connected does
not disturb us). Thus $V$, $p$, and $R_0$ satisfy the assumptions in the Main Lemma, and moreover
$V=\re^{n+1}\setminus \partial\Omega =\RR^{n+1}\setminus \partial V$. Assuming the Main Lemma
to be valid in this particular case, 
we deduce that 
 there exists a subset $\Con(R_0)\subset R_0$ and a constant $c''>0$ with
$\sigma(\Con(R_0))\geq c''\,\sigma(R_0)$ such that each point $x\in \Con(R_0)$ can be joined to $p$ by a carrot curve in $V$. Now just observe that if $\gamma$ is one of this carrot curves and it joints $p$ and  
$x\in \Con(R_0)\subset \partial V=\partial\Omega$, then $\gamma$ is contained in $V$ except for its end-point
$x$. By connectivity, since $p \in \Omega\cap\gamma$, $\gamma$ must be contained in $\Omega$, except for the end-point $x$. Hence, $\gamma$ is a carrot curve with respect to $\Omega$.

Sections \ref{secshortp}-\ref{sec8} are devoted to the proof of Main Lemma \ref{lemhc}. To this end, we will
assume that $\Omega=\RR^{n+1}\setminus \partial\Omega$.

\vv


\section{The Alt-Caffarelli-Friedman formula and the existence of short paths}\label{secshortp}

\subsection{The Alt-Caffarelli-Friedman formula}\label{s7}

Recall the following well known result of Alt-Caffarelli-Friedman (see \cite[Theorems 12.1 and 12.3]{CS}):

\begin{theorem} \label{t:ACF} Let $B(x,R)\subset \RR^{n+1}$, 
	and let $u_1,u_2\in
W^{1,2}(B(x,R))\cap C(B(x,R))$ be nonnegative subharmonic functions. Suppose that $u_1(x)=u_2(x)=0$ and
that $u_1\cdot u_2\equiv 0$. Set 
\begin{equation*}
J_i(x,r) = \frac{1}{r^{2}} \int_{B(x,r)} \frac{|\nabla u_i(y)|^{2}}{|y-x|^{n-1}}dy,
\end{equation*}
and
\begin{equation}\label{e:Jxr}
J(x,r) = J_1(x,r)\,J_2(x,r).
\end{equation}
Then $J(x,r)$ is a non-decreasing function of $r\in (0,R)$ and $J(x,r)<\infty$ for all $r\in (0,R)$. That is,
\begin{equation}\label{e:gamma}
J(x,r_{1})\leq J(x,r_{2})<\infty \;\;  \mbox{ for } \;\; 0<r_{1}\leq r_{2}<R.
\end{equation}
Further, 
\begin{equation}\label{eqjr}
J_i(x,r)\lesssim \frac1{r^2}\,\|u_i\|_{\infty,B(x,2r)}^2.
\end{equation}
\end{theorem}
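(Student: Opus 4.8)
The plan is to reproduce the classical Alt--Caffarelli--Friedman argument. By translating, assume $x=0$, and by a routine mollification (replace $u_i$ by $u_i*\varphi_\varepsilon$ on slightly smaller balls, observe that subharmonicity, nonnegativity, the disjointness $u_1u_2\equiv0$, and the normalization $u_i(0)=0$ are stable under this, and undo it at the end) we may assume $u_1,u_2$ are smooth on $B(0,R')$ for every $R'<R$. The bound \eqref{eqjr} and the finiteness of $J$ are logically separate from the monotonicity: integrating $\operatorname{div}\!\big(|y|^{1-n}\nabla u_i\big)$ against $u_i$ over $B(0,r)$, and using $\Delta u_i\ge 0$, $u_i\ge0$, together with the crucial fact that $u_i(0)=0$ kills the Dirac mass of $\Delta(|y|^{1-n})$ at the origin, one bounds $\int_{B(0,r)}|y|^{1-n}|\nabla u_i|^2\,dy$ by boundary terms on $\partial B(0,r)$ which, after averaging in $r$ and one application of Caccioppoli's inequality, are controlled by $\|u_i\|_{\infty,B(0,2r)}^2$; this yields both \eqref{eqjr} and finiteness, and shows $J(0,r)$ is locally absolutely continuous in $\log r$ away from the radii where $A_i$ blows up.

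Next I would pass to spherical coordinates. Writing $y=\rho\omega$, $\omega\in S^n$, and setting $A_i(\rho):=\rho\int_{S^n}|\nabla u_i(\rho\omega)|^2\,d\mathcal{H}^n(\omega)$, one has $J_i(0,r)=r^{-2}\int_0^r A_i(\rho)\,d\rho$, hence
$$\frac{d}{dr}\log\big(J_1(0,r)J_2(0,r)\big)=\sum_{i=1}^2\left(\frac{A_i(r)}{\int_0^r A_i}-\frac{2}{r}\right),$$
so that monotonicity of $J=J_1J_2$ is equivalent to the pointwise (a.e.) inequality $\sum_i A_i(r)\big/\!\int_0^r A_i \ge 4/r$. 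To bound $A_i(r)\big/\!\int_0^r A_i$ from below, decompose the gradient on each sphere, $|\nabla u_i(\rho\omega)|^2=|\partial_\rho u_i|^2+\rho^{-2}|\nabla_{S^n}u_i|^2$, and put $\Omega_i(\rho)=\{\omega\in S^n:u_i(\rho\omega)>0\}$; since $u_1u_2\equiv0$, the open sets $\Omega_1(\rho),\Omega_2(\rho)$ are disjoint for a.e.\ $\rho$. Using the Rayleigh quotient characterization of the first Dirichlet eigenvalue $\lambda_1(\Omega_i(\rho))$ of $-\Delta_{S^n}$, together with a one-variable ODE/Cauchy--Schwarz comparison (relating $A_i(\rho)$ to $\int_{S^n}u_i(\rho\omega)^2\,d\mathcal{H}^n$ and its $\rho$-derivative), one obtains $A_i(r)\big/\!\int_0^r A_i \ge 2\gamma_i(r)/r$, where $\gamma_i(\rho)>0$ is the characteristic exponent determined by $\gamma_i(\gamma_i+n-1)=\lambda_1(\Omega_i(\rho))$ --- precisely the homogeneity of the harmonic function $\rho^{\gamma}\phi(\omega)$ with $\phi$ a ground state on $\Omega_i(\rho)$.

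Everything then reduces to the Friedland--Hayman inequality: if $\Omega_1,\Omega_2$ are disjoint open subsets of $S^n$, then $\gamma(\Omega_1)+\gamma(\Omega_2)\ge 2$, with equality for complementary hemispheres. I would prove this by spherical (geodesic cap) symmetrization: replacing $\Omega_i$ by a spherical cap of equal $\mathcal{H}^n$-measure does not increase $\lambda_1$, hence does not increase $\gamma_i$, and for two caps of total measure $\le\mathcal{H}^n(S^n)$ the inequality $\gamma_1+\gamma_2\ge2$ becomes an explicit estimate for the Legendre-type ODE governing the ground state on a cap, settled by a convexity argument in the aperture parameter. Combining, $\sum_i A_i(r)\big/\!\int_0^r A_i \ge (2/r)(\gamma_1(r)+\gamma_2(r))\ge 4/r$, so $\log J$ is nondecreasing; by the absolute continuity from the first step this gives \eqref{e:gamma}, and passing to the limit in the mollification finishes the proof.

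I expect the main obstacle to be the Friedland--Hayman symmetrization step, and, secondarily, making the reduction in the second paragraph fully rigorous: one must justify measurability and a.e.\ differentiability of the slices $\rho\mapsto\Omega_i(\rho),\gamma_i(\rho),A_i(\rho)$, the slice-by-slice validity of the eigenvalue inequality, and the passage from the integrated estimate to the differential inequality in $r$ down to the origin --- the one point where the singularity of the weight $|y|^{1-n}$ and the hypothesis $u_i(0)=0$ genuinely interact.
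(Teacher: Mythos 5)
The paper does not prove this statement; it records it as known, citing \cite[Theorems 12.1 and 12.3]{CS}. What you sketch is the standard Alt--Caffarelli--Friedman argument found there and in \cite{ACF}: pass to spherical coordinates, reduce $\frac{d}{dr}\log J\geq0$ to the slice inequality $A_i(r)/\int_0^r A_i\geq 2\gamma_i(r)/r$ via an integration by parts (exploiting subharmonicity and $u_i(0)=0$ to kill the Dirac mass of $\Delta|y|^{1-n}$), Cauchy--Schwarz, and the Rayleigh quotient, and then invoke the Friedland--Hayman inequality $\gamma_1+\gamma_2\geq2$ proved via spherical cap symmetrization. Your computation of the logarithmic derivative, the reduction to the eigenvalue estimate, and your route to \eqref{eqjr} are all essentially correct.

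There is, however, a concrete error in your reduction to the smooth case. You assert that convolving $u_i$ with $\varphi_\varepsilon$ preserves both the disjointness $u_1u_2\equiv0$ and the normalization $u_i(0)=0$. Neither survives mollification: convolution fattens the positivity sets $\{u_i>0\}$ by an $\varepsilon$-collar, so if the two open sets share a boundary point (as in the model case $u_1=(y_1)^+$, $u_2=(y_1)^-$) the mollifications overlap and $u_1^\varepsilon u_2^\varepsilon\not\equiv0$; and $(u_i\ast\varphi_\varepsilon)(0)$ is the $\varphi_\varepsilon$-average of the nonnegative function $u_i$ near the origin, which is strictly positive whenever $u_i\not\equiv0$ near $0$. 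You therefore cannot apply the monotonicity to the mollified pair and pass to the limit as stated. The standard fix is to truncate first, $u_i\mapsto(u_i-\varepsilon)^+$, which preserves nonnegativity, subharmonicity (as a maximum of subharmonic functions), continuity, membership in $W^{1,2}$, vanishing at the origin, and disjointness; the closed sets $\overline{\{u_i>\varepsilon\}}$ are then disjoint from each other and from $0$, with positive separation on compact subballs, so a subsequent mollification at a scale smaller than these separations is harmless. Alternatively, and this is what \cite{CS}, \cite{PSU}, and the paper itself in the nearby Lemma \ref{lem:ACF1} do, one avoids mollification and integrates by parts directly against the $W^{1,2}\cap C$ functions, using $\max(u_i,\varepsilon)-\varepsilon$ as a truncation device. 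With this repaired the rest of your plan is sound, and you correctly identify the spherical symmetrization and the measurability and differentiability of the radial slices as the remaining technical burdens.
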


In the case of equality we have the following result (see \cite[Theorem 2.9]{PSU}).

\begin{theorem}\label{teoACFequ}
Let $B(x,R)$ and $u_1,u_2$ be as in Theorem \ref{t:ACF}. Suppose that $J(x,r_a)=J(x,r_b)$ for some $0<r_a<r_b<R$.
Then either one or the other of the following holds:
\begin{itemize}
\item[(a)] $u_1=0$ in $B(x,r_b)$ or $u_2=0$ in $B(x,r_b)$;

\item[(b)] there exists a unit vector $e$ and constants $k_1,k_2>0$ such that
$$u_1(y)= k_1\,((y-x)\cdot e)^+,\qquad u_2(y)= k_2\,((y-x)\cdot e)^-, \qquad\text{in $B(x,r_b)$}.$$
\end{itemize}
\end{theorem}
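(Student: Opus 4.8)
The plan is to read off the rigidity from the equality case of the chain of inequalities underlying the monotonicity of $J$ in Theorem \ref{t:ACF}, as done in \cite{PSU}. Throughout we may assume $x=0$; write $J(r)=J_1(r)J_2(r)$ and $I_i(r)=\int_{B(0,r)}|\nabla u_i(y)|^2|y|^{1-n}\,dy$, so $J_i(r)=r^{-2}I_i(r)$. Since $J$ is non-decreasing on $(0,R)$ by Theorem \ref{t:ACF} and $r_a<r_b$, the hypothesis $J(r_a)=J(r_b)$ forces $J$ to be \emph{constant} on $[r_a,r_b]$. First I would dispose of the degenerate case: if $J(r_b)=0$ then $J_1(r_b)=0$ or $J_2(r_b)=0$; say $J_1(r_b)=0$, so $I_1(r_b)=0$, hence $\nabla u_1\equiv 0$ a.e. in $B(0,r_b)$, so $u_1$ is constant there and, since $u_1(0)=0$, $u_1\equiv 0$ in $B(0,r_b)$ — this is alternative (a). So from now on assume $J(r)>0$ for all $r\in[r_a,r_b]$; in particular $I_i(r)>0$ and the spherical positivity set $A_i(r):=\{\theta\in S^n:u_i(r\theta)>0\}$ is non-empty for all such $r$ and $i=1,2$.

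Next I would recall the structure of the monotonicity proof. For a.e. $r$ one has
\[
\frac{d}{dr}\log I_i(r)=\frac{1}{I_i(r)}\,\frac{1}{r^{n-1}}\int_{\partial B(0,r)}|\nabla u_i|^2\,d\HH^n\ \ge\ \frac{2\,\alpha_i(r)}{r},
\]
where $\alpha_i(r)>0$ is the characteristic exponent attached to $A_i(r)$, i.e. the positive root of $\alpha(\alpha+n-1)=\lambda_1(A_i(r))$, with $\lambda_1(A_i(r))$ the first Dirichlet eigenvalue of $-\Delta_{S^n}$ on $A_i(r)$; moreover equality in this inequality forces the trace of $u_i$ on $\partial B(0,r)$ to be proportional to a first eigenfunction on $A_i(r)$, together with a radial matching condition. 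Summing, $\frac{d}{dr}\log J(r)\ge \frac{2}{r}\bigl(\alpha_1(r)+\alpha_2(r)-2\bigr)\ge 0$, where the last inequality is the Friedland--Hayman inequality applied to the disjoint open sets $A_1(r),A_2(r)\subset S^n$ (disjoint because $u_1u_2\equiv 0$), and where equality in the Friedland--Hayman step holds \emph{if and only if} $A_1(r)$ and $A_2(r)$ are complementary open half-spheres, in which case $\alpha_1(r)=\alpha_2(r)=1$.

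Now $\log J(r_b)-\log J(r_a)=\int_{r_a}^{r_b}\frac{d}{dr}\log J(r)\,dr=0$ with non-negative integrand, so every inequality above is an equality for a.e. $r\in(r_a,r_b)$. Hence for a.e. such $r$ there is a unit vector $e(r)$ with $A_1(r)=\{\theta\cdot e(r)>0\}$ and $A_2(r)=\{\theta\cdot e(r)<0\}$, with $\alpha_i(r)=1$, and the trace rigidity gives $u_1(r\theta)=k_1(r)(\theta\cdot e(r))^+$, $u_2(r\theta)=k_2(r)(\theta\cdot e(r))^-$ on $\partial B(0,r)$; the radial matching condition together with $\alpha_i\equiv 1$ shows, after integrating the resulting first-order ODE and using continuity of $u_i$, that $r\mapsto k_i(r)/r$ is constant and $e(r)$ is independent of $r$. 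Writing $e$ for this common vector and $k_i>0$ for the resulting constants, we get $u_1(y)=k_1(y\cdot e)^+$ and $u_2(y)=k_2(y\cdot e)^-$ in $B(0,r)$ for a.e.\ $r\in(r_a,r_b)$; taking the union of these balls (whose radii have supremum $r_b$) and invoking continuity yields alternative (b) in $B(0,r_b)$. The detailed bookkeeping turning ``equality a.e.\ in $r$'' into these pointwise identities, and especially the rigidity case of the Friedland--Hayman inequality, is carried out in \cite[Chapter 2]{PSU}; see also \cite{ACF,CS}.

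The main obstacle is precisely the rigidity in the Friedland--Hayman inequality: the bare bound $\alpha_1+\alpha_2\ge 2$ for disjoint spherical domains is classical, but identifying its equality case (complementary open hemispheres) requires the sharp spherical symmetrization argument, and then one must check that equality there is simultaneously compatible, for a.e.\ radius and for both $u_1$ and $u_2$, with equality in the two Rayleigh-quotient/trace steps — this is what pins the homogeneity degree to exactly $1$ and forces the nodal hyperplanes to be aligned across radii. The remaining ingredients (differentiation under the integral sign, the eigenvalue estimate, and the ODE argument for $k_i(r)$ and $e(r)$) are routine.
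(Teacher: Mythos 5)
Since the paper simply cites \cite[Theorem 2.9]{PSU} and supplies no proof of its own, there is no in-paper argument to compare against; what you have written is an outline of the standard PSU/Caffarelli--Salsa rigidity argument, and the main ingredients are correctly identified: constancy of $J$ on $[r_a,r_b]$ from monotonicity, the degenerate case $J(r_b)=0$ handled directly, first-eigenfunction traces and a radial ODE from equality in the Cauchy--Schwarz/Rayleigh steps, and complementary hemispheres with $\alpha_1=\alpha_2=1$ from the equality case of the Friedland--Hayman inequality.

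There is, however, a gap at the very end. The a.e.-$r$ rigidity you derive determines the trace of $u_i$ on the spheres $\partial B(0,r)$ for $r\in(r_a,r_b)$, and hence, via the ODE and continuity, the linear profile on the \emph{annulus} $\{r_a<|y|<r_b\}$ --- not ``in $B(0,r)$'' as you assert. Passing from the annulus to the full ball $B(0,r_b)$ is genuine content, since on $(0,r_a)$ the functional $J$ is only known to be non-decreasing, and monotonicity alone does not force the profile inward. One way to close this: any connected component of $\{u_i>0\}\cap B(0,r_b)$ that is contained in $B(0,r_a)$ has boundary in $\{u_i=0\}$ and is killed by the maximum principle, so $\{u_i>0\}\cap B(0,r_b)$ is the single component containing the corresponding half-annulus; combining this with the hypothesis that $u_i$ is harmonic in $\{u_i>0\}$ (implicit in the paper's applications, cf.\ Lemmas \ref{lem:ACF1} and \ref{lem:ACF2}, though omitted in the statement of Theorem \ref{t:ACF}), unique continuation and the strong maximum principle on the half-ball yield the linear profile throughout $B(0,r_b)$. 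Without some such step the conclusion in $B(0,r_b)$ does not follow from what you have established on the annulus.
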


 We will also need the following auxiliary lemma.
\vv
\begin{lemma} \label{lem:ACF1} Let $B(x,R)\subset \RR^{n+1}$, and let $\{u_i\}_{i\geq 1}
\subset W^{1,2}(B(x,R))\cap C(B(x,R))$ a sequence of
functions which are nonnegative, subharmonic, such that each $u_i$ is
harmonic in $\{y\in B(x,R):u_i(y)>0\}$ and $u_i(x)=0$. Suppose also that
$$\|u_i\|_{\infty,B(x,R)}\leq C_1\,R\quad\text{ and }\quad \|u_i\|_{{\rm Lip}^\alpha,B(x,R)}\leq C_1\,R^{1-\alpha}$$
for all $i\geq 1$.
Then, for every $0<r<R$  there exists a subsequence $\{u_{i_k}\}_{k\geq1}$ which converges uniformly in
$B(x,r)$ and weakly in $W^{1,2}(B(x,r))$ to some function $u\in W^{1,2}(B(x,r))\cap C(B(x,r))$, and moreover,
\begin{equation}\label{eqlim4}
\lim_{k\to\infty}  \int_{B(x,r)} \frac{|\nabla u_{i_k}(y)|^{2}}{|y-x|^{n-1}}dy =
 \int_{B(x,r)} \frac{|\nabla u(y)|^{2}}{|y-x|^{n-1}}dy.
 \end{equation}
\end{lemma}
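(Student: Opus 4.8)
The statement is a standard compactness result in the spirit of the Alt--Caffarelli--Friedman theory, and the plan is to combine the Arzel\`a--Ascoli theorem with weak compactness in $W^{1,2}$, and then upgrade weak convergence of the gradients to convergence of the weighted Dirichlet integrals by showing that $\nabla u_{i_k}$ converges \emph{strongly} in $L^2_{\rm loc}$. Fix $0<r<R$ and choose an intermediate radius $r<r'<R$.

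\textbf{Step 1: Extracting the subsequence.} The hypotheses $\|u_i\|_{\infty,B(x,R)}\leq C_1 R$ and $\|u_i\|_{{\rm Lip}^\alpha,B(x,R)}\leq C_1 R^{1-\alpha}$ give a uniformly bounded, uniformly equicontinuous family on $\overline{B(x,r')}$, so by Arzel\`a--Ascoli there is a subsequence converging uniformly on $\overline{B(x,r')}$ to some $u\in C(\overline{B(x,r')})$; clearly $u\geq 0$ and $u(x)=0$. By Caccioppoli's inequality applied on balls $B(y,\rho)\subset B(x,r')$ to the subharmonic functions $u_i$ (using $\|u_i\|_\infty\lesssim R$), the gradients $\nabla u_i$ are bounded in $L^2(B(x,r'))$ uniformly in $i$; passing to a further subsequence $\{u_{i_k}\}$ we get $\nabla u_{i_k}\rightharpoonup \nabla u$ weakly in $L^2(B(x,r'))$ (the limit being $\nabla u$ because uniform convergence identifies the distributional limit), so $u\in W^{1,2}(B(x,r'))\cap C(B(x,r'))$.

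\textbf{Step 2: Strong $L^2$ convergence of the gradients.} This is the heart of the matter. Each $u_i$ is harmonic on the open set $\{u_i>0\}$ and $u_i$ is nonnegative and subharmonic overall, so $\Delta u_i = \mu_i \geq 0$ is a nonnegative measure supported on $\partial\{u_i>0\}\cap\{u_i=0\}$, i.e. $u_i\,d\mu_i=0$. The same holds for $u$ in the limit: $u$ is a nonnegative subharmonic function, harmonic where it is positive, hence $\Delta u = \mu\geq 0$ with $u\,d\mu=0$ (one checks $u$ is subharmonic as a locally uniform limit of subharmonic functions, and harmonic on $\{u>0\}$ since locally there $u_{i_k}>0$ too for large $k$). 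Now take a cutoff $\psi\in C_c^\infty(B(x,r'))$ with $\psi\equiv 1$ on $B(x,r)$, $0\leq\psi\leq 1$. Write, integrating by parts,
\begin{align*}
\int \psi^2 |\nabla(u_{i_k}-u)|^2
&= -\int (u_{i_k}-u)\,\div\big(\psi^2\nabla(u_{i_k}-u)\big) \\
&= -\int \psi^2 (u_{i_k}-u)\,\Delta(u_{i_k}-u) - 2\int \psi (u_{i_k}-u)\,\nabla\psi\cdot\nabla(u_{i_k}-u).
\end{align*}
The last term tends to $0$ because $u_{i_k}-u\to 0$ uniformly while $\nabla(u_{i_k}-u)$ is bounded in $L^2$. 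For the term with the Laplacians, expand $\Delta(u_{i_k}-u) = \mu_{i_k}-\mu$ and use $u_{i_k}\,d\mu_{i_k}=0=u\,d\mu$ to get
$$-\int\psi^2(u_{i_k}-u)\,d(\mu_{i_k}-\mu) = \int\psi^2 u\,d\mu_{i_k} + \int\psi^2 u_{i_k}\,d\mu \geq 0,$$
and both $\int\psi^2 u\,d\mu_{i_k}$ and $\int\psi^2 u_{i_k}\,d\mu$ tend to $0$: the total masses $\mu_{i_k}(B(x,r'))$ are uniformly bounded (test $\Delta u_{i_k}\geq 0$ against a fixed cutoff and use $\|u_{i_k}\|_\infty\lesssim R$), $u_{i_k}\to u$ uniformly so $\int\psi^2 u_{i_k}\,d\mu\to\int\psi^2 u\,d\mu = 0$, and $\int \psi^2 u\,d\mu_{i_k}\to 0$ since $\psi^2 u$ is a fixed continuous function vanishing on $\{u=0\}$ while $\mu_{i_k}$ concentrates (in the weak-$*$ limit) on $\{u=0\}$ — more carefully, $\mu_{i_k}\rightharpoonup\mu$ weakly-$*$ as measures (a subsequence of the bounded sequence $\mu_{i_k}$), and $\mu$ must be $\geq$ the weak limit obtained from $\Delta u_{i_k}\to\Delta u$ distributionally, forcing the weak-$*$ limit to equal $\mu$; then $\int\psi^2 u\,d\mu_{i_k}\to\int\psi^2 u\,d\mu=0$ since $\psi^2 u$ is continuous and compactly supported. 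Hence $\int_{B(x,r)}|\nabla(u_{i_k}-u)|^2\leq \int\psi^2|\nabla(u_{i_k}-u)|^2\to 0$.

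\textbf{Step 3: Conclusion.} With $\nabla u_{i_k}\to\nabla u$ strongly in $L^2(B(x,r))$, it remains to handle the weight $|y-x|^{-(n-1)}$, which is singular only at $x$. For $n=1$ the weight is bounded and \eqref{eqlim4} is immediate. For $n\geq 2$, split $B(x,r)$ into $B(x,\rho)$ and $B(x,r)\setminus B(x,\rho)$. On the annulus the weight is bounded by $\rho^{-(n-1)}$, so strong $L^2$ convergence gives convergence of $\int_{B(x,r)\setminus B(x,\rho)}|\nabla u_{i_k}|^2|y-x|^{1-n}$ to the corresponding integral for $u$. On $B(x,\rho)$, the estimate $J_i(x,\rho)\lesssim \rho^{-2}\|u_i\|^2_{\infty,B(x,2\rho)}\lesssim R^2/\rho^2$ from \eqref{eqjr} (which applies to each $u_i$ and, by the same Caccioppoli argument, to $u$) shows $\int_{B(x,\rho)}|\nabla u_{i_k}|^2|y-x|^{1-n}\lesssim \rho^{n-1}R^2/\rho^{2}\cdot\rho^{2-n}\cdot(\dots)$ — more precisely $\int_{B(x,\rho)}|\nabla u_{i}|^2|y-x|^{1-n}dy = \rho^2 J_i(x,\rho)\lesssim \|u_i\|_\infty^2 \lesssim R^2$ is not small, so instead one uses that $\rho\mapsto \rho^2 J_i(x,\rho)$ is controlled and, crucially, that $\int_{B(x,\rho)}|\nabla u_i|^2|y-x|^{1-n}dy\to 0$ as $\rho\to 0$ \emph{uniformly in} $i$: this follows from the monotonicity/finiteness in Theorem \ref{t:ACF} combined with the uniform bound \eqref{eqjr}, or directly from absolute continuity of the integrals together with the uniform $L^2$ bound on $\nabla u_i$ and H\"older's inequality since $|y-x|^{1-n}\in L^q(B(x,1))$ for $q<(n+1)/(n-1)$. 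Given $\varepsilon>0$, choose $\rho$ making these tail integrals (for all $i_k$ and for $u$) smaller than $\varepsilon$, then let $k\to\infty$ on the annulus; this yields \eqref{eqlim4}.

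\textbf{Main obstacle.} The delicate point is Step 2, and within it the assertion that the defect measures $\mu_{i_k}=\Delta u_{i_k}$ do not ``leak mass'' onto $\{u>0\}$ in the limit — equivalently, that $\mu_{i_k}\rightharpoonup\mu$ weakly-$*$ with $\mu=\Delta u$. This is where the structural hypothesis that each $u_i$ is harmonic on $\{u_i>0\}$ is essential: it forces $\operatorname{supp}\mu_{i_k}\subset\{u_{i_k}=0\}$, and uniform convergence then confines the limiting support to $\{u=0\}$. One must also be slightly careful that the uniform mass bound on $\mu_{i_k}$ holds on a ball strictly inside $B(x,R)$ (use $r<r'<R$ throughout) so that the cutoff arguments are legitimate. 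The weight issue in Step 3 is routine once the uniform smallness of the tails near $x$ is recorded from \eqref{eqjr}.
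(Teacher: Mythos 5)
Your proof is essentially correct but takes a genuinely different route from the paper's. The paper derives an \emph{exact} integral identity for $\int \frac{|\nabla v|^2}{|y-x|^{n-1}}\varphi\,dy$ by pairing $\nabla v_\eps$ against $\nabla(\EE(x-\cdot)v_\eps\varphi)$ and integrating by parts: harmonicity of $v_\eps$ on $\{v_\eps>0\}$ kills the only term that could see the singularity at $x$, and what remains is an expression in $v$, $\nabla v$, $v^2$ tested against fixed smooth functions supported in $\supp\nabla\varphi$, i.e.\ away from $x$. That identity passes directly to the limit under weak $W^{1,2}$ plus local uniform convergence, and a second (lower) cutoff sandwiches $\int_{B(x,r)}$; strong convergence of the gradients is never needed. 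You instead upgrade weak to \emph{strong} $L^2_{\rm loc}$ convergence of $\nabla u_{i_k}$ using the free-boundary structure: writing $\Delta u_i = \mu_i \ge 0$ with $u_i\,d\mu_i = 0$, the Caccioppoli expansion of $\int\psi^2|\nabla(u_{i_k}-u)|^2$ reduces, modulo a lower-order term that vanishes by uniform convergence, to $\int\psi^2 u\,d\mu_{i_k}+\int\psi^2 u_{i_k}\,d\mu$, both of which tend to $0$ by weak-$*$ convergence $\mu_{i_k}\rightharpoonup\mu$ and uniform convergence $u_{i_k}\to u$. You then handle the singular weight by cutting out a small ball near $x$. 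Both arguments work; the paper's is more streamlined (one identity disposes of both the singular weight and the passage to the limit), while yours is more modular and produces strong gradient convergence, which is often useful independently.

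One detail in your Step 3 needs fixing. The fallback you offer --- ``absolute continuity of the integrals together with the uniform $L^2$ bound on $\nabla u_i$ and H\"older's inequality since $|y-x|^{1-n}\in L^q$'' --- does not actually give the uniform-in-$i$ smallness of $\int_{B(x,\rho)}$: H\"older against a weight in $L^q$ would require $|\nabla u_i|^2$ bounded in $L^{q'}$ for some $q'>1$ uniformly, and a uniform $L^1$ bound does not give uniform integrability. The other route you mention is the right one and should be spelled out: by \eqref{eqjr}, $\int_{B(x,\rho)}\frac{|\nabla u_i|^2}{|y-x|^{n-1}}\,dy = \rho^2 J_i(x,\rho)\lesssim \|u_i\|^2_{\infty,B(x,2\rho)}$, and since $u_i(x)=0$ with $\|u_i\|_{{\rm Lip}^\alpha}\le C_1 R^{1-\alpha}$ one gets $\|u_i\|_{\infty,B(x,2\rho)}\lesssim R^{1-\alpha}\rho^\alpha$, whence $\int_{B(x,\rho)}\frac{|\nabla u_i|^2}{|y-x|^{n-1}}\,dy\lesssim R^{2(1-\alpha)}\rho^{2\alpha}\to 0$ uniformly in $i$ (and likewise for $u$). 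With that in place, the splitting argument closes as you describe.
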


\begin{proof}
The existence of a subsequence $\{u_{i_k}\}_{k\geq1}$ converging weakly in $W^{1,2}(B(x,r))$ and uniformly in
$B(x,r)$ to some function $u\in W^{1,2}(B(x,r))\cap C(B(x,r))$
is an immediate consequence of the Arzel\`a-Ascoli and the Banach-Alaoglu theorems.
The identity \rf{eqlim4} is clear when $n=1$, and 
quite likely, for $n>1$ this is also well known. However, for completeness, we will show the details (for $n>1$).

Consider a non-negative subharmonic function $v\in W^{1,2}(B(x,R))\cap C(B(x,R))$ which is harmonic in $\{y\in B(x,R):v(y)>0\}$ so that $v(x)=0$. 
For $0<r<R$ and $0<\delta<R-r$, let $\vphi$ be a radial $C^\infty$ function such that $\chi_{B(x,r)} \leq  \vphi\leq
\chi_{B(x,r+\delta)}$. Let $\EE(y)=c_n^{-1}\,|y|^{1-n}$ be the fundamental solution of the Laplacian.
For $\eps>0$, denote $v_\eps=\max(v,\eps)-\eps$.
Then we have
\begin{align*}
\int \frac{|\nabla v_\eps(y)|^{2}}{|y-x|^{n-1}}\,\vphi(y)\,dy &= c_n\int \nabla v_\eps(y)\,\nabla(\EE(x-\cdot)\,v_\eps\,\vphi)(y) \,dy\\
&\quad - c_n\int \nabla v_\eps(y)\,\EE(x-y)\,v_\eps(y)\,\nabla\vphi(y) \,dy\\
& \quad - c_n\int \nabla v_\eps(y)\,\nabla_y \EE(x-y)\,v_\eps(y)\,\vphi(y) \,dy = c_n( I_1 - I_2 - I_3).
\end{align*}
Using the fact that $v_\eps$ is harmonic in $\{v_\eps>0\}$ and that $\EE(x-\cdot)\,v_\eps\,\vphi \in W^{1,2}_0(\{v_\eps>0\} \cap B(x,R))$ since  $\vphi$ is compactly supported in $B(x,R)$, $v_\eps=0$ on $\partial \{v_\eps>0\}$, and $x$ is far away from $\overline{\{v_\eps>0\}}$,
it follows easily that $I_1=0$. On the other hand, we have
\begin{align*}
2\,I_3 & = \int \nabla(v_\eps^2\,\vphi)(y)\,\nabla_y \EE(x-y)\,dy -
\int v_\eps(y)^2\,\nabla_y \EE(x-y)\,\nabla\vphi(y) \,dy \\
& = - v_\eps(x)^2-
\int v_\eps(y)^2\,\nabla_y \EE(x-y)\,\nabla\vphi(y) \,dy.
\end{align*}
Thus,
\begin{align*}
\int \frac{|\nabla v_\eps(y)|^{2}}{|y-x|^{n-1}}\,\vphi(y)\,dy & = - c_n\int \nabla v_\eps(y)\,\EE(x-y)\,v_\eps(y)\,\nabla\vphi(y) \,dy\\
&\quad + \frac{c_n}2\int v_\eps(y)^2\,\nabla_y \EE(x-y)\,\nabla\vphi(y) \,dy.
\end{align*}
Taking into account that $\supp \nabla\vphi$ is far away from $x$, letting $\eps\to0$, we obtain
\begin{align*}
\int \frac{|\nabla v(y)|^{2}}{|y-x|^{n-1}}\,\vphi(y)\,dy & = - c_n\int \nabla v(y)\,\EE(x-y)\,v(y)\,\nabla\vphi(y) \,dy\\
&\quad + \frac{c_n}2\int v(y)^2\,\nabla_y \EE(x-y)\,\nabla\vphi(y) \,dy.
\end{align*}

Using the preceding identity, it follows easily that
$$\lim_{k\to\infty}\int \frac{|\nabla u_{i_k}(y)|^{2}}{|y-x|^{n-1}}\,\vphi(y)\,dy = \int \frac{|\nabla u(y)|^{2}}{|y-x|^{n-1}}\,\vphi(y)\,dy.$$
Indeed, $\lim_{k\to \infty}u_{i_k}(x)^2 = u(x)^2$. Also, it is clear that 
$$\lim_{k\to \infty} \int u_{i_k}(y)^2\,\nabla_y \EE(x-y)\,\nabla\vphi(y) \,dy = \int u(y)^2\,\nabla_y \EE(x-y)\,\nabla\vphi(y) \,dy.$$
Further,
\begin{align*}
\int \nabla u_{i_k}(y)\,\EE(x-y)\,u_{i_k}(y)\,\nabla\vphi(y) \,dy & = 
\int \nabla u_{i_k}(y)\,\EE(x-y)\,u(y)\,\nabla\vphi(y) \,dy \\
&\quad + \int \nabla u_{i_k}(y)\,\EE(x-y)\,(u_{i_k}(y)-u(y))\,\nabla\vphi(y) \,dy\\
&\stackrel{k\to\infty}\to \int \nabla u(y)\,\EE(x-y)\,u(y)\,\nabla\vphi(y) \,dy,
\end{align*}
by the weak convergence of $u_{i_k}$ in $W^{1,2}(B(x,R))$ and the uniform convergence in $B(x,r+\delta)$,
since $\supp\nabla\vphi$ is far away from $x$.

Let $\psi$ be a radial $C^\infty$ function such that $\chi_{B(x,r-\delta)} \leq  \psi\leq
\chi_{B(x,r)}$. The same argument as above shows that
$$\lim_{k\to\infty}\int \frac{|\nabla u_{i_k}(y)|^{2}}{|y-x|^{n-1}}\,\psi(y)\,dy = \int \frac{|\nabla u(y)|^{2}}{|y-x|^{n-1}}\,\psi(y)\,dy.$$
Consequently,
$$\limsup_{k\to\infty}   \int_{B(x,r)} \frac{|\nabla u_{i_k}(y)|^{2}}{|y-x|^{n-1}}dy \leq 
\lim_{k\to\infty}\int \frac{|\nabla u_{i_k}(y)|^{2}}{|y-x|^{n-1}}\,\vphi(y)\,dy = \int \frac{|\nabla u(y)|^{2}}{|y-x|^{n-1}}\,\vphi(y)\,dy,$$
and also
$$\liminf_{k\to\infty}   \int_{B(x,r)} \frac{|\nabla u_{i_k}(y)|^{2}}{|y-x|^{n-1}}dy \geq 
\lim_{k\to\infty}\int \frac{|\nabla u_{i_k}(y)|^{2}}{|y-x|^{n-1}}\,\psi(y)\,dy = \int \frac{|\nabla u(y)|^{2}}{|y-x|^{n-1}}\,\psi(y)\,dy.$$
Since $\delta>0$ can be taken arbitrarily small, \rf{eqlim4} follows.
\end{proof}

\vv

\begin{lemma} \label{lem:ACF2} Let $B(x,2R)\subset \RR^{n+1}$, and let $u_1,u_2\in
W^{1,2}(B(x,2R))\cap C(B(x,2R))$ be nonnegative subharmonic functions such that each $u_i$ is harmonic
in $\{y\in B(x,2R):u_i(y)>0\}$.
 Suppose that $u_1(x)=u_2(x)=0$ and
that $u_1\cdot u_2\equiv 0$. 
Assume also that
$$\|u_i\|_{\infty,B(x,2R)}\leq C_1\,R\quad\text{ and }\quad \|u_i\|_{{\rm Lip}^\alpha,B(x,2R)}\leq C_1\,R^{1-\alpha}\quad \mbox{ for $i=1,2$.}$$
For any $\eps>0$, there exists some $\delta>0$ such that if
$$J(x,R)\leq (1+\delta)\, J(x,\tfrac12R),$$
with $J(\cdot,\cdot)$ defined in \rf{e:Jxr},
then either
one or the other of the following holds:
\begin{itemize}
\item[(a)] $\|u_1\|_{\infty,B(x,R)}\leq \eps\,R$ or $\|u_2\|_{\infty,B(x,R)}\leq \eps\,R$;

\item[(b)] there exists a unit vector $e$ and constants $k_1,k_2>0$ such that
$$\|u_1- k_1\,((\cdot-x)\cdot e)^+\|_{\infty,B(x,R)}\leq \eps\,R
,\qquad \|u_2-k_2\,((\cdot-x)\cdot e)^-\|_{\infty,B(x,R)}\leq \eps\,R 
.$$
\end{itemize}
The constant $\delta$ depends only on $n,\alpha,C_1,\eps$.
\end{lemma}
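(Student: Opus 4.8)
The statement is a compactness (or normal families) upgrade of the rigidity Theorem \ref{teoACFequ}: near-equality in the ACF monotonicity formula forces the pair $(u_1,u_2)$ to be quantitatively close, on $B(x,R)$, to one of the two rigid configurations. The plan is to argue by contradiction. Suppose the conclusion fails for some fixed $\eps>0$: then there is a sequence of radii $R_j$ and pairs $u_1^j,u_2^j$ satisfying all the hypotheses (with the same $C_1,\alpha$) and with
$$J^j(x_j,R_j)\leq (1+1/j)\, J^j(x_j,\tfrac12 R_j),$$
yet neither alternative (a) nor (b) holds with parameter $\eps$. By translating and rescaling (replace $u_i^j(y)$ by $R_j^{-1}u_i^j(x_j+R_j\,\cdot)$, which preserves subharmonicity, harmonicity off the zero set, the product-vanishing condition, the value $0$ at the center, and the normalization bounds, and scales $J$ by a fixed power of $R_j$ that cancels in the ratio), we may assume $x_j=0$ and $R_j=1$ for all $j$, so $\|u_i^j\|_{\infty,B(0,2)}\leq C_1$ and $\|u_i^j\|_{{\rm Lip}^\alpha,B(0,2)}\leq C_1$.

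\textbf{Extracting a limit.} Apply Lemma \ref{lem:ACF1} (with $R=2$, and any $r\in(1,2)$, e.g.\ $r=3/2$) to each of the two sequences $\{u_1^j\}$ and $\{u_2^j\}$: passing to a common subsequence we get $u_i^j\to u_i$ uniformly on $B(0,3/2)$ and weakly in $W^{1,2}(B(0,3/2))$, with $u_i\in W^{1,2}\cap C$, and
$$\lim_{j}\int_{B(0,\rho)}\frac{|\nabla u_i^j(y)|^2}{|y|^{n-1}}\,dy=\int_{B(0,\rho)}\frac{|\nabla u_i(y)|^2}{|y|^{n-1}}\,dy$$
for every $\rho<3/2$; in particular this holds for $\rho=1$ and $\rho=1/2$, so $J_i^j(0,\rho)\to J_i(0,\rho)$ and hence $J^j(0,1)\to J(0,1)$, $J^j(0,\tfrac12)\to J(0,\tfrac12)$ where $J=J_1J_2$ is the ACF quantity for the limit pair. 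The limit $u_i$ is nonnegative, vanishes at $0$, is subharmonic (uniform limit of subharmonic functions), the product $u_1u_2$ vanishes identically (uniform convergence), and $u_i$ is harmonic in $\{u_i>0\}$ (on any ball in the open set $\{u_i>0\}$, $u_i^j>0$ eventually, hence $u_i^j$ is harmonic there, and uniform limits of harmonic functions are harmonic). Thus the limit pair satisfies the hypotheses of Theorems \ref{t:ACF} and \ref{teoACFequ} on $B(0,3/2)$. Passing to the limit in the near-equality hypothesis gives $J(0,1)\leq J(0,\tfrac12)$, while ACF monotonicity (Theorem \ref{t:ACF}) gives the reverse inequality; hence $J(0,1)=J(0,\tfrac12)$, i.e.\ equality holds between the radii $r_a=\tfrac12$ and $r_b=1$.

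\textbf{Applying rigidity and concluding.} By Theorem \ref{teoACFequ} (with $r_b=1$), either $u_1\equiv 0$ on $B(0,1)$, or $u_2\equiv 0$ on $B(0,1)$, or there is a unit vector $e$ and $k_1,k_2>0$ with $u_1(y)=k_1(y\cdot e)^+$ and $u_2(y)=k_2(y\cdot e)^-$ on $B(0,1)$. In the first two cases, uniform convergence on $\overline{B(0,1)}$ forces $\|u_1^j\|_{\infty,B(0,1)}\to 0$ (resp.\ $\|u_2^j\|_{\infty,B(0,1)}\to 0$), so alternative (a) holds with parameter $\eps$ for $j$ large, a contradiction. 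In the third case, uniform convergence gives $\|u_1^j-k_1(\cdot\,\cdot e)^+\|_{\infty,B(0,1)}\to 0$ and $\|u_2^j-k_2(\cdot\,\cdot e)^-\|_{\infty,B(0,1)}\to 0$, so alternative (b) holds (with these $e,k_1,k_2$, which are positive because, say, $k_i$ is recovered from $\|u_i\|_{\infty,B(0,1)}=k_i$ and this is bounded below once $j$ is large unless we are in case (a) — one disposes of the degenerate possibility $k_1=0$ or $k_2=0$ by noting it coincides with one of the first two cases) for $j$ large, again a contradiction. Undoing the rescaling, this proves the claimed $\delta=\delta(n,\alpha,C_1,\eps)$ exists.

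\textbf{Main obstacle.} The only delicate point is verifying that the hypotheses of Lemma \ref{lem:ACF1} and of Theorems \ref{t:ACF}--\ref{teoACFequ} genuinely pass to the limit — in particular that the limit $u_i$ is still harmonic off its (possibly larger, in the limit) positivity set, and that the convergence $J^j\to J$ at the two radii $\tfrac12,1$ is exactly the content of \eqref{eqlim4} rather than something weaker. Once the weighted-energy convergence of Lemma \ref{lem:ACF1} is invoked at both radii, the ACF monotonicity and the equality-case classification do the rest; the scaling bookkeeping (that the normalization constants $C_1R$, $C_1R^{1-\alpha}$ are scale-invariant under $u_i(y)\mapsto R^{-1}u_i(x+Ry)$) is routine.
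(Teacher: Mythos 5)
Your argument is correct and follows essentially the same route as the paper: normalize to $x=0$, $R=1$, argue by contradiction, use Lemma \ref{lem:ACF1} to pass to a limit pair achieving equality in the ACF quantity between radii $\tfrac12$ and $1$, invoke the rigidity Theorem \ref{teoACFequ}, and derive the contradiction from uniform convergence. The extra verifications you flag (harmonicity of the limit on its positivity set, ruling out $k_i=0$) are sound but not actually needed, since Theorem \ref{teoACFequ} as stated neither assumes the former nor allows the latter.
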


\begin{proof}
Suppose that the conclusion of the lemma fails. Then, by replacing $u_i(y)$ by $\frac1{R}\,u_i(Ry+x)$,  we  can assume that $x=0$ and $R=1$.
Let $\eps>0$, and for each $\delta=1/k$ and $i=1,2$, consider functions $u_{i,k}$ satisfying the
assumptions of the lemma and such that neither (a) nor (b) holds for them.
By Lemma \ref{lem:ACF1}, there exist subsequences (which we still denote by $\{u_{i,k}\}_k$) 
which converge uniformly in
$B(0,\frac32)$ and weakly in $W^{1,2}(B(0,\frac32))$ to some functions $u_i\in W^{1,2}(B(0,\frac32))\cap C(B(0,\frac32))$, and moreover,
$$\lim_{k\to\infty}  \int_{B(0,r)} \frac{|\nabla u_{i,k}(y)|^{2}}{|y|^{n-1}}dy =
 \int_{B(0,r)} \frac{|\nabla u_i(y)|^{2}}{|y|^{n-1}}dy$$
both for $r=1$ and $r=1/2$. Clearly, the functions $u_i$ are non-negative, subharmonic, and $u_1\cdot u_2=0$. Hence, by Theorem \ref{teoACFequ}, one of the following holds:
\begin{itemize}
\item[(a')] $u_1=0$ in $B(0,1)$ or $u_2=0$ in $B(0,1)$;

\item[(b')] there exists a unit vector $e$ and constants $k_1,k_2>0$ such that
$$u_1(y)= k_1\,(y\cdot e)^+,\qquad u_2(y)= k_2\,(y\cdot e)^-, \qquad\text{in $B(0,1)$}.$$
\end{itemize}
However, the fact that neither (a) nor (b) holds for any pair $u_{1,k}$, $u_{2,k}$, together with the uniform
convergence of $\{u_{i,k}\}_k$, implies that neither (a') nor (b') can hold, and thus we get a contradiction.
\end{proof}

\subsection{Existence of short paths}\label{sect:short-paths}

Let $p\in\Omega$ and $\Lambda>1$.
For $x\in\partial\Omega$, we write $x\in \WA(p,\Lambda)$ if 
for all $0<r\leq \delta_\Omega(p)$,
$$\Lambda^{-1}\,\frac{\sigma(B(x,r))}{\sigma(B(x,\delta_\Omega(p)))}\leq
\omega^p(B(x,r))\leq \Lambda\,\frac{\sigma(B(x,r))}{\sigma(B(x,\delta_\Omega(p)))}.$$
We will see in Section \ref{sechdld} that, under the assumptions of the Main Lemma \ref{lemhc},
 for some $\Lambda$ big enough,
\begin{equation}\label{eqwap}
\sigma(\WA(p,\Lambda)\cap R_0)\gtrsim \sigma(R_0).
\end{equation}

\vv

\begin{lemma}\label{lemfacc}
Let $p\in\Omega$, $x_0\in\WA(p,\Lambda)$, and $r\in (0,\delta_\Omega(p))$. Then there exists
$q\in B(x_0,r)$ such that, for some constant $\kappa\in (0,1/10)$,
\begin{itemize}
\item[(a)] $\delta_\Omega(q) \geq \kappa \,r$, and
\item [(b)]
$$\kappa\,\frac{\omega^p(B(x_0,r))}{r^{n-1}} \leq  g(p,q)\leq \kappa^{-1}\,\frac{\omega^p(B(x_0,r))}{r^{n-1}}
.$$
\end{itemize}
The constant $\kappa$ depends only on $\Lambda$, $n$, and $C_0$, the AD-regularity constant of $\partial\Omega$.
\end{lemma}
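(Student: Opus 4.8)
The plan is to follow the scheme of the proof of Lemma \ref{lem2}, working with Euclidean balls in place of dyadic cubes and using the two-sided control provided by $x_0\in\WA(p,\Lambda)$ in place of the doubling hypothesis $\omega^p(Q)\approx\omega^p(2Q)$ used there. Fix a large constant $C_2\geq 1$, depending only on $n$ and $C_0$, to be chosen, and set $\rho:=r/C_2$. Since $x_0\in\partial\Omega$ and $r<\delta_\Omega(p)$, for $C_2$ large every ball $B(x_0,s)$ with $s\lesssim\rho$ has radius at most $\delta_\Omega(p)$ and is well separated from $p$ (say $p\notin 50\,B(x_0,s)$); hence $\WA(p,\Lambda)$ together with $n$-ADR gives $\omega^p(B(x_0,s))\approx(s/\delta_\Omega(p))^n$ for all $0<s\leq\delta_\Omega(p)$, with implicit constants depending only on $n,C_0,\Lambda$. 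In particular $\omega^p(B(x_0,s))/s^{\,n-1}\approx\omega^p(B(x_0,r))/r^{\,n-1}$ whenever $s\approx r$.

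First I would prove the reverse mean-value bound
$$\omega^p(B(x_0,\rho))\;\lesssim\;\rho^{\,n-1}\avint_{B(x_0,C\rho)}g(p,y)\,dy$$
for a suitable absolute constant $C$. Choosing $\varphi\in C_c^\infty(B(x_0,2\rho))$ with $\varphi\equiv1$ on $B(x_0,\rho)$ and $\|\nabla\varphi\|_\infty\lesssim\rho^{-1}$, and using the identity $\int\varphi\,d\omega^p=-\int\nabla_yg(p,y)\cdot\nabla\varphi(y)\,dy$ (valid since $p\notin\supp\varphi$), Hölder's inequality, Caccioppoli's inequality for the nonnegative subharmonic extension of $g(p,\cdot)$ by zero across $\partial\Omega$, and the sup-to-average estimate of Lemma \ref{lem333}, one obtains the displayed bound by the same computation as in the proof of Lemma \ref{lem2}. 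Next I would show that the near-boundary part of this average is negligible: if $y\in B(x_0,C\rho)$ with $\delta_\Omega(y)<a\rho$ and $\xi\in\partial\Omega$ satisfies $|y-\xi|=\delta_\Omega(y)$, then the Hölder decay of Lemma \ref{lem333}, the upper bound of Lemma \ref{lem1'}, and the $\WA$ estimate for $\omega^p$ of balls centered near $x_0$ yield $g(p,y)\lesssim a^\alpha\,\omega^p(B(x_0,\rho))/\rho^{\,n-1}$, and hence
$$\avint_{\{y\in B(x_0,C\rho):\,\delta_\Omega(y)<a\rho\}}g(p,y)\,dy\;\lesssim\;a^\alpha\,\frac{\omega^p(B(x_0,\rho))}{\rho^{\,n-1}}\,.$$

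Now choose $a$ small, depending only on $n,C_0,\Lambda$; the two displayed estimates then force
$$\omega^p(B(x_0,\rho))\;\lesssim\;\rho^{\,n-1}\avint_{\{y\in B(x_0,C\rho):\,\delta_\Omega(y)\geq a\rho\}}g(p,y)\,dy\,,$$
so there is a point $q\in B(x_0,C\rho)\subset B(x_0,r)$ (here $C_2\gg C$) with $\delta_\Omega(q)\geq a\rho$ and $g(p,q)\gtrsim\omega^p(B(x_0,\rho))/\rho^{\,n-1}\approx\omega^p(B(x_0,r))/r^{\,n-1}$; this gives property (a) and the lower bound in (b). For the upper bound in (b), note $\delta_\Omega(q)\leq|q-x_0|\leq C\rho$ while $|p-q|\geq\delta_\Omega(p)-C\rho\geq r-C\rho\geq 4\delta_\Omega(q)$ once $C_2$ is large, so Lemma \ref{lem1} gives $g(p,q)\lesssim\omega^p(B(q,4\delta_\Omega(q)))/\delta_\Omega(q)^{\,n-1}$; since $B(q,4\delta_\Omega(q))\subset B(x_0,r)$ we have $\omega^p(B(q,4\delta_\Omega(q)))\leq\omega^p(B(x_0,r))$ by monotonicity, and $\delta_\Omega(q)^{\,n-1}\gtrsim r^{\,n-1}$, so $g(p,q)\lesssim\omega^p(B(x_0,r))/r^{\,n-1}$. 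Taking $\kappa<1/10$ a sufficiently small constant depending only on $n,C_0,\Lambda$ completes the proof. The main obstacle I anticipate is purely bookkeeping of constants: one must fix $C_2$ large enough at the outset so that every auxiliary ball appearing above — at scales $\rho$, $C\rho$, $B(q,4\delta_\Omega(q))$, and the balls $B(\xi,C'\rho)$ centered at boundary points near $x_0$ — has radius at most $\delta_\Omega(p)$ and is well separated from $p$, so that the $\WA(p,\Lambda)$ property and Lemmas \ref{lem333}, \ref{lem1'}, \ref{lem1} all apply with uniform constants. The only non-formal ingredient is the reverse mean-value inequality of the first step, which is already contained in the proof of Lemma \ref{lem2}; here it is in fact simpler, since the membership $x_0\in\WA(p,\Lambda)$ removes the need to pass to a subcube and to invoke any doubling of $\omega^p$.
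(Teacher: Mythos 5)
Your proof is correct and uses essentially the same ideas as the paper, whose own proof of this lemma is simply a one-line citation of Lemmas \ref{lem1} and \ref{lem2}. You have unpacked the argument of Lemma \ref{lem2} and re-run it in a ball-centered form, observing correctly that the $\WA(p,\Lambda)$ hypothesis directly supplies the two-sided control on $\omega^p(B(x_0,s))$ that, in a direct citation of Lemma \ref{lem2}, would have to be extracted by first pigeonholing a dyadic cube $Q$ near $x_0$ satisfying $\omega^p(Q)\approx\omega^p(2Q)$; the upper bound via Lemma \ref{lem1} is then used exactly as the paper intends.
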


\begin{proof}
This follows easily from Lemmas \ref{lem1} and \ref{lem2}.
\end{proof}

\vv

\begin{lemma}[Short paths]\label{lemshortjumps}
Let $p\in\Omega$, $x_0\in\WA(p,\Lambda)$,  and for $0<r_0\leq \delta_\Omega(p)/4$, $0<\tau_0,\lambda_0 \leq1$, let $q\in\Omega$ be such that
\begin{equation}\label{eqass11}
q\in B(x_0,r_0),\quad \delta_\Omega(q)\geq \tau_0\,r_0,\quad g(p,q)\geq \lambda_0\,\frac{\delta_\Omega(q)}{\delta_\Omega(p)^n}.
\end{equation}
Then there exist constants $A_1>1$ and $0<a_1,\lambda_1<1$ such that for every $r\in (r_0,\delta_\Omega(p)/2)$,  there exists some point $q'\in\Omega$ such that 
\begin{equation}\label{eqconc11}
q'\in B(x_0,A_1r),\quad \delta_\Omega(q')\geq \kappa\,|x_0-q'|\geq \kappa\,r,\quad g(p,q')\geq \lambda_1\,\frac{\delta_\Omega(q')}{\delta_\Omega(p)^n},
\end{equation}
(with $\kappa$ as in Lemma \ref{lemfacc})
and such that
$q$ and $q'$ can be joined by a curve $\gamma$ such that
$$\gamma\subset\{y\in B(x_0,A_1r):\dist(y,\partial\Omega)
>a_1\, r_0\}.$$
The parameters $\lambda_1,A_1,a_1$ depend only 
on $C_0,\Lambda,\lambda_0,\tau_0$ and the ratio $r/r_0$.
\end{lemma}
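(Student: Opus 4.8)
The plan is to iterate a single "short jump" across dyadic scales, doubling (or multiplying by a fixed factor) the distance scale at each step until we reach scale $r$, and then invoke the Alt-Caffarelli-Friedman monotonicity formula via Lemma \ref{lem:ACF2} to control what happens at each individual jump. First I would reduce to the case where $r/r_0$ is bounded above by a fixed constant $C_2$, say $r\le C_2 r_0$: once this ``one bounded jump'' version is established with parameters $\lambda_1,A_1,a_1$ depending only on $C_0,\Lambda,\lambda_0,\tau_0,C_2$, the general case follows by composing $\approx \log_2(r/r_0)$ such jumps, tracking how the constants degrade (they depend on $r/r_0$, which is allowed) and concatenating the curves $\gamma$ — this is exactly why the statement permits the parameters to depend on the ratio $r/r_0$. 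The output at each stage serves as the input $q$ for the next stage, since the conclusion \eqref{eqconc11} is formatted to match the hypothesis \eqref{eqass11} (up to relabelling constants).

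For the single bounded jump, the heart of the matter is the following dichotomy argument. Fix a ball $B(x_0, c r)$ with $c$ comparable to $1$ and look at the Green function $u_1 := g(p,\cdot)$, extended by $0$ across $\partial\Omega$; since $\Omega = \RR^{n+1}\setminus\partial\Omega$, we may pair $u_1$ restricted to one ``side'' against $u_2 := g(p,\cdot)$ restricted to a complementary region — more precisely we apply the ACF formula to the two nonnegative subharmonic functions obtained by restricting $g(p,\cdot)$ to the two components of $\Omega$ cut by a hyperplane through a boundary point, or (the cleaner route) we simply use that $g(p,\cdot)$ together with a competitor built from the boundary vanishes appropriately. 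Using Lemma \ref{lemfacc} at scale $r_0$ we know $g(p,q)$ is of order $\omega^p(B(x_0,r_0))/r_0^{n-1}\approx \delta_\Omega(q)/\delta_\Omega(p)^n$ (by $x_0\in\WA(p,\Lambda)$ and ADR), so $J_1$ at scale $r_0$ is bounded below; by the ACF monotonicity $J = J_1 J_2$ is non-decreasing, hence bounded below at all larger scales up to $\delta_\Omega(p)/2$, and it is bounded above by \eqref{eqjr} together with Lemma \ref{lem333} (Hölder decay of $g$ near the boundary) and Lemma \ref{lem1'}. Consequently, over the finitely many dyadic scales between $r_0$ and $r$, there must be a scale where the ratio $J(x_0,\rho)/J(x_0,\rho/2)$ is as close to $1$ as we like, so Lemma \ref{lem:ACF2} applies: either (a) one of $u_1,u_2$ is uniformly small on $B(x_0,\rho)$ — which we rule out because $g(p,\cdot)$ is not small there, thanks to the lower bound on $J_1$ propagated from scale $r_0$ and a Harnack-type comparison — or (b) near $B(x_0,\rho)$ the Green function looks, up to error $\eps\rho$, like a linear function $k_1((\cdot - x_0)\cdot e)^+$ with $k_1\gtrsim 1$. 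In case (b), the point $q' := x_0 + c'\rho\, e$ (for a suitable small fixed $c'$) satisfies $\delta_\Omega(q')\gtrsim \rho$ and $g(p,q')\gtrsim \rho/\delta_\Omega(p)^n$, which is the required \eqref{eqconc11}; and the straight segment from $q$ to $q'$, possibly modified near the endpoints using the already-established lower bounds $\delta_\Omega(q)\ge\tau_0 r_0$ and $\delta_\Omega(q')\gtrsim r$, stays at distance $\gtrsim r_0$ from $\partial\Omega$, giving the curve $\gamma$ inside $\{y\in B(x_0,A_1 r): \dist(y,\partial\Omega) > a_1 r_0\}$.

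The main obstacle I anticipate is excluding alternative (a) of Lemma \ref{lem:ACF2} — i.e., guaranteeing that the Green function $g(p,\cdot)$ does not collapse to something $\le \eps\rho$ on the relevant ball at the ``almost-equality'' scale. The lower bound on $J_1(x_0,\rho)$ coming from monotonicity only says the gradient energy of $u_1$ is not too small; converting this into a pointwise lower bound for $g(p,\cdot)$ at a corkscrew point of $B(x_0,\rho)$ requires combining the lower energy bound with the upper Hölder bound (Lemma \ref{lem333}) and an interior Harnack inequality, and one must be careful that the corkscrew point one lands on is genuinely interior with controlled $\delta_\Omega$ — this is where $x_0\in\WA(p,\Lambda)$ and Lemma \ref{lemfacc} (which provides, at every scale $r$, an interior point $q_r\in B(x_0,r)$ with $\delta_\Omega(q_r)\gtrsim r$ and $g(p,q_r)\approx \omega^p(B(x_0,r))/r^{n-1}\approx r/\delta_\Omega(p)^n$) are essential: they ensure that at \emph{every} scale in the range there is a ``good'' interior point witnessing that $u_1$ is not small, so (a) is impossible. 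A secondary technical point is making the choice of the nearly-equal scale and the parameter $\eps$ in Lemma \ref{lem:ACF2} quantitative and uniform, but since the number of scales is $\approx\log_2(r/r_0)$ and the total variation of $J$ is bounded by allowable constants, a pigeonholing argument delivers a scale with $J(x_0,\rho)\le (1+\delta)J(x_0,\rho/2)$ for $\delta = \delta(\eps)$ as small as needed, and all resulting constants depend only on $C_0,\Lambda,\lambda_0,\tau_0$ and $r/r_0$ as claimed.
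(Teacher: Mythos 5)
Your high-level picture — find a scale where the ACF functional is nearly stationary, apply Lemma~\ref{lem:ACF2} to extract a two-plane structure, and land $q'$ on the ``good'' half-space — is the right idea, but the execution contains two gaps that break the argument.

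\textbf{The pigeonhole is over the wrong range.} You reduce to $r\le C_2 r_0$ and then propose to pigeonhole ``over the finitely many dyadic scales between $r_0$ and $r$''; you later say this gives $\delta$ as small as needed because there are $\approx\log_2(r/r_0)$ scales. These two statements contradict each other: after the reduction, $\log_2(r/r_0)\le\log_2 C_2$ is a \emph{bounded} number, so with a bounded total variation of $\log J$ the best the pigeonhole can give is a \emph{bounded-below} $\delta$, which is useless for invoking the compactness/contradiction argument in Lemma~\ref{lem:ACF2}. (The opposite regime is no better: if $r/r_0$ is close to $1$ there are \emph{zero} scales to pigeonhole over.) The way the paper escapes this trap is by introducing the large parameter $A_1$ and pigeonholing over the scales in $[2r,\tfrac14A_1 r]$ — i.e. over scales \emph{above} $r$, not below — so the number of candidate scales is $\log_2(A_1/8)$, which can be made as large as needed by choosing $A_1$ large (this is exactly why $A_1$ appears in the conclusion). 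Without this step there is no way to make the near-equality $J(x_0,\rho)\le(1+\delta)J(x_0,\rho/2)$ hold with $\delta$ small.

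\textbf{The ACF pair $u_1,u_2$ is set up incorrectly.} You propose to split $g(p,\cdot)$ ``by a hyperplane through a boundary point'' or by some unspecified competitor. The problem is that $g(p,\cdot)$ does not vanish on any such hyperplane, so neither piece is admissible for Theorem~\ref{t:ACF} (which requires $u_1 u_2\equiv0$ and $u_i(x_0)=0$). The correct construction, which is the real engine of the proof, is to first produce a good corkscrew $\widetilde q\in B(x_0,\kappa^{-1}r)$ at scale $r$ via Lemma~\ref{lemfacc}, observe that if $q$ and $\widetilde q$ can already be joined by a curve staying at distance $\gtrsim r_0$ from $\partial\Omega$ then we are done by taking $q'=\widetilde q$, and in the remaining case let $V_1,V_2$ be the (necessarily \emph{disjoint}) connected components of the superlevel set $V^t=\{x\in B(x_0,\tfrac14A_1r):\delta_\Omega(p)^n g(p,x)>tr_0\}$ containing $q$ and $\widetilde q$. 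Then $u_i=\chi_{V_i}(\delta_\Omega(p)^n g(p,\cdot)-tr_0)^+$ is a valid ACF pair, and the lower bound on $J_i$ comes from \emph{both} corkscrews (the one at $q$ \emph{and} $\widetilde q$) contributing a definite amount of gradient energy in each component. Your statement that the lower bound is ``propagated from scale $r_0$ by monotonicity'' only bounds the \emph{product} $J=J_1J_2$; monotonicity of $J$ alone does not prevent $J_1$ (say) from shrinking while $J_2$ grows, so you would not be able to rule out alternative (a) without the symmetric role of $\widetilde q$.

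A third, subsidiary problem: the straight segment from $q$ to $q'$ has no reason to avoid $\partial\Omega$, even with control on $\delta_\Omega$ at the endpoints. The paper gets the curve $\gamma$ for free because $q$ and $q'$ both lie in the connected component $V_1$, and the boundary H\"older estimate forces $\dist(V^t,\partial\Omega)\gtrsim r_0$; this is another place where the $V_1,V_2$ structure does essential work that your sketch does not replicate.
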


\begin{proof}
All the parameters in the lemma will be fixed along the proof. We assume that $A_1\gg \kappa^{-1}>1$.
First note that we may assume that $r< 2A_1^{-1} |x_0-p|$. Otherwise, we just take a point
$q'\in\Omega$ such that $|p-q'|=\delta_\Omega(p)/2$, which clearly satisfies the properties in \rf{eqconc11}.
Further, both $q$ and $q'$ belong to the open connected set
$$U:=\{x\in\Omega:g(p,x)>c_2\,r_0\,\delta_\Omega(p)^{-n}\}$$
for a sufficiently small $c_2>0$.  The fact that $U$ is connected is well known. This follows from the fact that, for any $\lambda>0$, any connected component of $\{g(p,\cdot)>\lambda\}$ should contain $p$. Otherwise there would be a connected component where $g(p,\cdot)-\lambda$ is positive and harmonic with zero boundary values. So, by maximum principle,  $g(p,\cdot)-\lambda$ should equal $\lambda$ in the whole component, which is a contradiction.  So there is only one connected component.

We just let $\gamma$ be a curve contained in $U$. Note that
$$\dist(U,\partial\Omega)\geq c\,r_0^{\frac1\alpha}\,\delta_\Omega(p)^{1-\frac1\alpha}\geq a\,r_0,$$
for a sufficiently small $a>0$
because, by boundary H\"older continuity,
$$g(p,x)\lesssim\left(\frac{\delta_\Omega(x)}{\delta_\Omega(p)}\right)^\alpha\,\frac1{\delta_\Omega(p)^{n-1}}$$
if $\dist(x,\partial\Omega)\leq \delta_\Omega(p)/2$. Further, the fact that 
$g(p,x)\leq c|x-p|^{1-n}$ ensures that $U\subset B(p,C\delta_\Omega(p))$, for a sufficiently big constant $C$ depending on $r/r_0$.


\vv
So from now on we assume that $r< 2A_1^{-1} |x_0-p|$. 
By Lemma \ref{lemfacc} we know there exists some point $\wt q\in\Omega$ such that
\begin{equation}\label{eqass12}
\wt q\in B(x_0,\kappa^{-1}r),\quad \delta_\Omega(\wt q)\geq r\geq \kappa\,|x_0-\wt q|\geq \kappa \,\delta_\Omega(\wt q)\geq \kappa\, r
,\quad g(p,\wt q)\geq c\,\frac{\delta_\Omega(\wt q)}{\delta_\Omega(p)^n},
\end{equation}
with $c$ depending on $\kappa$ and $\Lambda$.

Assume that $q$ and $\wt q$ cannot be joined by a curve $\gamma$ as in the statement  of the lemma.  Otherwise, we choose $q'=\wt q$ and we are done. 
For $t>0$, consider the open set
$$V^t =\bigl\{x\in B(x_0, \tfrac14A_1r):g(p,x)>t \,r_0\,\delta_\Omega(p)^{-n}\bigr\}.$$
We fix $t>0$ small enough such that $q,\wt q\in V^{2t}\subset V^t$. Such $t$ exists by \rf{eqass11} and \rf{eqass12}, and it may depend on $\Lambda,\lambda,r/r_0$.

Let $V_1$ and $V_2$ be the respective components of $V^t$ to which $q$ and $\wt q$ belong.
We have
$$V_1\cap V_2=\varnothing,$$
because otherwise there is a curve contained in $V^t\subset B(x_0,\tfrac14A_1r)$ which connects $q$ and $\wt q$, and further 
this is far away from $\partial\Omega$. Indeed, we claim that
\begin{equation}\label{eqvt5}
\dist(V^t,\partial \Omega)\gtrsim_{A_1,\Lambda,t,r/r_0} r_0.
\end{equation}
To see this, note
that by the H\"older continuity of $g(p,\cdot)$ in $B(x_0,\tfrac12A_1r)$, for all $x\in V^t$, we have
\begin{align*}
t \,\frac{r_0}{\delta_\Omega(p)^{n}}\leq 
g(p,x) &\lesssim \sup_{y\in B(x_0,\frac12 A_1r)} g(p,y)\,\left(\frac{\delta_\Omega(x)}{A_1r}\right)^\alpha \\
& \leq 
\;\avint_{B(x_0,\frac34 A_1r)} g(p,y)\,dy \left(\frac{\delta_\Omega(x)}{A_1r}\right)^\alpha
\\
& \lesssim_{A_1,\Lambda} \frac{A_1r}
{\delta_\Omega(p)^n}\,\left(\frac{\delta_\Omega(x)}{A_1r}\right)^\alpha,
\end{align*}
 where in the last inequality we used Lemma \ref{lem1'} and that $x_0 \in \WA(p,\Lambda)$. This yields our claim.

Next we wish to apply the Alt-Caffarelli-Friedman formula with 
\begin{align*}
u_1(x) &= \chi_{V_1}\,(\delta_\Omega(p)^n\,g(p,x)-t \,r_0)^+,\\
u_2(x) &= \chi_{V_2}\,(\delta_\Omega(p)^n\,g(p,x)-t \,r_0)^+.
\end{align*}
 It is clear that  both satisfy the hypotheses of Theorem \ref{t:ACF}.
For $i=1,2$ and $0<s<A_1r$, we denote
$$J_i(x_0,s) = \frac{1}{s^{2}} \int_{B(x_0,s)} \frac{|\nabla u_i(y)|^{2}}{|y-x_0|^{n-1}}dy,$$
so that
$J(x_0,s) = J_1(x_0,s)\,J_2(x_0,s)$.
 We claim that:
\begin{itemize}
\item[(i)] $J_i(x_0,s)\lesssim_\Lambda 1$ for $i=1,2$ and $0<s<\tfrac14A_1r$.
\item[(ii)] $J_i(x_0,2r) \gtrsim_{\Lambda,\lambda,r/r_0} 1$ for $i=1,2$.
\end{itemize}
The condition (i) follows from \rf{eqjr} and the fact that
\begin{equation}\label{eqgs99}
g(p,y)\lesssim \frac{s}{\delta_\Omega(p)^n}\quad \mbox{ for all $y\in B(x_0,s)$,}
\end{equation}
which holds by Lemma \ref{lem1'} and subharmonicity, since $x_0 \in \WA(p,\Lambda)$.
Concerning (ii), note first that
$$|\nabla u_1(y)| \lesssim \delta_\Omega(p)^n\,\frac{g(p,y)}{\delta_\Omega(y)} \lesssim_{\tau_0} \delta_\Omega(p)^n\,\frac{r_0}{\delta_\Omega(p)^n}=1\quad \mbox{ for all $y\in B(q,\tau_0r_0/2)$},$$
where we first used Cauchy estimates and then the pointwise bounds of $g(\cdot,\cdot)$ 
in \rf{eqgs99} with $s\approx\delta_\Omega(y)$.
 Thus, using also that $q \in V^{2t}$, we infer that $u_1(y)>t\,r_0/2
$ in some ball $B(q,ctr_0)$ with $c$ possibly depending on $\Lambda,\lambda,r/r_0$. Analogously, we deduce that $u_2(y)>t\,r_0/2$ in some ball $B(\wt q,ctr_0)$. 
Let $B$ be the largest open ball centered at $q$ not intersecting $\partial V_1$ and let $y_{0}\in \partial V_1\cap \partial B$.
 Then, by considering the convex hull $H\subset B$ of $B(q,ctr_0)$ and
$y_0$ and integrating in spherical coordinates (with the origin in $y_0$), one can check that 
$$\int_{H} |\nabla u_1|\,dy \gtrsim_t \,r_0^{n+1}.$$
 An analogous estimate holds for $u_2$, and then it easily follows that
$$J_i(x_0,2r_0)\gtrsim_t 1,$$
which implies (ii). We leave the details for the reader.

From the conditions (i) and (ii) and the fact that $J(x,r)$ is non-decreasing we infer that 
$$J(x_0,s)\approx_{\Lambda,\lambda,r/r_0} 1\quad \mbox{ for $2r<s<\tfrac14A_1r$.}$$
and also 
\begin{equation}\label{eqji*}
J_i(x_0,s)\approx_{\Lambda,\lambda,r/r_0} 1\quad \mbox{ for $i=1,2$ and $2r<s<\tfrac14A_1r$.} 
\end{equation}

Assume that $\tfrac14A_1=2^m$ for some big $m>1$. Since $J(x_0,s)$ is non-decreasing we infer that there exists some $h\in [1,m-1]$
such that
$$J(x_0,2^{h+1}r)  \leq C(\Lambda,\lambda,r/r_0)^{1/m} J(x_0,2^hr),$$
because otherwise, by iterating the reverse inequality, we  get a contradiction.
Now from Lemma \ref{lem:ACF2} we deduce that, given any $\eps>0$, for $m$ big enough, 
 there are constant $k_i\approx_{\Lambda,\lambda,r/r_0} 1$ and a unit vector $e$ such that
\begin{equation}\label{eqsum1122}
\|u_1- k_1\,((\cdot-x_0)\cdot e)^+\|_{\infty,B(x_0,2^hr)} + \|u_2- k_2\,((\cdot-x_0)\cdot e)^-\|_{\infty,B(x_0,2^hr)} \leq \eps\,2^h\,r.
\end{equation}
As a matter of fact, $\|u_i\|_{\infty,B(x_0,2^{h+1}r)}\approx_{\Lambda,\lambda,r/r_0} 2^hr$ by 
 \eqref{eqjr}, \rf{eqji*},  and \eqref{eqgs99}; $\|u_i\|_{\rm{Lip}^\alpha,B(x_0,2^{h+1}r)}\lesssim_{\Lambda,\lambda,r/r_0}(2^{h}r)^{1-\alpha}$ by Lemma \ref{lem333}; 
and the option (a) in Lemma \ref{lem:ACF2}
cannot hold (since we have  $\|u_i\|_{\infty,B(x_0,2^hr)}\approx_{\Lambda,\lambda,r/r_0} 2^hr$).

In particular, for $\eps$ small, \rf{eqsum1122} implies that if $q':=x_0 + 2^{h-1}re$, then one has $u_1(q') \approx_{\Lambda,\lambda,r/r_0} 2^{h-1}r$, and also that 
$$u_1(y) \approx_{\Lambda,\lambda,r/r_0} 2^{h-1}r>0 \quad\mbox{ for all $y\in B(q',2^{h-2}r)$}.$$
Thus  $B(q',2^{h-2}r)\subset\Omega$ and so $q'$ is at a distance at least $2^{h-2}r$ from $\partial\Omega$, and also 
$$g(p,q')\geq\frac{u_1(q')}{\delta_\Omega(p)^n}\approx_{\Lambda,\lambda,r/r_0} \frac{2^h\,r}{\delta_\Omega(p)^n}.$$
Further, since $q$ and $q'$ are both in $V_1$  by definition, there is a curve $\gamma$ which joins $q$ and $q'$ contained in $V_1$ satisfying
$$\dist(\gamma,\partial \Omega)\gtrsim_{A_1,\Lambda,t,r/r_0} r_0,$$
by \rf{eqvt5}. So $q'$ satisfies all the required properties in the lemma and we are done.
\end{proof}

\vv


\section{Types of cubes}\label{sechdld}

From now on we fix $R_0\in\DD$ and $p\in\Omega$ and we assume that we are under the assumptions
of the Main Lemma \ref{lemhc}.

We need now to define two families
$\HD$ and $\LD$ of high density and low density cubes, respectively.
Let $A\gg1$ be some fixed constant. We 
denote by $\HD$ (high density) the family of maximal cubes $Q\in\DD$ which are contained in $R_0$
and satisfy
$$\frac{\omega^{p}(2Q)}{\sigma(2Q)}\geq A \,\frac{\omega^{p}(2R_0)}{\sigma(2R_0)}.$$
We also denote by $\LD$ (low density) the family of maximal cubes $Q\in\DD$ which are contained in $R_0$
and satisfy
$$\frac{\omega^{p}(Q)}{\sigma(Q)}\leq A^{-1} \,\frac{\omega^{p}(R_0)}{\sigma(R_0)}$$
(notice that $\omega^{p}(R_0)\approx \omega^{p}(2R_0)\approx 1$ by assumption). Observe that the definition of the family $\HD$ involves the density of $2Q$, while the one of $\LD$ involves the density of $Q$.

We denote
$$B_H=\bigcup_{Q\in \HD} Q \quad\mbox{ and }\quad B_L=\bigcup_{Q\in \LD} Q.$$
\vv

\begin{lemma}\label{lemhd}
We have
$$\sigma(B_H) \lesssim \frac1A\,\sigma(R_0)\quad \text{ and }\quad \omega^{p}(B_L) \leq \frac1A\,\omega^{p}(R_0).$$
\end{lemma}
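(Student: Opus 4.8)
The plan is to prove the two bounds separately: the estimate for $B_L$ is essentially immediate from disjointness, while the estimate for $B_H$ needs a Vitali covering argument to compensate for the fact that the defining inequality of $\HD$ involves $2Q$ rather than $Q$. First I would record the elementary facts used throughout. By the hypotheses of the Main Lemma, $\omega^p$ is a probability measure with $\omega^p(R_0)\ge c'$, so $\omega^p(R_0)\approx\omega^p(2R_0)\approx1$; and by $n$-ADR, $\sigma(2R_0)\approx\sigma(R_0)\approx\ell(R_0)^n$. Since $A\gg1$, neither $R_0\in\HD$ nor $R_0\in\LD$, so every cube of $\HD$ (resp. $\LD$) is a proper subcube of $R_0$; by maximality the cubes of $\HD$ (resp. $\LD$) are pairwise disjoint, hence $B_H=\bigsqcup_{Q\in\HD}Q$ and $B_L=\bigsqcup_{Q\in\LD}Q$. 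Finally, if $Q\subset R_0$ then $2Q\subset 2R_0$, because $\dist(x,R_0)\le\dist(x,Q)\le\ell(Q)\le\ell(R_0)$ for every $x\in 2Q$.

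For $B_L$: each $Q\in\LD$ satisfies $\omega^p(Q)\le A^{-1}\,\frac{\omega^p(R_0)}{\sigma(R_0)}\,\sigma(Q)$ by definition, so summing over the disjoint family $\LD$ and using $B_L\subset R_0$ gives
\[
\omega^p(B_L)=\sum_{Q\in\LD}\omega^p(Q)\le A^{-1}\,\frac{\omega^p(R_0)}{\sigma(R_0)}\sum_{Q\in\LD}\sigma(Q)=A^{-1}\,\frac{\omega^p(R_0)}{\sigma(R_0)}\,\sigma(B_L)\le A^{-1}\,\omega^p(R_0),
\]
which is the claimed bound.

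For $B_H$: by definition and the preliminary facts, each $Q\in\HD$ satisfies $\sigma(Q)\le\sigma(2Q)\le A^{-1}\,\frac{\sigma(2R_0)}{\omega^p(2R_0)}\,\omega^p(2Q)\lesssim A^{-1}\sigma(R_0)\,\omega^p(2Q)$. For each $Q\in\HD$ fix a ball $B_Q=B(x_Q,C\ell(Q))$, with $C$ a fixed constant chosen large enough that $2Q\subset B_Q\cap\partial\Omega$ (possible since $\diam Q\approx\ell(Q)$). Applying the Vitali covering lemma (in the $5r$-covering form) to the family $\{B_Q\}_{Q\in\HD}$, whose radii are bounded by $\lesssim\ell(R_0)$, yields a countable pairwise disjoint subfamily $\{B_{Q_i}\}$, $Q_i\in\HD$, with $\bigcup_{Q\in\HD}B_Q\subset\bigcup_i 5B_{Q_i}$. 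Then, using the upper $n$-ADR bound $\sigma(5B_{Q_i}\cap\partial\Omega)\lesssim\ell(Q_i)^n\approx\sigma(2Q_i)$,
\[
\sigma(B_H)\le\sigma\Big(\bigcup_{Q\in\HD}2Q\Big)\le\sum_i\sigma(5B_{Q_i}\cap\partial\Omega)\lesssim\sum_i\sigma(2Q_i)\lesssim A^{-1}\sigma(R_0)\sum_i\omega^p(2Q_i).
\]
Since $2Q_i\subset B_{Q_i}$ and the balls $B_{Q_i}$ are pairwise disjoint, the sets $2Q_i$ are pairwise disjoint; as each $2Q_i\subset 2R_0$, we get $\sum_i\omega^p(2Q_i)=\omega^p\big(\bigsqcup_i 2Q_i\big)\le\omega^p(2R_0)\lesssim1$. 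Hence $\sigma(B_H)\lesssim A^{-1}\sigma(R_0)$. The only point requiring care is exactly this last step: the doubles $\{2Q\}_{Q\in\HD}$ need not have bounded overlap, so one cannot sum $\omega^p(2Q)$ directly over $\HD$; passing to the disjointed subfamily whose fixed dilates still cover $B_H$ is what makes the telescoping valid. Otherwise the lemma is routine and I anticipate no genuine obstacle.
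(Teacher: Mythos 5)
Your proof is correct and follows essentially the same route as the paper: the $B_L$ bound is immediate from disjointness, and the $B_H$ bound is handled by the standard Vitali/$5r$ covering step that disjointifies the doubled cubes so that the harmonic measures of the $2Q_i$ telescope against $\omega^p(2R_0)\lesssim 1$. The only cosmetic difference is that you apply Vitali to enclosing Euclidean balls $B_Q$ and then restrict to the boundary, whereas the paper applies the metric-space Vitali covering lemma directly to the sets $2Q$ (obtaining a disjoint subfamily with $\bigcup 2Q\subset\bigcup 6Q$) and then uses ADR-doubling of $\sigma$; both are valid and yield the same estimate.
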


\begin{proof}
By Vitali's covering theorem, there exists a subfamily $I\subset\HD$ so that the cubes $2Q$, $Q\in I$, are pairwise disjoint and 
$$\bigcup_{Q\in\HD} 2Q\subset \bigcup_{Q\in I} 6Q.$$
Then, since $\sigma$ is doubling, we obtain
\begin{align*}
\sigma(B_H) \lesssim \sum_{Q\in I}\sigma(2Q) \leq \frac 1A
\sum_{Q\in I}\frac{\omega^{p}(2Q)}{\omega^{p}(2R_0)}\,\sigma(2R_0) \lesssim
\frac1A\,\sigma(R_0).
\end{align*}

Next we turn our attention to the low density cubes. Since the cubes from $\LD$ are pairwise disjoint, we have
\begin{align*}
\omega^{p}(B_L) = \sum_{Q\in\LD}\omega^{p}(Q) \leq\frac1A
\sum_{Q\in\LD}\frac{\sigma(Q)}{\sigma(R_0)}\,\omega^{p}(R_0) \leq \frac1A\,\omega^{p}(R_0). 
\end{align*}
\end{proof}

\vv
From the above estimates and the fact that the harmonic measure belongs to weak-$A_\infty$ (cf. \eqref{fwFRQAFRQAF}), we infer that
if $A$ is chosen big enough, then 
$$\omega^p(B_H) \leq \eps_0\,\omega^p(2\ball_{R_0}) \leq \frac14\,\omega^p(R_0)$$
and thus
$$\omega^p(B_H\cup B_L) \leq  \frac14\,\omega^p(R_0)
+ \frac1A\,\omega^p(R_0)
\leq \frac12\,\omega^p(R_0).$$
As a consequence, denoting $G_0= R_0\setminus (B_H\cup B_L))$, we deduce that
$$\omega^p(G_0)\geq\frac12\,\omega^p(R_0)\approx \omega^p(2\ball_{R_0}),$$
which implies that
$$\sigma(G_0)\gtrsim \sigma(2\ball_{R_0})\approx \sigma(R_0),$$
again using the fact that $\omega^p$ belongs to weak-$A_\infty$ in $\ball_{R_0}$.
So we have:

\begin{lemma}\label{lemg0}
Assuming $A$ big enough,
the set $G_0:= R_0\setminus (B_H\cup B_L))$ satisfies
$$\omega^p(G_0)\approx 1\quad \text{ and }\quad \sigma(G_0)\approx \sigma(R_0),$$
with the implicit constants depending on $C_0$ and the weak-$A_\infty$ condition in $\ball_{R_0}$.
\end{lemma}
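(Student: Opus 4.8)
The plan is to obtain Lemma \ref{lemg0} by combining the density bounds of Lemma \ref{lemhd} with the weak-$A_\infty$ property of $\omega^p$ in $\ball_{R_0}$, and fixing the parameter $A$ only at the very end. Two elementary remarks are used repeatedly. First, since $\omega^p$ is a subprobability measure and, by hypothesis, $\omega^p(R_0)\ge c'$, we have $c'\le \omega^p(R_0)\le \omega^p(2R_0)\le \omega^p(2\ball_{R_0})\le 1$ (so in particular $c'\le 1$), and all of these quantities are comparable to $1$ with constants depending only on $c'$. Second, by $n$-ADR, $\sigma(\ball_{R_0}\cap\partial\Omega)\approx \ell(R_0)^n\approx \sigma(R_0)$, with implicit constants depending only on $C_0$. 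Finally, by construction $B_H$ and $B_L$ are Borel subsets of $R_0$, so $G_0=R_0\setminus(B_H\cup B_L)$ is Borel, $\omega^p(G_0)=\omega^p(R_0)-\omega^p(B_H\cup B_L)$ and $\sigma(G_0)\le \sigma(R_0)$; this already gives the required upper bounds $\omega^p(G_0)\le 1$ and $\sigma(G_0)\le \sigma(R_0)$.

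Next I would fix once and for all $\eps_0:=c'/4\in(0,1)$ and let $\delta_0=\delta_0(\eps_0)\in(0,1)$ be the constant furnished by the weak-$A_\infty$ condition in $\ball_{R_0}$, see \eqref{fwFRQAFRQAF}. By Lemma \ref{lemhd} and $n$-ADR, $\sigma(B_H)\lesssim A^{-1}\sigma(R_0)\le C_2\,A^{-1}\,\sigma(\ball_{R_0}\cap\partial\Omega)$, where $C_2$ depends only on $n$ and $C_0$. Now choose $A$ large enough that simultaneously $C_2\,A^{-1}\le \delta_0$ and $A\ge 4$. With this choice, applying \eqref{fwFRQAFRQAF} to $E=B_H\subset \ball_{R_0}\cap\partial\Omega$ gives $\omega^p(B_H)\le \eps_0\,\omega^p(2\ball_{R_0})\le \tfrac{c'}{4}\le \tfrac14\,\omega^p(R_0)$, using $\omega^p(2\ball_{R_0})\le 1$ and $\omega^p(R_0)\ge c'$. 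On the other hand, the second estimate in Lemma \ref{lemhd} together with $A\ge 4$ gives $\omega^p(B_L)\le A^{-1}\omega^p(R_0)\le \tfrac14\,\omega^p(R_0)$. Adding, $\omega^p(B_H\cup B_L)\le \tfrac12\,\omega^p(R_0)$, whence
\[
\omega^p(G_0)=\omega^p(R_0)-\omega^p(B_H\cup B_L)\ge \tfrac12\,\omega^p(R_0)\ge \tfrac{c'}{2},
\]
so that $\omega^p(G_0)\approx 1$, with constants depending only on $c'$.

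It remains to bound $\sigma(G_0)$ from below, and here the key observation is that the lower bound just obtained reads $\omega^p(G_0)\ge c'/2=2\eps_0$, so, since $\omega^p(2\ball_{R_0})\le 1$,
\[
\omega^p(G_0)\ge 2\eps_0 > \eps_0 \ge \eps_0\,\omega^p(2\ball_{R_0}).
\]
Thus $G_0\subset \ball_{R_0}\cap\partial\Omega$ fails the conclusion of \eqref{fwFRQAFRQAF} for our fixed $\eps_0$, hence it must fail its hypothesis: $\sigma(G_0)>\delta_0\,\sigma(\ball_{R_0}\cap\partial\Omega)\gtrsim \sigma(R_0)$ by $n$-ADR. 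Combined with $\sigma(G_0)\le\sigma(R_0)$, this gives $\sigma(G_0)\approx \sigma(R_0)$, with constants depending only on $C_0$ and on the weak-$A_\infty$ condition in $\ball_{R_0}$ (through $\delta_0$ and $\eps_0=c'/4$), completing the proof. There is essentially no analytic obstacle here — all the substance is contained in Lemma \ref{lemhd} and in the weak-$A_\infty$ hypothesis (and in Theorem \ref{teo*} for the standing assumptions) — and the only point requiring care is the order in which the parameters are chosen: $\eps_0=c'/4$ first, then $\delta_0=\delta_0(\eps_0)$, and only then $A$ large depending on $\delta_0$, $n$ and $C_0$, which avoids any circularity and keeps all implicit constants admissible.
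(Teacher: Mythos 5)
Your proof is correct and takes essentially the same approach as the paper: you control $\omega^p(B_H)$ by choosing $A$ large so that $\sigma(B_H)\le\delta_0\,\sigma(\ball_{R_0}\cap\partial\Omega)$ and invoking \eqref{fwFRQAFRQAF}, control $\omega^p(B_L)$ directly from Lemma \ref{lemhd}, combine to get $\omega^p(G_0)\gtrsim 1$, and then apply the contrapositive of \eqref{fwFRQAFRQAF} to deduce $\sigma(G_0)\gtrsim\sigma(R_0)$. The only difference is that you spell out the order of parameter choices ($\eps_0=c'/4$, then $\delta_0$, then $A$) more explicitly than the paper does, which is a sound clarification rather than a departure.
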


\vv
 We denote by $\fG$ the family of those cubes $Q\in \DD(R_0)$ which are not contained in 
$\bigcup_{P\in  \HD\cup\LD}P$. In particular, such cubes $Q\in\fG$ do not belong to $\HD\cup\LD$ and 
\begin{equation}\label{defgr0}
A^{-1}\frac{\omega^p(R_0)}{\sigma(R_0)}\leq \frac{\omega^p(Q)}{\sigma(Q)}\lesssim\frac{\omega^p(2Q)}{\sigma(2Q)}\leq A\,\frac{\omega^p(2R_0)}{\sigma(2R_0)}.
\end{equation}
From this fact, it follows easily that $G_0$ is contained in the set $\WA(p,\Lambda)$ defined in Section
\ref{sect:short-paths}, assuming $\Lambda$ big enough, and so Lemma \ref{lemg0} ensures that \rf{eqwap} holds.
\vv

The following lemma is an immediate consequence of Lemma \ref{lemfacc}.

\begin{lemma}\label{lemcork1}
For every cube $Q\in\fG$ there exists some point $\exe_Q\in 2\ball_Q\cap\Omega$ such that 
$\delta_\Omega(\exe_Q)\geq \kappa_0\,\ell(Q)$ and
\begin{equation}\label{eqgg1}
g(p,\exe_Q)> c_3\,\frac{\ell(Q)}{\sigma(R_0)},
\end{equation}
for some $\kappa_0,c_3>0$, which depend on $A$ and on the weak-$A_\infty$ constants in $\ball_{R_0}$.
\end{lemma}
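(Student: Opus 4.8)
The plan is to deduce this from Lemma \ref{lemfacc}, which produces a corkscrew point near a given $\WA(p,\Lambda)$-point where the Green function is comparable to a harmonic measure density. First I would observe that, by the remark just before the statement, $G_0 \subset \WA(p,\Lambda)$ for $\Lambda$ large enough, and moreover every cube $Q\in\fG$ contains a point of $G_0$: indeed, $Q$ is not contained in $\bigcup_{P\in\HD\cup\LD}P$ by definition of $\fG$, so $Q\setminus(B_H\cup B_L)=Q\cap G_0\neq\emptyset$. Fix such a point $x_0\in Q\cap G_0\subset\WA(p,\Lambda)$. Since $Q\subset R_0$ and $\delta_\Omega(p)\gtrsim \ell(R_0)\geq \ell(Q)$ (using the hypothesis $c\,\ell(R_0)\leq\dist(p,\partial\Omega)$ of the Main Lemma), we have $\ell(Q)\leq \delta_\Omega(p)$, so we may apply Lemma \ref{lemfacc} with this $x_0$ and radius $r=\ell(Q)$ (we may pass to $r\approx \ell(Q)$ with $r\le\delta_\Omega(p)$ if we need a strict inequality, adjusting constants). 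We also need $p\notin 2\ball_Q$ roughly, which holds since $p\notin 4\ball_{R_0}$ and $Q\subset R_0$ — this is needed only to invoke Lemmas \ref{lem1} and \ref{lem2} behind Lemma \ref{lemfacc}, and can be arranged by shrinking to a subcube if necessary, though for $Q$ of size comparable to $R_0$ one argues directly.

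Lemma \ref{lemfacc} then yields a point $q=:\exe_Q$ with $q\in B(x_0,r)\subset 2\ball_Q$ (since $x_0\in Q$ and $r=\ell(Q)$), satisfying $\delta_\Omega(\exe_Q)\geq \kappa\,r\approx\kappa\,\ell(Q)$, which gives the first assertion with $\kappa_0\approx\kappa$. For the Green function bound, Lemma \ref{lemfacc}(b) gives
\[
g(p,\exe_Q)\,\geq\,\kappa\,\frac{\omega^p(B(x_0,r))}{r^{n-1}}.
\]
Now I would use that $x_0\in G_0\subset\WA(p,\Lambda)$ together with the density estimate \eqref{defgr0}: since $B(x_0,r)$ is comparable to the surface ball of a cube in $\fG$ (namely, it contains and is contained in surface balls associated with cubes of generation near $k(Q)$ which all satisfy \eqref{defgr0}), we get $\omega^p(B(x_0,r))\gtrsim A^{-1}\,\frac{\sigma(B(x_0,r))}{\sigma(R_0)}\,\omega^p(R_0)\approx_A \frac{r^n}{\sigma(R_0)}$, using $n$-ADR and $\omega^p(R_0)\approx 1$. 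Alternatively, and more cleanly, one just reads off from the $\WA(p,\Lambda)$ condition directly that $\omega^p(B(x_0,r))\geq \Lambda^{-1}\,\sigma(B(x_0,r))/\sigma(B(x_0,\delta_\Omega(p)))\gtrsim_\Lambda r^n/\sigma(R_0)$, since $\delta_\Omega(p)\approx\ell(R_0)$ and hence $\sigma(B(x_0,\delta_\Omega(p)))\approx\sigma(R_0)$ by $n$-ADR. Combining,
\[
g(p,\exe_Q)\,\geq\,\kappa\,\frac{\omega^p(B(x_0,r))}{r^{n-1}}\,\gtrsim_{\Lambda}\,\frac{r^n}{r^{n-1}\,\sigma(R_0)}\,\approx\,\frac{\ell(Q)}{\sigma(R_0)},
\]
which is \eqref{eqgg1} with an appropriate $c_3>0$ depending on $A$, $\Lambda$, and the weak-$A_\infty$ constants.

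I do not expect a serious obstacle here: the lemma is explicitly described as "an immediate consequence of Lemma \ref{lemfacc}." The only points requiring a little care are (i) checking the membership $x_0\in\WA(p,\Lambda)$ with the right $\Lambda$, which is the content of the paragraph preceding the statement together with \eqref{defgr0}, and (ii) tracking that the pole $p$ stays far enough from the relevant balls so that the hypotheses of Lemmas \ref{lem1}, \ref{lem2} (used inside Lemma \ref{lemfacc}) are met — both are handled by the standing hypothesis $p\in\Omega\setminus 4\ball_{R_0}$ and $\dist(p,R_0)\approx\ell(R_0)$ from the Main Lemma, possibly after replacing $Q$ by a fixed-generation subcube when $\ell(Q)$ is comparable to $\ell(R_0)$. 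The constants $\kappa_0,c_3$ then depend only on $A$, $\Lambda$ (itself fixed in terms of $A$ and the allowable parameters), and the weak-$A_\infty$ constants in $\ball_{R_0}$, as claimed.
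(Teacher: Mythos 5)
Your proof is correct and follows exactly the route the paper intends: the paper offers no written proof for Lemma \ref{lemcork1}, stating only that it is an immediate consequence of Lemma \ref{lemfacc}, and your argument is precisely that deduction, spelled out. The two points you flag as needing care (membership of a point of $Q$ in $\WA(p,\Lambda)$, and the position of the pole $p$) are indeed the only nontrivial ingredients, and you handle them correctly via $Q\cap G_0\neq\emptyset$, the inclusion $G_0\subset\WA(p,\Lambda)$, and the Main Lemma's standing hypothesis $\delta_\Omega(p)\approx\ell(R_0)$.
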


If  $\exe_Q\in 2\ball_Q\cap\Omega$ and $\delta_\Omega(\exe_Q)\geq \kappa_0\,\ell(Q)$, we say that $\exe_Q$ is {\it $\kappa_0$-corkscrew}
for $Q$. If \rf{eqgg1} holds, we say that $\exe_Q$ is a  {\it $c_3$-good corkscrew} for $Q$.
Abusing notation, quite often we will not write ``for $Q$".


\vv
We will need the following auxiliary result:
\begin{lemma}\label{lemclosejumps}
Let $Q\in\DD$ and let $\exe_Q$ be a $\lambda$-good $c_4$-corkscrew, for some $\lambda,c_4>0$. Suppose that $\ell(Q)\geq c_5\,\ell(R_0)$.
Then there exists some
$C$-good Harnack chain that joins $\exe_Q$ and $p$, with $C$ depending on $\lambda,c_5$.
\end{lemma}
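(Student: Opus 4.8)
\textbf{Proof plan for Lemma \ref{lemclosejumps}.}

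The plan is to iterate the Short Paths Lemma \ref{lemshortjumps} a bounded number of times to climb from $\exe_Q$, at height $\approx\lambda\ell(Q)$, up to a point at height $\approx\delta_\Omega(p)$, and then close up with a trivial Harnack chain connecting that point to $p$. First I would observe that the hypotheses on $\exe_Q$ are exactly what is needed to start: we have $\exe_Q\in 2\ball_Q\cap\Omega$ with $\delta_\Omega(\exe_Q)\geq c_4\,\ell(Q)$ (the $c_4$-corkscrew property), and $g(p,\exe_Q)>\lambda\,\ell(Q)/\sigma(R_0)\gtrsim_\lambda \delta_\Omega(\exe_Q)/\delta_\Omega(p)^n$, since $\delta_\Omega(p)\approx\ell(R_0)$ (recall the Main Lemma hypotheses on $p$) so that $\ell(Q)/\sigma(R_0)\approx \ell(Q)/\delta_\Omega(p)^n\gtrsim_{c_4}\delta_\Omega(\exe_Q)/\delta_\Omega(p)^n$. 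Picking a touching point $x_0\in\partial\Omega$ with $|x_0-\exe_Q|=\delta_\Omega(\exe_Q)$, one checks $x_0\in\WA(p,\Lambda)$ because $x_0\in 3\ball_Q$ and $G_0\subset\WA(p,\Lambda)$ by the discussion after Lemma \ref{lemcork1} (or, more simply, because every cube of size comparable to $\ell(Q)\approx\ell(R_0)$ sitting inside $\ball_{R_0}$ inherits the weak-$A_\infty$ density bounds, hence the $\WA$ property, from $R_0$). Thus with $r_0:=\delta_\Omega(\exe_Q)$, $q:=\exe_Q$, $\tau_0\approx 1$, $\lambda_0\approx_\lambda 1$, the hypothesis \eqref{eqass11} of Lemma \ref{lemshortjumps} holds.

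Next I would run the iteration. Since $\ell(Q)\geq c_5\,\ell(R_0)\approx c_5\,\delta_\Omega(p)$ and $\delta_\Omega(\exe_Q)\geq c_4\,\ell(Q)$, we have $r_0\gtrsim_{c_4,c_5}\delta_\Omega(p)$, so only a \emph{bounded} number $M=M(c_4,c_5)$ of doublings separates $r_0$ from $\delta_\Omega(p)/4$. Apply Lemma \ref{lemshortjumps} with $r=2r_0$ (so $r/r_0=2$ is a fixed ratio, and $A_1,a_1,\lambda_1$ are fixed constants depending only on $C_0,\Lambda,\lambda_0,\tau_0$, hence ultimately on $\lambda,c_5$) to obtain $q_1:=q'$ satisfying \eqref{eqconc11}: $\delta_\Omega(q_1)\geq\kappa|x_0-q_1|\geq\kappa r_0$ and $g(p,q_1)\geq\lambda_1\,\delta_\Omega(q_1)/\delta_\Omega(p)^n$, joined to $q$ by a curve staying at distance $\gtrsim a_1 r_0$ from $\partial\Omega$. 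Now $q_1$ again satisfies the hypotheses \eqref{eqass11} of Lemma \ref{lemshortjumps} — with its own touching point $x_1$, which lies in $\WA(p,\Lambda)$ for the same reason as $x_0$, since $q_1\in B(x_0,A_1r_0)$ and $\delta_\Omega(q_1)\approx r_0$ keep us at a scale comparable to $\ell(R_0)$ inside a bounded dilate of $\ball_{R_0}$ — with $\tau_0\approx\kappa$ and $\lambda_0=\lambda_1$. Repeating $M$ times produces a chain of points $q=q_0,q_1,\dots,q_M$ with $\delta_\Omega(q_M)\gtrsim\delta_\Omega(p)$ and $g(p,q_M)\gtrsim\delta_\Omega(q_M)/\delta_\Omega(p)^n$, together with connecting curves each contained in $\{y:\dist(y,\partial\Omega)>a_1'\,\delta_\Omega(p)\}$ for a uniform $a_1'>0$ (the relevant scale $r_0$ only ever moves within a fixed multiplicative window of $\delta_\Omega(p)$).

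Finally I would convert the concatenated curve $\gamma$ from $\exe_Q$ to $q_M$ into a good Harnack chain. Since $\gamma$ is compactly contained in $\Omega$, stays at distance $\gtrsim\delta_\Omega(p)$ from $\partial\Omega$, and has length $\lesssim\delta_\Omega(p)$ (it lives inside a bounded dilate of $B(p,C\delta_\Omega(p))$), one covers it by a bounded number of balls of radius $\approx\delta_\Omega(p)$ comparable to their distance to $\partial\Omega$, all of comparable size — trivially a $C$-good chain. Since $\delta_\Omega(q_M)\gtrsim\delta_\Omega(p)$ and $|q_M-p|\lesssim\delta_\Omega(p)$, the straight segment from $q_M$ to $p$ can likewise be covered by boundedly many such balls, and appending these to the previous chain yields a $C$-good Harnack chain from $\exe_Q$ to $p$, with $C$ depending only on $\lambda$ and $c_5$ as claimed. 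Invoking Lemma \ref{l:johntochain} is not even needed here since the statement already asks for a good chain; but if one preferred a carrot curve, that lemma would supply it. The only mildly delicate point — and the step I would be most careful about — is checking that each intermediate touching point $x_j$ genuinely lies in $\WA(p,\Lambda)$, i.e.\ that throughout the (bounded) iteration we never leave the regime of scales $\approx\ell(R_0)$ inside $\ball_{R_0}$ where the weak-$A_\infty$ density estimates apply; this is a routine but necessary bookkeeping of the constants $A_1^M$, which is harmless precisely because $M$ is bounded in terms of $c_4,c_5$.
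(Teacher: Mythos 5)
Your proposal is essentially correct in spirit but takes a much more complicated route than the paper's. The paper's proof is a direct one-step argument: it considers the superlevel set
\[
U=\{x\in\Omega:\;g(p,x)>\lambda\,\ell(Q)/\sigma(R_0)\},
\]
which is open and connected (by the maximum principle, every component of a superlevel set of $g(p,\cdot)$ must contain $p$, so there is only one) and contains both $\exe_Q$ and $p$. Since $\ell(Q)\geq c_5\,\ell(R_0)$, the threshold is $\gtrsim_{\lambda,c_5}\ell(R_0)^{1-n}$, so boundary H\"older continuity of $g(p,\cdot)$ forces $\dist(U,\partial\Omega)\geq c_6\,\ell(R_0)$, and the pointwise bound $g(p,x)\leq|p-x|^{1-n}$ forces $U\subset B(p,C_2\ell(R_0))$. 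A curve in $U$ joining $\exe_Q$ to $p$ can then be covered by a bounded number of balls of radius $\approx\ell(R_0)$, yielding the $C$-good chain directly. Your iteration of the Short Paths Lemma \ref{lemshortjumps} ultimately lands on the same superlevel-set fact — indeed, the first (easy) case in the proof of Lemma \ref{lemshortjumps}, where $r\geq 2A_1^{-1}|x_0-p|$, is exactly this argument — so the bounded iteration adds work without benefit in the regime $\ell(Q)\gtrsim\ell(R_0)$.

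There is also a small but genuine soft spot in your argument: you assert that each intermediate touching point $x_j$ lies in $\WA(p,\Lambda)$, and offer as the ``simpler'' justification that cubes of top scale inside $\ball_{R_0}$ inherit the $\WA$ property. That is not correct — membership in $\WA(p,\Lambda)$ requires two-sided density control for \emph{all} radii $r\leq\delta_\Omega(p)$, not just at top scale, and a generic boundary point near $\exe_Q$ need not have it. The way the paper handles this elsewhere (e.g.\ in the proof of Lemma \ref{lemshortjumps2}) is to choose some $x_0\in\WA(p,\Lambda)\cap Q$ (not a touching point), using $\sigma(\WA(p,\Lambda)\cap R_0)\gtrsim\sigma(R_0)$ and pigeonholing; if you rework your iteration that way, you must then re-check the constraints $q\in B(x_0,r_0)$ and $r_0\leq\delta_\Omega(p)/4$ with some care, since $\ell(Q)$ may be comparable to $\delta_\Omega(p)$ itself. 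None of this is fatal, but it illustrates that the iterative route requires bookkeeping that the direct superlevel-set argument avoids entirely.
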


\begin{proof}
Consider the open set $U=\{x\in\Omega:g(p,x)>\lambda\,\ell(Q)/\sigma(R_0)\}$. This is connected and thus there exists a curve $\gamma
\subset U$ that connects $\exe_Q$ and $p$.
By H\"older continuity, any point $x\in\Omega$  such that $\delta_\Omega(x)\leq \delta_\Omega(p)/2$,  satisfies
$$g(p,x)\leq c\,\left(\frac{\delta_\Omega(x)}{\ell(R_0)}\right)^\alpha\,\frac1{\ell(R_0)^{n-1}}.$$
Since $g(p,x)>\lambda\,\ell(Q)/\sigma(R_0)\gtrsim_{c_5,\lambda}\ell(R_0)^{1-n}$ for all $x\in U$, we then deduce that
$\dist(U,\partial\Omega)\geq c_6\,\ell(R_0)$
for some $c_6>0$ depending on $\lambda$ and $c_5$. Thus, 
$$\dist(\gamma,\partial\Omega)\geq c_6\,\ell(R_0).$$

From the fact that $g(p,x)\leq|p-x|^{1-n}$ for all $x\in\Omega$, we infer that any $x\in U$ satisfies
$$\lambda\,\frac{\ell(Q)}{\sigma(R_0)} < g(p,x)\leq \frac1{|p-x|^{n-1}}.$$
Therefore, 
$$|p-x|<\left(\frac{\sigma(R_0)}{\lambda\,\ell(Q)}\right)^{1/(n-1)}\lesssim_{c_5,\lambda}\ell(R_0).$$
So $U\subset B(p,C_2\,\ell(R_0))$ for some $C_2$ depending on $\lambda$ and $c_5$.
Next we consider a Besicovitch covering of $\gamma$ with balls $B_i$ of radius $c_6\ell(R_0)/2$. By volume considerations, it easily follows that the number of balls $B_i$ is bounded above by some constant $C_3$ depending on $\lambda$ and $c_5$, and thus this
is a $C$-good Harnack chain, with $C=C(\lambda,c_5)$.
\end{proof}

\vv

\begin{lemma}\label{lemshortjumps2}
There exists some constant $\kappa_1$ with $0<\kappa_1\leq\kappa_0$ such that the following holds for all $\lambda>0$. Let $Q\in\fG$, $Q\neq R_0$, and let $\exe_Q$ be a $\lambda$-good $\kappa_1$-corkscrew.
Then there exists some cube $R\in\fG$ with $Q\subsetneq R\subset R_0$ and $\ell(R)\leq C\,\ell(Q)$
and a $\lambda'$-good $\kappa_1$-corkscrew
$\exe_R$ such that $\exe_Q$ and $\exe_R$ can be joined by a $C'(\lambda)$-good Harnack chain, with $\lambda'>0$ and $C$
depending on $\lambda$.
\end{lemma}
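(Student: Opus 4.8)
The goal is to promote a good corkscrew point $\exe_Q$ at scale $\ell(Q)$ to a good corkscrew point $\exe_R$ at a comparable but strictly larger scale, joined to $\exe_Q$ by a good Harnack chain, while keeping the corkscrew constant fixed at some uniform $\kappa_1\le\kappa_0$. The idea is to invoke the Short Paths Lemma (Lemma~\ref{lemshortjumps}) at a single, fixed ratio $r/r_0$, but one must first put ourselves in a position to do so, which means recording the hypotheses of that lemma in the current language (densities of cubes in $\fG$ rather than the abstract weak-$A_\infty$-at-$p$ condition). First I would fix $r_0\approx\ell(Q)$ and a touching/center point $x_0\in\partial\Omega$ near $Q$ with $x_0\in\WA(p,\Lambda)$; this is available because $G_0\subset\WA(p,\Lambda)$ (see the discussion after Lemma~\ref{lemcork1}) and, up to passing from $Q$ to a slightly smaller subcube or using the coherency-type density bound \eqref{defgr0}, the cube $Q\in\fG$ has surface ball meeting $\WA(p,\Lambda)$ in ample measure. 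The hypotheses \eqref{eqass11} of Lemma~\ref{lemshortjumps} then read: $q=\exe_Q\in B(x_0,r_0)$, $\delta_\Omega(q)\ge\kappa_1 r_0\gtrsim\tau_0 r_0$, and $g(p,q)\gtrsim\ell(Q)/\sigma(R_0)\approx \delta_\Omega(q)/\delta_\Omega(p)^n$ — the last comparison using $\sigma(R_0)\approx\ell(R_0)^n\approx\delta_\Omega(p)^n$ and $\ell(Q)\approx\delta_\Omega(q)$, and the hypothesis that $\exe_Q$ is $\lambda$-good. So the input parameters $\tau_0,\lambda_0$ are controlled in terms of $\lambda$ and allowable constants.

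Second, I would apply Lemma~\ref{lemshortjumps} with a \emph{fixed} choice of the ratio $r/r_0=:\Theta$, say $\Theta=2$ or whatever the scheme downstream requires, and $r\le\delta_\Omega(p)/2$ (legitimate since $Q\subsetneq R_0$, so $\ell(Q)<\ell(R_0)$ and $r_0\ll\delta_\Omega(p)$ after the reduction; the degenerate case $r\ge 2A_1^{-1}|x_0-p|$ handled inside that lemma produces a point near $p$, which is the easy case). This yields $q'\in B(x_0,A_1 r)$ with $\delta_\Omega(q')\ge\kappa|x_0-q'|\ge\kappa r$ and $g(p,q')\ge\lambda_1\,\delta_\Omega(q')/\delta_\Omega(p)^n$, together with a curve $\gamma$ from $\exe_Q$ to $q'$ staying in $\{y\in B(x_0,A_1r):\dist(y,\partial\Omega)>a_1 r_0\}$. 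By Lemma~\ref{l:johntochain}, or rather directly by a Besicovitch covering of $\gamma$ (exactly as in the proof of Lemma~\ref{lemclosejumps}), this curve gives a $C'(\lambda)$-good Harnack chain joining $\exe_Q$ to $q'$: the lower bound $\dist(\gamma,\partial\Omega)\gtrsim r_0$ and the containment of $\gamma$ in a ball of radius $A_1 r\approx r_0$ bound the number of balls of each dyadic radius by a constant depending only on $A_1,a_1$, hence on $\lambda$ and allowable parameters.

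Third, I would take $R\in\fG$ to be the (unique) cube containing $Q$ with $\ell(R)$ of the appropriate size so that $q'\in 2\ball_R$ and $\delta_\Omega(q')\approx\ell(R)$: since $\delta_\Omega(q')\approx r\approx\Theta\ell(Q)$ and $q'\in B(x_0,A_1 r)$, choosing $\ell(R)\approx r$ works, giving $Q\subsetneq R\subset R_0$ and $\ell(R)\le C\ell(Q)$ with $C$ absolute. One must check $R\in\fG$: because $q'$ is a good corkscrew for $R$ (see below) the density of $R$ cannot be too small, and choosing $\ell(R)$ comparable-but-not-too-large keeps it out of $\HD$ as well; alternatively, if the chosen $R$ happened to lie in $\HD\cup\LD$, one slides down to a nearby cube in $\fG$ of comparable size, which still contains $Q$ (here one may need to shrink $\Theta$ slightly or enlarge $C$ — all uniform). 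Finally $\exe_R:=q'$ is a $\kappa_1$-corkscrew for $R$ (with the same $\kappa_1$, since $\delta_\Omega(q')\ge\kappa r\gtrsim\kappa\ell(R)$ and one shrinks $\kappa_1$ once and for all to absorb the implicit constants), and it is $\lambda'$-good because
\[
g(p,\exe_R)\ge\lambda_1\,\frac{\delta_\Omega(q')}{\delta_\Omega(p)^n}\gtrsim\lambda_1\,\frac{\ell(R)}{\sigma(R_0)}=:\lambda'\,\frac{\ell(R)}{\sigma(R_0)},
\]
with $\lambda'$ depending on $\lambda$ (through $\lambda_1$) and allowable parameters. This gives all the assertions.

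\textbf{Main obstacle.} The delicate point is the bookkeeping of constants so that the corkscrew parameter $\kappa_1$ comes out \emph{independent of $\lambda$} (only the good-corkscrew constant $\lambda'$ and the Harnack-chain constant are allowed to depend on $\lambda$), as the lemma demands. This forces the ratio $r/r_0$ to be a fixed absolute constant when invoking Lemma~\ref{lemshortjumps}, so that $A_1,a_1,\kappa$ there are absolute, and then $\kappa_1$ can be fixed once (depending only on $\kappa$, $A_1$, $a_1$, and ADR) before the statement quantifies over $\lambda$. A secondary nuisance is ensuring the newly produced $R$ genuinely belongs to $\fG$ rather than to $\HD\cup\LD$; this is handled by the density estimate \eqref{defgr0} together with the good-corkscrew lower bound for $q'$, possibly after replacing $R$ by a comparable cube in $\fG$ — harmless since the statement only requires $Q\subsetneq R\subset R_0$ and $\ell(R)\le C\ell(Q)$.
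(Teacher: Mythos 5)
Your overall strategy coincides with the paper's: apply Lemma~\ref{lemshortjumps} to a point $x_0\in Q\cap\WA(p,\Lambda)$ (which exists since $Q\in\fG$ forces $Q\cap G_0\neq\varnothing$, and $G_0\subset\WA(p,\Lambda)$ for $\Lambda$ large) with $r_0\approx r\approx\ell(Q)$, obtain $q'$ from the conclusion, let $R$ be the ancestor of $Q$ at a scale comparable to $q'$, and build the Harnack chain from the curve $\gamma$ exactly as in the proof of Lemma~\ref{lemclosejumps}. However, two of your assertions are not right and one of them is where the actual content of the lemma lives.

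First, the claim that ``$\delta_\Omega(q')\approx r$'' and that ``$C$ is absolute'' is a genuine gap. Lemma~\ref{lemshortjumps} only gives $\kappa r\le\delta_\Omega(q')\le|x_0-q'|< A_1 r$, and $A_1$ depends on $\lambda$ (through $\lambda_0$). If $|x_0-q'|$ is of order $A_1 r$, then the cube $R$ you propose (with $\ell(R)\approx r$, absolute multiple of $\ell(Q)$) does not satisfy $q'\in 2\ball_R$, and $q'$ fails to be a corkscrew for it. The resolution in the paper is to take $R$ to be the cube containing $x_0$ with $\tfrac12 r(\ball_R)<|x_0-q'|\le r(\ball_R)$, which necessarily gives $\ell(R)\lesssim A_1\ell(Q)=:C(\lambda)\ell(Q)$ (so $C$ depends on $\lambda$, exactly as the statement allows), and then the inequality $\delta_\Omega(q')\ge\kappa|x_0-q'|$ — where $\kappa$ comes from Lemma~\ref{lemfacc} and depends only on $\Lambda,n,C_0$, not on $\lambda$ — directly gives $\delta_\Omega(q')\gtrsim\kappa\ell(R)$. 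This is precisely what makes $\kappa_1:=\min(\kappa_0,\kappa)$ work uniformly in $\lambda$; it does not require $A_1$ or $a_1$ to be absolute, which they are not.

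Second, your concern about verifying $R\in\fG$ via density estimates is a red herring: $R$ contains $x_0\in G_0$, and by definition $R\in\fG$ if and only if $R\cap G_0\neq\varnothing$, so membership is automatic for any dyadic ancestor of $Q$ contained in $R_0$. (One does need to handle the edge case $\ell(R)>\ell(R_0)$ separately, by applying Lemma~\ref{lemclosejumps} directly, which the paper does.) So: right route, but the corkscrew/scale bookkeeping — the only place the statement is not mechanical — is done incorrectly as written; defining $R$ by $|x_0-q'|$ rather than by an absolute multiple of $r$ is what saves it.
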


The proof below yields a constant $\lambda'<\lambda$. On the other hand, the lemma ensures that 
$\exe_R$ is still a $\kappa_1$-corkscrew, which will be important for the arguments to come.

\begin{proof} This follows easily from Lemma \ref{lemshortjumps}. For completeness we will show the details.

 By choosing $\Lambda=\Lambda(A)>0$ big enough, 
$G_0\cap Q\subset\WA(p,\Lambda)$ and thus there exists some $x_0\in Q\cap \WA(p,\Lambda)$. 
We let 
$$\kappa_1=\min\big(\kappa_0, \kappa\big),$$ 
where $\kappa_0$ is defined in Lemma \ref{lemcork1} and
$\kappa$ in Lemma \ref{lemfacc} (and thus it depends only on $A$ and $C_0$).
We apply Lemma \ref{lemshortjumps}
to $x_0$, $q=z_Q$, 
with $r_0=3r(\ball_Q)$, $\lambda_0\approx \lambda$, and $r=4r(\ball_Q)$. To this end, note that
$$\delta_\Omega(q) \geq \kappa_1\,\ell(Q) =  \kappa_1\,\frac14\,\ell(r(\ball_Q)) = \kappa_1\,\frac1{12}\,r_0.$$
Hence there exists $q'\in B(x_0,A_1r)$ such that
\begin{equation}\label{eqkap12}
\delta_\Omega(q')\geq \kappa\,|x_0-q'|\geq \kappa\,r,\qquad g(p,q')\geq \lambda_1\,\frac{\delta_\Omega(q')}{\delta_\Omega(p)^n},
\end{equation}
and such that
$q$ and $q'$ can be joined by a curve $\gamma$ such that
\begin{equation}\label{eqgamma83}
\gamma\subset\{y\in B(x_0,A_1r):\dist(y,\partial\Omega)
>a_1\, r_0\},
\end{equation}
with $\lambda_1,A_1,a_1$ depending on $C_0,A,\lambda,\kappa_1$. Now let $R\in\DD$ be the cube containing $x_0$
such that
$$\frac12\,r(\ball_R)< |x_0-q'|\leq r(\ball_R).$$
Observe that
$$r(\ball_R) \geq |x_0-q'| \geq r = 4r(\ball_Q)\quad 
\text{ and }\quad r(\ball_R)< 2|x_0-q'|\leq 2A_1\,r\lesssim_\lambda \ell(Q).$$
 Also, we may assume that $\ell(R)\leq\ell(R_0)$ because otherwise we have $\ell(Q)\gtrsim A_1\,
\delta_\Omega(p)$ and then the statement in the lemma follows from Lemma \ref{lemclosejumps}.
So we have $Q\subsetneq R\subset R_0$.

From \rf{eqkap12} we get
$$\delta_\Omega(q') \geq\kappa\,|x_0-q'| \geq \frac12\,\kappa\,r(\ball_R) = 2\kappa\,\ell(R)
> \kappa_1\,\ell(R)$$
and 
$$ g(p,q')\geq c\,\lambda_1\,\frac{2\kappa\,\ell(R)}{\sigma(R_0)}.
$$
Hence, $q'$ is a $\lambda'$-good $\kappa_1$-corkscrew, for $\lambda' = c\lambda_12\kappa$.

From \rf{eqgamma83} and arguing as in the end of the proof of Lemma \ref{lemclosejumps} we infer that 
$\exe_Q=q$ and $\exe_R=q'$ can be joined by a $C(\lambda)$-good Harnack chain.
\end{proof}

\vv

From now on we will assume that all corkscrew points for cubes $Q\in\fG$ are $\kappa_1$-corkscrews, unless otherwise
stated. 

\vv

\section{The corona decomposition and the Key Lemma}\label{s9}

\subsection{The corona decomposition}

Recall that the $b\beta$ coefficient of a ball was defined in \rf{defbbeta}.
For each $Q\in\DD$, we denote
$$b\beta(Q) = b\beta_{\partial\Omega}(100\ball_Q).$$

Now we fix a constant $0<\eps\ll\min(1,\kappa_1)$.
Given $R\in \DD(R_0)$, we denote 
by $\Stop(R)$ the  maximal family of cubes $Q\in\DD(R)\setminus \{R\}$ satisfying that either $Q\not \in\fG$ or
$b\beta\bigl(\wh Q\bigr)>\eps$, where $\wh Q$ is the parent of $Q$.
Recall that the family $\fG$ was defined in \rf{defgr0}.
Note that,  by maximality, $\Stop(R)$ is a family of pairwise disjoint cubes. 

We define 
$$\tree(R):=\{ Q \in \DD (R): \nexists \,\, S \in \Stop(R) \,\, \textup{such that}\,\, Q\subset S\}.$$ 
 In particular, note that $\Stop(R)\not\subset\tree(R)$.
\vv

We now define the family of the top cubes with respect to $R_0$ as follows:
first we define the families $\ttt_k$ for $k\geq1$ inductively. We set
$$\ttt_1=\{R\in \DD(R_0)\cap\fG: \ell(R)= 2^{-10}\ell(R_0)\}.$$
Assuming that $\ttt_k$ has been defined, we set
$$\ttt_{k+1} = \bigcup_{R\in\ttt_k}(\Stop(R)\cap\fG),$$
and then we define
$$\ttt=\bigcup_{k\geq1}\ttt_k.$$
Notice that the family of cubes $Q\in\DD(R_0)$ with $\ell(Q)\leq 2^{-10}\ell(R_0)$ which are not contained in any cube $P\in\HD\cup\LD$
is contained in $\bigcup_{R\in\ttt}\tree(R)$,
and this union is disjoint. Also, all the cubes in that union belong to
$\fG$.

The following lemma is an easy consequence of our construction. Its proof is left for the reader.

\begin{lemma}\label{lem5.1}
We have
$$\ttt\subset \fG.$$
Also, for each $R\in\ttt$,
$$\tree(R) \subset \fG.$$
Further, for all $Q\in\tree(R)\cup \Stop(R)$,
$$\omega^p(2Q)\leq C\,A\,\frac{\sigma(Q)}{\sigma(R_0)}.$$
\end{lemma}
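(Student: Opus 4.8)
The statement collects three assertions about the families just constructed, and all of them should follow by unwinding the definitions together with the packing/Carleson estimates already in hand. The plan is to prove them in the order listed.

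\emph{Step 1: $\ttt\subset\fG$ and $\tree(R)\subset\fG$ for each $R\in\ttt$.} The first is immediate from the construction: $\ttt_1$ consists of cubes in $\fG$ by definition, and inductively $\ttt_{k+1}=\bigcup_{R\in\ttt_k}(\Stop(R)\cap\fG)\subset\fG$; hence $\ttt=\bigcup_k\ttt_k\subset\fG$. For the second assertion, fix $R\in\ttt$ and $Q\in\tree(R)$. By definition of $\tree(R)$, no $S\in\Stop(R)$ contains $Q$. Suppose, for contradiction, that $Q\notin\fG$. Since $R\in\fG$ and $Q\subsetneq R$ (the case $Q=R$ being trivial, as $R\in\fG$), the cube $Q$ is contained in some cube $P\in\HD\cup\LD$ with $P\subset R$; taking $P$ maximal with $P\subsetneq R$ among such cubes and noting $P\in\DD(R)\setminus\{R\}$ with $P\notin\fG$, the maximality defining $\Stop(R)$ forces $P$ to lie in $\Stop(R)$ (or to be contained in some member of $\Stop(R)$). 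Either way $Q$ is contained in a cube of $\Stop(R)$, contradicting $Q\in\tree(R)$. Hence $Q\in\fG$, and since $\Stop(R)\cap\fG\subset\ttt_{k+1}\subset\fG$ as well, the same bound is inherited by cubes in $\Stop(R)$ that lie in $\fG$.

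\emph{Step 2: the density bound $\omega^p(2Q)\leq C\,A\,\sigma(Q)/\sigma(R_0)$ for $Q\in\tree(R)\cup\Stop(R)$.} The point is that every such $Q$ has its \emph{parent} $\wh Q$ in $\fG$: indeed, if $Q\in\tree(R)$ with $Q\neq R$, then $\wh Q\in\tree(R)$ as well (otherwise $Q$ would be contained in a $\Stop(R)$-cube), so $\wh Q\in\fG$ by Step 1; if $Q\in\Stop(R)$, then by maximality $\wh Q\in\tree(R)\subset\fG$; and if $Q=R$ then $R\in\ttt$, whose parent lies in $\tree(R')$ for the top cube $R'$ one generation up, again in $\fG$ (with the top generation $\ttt_1$ handled directly, since there $\ell(R)=2^{-10}\ell(R_0)$ and $\omega^p(2R)\le\omega^p(2\ball_{R_0})\approx1\approx A\,\sigma(R)/\sigma(R_0)$ up to the allowable constants). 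For $\wh Q\in\fG$, the defining inequality \eqref{defgr0} gives $\omega^p(2\wh Q)\le A\,\omega^p(2R_0)\,\sigma(2\wh Q)/\sigma(2R_0)$. Since $\omega^p(2R_0)\approx\omega^p(R_0)\approx1$ by the hypotheses of the Main Lemma and $\sigma$ is doubling with $\sigma(2\wh Q)\approx\sigma(\wh Q)\approx\sigma(Q)$ by $n$-ADR, and since $2Q\subset 2\wh Q$ so that $\omega^p(2Q)\le\omega^p(2\wh Q)$, we obtain $\omega^p(2Q)\lesssim A\,\sigma(Q)/\sigma(R_0)$, which is the claim with an allowable constant $C$.

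\emph{Main obstacle.} None of this is deep; the only thing requiring a little care is the bookkeeping in Step 2 for the cubes sitting at the very top of a tree, $Q=R\in\ttt$, and for the cubes in $\Stop(R)$ — one must make sure the parent is genuinely in $\fG$ and not itself a stopping cube or outside $R_0$. This is handled by the coherence of the stopping-time construction (the parent of a stopping cube is always in the tree, and the parent of a tree cube other than the root is again in the tree), together with the base-case estimate for $\ttt_1$ using $\ell(R)=2^{-10}\ell(R_0)$ and $\omega^p(2\ball_{R_0})\approx1$. Since the lemma is stated with proof left to the reader, in the paper this amounts to recording the two or three lines above; no genuinely hard estimate is involved.
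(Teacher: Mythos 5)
Your proof is essentially correct, and for the $\Stop(R)$ case it follows exactly the route the paper sketches in the remark immediately after the lemma (pass to the parent $\wh Q\in\tree(R)\subset\fG$). Two minor points of criticism, neither fatal:

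\emph{First}, for $Q\in\tree(R)$ the detour through the parent is unnecessary. Once you have established $\tree(R)\subset\fG$, the inequality \eqref{defgr0} applies to $Q$ itself and immediately yields $\omega^p(2Q)\leq A\,\sigma(2Q)\,\omega^p(2R_0)/\sigma(2R_0)\lesssim A\,\sigma(Q)/\sigma(R_0)$. Going through $\wh Q$ adds an extra step for no gain, and it is exactly what causes trouble in the next point.

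\emph{Second}, your handling of the base case $Q=R\in\ttt_1$ is not quite right. You write that $\omega^p(2R)\le\omega^p(2\ball_{R_0})\approx 1\approx A\,\sigma(R)/\sigma(R_0)$, but $A\,\sigma(R)/\sigma(R_0)\approx 2^{-10n}A$, which is not comparable to $1$ without further information on how large $A$ is; the two sides of your ``$\approx$'' do not match up. The correct and simpler observation is that $R\in\ttt_1\subset\fG$, so \eqref{defgr0} applies to $R$ directly. (Alternatively: if you insist on the parent, note that $\fG$ is upward-closed in $\DD(R_0)$ --- if $Q\in\fG$ and $Q\subset Q'\subset R_0$ then $Q'\in\fG$ --- so $\wh R\in\fG$ for every $R\in\ttt$, with no need to locate $\wh R$ inside some tree.) In short: use $Q\in\fG$ directly for $Q\in\tree(R)$, and reserve the parent argument for $Q\in\Stop(R)\setminus\fG$, which is the only place it is actually needed and is precisely the case the paper's remark addresses.
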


Remark that the last inequality holds for any cube $Q\in\Stop(R)$ because its parent $\wh Q$ belongs to $\tree(R)$ and so $\wh Q$ is not contained in any cube from $\HD$, which implies that $\omega^p(2Q)\leq \omega^p(2\wh Q)\lesssim A\,\frac{\sigma(\wh Q)}{\sigma(R_0)}\approx A\,\frac{\sigma(Q)}{\sigma(R_0)}.$

Using that $\partial\Omega$ is $n$-UR (by the assumption in the Main Lemma  \ref{lemhc}), it is easy to prove that the cubes from $\ttt$ satisfy a Carleson packing condition. This is shown in the next lemma.

\begin{lemma}\label{lempack}
We have
$$\sum_{R\in\ttt}\sigma(R) \leq M(\eps)\,\sigma(R_0).$$
\end{lemma}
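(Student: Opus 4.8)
The strategy is a standard stopping-time/Carleson-packing argument exploiting the $n$-uniform rectifiability of $\partial\Omega$ through the David--Semmes characterization (Theorem \ref{teods}). Recall that each $R\in\ttt$ other than the cubes in $\ttt_1$ arises as a cube $R\in\Stop(\wh R)\cap\fG$ for some $\wh R\in\ttt_{k}$ with $R$ in the next generation; by the definition of $\Stop(\wh R)$, such an $R$ is maximal among cubes $Q\subset\wh R$, $Q\neq\wh R$, with either $Q\notin\fG$ or $b\beta(\widehat Q)>\eps$. Since $R\in\fG$ by construction of $\ttt_{k+1}$ (we only keep cubes in $\fG$), the relevant alternative is that $b\beta$ of the \emph{parent} of $R$ exceeds $\eps$. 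I would first reduce to controlling cubes whose parent has large $b\beta$: write $\ttt=\ttt_1\cup(\ttt\setminus\ttt_1)$, note $\sum_{R\in\ttt_1}\sigma(R)\lesssim\sigma(R_0)$ trivially (they are pairwise disjoint and contained in $R_0$), and for $R\in\ttt\setminus\ttt_1$ observe that its parent $\widehat R$ satisfies $b\beta(\widehat R)>\eps$, i.e.\ $b\beta_{\partial\Omega}(100\ball_{\widehat R})>\eps$.

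\textbf{Main steps.} (1) Fix $R_0$ and let $R\in\ttt\setminus\ttt_1$. Its dyadic parent $P=\widehat R$ lies in $\tree(\widehat R\,')$ for the top cube $\widehat R\,'$ of the previous generation, hence $P\in\fG$, and moreover $b\beta(P)>\eps$. (2) Bound the multiplicity: for a fixed cube $P\in\DD(R_0)$ with $b\beta(P)>\eps$, there are at most $C$ cubes $R\in\ttt$ with $\widehat R=P$ (namely the dyadic children of $P$, boundedly many by $n$-ADR), so $\sigma(R)\lesssim\sigma(P)$ and
\[
\sum_{R\in\ttt\setminus\ttt_1}\sigma(R)\;\lesssim\;\sum_{\substack{P\in\DD(R_0)\cap\fG\\ b\beta(P)>\eps}}\sigma(P).
\]
(3) Apply the David--Semmes packing estimate (Theorem \ref{teods}), with the remark following it that the dilation factor $100$ in the definition of $b\beta(P)=b\beta_{\partial\Omega}(100\ball_P)$ is permissible in place of $3$, to conclude
\[
\sum_{\substack{P\in\DD(R_0):\, b\beta(P)>\eps}}\sigma(P)\;\le\;C(\eps)\,\sigma(R_0),
\]
since $\partial\Omega$ is $n$-UR by the hypothesis of the Main Lemma \ref{lemhc}. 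Combining (2) and (3) with the trivial bound for $\ttt_1$ gives $\sum_{R\in\ttt}\sigma(R)\le M(\eps)\,\sigma(R_0)$, which is the claim.

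\textbf{Where the work lies.} The only genuine point requiring care is step (2): verifying that the map $R\mapsto\widehat R$ from $\ttt\setminus\ttt_1$ into $\DD(R_0)$ is boundedly-to-one \emph{and} that every $R$ in the image-producing set really does have $b\beta(\widehat R)>\eps$. For the latter one must rule out the other stopping alternative, $R\notin\fG$: but by the very definition of $\ttt_{k+1}=\bigcup_{R\in\ttt_k}(\Stop(R)\cap\fG)$, cubes added to $\ttt$ are intersected with $\fG$, so any $R\in\ttt$ is in $\fG$, while $R$ was selected into $\Stop(\wh R')$ precisely because a stopping condition failed at $R$ but held at no ancestor up to $\wh R'$; since $R\in\fG$, the failure must come from $b\beta(\widehat R)>\eps$. (There is a mild edge case if $\widehat R$ is itself the top cube of its tree, i.e.\ $\widehat R\in\ttt_k$; then $\widehat R\in\fG$ by construction, and the $b\beta$ condition still applies to it.) For bounded multiplicity, each $P\in\DD(R_0)$ has at most a dimensional constant number of dyadic children by Lemma \ref{lemmaCh}, and every $R\in\ttt\setminus\ttt_1$ with $\widehat R=P$ is such a child, so the fibers have size $\le C(n,\mathrm{ADR})$. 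Everything else is bookkeeping; no monotonicity-formula or harmonic-measure input is needed here, only the $n$-UR hypothesis already in force.
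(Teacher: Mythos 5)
Your proof is correct and uses the same key ingredient as the paper's: the David--Semmes $b\beta$-packing estimate (Theorem \ref{teods}), applied to the parents $\widehat R$ of cubes $R\in\ttt\setminus\ttt_1$, which have $b\beta(\widehat R)>\eps$ precisely because $R\in\Stop(\cdot)\cap\fG$ forces the second stopping alternative. The paper organizes the bookkeeping slightly differently --- decomposing each $\sigma(Q)$, $Q\in\ttt$, over $\Stop(Q)\cap\fG$, $\Stop(Q)\setminus\fG$, and the leftover, and bounding the latter two by disjointness --- but this is equivalent to your split of $\ttt$ into $\ttt_1$ (disjoint, trivially packed) and $\ttt\setminus\ttt_1$ (controlled by the $\beta$-packing), so the arguments are essentially identical.
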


\begin{proof}
For each $Q\in\ttt$ we have
$$\sigma(Q) = \sum_{P\in\Stop(Q)\cap\fG}\sigma(P) + \sum_{P\in\Stop(Q)\setminus\fG}\sigma(P)
+\sigma\biggl(Q\setminus\bigcup_{P\in\Stop(Q)} P\biggr).$$
Then we get
\begin{align}\label{eqspl23}
\sum_{Q\in\ttt} \sigma(Q) & \leq \sum_{Q\in\ttt}\sum_{P\in\Stop(Q)\cap\fG}\sigma(P)\\&\quad + \sum_{Q\in\ttt}\sum_{P\in\Stop(Q)\setminus\fG}\sigma(P)+\sum_{Q\in\ttt}\sigma\biggl(Q\setminus\bigcup_{P\in\Stop(Q)} P\biggr).\nonumber
\end{align}
Note now that, because of the stopping conditions, for all $Q\in\ttt
$, if $P\in\Stop(Q)\cap\fG$, then the parent $\wh P$ of $P$ satisfies
 $b\beta_{\partial\Omega}(100\ball_{\wh P})>\eps$. Hence, by Theorems 
\ref{teods} and \ref{teo*},
$$\sum_{Q\in\ttt}\sum_{P\in\Stop(Q)\cap\fG}\sigma(P) \leq \sum_{P\in\DD(R_0):b\beta_{\partial\Omega}(100\ball_{\wh P})>\eps}\sigma(P)
\leq C(\eps)\,\sigma(R_0).$$

On the other hand, the cubes $P\in\Stop(Q)\setminus\fG$ with $Q\in\ttt$ do not contain any cube from $\ttt$, by 
construction. Hence, they are disjoint and thus
$$\sum_{Q\in\ttt}\sum_{P\in\Stop(Q)\setminus\fG}\sigma(P)\leq \sigma(R_0).$$
By an analogous reason,
$$\sum_{Q\in\ttt}\sigma\biggl(Q\setminus\bigcup_{P\in\Stop(Q)} P\biggr)\leq \sigma(R_0).$$
Using \rf{eqspl23} and the estimates above, the lemma follows.
\end{proof}

\vv

Given a constant $K\gg1$, next we define 
\begin{equation}\label{defg0k}
G_0^K=\biggl\{x\in G_0: \sum_{R\in\ttt}\chi_R(x) \leq K\biggr\},
\end{equation}
By Chebyshev and the preceding lemma, we have
$$\sigma(G_0\setminus G_0^K) \leq \sigma(R_0\setminus G_0^K) \leq \frac1K \int_{R_0}\sum_{R\in\ttt}\chi_R\,
d\sigma \leq \frac{M(\eps)}K\,\sigma(R_0).$$
Therefore, if $K$ is chosen big enough (depending on $M(\eps)$ and the constants on the weak-$A_\infty$ condition), by Lemma \ref{lemg0} we get
$$\sigma(G_0\setminus G_0^K)\leq \frac12\,\sigma(G_0),$$
and thus
$$\sigma(G_0^K)\geq\frac12\,\sigma(G_0)\gtrsim \sigma(R_0).$$

\vv
We distinguish now two types of cubes from $\ttt$. We denote by $\ttt_a$ the family of cubes $R\in\ttt$ such that
$\tree(R)=\{R\}$, and we set $\ttt_b=\ttt\setminus \ttt_a$. Notice that, by construction, if $R\in\ttt_b$, then
$b\beta(R)\leq\eps$. On the other hand, this estimate may fail if $R\in\ttt_a$.

\vv


\subsection{The truncated corona decomposition}\label{secnnn}

For technical reasons, we need now to define a truncated version of the previous corona decomposition.
We fix a big natural number $N\gg 1$. Then we let $\ttt^{(N)}$ be the family of the cubes
from $\ttt$ with side length larger than $2^{-N}\ell(R_0)$. Given $R\in\ttt^{(N)}$ we let $\tree^{(N)}(R)$ be the 
subfamily of the cubes from $\tree(R)$  with side length larger than $2^{-N}\ell(R_0)$, and we let $\Stop^{(N)}(R)$
be a maximal subfamily from $\Stop(R)\cup\DD_{N}(R_0)$, where $\DD_{N}(R_0)$ is the subfamily of
the cubes from $\DD(R_0)$ with side length $2^{-N}\ell(R_0)$.  We also denote $\ttt_a^{(N)} =\ttt^{(N)}\cap \ttt_a$ and 
$\ttt_b^{(N)} =\ttt^{(N)}\cap \ttt_b$.

Observe that, since $\ttt^{(N)}\subset \ttt$, we also have
$$\sum_{R\in\ttt^{(N)}}\chi_R(x) \leq \sum_{R\in\ttt}\chi_R(x) \leq K\quad
\mbox{ for all $x\in G_0^K$.}$$

\vv


\subsection{The Key Lemma}

The main ingredient for the proof of the Main Lemma \ref{lemhc}  is the following result.

\begin{lemma}[Key Lemma] \label{keylemma3}
Given $\eta\in (0,1)$ and
$\lambda\in (0,c_3]$ (with $c_3$ as in \rf{eqgg1}),
there exists an exceptional family $\mathsf{Ex}(R)\subset \Stop(R)\cap\fG$ 
satisfying
$$\sum_{P\in\mathsf{Ex}(R)}\sigma(P)\leq \eta\,\sigma(R)$$
such that,
for every $Q\in \Stop(R)\cap\fG\setminus \mathsf{Ex}(R)$, 
 any $\lambda$-good corkscrew for $Q$ can be joined to some $\lambda'$-good corkscrew for $R$ by a $C(\lambda,\eta)$-good Harnack chain, with $\lambda'$ depending on $\lambda,\eta$.
\end{lemma}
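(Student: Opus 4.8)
The strategy is to run a corona-type argument \emph{inside} the tree $\tree(R)$, propagating good corkscrew points from $R$ down to the cubes in $\Stop(R)\cap\fG$ while keeping track of how the constant $\lambda'$ deteriorates. First I would fix a $\lambda''$-good corkscrew point $z_R$ for $R$, where $\lambda''$ depends only on allowable parameters (this exists by Lemma \ref{lemcork1}, possibly after replacing $R$ by a corkscrew point one or two generations up). The main dichotomy is between cubes in $\ttt_a$ (where $\tree(R)=\{R\}$ and $\Stop(R)$ consists of the dyadic children of $R$, but $b\beta(R)$ may be large) and cubes in $\ttt_b$ (where $b\beta(R)\le\eps$, so $\partial\Omega$ is $\eps$-close to an $n$-plane $L_R$ throughout $100\ball_R$, but $\tree(R)$ can be deep). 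For $R\in\ttt_b$ the flatness of $\partial\Omega$ near every cube of $\tree(R)$ forces a \emph{two-sided} picture: either every cube in $\tree(R)$ has a good corkscrew on one distinguished side of (the appropriate graph approximating) $\partial\Omega$, in which case a Harnack-chain through the Whitney regions of a consecutive chain of cubes connecting $Q$ to $R$ does the job (here one uses the coherence of $\tree(R)$ and the overlap of Whitney regions of parent and child, exactly as in Remark \ref{remark2.12}); or else there are cubes with ``two well separated big corkscrews'', which we collect into the exceptional family $\mathsf{Ex}(R)$ and must show are sparse.

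The key input for controlling connectivity is the Short Paths Lemma \ref{lemshortjumps}, together with its iterated consequence Lemma \ref{lemshortjumps2}: starting from a $\lambda$-good corkscrew for $Q\in\Stop(R)\cap\fG$, I would repeatedly apply Lemma \ref{lemshortjumps2} to climb from scale $\ell(Q)$ up to scale $\ell(R)$ through a chain of cubes $Q=Q_0\subsetneq Q_1\subsetneq\dots\subsetneq Q_m=R'$ with $R'\supset R$, $\ell(Q_{j+1})\le C\ell(Q_j)$, obtaining at each step a $\lambda_{j+1}$-good $\kappa_1$-corkscrew $z_{Q_{j+1}}$ joined to $z_{Q_j}$ by a good Harnack chain, with $\lambda_{j+1}$ depending on $\lambda_j$ only. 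Since $\ell(Q_{j+1})\gtrsim\ell(Q_j)$, the number $m$ of steps needed is $\lesssim\log(\ell(R)/\ell(Q))$, and this is \emph{not} a priori bounded — which is where the truncation and the sparseness estimate enter. To keep $m$ bounded, I would instead work with the truncated corona $\tree^{(N)}(R)$ and, crucially, use the function $G_0^K$: the definition \eqref{defg0k} says that a point $x\in G_0^K$ lies in at most $K$ cubes of $\ttt$, so the ``vertical'' length of the concatenated corona regions that $x$ sees is controlled, and hence the cumulative degradation $\lambda\mapsto\lambda'$ over the whole passage from $Q$ to $R$ is bounded in terms of $K$ (equivalently, in terms of $\eta$ and allowable parameters). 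The exceptional set $\mathsf{Ex}(R)$ is then defined to absorb (i) cubes $Q\in\Stop(R)\cap\fG$ all of whose representative points fall outside $G_0^K$, which has small $\sigma$-measure by Chebyshev and Lemma \ref{lempack}, and (ii) cubes whose chain-climbing runs into a ``two well separated big corkscrew'' obstruction; the sparseness of the latter is where the Geometric Lemma \ref{lemgeom} and the Alt--Caffarelli--Friedman monotonicity formula (via Lemma \ref{lem:ACF2}) are used — two well-separated big corkscrews at a fixed scale force $J(x,r)\gtrsim1$ at that scale, and the monotonicity of $J$ plus the finiteness coming from \eqref{eqjr} bounds the number of scales at which this can happen along a fixed ray, yielding a Carleson packing bound $\sum_{P\in\mathsf{Ex}(R)}\sigma(P)\le\eta\,\sigma(R)$ once $\eps$ and $K$ are tuned.

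The final bookkeeping is: for $Q\in\Stop(R)\cap\fG\setminus\mathsf{Ex}(R)$, pick a $\lambda$-good corkscrew $z_Q$; its touching point (or rather a nearby point of $Q\cap\WA(p,\Lambda)\cap G_0^K$, which exists because $Q\notin\mathsf{Ex}(R)$) anchors an application of Lemmas \ref{lemshortjumps2}/\ref{lemclosejumps} producing a good Harnack chain from $z_Q$ up to a $\lambda'$-good corkscrew for $R$, with $\lambda'=\lambda'(\lambda,\eta)$ and the chain constant $C=C(\lambda,\eta)$, both independent of $N$. Letting $N\to\infty$ at the end (the estimates are uniform in $N$) removes the truncation. \textbf{The main obstacle} I anticipate is exactly the control of the constant degradation $\lambda\to\lambda'$ over an unbounded number of scales: naively iterating Lemma \ref{lemshortjumps2} gives $\lambda'$ depending on $\ell(R)/\ell(Q)$, which is useless, so the real work is to show that along the relevant chain the ``bad'' scales where $\lambda$ actually drops are governed by the $G_0^K$ multiplicity bound and the ACF packing of two-corkscrew cubes, so that only boundedly many genuine drops occur — everything else being a Harnack-chain passage through a single flat corona region where $\lambda$ is essentially preserved. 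Establishing that decomposition of the chain into ``boundedly many genuine drops $+$ flat passages'' is the technical heart of the argument.
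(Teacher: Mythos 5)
Your high-level picture of the Key Lemma is on the right track: corona decomposition inside $\tree(R)$, the dichotomy between the flat passage through $\tree_\WSBC(R)$ (where Harnack chains preserve $\lambda$ essentially for free via the overlap of parent/child Whitney regions, Lemma \ref{lembigcork}) and the obstruction created by cubes with two well separated big corkscrews, and the collection of ``failures'' into an exceptional family. You also correctly flag the core difficulty: naively iterating Lemma \ref{lemshortjumps2} through an unbounded number of generations degrades $\lambda$ by a fixed factor per step, yielding a useless $\lambda'$ depending on $\ell(R)/\ell(Q)$.

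However, the mechanism you propose to resolve this — iterated Short Paths plus an ACF monotonicity packing bound on WSBC scales, plus the $G_0^K$ multiplicity count — is not how the paper closes the gap, and as you set it up it would not close it. The $G_0^K$ multiplicity bound controls the number of \emph{trees} $\tree(R)$ a point can lie in, and is exploited in the proof of the Main Lemma (Section~\ref{sec8}); it says nothing about the depth of a single $\tree(R)$, which is what the Key Lemma is about. Likewise, there is no ACF-based Carleson packing of WSBC cubes anywhere in the argument: WSBC cubes can be dense in $\tree(R)$ (the whole subtree $\tree_\WSBC(R)$ consists of them), and the exceptional set $\mathsf{Ex}(R)$ is made small not by any packing theorem but by a simple dichotomy. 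The actual engine is the Baby Key Lemma \ref{keylemma}, built on the Green function identity in Lemma \ref{lemgg} (an integration by parts against a cutoff $\vphi = \eta\vphi_0$ adapted to the sets $V_i$ of the Geometric Lemma \ref{lemgeom}) together with the crucial fact, encoded in Lemma \ref{lemgeom}(e), that $g(p,\cdot)$ is uniformly small on $\partial V_1\cup\partial V_2$. This gives a \emph{one-shot} jump from the stopping cubes to the top of $\tree_\WSBC(R)$: if at some scale $R'$ and side $i$ the cubes $Q\in\fH_i(R')$ with $\lambda$-good corkscrews pack at least $\tau\sigma(R')$, then $g(p,\exe_{R'}^i)\gtrsim_{\lambda,\tau}\ell(R')/\sigma(R_0)$, so $\exe_{R'}^i$ is $\wt\lambda$-good with $\wt\lambda$ depending only on $\lambda,\tau$ and \emph{not} on the depth of the tree. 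If the packing is below $\tau\sigma(R')$ for side $i$, all of $\fH_i(R')$ is simply discarded into $\mathsf{Ex}$; summing the two sides gives $\sum_{P\in\mathsf{Ex}_\WSBC(R')}\sigma(P)\le 2\tau\sigma(R') = \eta\sigma(R')$ by choosing $\tau=\eta/2$. That combinatorial dichotomy, not ACF, is what makes $\mathsf{Ex}(R)$ small. (ACF does appear, but one level down: it powers Lemma \ref{lemshortjumps}, which feeds into Lemma \ref{lemei} inside the Geometric Lemma; it is not a packing mechanism for $\mathsf{Ex}(R)$.)

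So the genuine gap in your proposal is the missing idea of the Baby Key Lemma: replacing step-by-step climbing by a single global Green-function argument whose error terms live on a set ($\partial V_1\cup\partial V_2$) that the Geometric Lemma has engineered to be negligible. Without this, your plan has no way to prevent $\lambda'$ from depending on $\ell(R)/\ell(Q)$, which is precisely the obstacle you identified but did not overcome.
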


This lemma will be proved in the next Sections \ref{sec6} and \ref{sec7}. Using this result, in Section
\ref{sec8} we will build the required carrot curves for the  Main Lemma \ref{lemhc}, which join the pole $p$ to points from a suitable big piece
of $R_0$. If the reader prefers to see how this is applied before its long proof, they may go directly to Section \ref{sec8}.  A crucial point in the Key Lemma is that the constant $\eps$ in the definition of the stopping cubes of the corona decomposition does not depend on the constants $\lambda$ or $\eta$ above.

To prove the Key Lemma \ref{keylemma3} we will need first to introduce the notion of ``cubes with well
separated big corkscrews" and we will split $\tree^{(N)}(R)$ into subtrees by introducing an additional stopping condition involving this type of cubes. Later on, in Section \ref{sec6} we will prove the ``Geometric Lemma", which
relies on a geometric construction which plays a fundamental role in the proof of the Key Lemma.


\vv

\subsection{The cubes with well separated big corkscrews}\label{subsepcork}

Let $Q\in\DD$ be a cube such that $b\beta(Q)\leq C_4\eps$. For example, $Q$ might be a cube from $Q\in\tree^{(N)}(R)\cup\Stop^{(N)}(R)$, with  $R\in\ttt^{(N)}_b$ (which in particular implies that $b\beta(R)\leq \eps$). We denote by $L_Q$ a best approximating $n$-plane for $b\beta(Q)$,
and we choose $\exe_Q^1$ and $\exe_Q^2$ to be two fixed points in $\ball_Q$ such that $\dist(\exe_Q^i,L_Q) = r(\ball_Q)/2$ and lie
in different components of $\RR^{n+1}\setminus L_Q$. So $\exe_Q^1$ and $\exe_Q^2$ are corkscrews for $Q$. We will call them ``big corkscrews".

Since any corkscrew $x$ for $Q$ satisfies $\delta_\Omega(x)\geq \kappa_1\,\ell(Q)$ and we have chosen
$\eps\ll\kappa_1$, it turns out that
$$\dist(x,L_Q)\geq \frac12\,\kappa_1\,\ell(Q)\gg \eps\,\ell(Q).$$
As a consequence, $x$ can be joined either to $\exe_Q^1$ or to $\exe_Q^2$ by a $C$-good Harnack chain, with 
$C$ depending only on $n,C_0,\kappa_1$, and thus only on $n$, $C_0$ and the weak-$A_\infty$ constants in $\ball_{R_0}$.
The following lemma follows by the same reasoning:

\begin{lemma}\label{lembigcork}
Let $Q,Q'\in\DD$ be cubes such that $b\beta(Q),b\beta(Q')\leq C_4\eps$ and $Q'$ is the parent of $Q$. Let $\exe_Q^i,\exe_{Q'}^i$, for $i=1,2$, be big corkscrews for $Q$ and $Q'$ respectively. Then, after relabeling the corkscrews if necessary, $\exe_Q^i$ can be joined to
$\exe_{Q'}^i$ by a $C$-good Harnack chain, with
$C$ depending only on $n,C_0,\kappa_1$.
\end{lemma}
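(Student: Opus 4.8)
Looking at Lemma \ref{lembigcork}, I need to prove that if $Q, Q'$ are parent-child cubes with small $b\beta$, then their big corkscrews can be matched up (after relabeling) and joined by good Harnack chains.

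\textbf{The plan.} The key observation is that both cubes have comparable sizes ($\ell(Q') = 2\ell(Q)$) and their approximating planes $L_Q$ and $L_{Q'}$ are nearly parallel and close to each other in the relevant region, because both are good approximations to $\partial\Omega$ near the ball $100\ball_{Q'}$, which contains $100\ball_Q$ as well (up to dilation constants). More precisely, since $b\beta(Q), b\beta(Q') \le C_4\eps$, both planes lie within $C\eps\,\ell(Q')$ of $\partial\Omega \cap 100\ball_{Q'}$, and $\partial\Omega$ lies within $C\eps\,\ell(Q')$ of both planes within this ball. Since $100\ball_{Q'}$ is a ball of radius comparable to $\ell(Q')$ containing points of $\partial\Omega$ (indeed $Q \subset \partial\Omega$), a standard geometric argument shows $\dist_H(L_Q \cap \ball_{Q'}, L_{Q'}\cap \ball_{Q'}) \lesssim \eps\,\ell(Q')$ and the planes make an angle $\lesssim \eps$. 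Thus $\RR^{n+1}\setminus L_Q$ and $\RR^{n+1}\setminus L_{Q'}$ have, in a neighborhood of $\ball_{Q'}$, two half-space components that can be matched: I label the "$+$" side of $L_{Q'}$ to agree with whichever side of $L_Q$ it overlaps with away from the thin slab $\{\dist(\cdot, L_{Q'}) \le C\eps\,\ell(Q')\}$.

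\textbf{Key steps.} First I would record the geometric comparison of $L_Q$ and $L_{Q'}$: within $2\ball_{Q'}$ they are within $C\eps\,\ell(Q')$ of one another in Hausdorff distance. Second, I would recall (as established just before the lemma in the discussion of $\exe_Q^i$) that each big corkscrew $\exe_Q^i$ satisfies $\delta_\Omega(\exe_Q^i) \ge \kappa_1\ell(Q)$ and $\dist(\exe_Q^i, L_Q) = r(\ball_Q)/2 \gg \eps\,\ell(Q)$, so $\exe_Q^i$ sits well outside the thin slab around $L_{Q'}$ too, and similarly for $\exe_{Q'}^i$. Third, relabel so that $\exe_Q^1$ and $\exe_{Q'}^1$ lie on the same side of (a fixed choice of) $L_{Q'}$ — this is the "after relabeling" in the statement. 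Fourth, I construct the good Harnack chain: connect $\exe_Q^1$ to $\exe_{Q'}^1$ by moving through points that stay at distance $\gtrsim \kappa_1\ell(Q)$ from $\partial\Omega$; concretely, the straight segment between them (or a two-segment path via a point at distance $\sim \ell(Q')$ from $L_{Q'}$ on the correct side) stays in $\Omega$ because any point on it is at distance $\gtrsim \kappa_1\ell(Q)$ from $L_{Q'}$ hence (using $\partial\Omega$ lies in the $C\eps\ell(Q')$-slab around $L_{Q'}$ within $2\ball_{Q'}$, and $\eps \ll \kappa_1$) at distance $\gtrsim \kappa_1\ell(Q)$ from $\partial\Omega$. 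Covering this bounded-length path (length $\lesssim \ell(Q')$) by balls of radius $\sim \kappa_1\ell(Q)$ gives the $C$-good Harnack chain with $C$ depending only on $n, C_0, \kappa_1$. This is exactly the reasoning alluded to in "The following lemma follows by the same reasoning."

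\textbf{Main obstacle.} The only delicate point is verifying that the path from $\exe_Q^1$ to $\exe_{Q'}^1$ can be chosen to remain uniformly far from $\partial\Omega$ — i.e., that one does not need to cross the slab near $L_{Q'}$ where $\partial\Omega$ lives. This is where the relabeling matters and where one uses that $\exe_Q^1$ is a genuine corkscrew of the \emph{child} $Q$, so it is not merely $\eps\ell(Q')$ away from $L_{Q'}$ but $\gtrsim \kappa_1\ell(Q) = \tfrac{\kappa_1}{2}\ell(Q')$ away, comfortably outside the slab; and both $100\ball_Q \subset 200\ball_{Q'}$ so the planar comparison is valid on the whole region the path traverses. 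Once the path stays in a fixed half-space determined by $L_{Q'}$ at controlled distance from it, confinement to $\Omega$ is immediate, and the Harnack chain count is a routine volume argument. I would keep this short, as the paper explicitly delegates it.
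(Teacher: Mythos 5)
Your proposal is correct and follows essentially the same route the paper has in mind: the paper only says ``The following lemma follows by the same reasoning,'' referring to the preceding paragraph, and your argument fills in exactly that reasoning (closeness of $L_Q$ and $L_{Q'}$, big corkscrews lying $\gtrsim\kappa_1\ell(Q)\gg\eps\ell(Q')$ from the planes, relabeling so the corkscrews sit on matching sides, a bounded-length path staying $\gtrsim\kappa_1\ell(Q)$ from $\partial\Omega$, and a finite ball covering to produce the $C$-good chain). No gap; this is the intended proof.
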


Given $\Gamma>0$, we will write $Q\in \WSBC(\Gamma)$ (or just $Q\in\WSBC$, which stands for ``well separated big corkscrews") if $b\beta(Q)\leq C_4\eps$ and the big corkscrews $\exe_Q^1$, $\exe_Q^2$
 can {\em not} be joined by any $\Gamma$-good Harnack chain. The parameter $\Gamma$ will be chosen below. For the moment, let us say
 that $\Gamma^{-1}\ll\eps$. The reader should think that in spite of $b\beta(Q)\leq C_4\eps$, the possible existence of ``holes of size $C\,\eps\ell(Q)$ in $\partial\Omega$''
 makes possible the connection of the big corkscrews by means of $\Gamma$-Harnack chains passing through these holes.
Note that if $b\beta(Q)\leq C_4\eps$ and $Q\not\in \WSBC(\Gamma)$, then any pair of corkscrews for $Q$ can be connected by a $C(\Gamma)$-good Harnack chain, since any of these corkscrews can be joined by a good chain to one of the big corkscrews for $Q$, as mentioned above.

\vv

\subsection{The tree of cubes of type $\WSBC$ and the subtrees}\label{sec5.5}

Given $R\in\ttt^{(N)}_b$, denote
by $\Stop_\WSBC(R)$ the maximal subfamily of cubes $Q\in\DD(R)$ which satisfy that either
\begin{itemize}
\item $Q\not\in\WSBC(\Gamma)$, or
\item $Q\not\in\tree^{(N)}(R)$.
\end{itemize}
Also, denote by $\tree_\WSBC(R)$ the cubes from $\DD(R)$ which are not contained in any cube from 
$\Stop_\WSBC(R)$. So this tree is empty if $R\not\in\WSBC(\Gamma)$.  Notice also that
$\Stop_\WSBC(R)\not\subset\tree_\WSBC(R)$.

Observe that if $Q\in\Stop_\WSBC(R)$, it may happen that $Q\not\in\WSBC(\Gamma)$. However, 
unless $Q=R$, it holds that $Q\in\WSBC(\Gamma')$, with $\Gamma'>\Gamma$ depending only on $\Gamma$ and $C_0$
(because the parent of $Q$ belongs to $\WSBC(\Gamma)$).

For each $Q\in\Stop_\WSBC(R)\setminus\Stop(R)$, we denote 
$$\stree(Q) = \DD(Q)\cap\tree^{(N)}(R),\qquad \sstop(Q)=\Stop(R)\cap\DD(Q).$$
So we have
$$\tree^{(N)}(R) = \tree_\WSBC(R) \cup \bigcup_{Q\in\Stop_\WSBC(R)}\stree(Q),$$
and the union is disjoint.  Observe also that we have the partition
\begin{equation}\label{eqstop221}
\Stop(R) = \bigl(\Stop_\WSBC(R)\cap \Stop(R)\bigr) \cup \bigcup_{Q\in\Stop_\WSBC(R)\setminus\Stop(R)}\sstop(Q).
\end{equation}


\vv

\section{The geometric lemma}\label{sec6}

\subsection{The geometric lemma for the tree of  cubes of type $\WSBC$}

Let $R\in\ttt^{(N)}_b$ and suppose that $\tree_\WSBC(R)\neq \varnothing$.
We need now to define a family $\eend(R)$ of cubes from $\DD$, which in a sense can be considered as a 
regularized  version of  $\Stop_\WSBC(R)$. The first step consists of introducing the following auxiliary function:
$$d_R(x) :=\inf_{Q\in \tree_\WSBC(R)}(\ell(Q) + \dist(x,Q)),\quad\mbox{ for $x\in\RR^{n+1}$.}$$
Observe that $d_R$ is $1$-Lipschitz.

For each $x\in \partial\Omega$ we take the largest cube $Q_x\in\DD$ 
such that $x\in Q_x$ and
\begin{equation}\label{eqdefqx}
\ell(Q_x) \leq \frac1{300}\,\inf_{y\in Q_x} d_R(y).
\end{equation}
We consider the collection of the different cubes $Q_x$, $x\in \partial\Omega$, and we denote it by $\eend(R)$.

\vv

\begin{lemma}\label{lem74}
Given $R\in\ttt^{(N)}_b$, the cubes from $\eend(R)$ are pairwise disjoint and satisfy the following properties:
\begin{itemize}
\item[(a)] If $P\in\eend(R)$ and $x\in 50\ball_P$, then $100\,\ell(P)\leq d_R(x) \leq 900\,\ell(P)$. 

\item[(b)] There exists some absolute constant $C$ such that if $P,P'\in\eend(R)$ and $50\ball_P\cap 50\ball_{P'}
\neq\varnothing$, then
$C^{-1}\ell(P)\leq \ell(P')\leq C\,\ell(P).$
\item[(c)] For each $P\in \eend(R)$, there at most $N$ cubes $P'\in\eend(R)$ such that
$50\ball_P\cap 50\ball_{P'}
\neq\varnothing,$
 where $N$ is some absolute constant.
 
\item[(d)] If $P\in\eend(R)$ and $\dist(P,R)\leq 20\,\ell(R)$, then there exists some $Q\in\tree_\WSBC(R)$
such that $P\subset 22Q$ and $\ell(Q)\leq 2000\,\ell(P)$.
\end{itemize}
\end{lemma}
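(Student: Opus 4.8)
The plan is to follow the standard "regularized stopping family" construction associated to a Lipschitz distance function. The function $d_R$ is defined as an infimum of $\ell(Q) + \dist(x,Q)$ over $Q \in \tree_\WSBC(R)$, so it is automatically $1$-Lipschitz (infimum of $1$-Lipschitz functions), and it is comparable to the scale at which the relevant stopping cubes live. The cubes in $\eend(R)$ are the maximal dyadic cubes $Q_x$ through each $x \in \partial\Omega$ satisfying the smallness constraint $\ell(Q_x) \le \frac{1}{300}\inf_{y \in Q_x} d_R(y)$; maximality gives disjointness in the usual way (if two such maximal cubes met, one would contain the other, contradicting maximality unless they coincide).

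For part (a), the key is that on a ball $50\ball_P$ the Lipschitz function $d_R$ oscillates by at most $\approx 50\,r(\ball_P) \approx \ell(P)$; combined with the defining inequality $\ell(P) \le \frac1{300}\inf_{Q_x} d_R$ and a comparison using the parent $\widehat P$ of $P$ (which by maximality \emph{fails} the defining inequality, so $\ell(\widehat P) > \frac1{300}\inf_{\widehat P} d_R$, hence $d_R$ is not too large relative to $\ell(P)$ either), one sandwiches $d_R(x) \approx \ell(P)$ with explicit constants $100$ and $900$ after tracking the $\frac1{300}$ and the doubling factor $2$ between $P$ and $\widehat P$ and the diameter bounds from Lemma~\ref{lemmaCh}. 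Part (b) is then immediate from (a): if $50\ball_P \cap 50\ball_{P'} \ne \varnothing$, pick a common point $x$; then $100\ell(P) \le d_R(x) \le 900\ell(P)$ and the same with $P'$, so $\ell(P) \approx \ell(P')$ with absolute constant $C = 9$. Part (c) follows from (b) plus a volume/packing count: all cubes $P'$ meeting $50\ball_P$ have comparable side length and are pairwise disjoint subsets of $\partial\Omega$ inside a ball of radius $\approx \ell(P)$, so by $n$-ADR there are only boundedly many; the bound is absolute (depending only on $n$ and the ADR constant $C_0$). I would note that the statement writes "$N$" here but it is really an absolute constant — I will denote it by a fixed $C$ or remark that it does not depend on the truncation parameter $N$.

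For part (d), suppose $P \in \eend(R)$ with $\dist(P,R) \le 20\ell(R)$. By part (a) applied at a point $x \in P$, $d_R(x) \approx \ell(P)$, and by definition of $d_R$ there is some $Q \in \tree_\WSBC(R)$ nearly realizing the infimum, i.e. $\ell(Q) + \dist(x,Q) \le 2 d_R(x) \lesssim \ell(P)$. This gives both $\ell(Q) \lesssim \ell(P)$ and $\dist(x,Q) \lesssim \ell(P)$, so $P \subset C Q$ for a suitable dilation; the constraint $\dist(P,R) \le 20\ell(R)$ together with the fact that $Q \subset R$ ensures the scales stay under control (one also needs $\ell(Q) \gtrsim \ell(P)$, which comes from the lower bound $d_R \ge \ell(Q')$ for \emph{every} $Q' \in \tree_\WSBC(R)$, so no cube of the tree can be much smaller than $\ell(P)$ near $x$ — actually more carefully: $d_R(x) = \inf(\ell(Q') + \dist(x,Q'))$ so the near-minimizer $Q$ has $\ell(Q) \le 2 d_R(x) \approx \ell(P)$, and if $\ell(Q) \ll \ell(P)$ then $Q$ being in the tree forces $d_R \le \ell(Q) + \dist \ll \ell(P)$ only if $\dist(x,Q)$ is also small, which is fine — the real content is just $\ell(Q) \le 2000\ell(P)$ and $P \subset 22Q$, so I only need the upper bounds). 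Then I chase the explicit constants $22$ and $2000$ through the triangle inequality and the cube-ball comparison \eqref{cube-ball}.

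The main obstacle is purely bookkeeping: getting the explicit numerical constants ($100$, $900$, $22$, $2000$, $\frac1{300}$, $50$) to close up consistently, since they must be compatible across (a)--(d) and with the constants $C_1$, $a_0$ from the dyadic lattice Lemma~\ref{lemmaCh} and the factor $2$ between a cube and its parent. There is no conceptual difficulty — it is the familiar "Whitney-type regularization of a stopping time" argument (cf. the construction of $\W_Q^*$ and the Whitney regions earlier) — but one must be careful that the choice of the constant $300$ in \eqref{eqdefqx} was made precisely so that these inequalities have slack. I would carry out (a) first and in full detail (both the upper and lower bound on $d_R$ on $50\ball_P$, using maximality of $P$ for the upper bound), since (b), (c), (d) all reduce quickly to (a) plus either a volume count or a triangle inequality.
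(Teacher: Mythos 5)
Your treatment of (a), (b), and (c) follows the paper's proof essentially verbatim: (a) is the $1$-Lipschitz oscillation bound together with the maximality of $P$ (i.e., its parent $\widehat P$ fails the defining inequality, yielding the upper bound $d_R \le 900\ell(P)$), (b) is the immediate comparison through a common point, and (c) is a packing count using the upper $n$-ADR bound together with (b). Your observation that the $N$ in (c) is an absolute constant, distinct from the truncation parameter $N$ of Section \ref{secnnn}, is correct.

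Part (d), however, has a genuine gap. You propose taking $Q$ to be a near-minimizer of the infimum defining $d_R(x)$ and claim $P\subset CQ$. This does not follow: the near-minimizer $S_0$ satisfies $\ell(S_0)+\dist(x,S_0)\approx d_R(x)\approx\ell(P)$, but this is perfectly consistent with $\ell(S_0)\ll\ell(P)$ (taking $\dist(x,S_0)\approx\ell(P)$), in which case no fixed dilation $22 S_0$ can contain $P$. Your parenthetical acknowledges the worry about $\ell(Q)\gtrsim\ell(P)$ but dismisses it incorrectly by saying you ``only need the upper bounds'' --- you do need $P\subset 22Q$, which is a lower bound on the scale of $Q$. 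The paper's resolution is to take $Q$ to be the \emph{smallest} dyadic ancestor of the minimizer $S_0$ for which $P\subset 22Q$; this makes the containment automatic, and the crucial point you omit is that $S_0\subset Q\subset R$ together with the coherency of $\tree_\WSBC(R)$ (cubes between a tree element and the root stay in the tree) ensures $Q\in\tree_\WSBC(R)$. The upper bound $\ell(Q)\le 2000\,\ell(P)$ is then obtained by comparing $Q$ to an auxiliary cube $\wt Q\supset S_0$ with $\ell(\wt Q)\approx\ell(P)+\ell(S_0)+\dist(\zed_P,S_0)=\ell(P)+d_R(\zed_P)\le 901\ell(P)$, observing $P\subset 2\wt Q$ forces $Q\subset\wt Q$ by minimality, and then invoking (a). Without the passage to the ancestor $Q$, your argument for (d) does not close.
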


\begin{proof}
The proof is a routine task. For the reader's convenience we show the details.
To show (a), consider  $x\in 50 \ball_P$. Since $d_R(\cdot)$ is $1$-Lipschitz
and, by definition, $d_R(\zed_{P})\geq 300\,\ell(P)$, we have
$$d_R(x)\geq d_R(\zed_{P}) - |x-\zed_{P}| \geq d_R(\zed_{P}) - 50\,r(\ball_P)\geq 300 \,\ell(P)-200\,\ell(P)=100\,\ell(P) .$$

To prove the converse inequality, by the definition of $\eend(R)$, there exists some $z'\in
\wh P$, the parent of $P$, such that 
$$d_R(z')\leq 300\,\ell(\wh P)\ = 600\,\ell(P).$$
Also, we have
$$|x-z'|\leq |x-\zed_{P}| + |\zed_{P}-z'|\leq 50\,r(\ball_P) + 2\ell(P)\leq 300\,\ell(P).$$
Thus,
$$d_R(x)\leq d_R(z') + |x-z'| \leq (600+ 300)\,\ell(P).$$

The statement (b) is an immediate consequence of (a), and (c) follows easily from (b).
To show (d), observe that, for any $S\in\tree_\WSBC(R)$,
$$\ell(P)\leq\frac{d_R(\zed_P)}{300}\leq \frac{\ell(S) + \dist(\zed_P,S)}{300}
\leq \frac{\ell(P) + \ell(S) + \dist(P,S)}{300}.$$
Thus,
$$\ell(P)\leq\frac{\ell(S) + \dist(P,S)}{299}.$$
In particular, choosing $S=R$, we deduce
$$\ell(P)\leq\frac{\ell(R) + \dist(P,R)}{299}
\leq \frac{21}{299}\,\ell(R)\leq \ell(R),$$
and thus, using again that $\dist(P,R)\leq20\ell(R)$, it follows that 
$P\subset 22R$. Let $S_0\in\tree_\WSBC(R)$ be such that $d_R(\zed_P)=\ell(S_0) + \dist(\zed_P,S_0)$, and let $Q\in\DD$ be the smallest cube such that $S_0\subset Q$ and $P\subset 22Q$. Since $S_0\subset R$ and $P\subset 22R$, 
we deduce that $S_0\subset Q\subset R$, implying that $Q\in\tree_\WSBC(R)$. 

So it just remains to check that
$\ell(Q)\leq 2000\,\ell(P)$. To this end, consider a cube $\wt Q\supset S_0$ such that
$$\ell(P)+\ell(S_0)+\dist(P,S_0)\leq \ell(\wt Q)\leq 2\bigl(\ell(P)+\ell(S_0)+\dist(P,S_0)\bigr).$$
From the first inequality, it is clear that $P\subset 2\wt Q$ and then, by the definition of $Q$, we infer that $Q\subset\wt Q$. This inclusion and the second inequality above imply that 
$$\ell(Q)\leq\ell(\wt Q) \leq 2\bigl(\ell(P)+\ell(S_0)+\dist(\zed_P,S_0)\bigr) = 4\ell(P) + 2\,d_R(\zed_P).$$
By (a) we know that $d_R(\zed_P)\leq900\,\ell(P)$, and so we derive $\ell(Q) \leq 2000\,\ell(P)$.
\end{proof}

\vv

\begin{lemma}\label{lemc6}
Given $R\in\ttt^{(N)}_b$, if $Q\in\eend(R)$ and $\dist(P,R)\leq 20\,\ell(R)$, then $b\beta(Q)\leq C\,\eps$ and $Q\in \WSBC(\Gamma')$, with $\Gamma'=c_6\,\Gamma$, for some absolute
constants $C,c_6>0$.
\end{lemma}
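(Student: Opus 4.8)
\textbf{Proof proposal for Lemma \ref{lemc6}.}

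The plan is to extract both conclusions from Lemma \ref{lem74}(d). Since $Q\in\eend(R)$ and $\dist(P,R)\le 20\,\ell(R)$ (here one reads $P=Q$ in the notation of the statement, as the cube under consideration), part (d) of Lemma \ref{lem74} produces a cube $S\in\tree_\WSBC(R)$ with $Q\subset 22 S$ and $\ell(S)\le 2000\,\ell(Q)$. The key point is that $S$ is comparable to $Q$ both in size and location: combining $Q\subset 22S$ with $\ell(Q)\le\ell(S)\le 2000\,\ell(Q)$ gives $100\ball_Q\subset C\ball_S$ for some absolute constant $C$. First I would use this to transfer the flatness: since $S\in\tree_\WSBC(R)$, every cube in $\tree_\WSBC(R)$ belongs to $\tree^{(N)}(R)\subset\fG$ and satisfies $b\beta(S)\le C_4\eps$ (recall $R\in\ttt^{(N)}_b$, so $b\beta(R)\le\eps$, and the stopping condition defining $\Stop(R)$ keeps $b\beta$ of parents of cubes in the tree below $\eps$; the constant $C_4$ absorbs the dilation losses). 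The $b\beta$ coefficient is (up to a harmless multiplicative constant coming from the ratio of radii) monotone under enlarging the ball: if $L$ is a near-optimal $n$-plane for $b\beta(S)=b\beta_{\partial\Omega}(100\ball_S)$, then $L$ also serves as a competitor for $b\beta_{\partial\Omega}(100\ball_Q)$, and since $r(\ball_Q)\approx r(\ball_S)$ and $100\ball_Q\subset C\ball_S$, one gets $b\beta(Q)\lesssim b\beta(S)\lesssim\eps$, i.e. $b\beta(Q)\le C\eps$ for an absolute $C$. In particular $b\beta(Q)\le C_4\eps'$ for the appropriate choice of threshold, so the notion $\WSBC$ makes sense for $Q$.

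Next I would establish $Q\in\WSBC(\Gamma')$ with $\Gamma'=c_6\Gamma$. The cube $S$ lies in $\tree_\WSBC(R)$, hence $S\in\WSBC(\Gamma)$, which means $b\beta(S)\le C_4\eps$ and the two big corkscrews $\exe^1_S,\exe^2_S$ cannot be joined by any $\Gamma$-good Harnack chain. I want to conclude the same (with a slightly worse constant) for the big corkscrews $\exe^1_Q,\exe^2_Q$ of $Q$. Because $Q$ and $S$ have comparable size and $Q\subset 22S$, the best approximating planes $L_Q$ and $L_S$ are within $C\eps\,\ell(Q)$ of each other on the relevant balls, so after relabelling, $\exe^i_Q$ lies on the same side of $L_S$ as $\exe^i_S$ and at distance $\approx\ell(Q)\approx\ell(S)$ from it; hence $\exe^i_Q$ is itself a corkscrew for $S$ and, by the remark preceding Lemma \ref{lembigcork}, can be joined to $\exe^i_S$ by a $C$-good Harnack chain with $C$ depending only on $n,C_0,\kappa_1$ (absolute, in our bookkeeping). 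Now suppose, for contradiction, that $\exe^1_Q$ and $\exe^2_Q$ could be joined by a $\Gamma'$-good Harnack chain. Concatenating the good chain $\exe^1_S\to\exe^1_Q$, this chain, and $\exe^2_Q\to\exe^2_S$ yields a good Harnack chain from $\exe^1_S$ to $\exe^2_S$; by the standard composition estimate for good chains (as in Lemma \ref{l:johntochain} and the surrounding discussion, the composition of three good chains of bounded multiplicity is again a good chain with a controlled constant), this composite chain is $\Gamma$-good provided $\Gamma'=c_6\Gamma$ with $c_6$ small enough depending only on $n,C_0,\kappa_1$. That contradicts $S\in\WSBC(\Gamma)$. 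Therefore $\exe^1_Q,\exe^2_Q$ admit no $\Gamma'$-good chain, i.e. $Q\in\WSBC(\Gamma')$.

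The main obstacle I anticipate is the bookkeeping in the second paragraph: one must be careful that the planes $L_Q$ and $L_S$ are genuinely close on the dilated balls (this uses both $b\beta(S)\le C_4\eps$ \emph{and} $b\beta(Q)\le C\eps$ together with the comparability $r(\ball_Q)\approx r(\ball_S)$, plus the nondegeneracy $\dist(\exe^i_Q,L_Q)=r(\ball_Q)/2\gg\eps\,\ell(Q)$ to guarantee the corkscrews stay cleanly on opposite sides after swapping to $L_S$), and that the Harnack-chain composition does not blow up the constant beyond a fixed multiple — this is where the hypothesis $\Gamma^{-1}\ll\eps$ and the bounded-multiplicity properties of good chains are used. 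All of these are routine given the machinery already set up (Lemma \ref{lem74}, the monotonicity of $b\beta$, Lemma \ref{lembigcork} and the discussion before it), so the proof is short; the only real content is organizing the constants so that $C$ and $c_6$ come out absolute, i.e. depend only on $n$, $C_0$, and the weak-$A_\infty$ data through $\kappa_1$, and in particular \emph{not} on $\Gamma$, $N$, $\lambda$, or $\eta$.
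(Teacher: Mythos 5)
Your proof is correct and follows the same strategy as the paper: invoke Lemma \ref{lem74}(d) to produce $S\in\tree_\WSBC(R)$ with $Q\subset 22S$ and $\ell(S)\approx\ell(Q)$, then transfer $b\beta\lesssim\eps$ by comparability of the associated balls and transfer the $\WSBC$ property by concatenating Harnack chains between the big corkscrews of $Q$ and those of $S$. The paper's own proof states this as ``immediate'' without spelling out the two transfer steps; you have simply filled in those details, and the bookkeeping of constants ($C$, $c_6$ depending only on $n$, $C_0$, $\kappa_1$, not on $\Gamma$) is handled correctly.
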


\begin{proof}
This immediate from the fact that, by (d) in the previous lemma, there exists some cube $Q'\in\tree_\WSBC(R)$
such that $Q\subset 22Q'$ and $\ell(Q')\leq 2000\,\ell(Q)$, so that $b\beta(Q')\leq\eps$ and 
$Q'\in \WSBC(\Gamma)$.
\end{proof}

\vv

As in Section \ref{s2}, we make a standard Whitney decomposition of the open set $\Omega$. With a harmless abuse of notation we let 
$\mathcal{W}=\W(\Omega)$ denote a collection
of (closed) dyadic Whitney cubes of $\Omega$, so that the cubes in $\mathcal{W}$
form a pairwise non-overlapping covering of $\Omega$, which satisfy for some $M_0>20$ and $D_0\geq1$
\begin{itemize}
	\item[(i)] $10I \subset \Omega$;
	\item[(ii)] $M_0 I \cap \partial\Omega \neq \varnothing$;
	\item[(iii)] there are at most $D_0$ cubes $I'\in\WW$
	such that $10I \cap 10I' \neq \varnothing$. Further, for such cubes $I'$, we have $\ell(I')\approx
	\ell(I)$, where $\ell(I')$ stands for the side length of $I'$.
\end{itemize}
From the properties (i) and (ii) it is clear that $\dist(I,\partial\Omega)\approx\ell(I)$. We assume that
the Whitney cubes are small enough so that
\begin{equation}\label{eqeq29}
\diam(I)< \frac1{100}\,\dist(I,\partial\Omega).
\end{equation}
To construct this Whitney decomposition one just needs to  replace each cube $I\in\WW$, as in  \cite[Chapter VI]{St}, by its descendants $I'\in\DD_k(I)$, for some fixed $k\geq1$. 

For each $I\in\WW$, as much as in Lemma \ref{lem1}, we denote by $B^I$ a ball concentric with $I$ and radius $C_5\ell(I)$, where $C_5$ is a
universal constant big enough
so that
$$g(p,x) \lesssim \frac{\omega^p(B^I)}{\ell(I)^{n-1}}\quad\mbox{ for all $x\in 4I$,}$$
and whenever $p\notin 5I$. 
Obviously, the ball $B^I$ intersects $\partial\Omega$, and the family $\{B^I\}_{I\in\WW}$ does not have
finite overlapping.

Given a  bounded measurable set $F\subset \mathbb R^{n+1}$ with $|F|>0$, and a function $f\in L^1_{\rm loc}(\RR^{n+1})$, we denote by $m_F f$ the mean of $f$ in $F$ with respect to Lebesgue measure. That is,
$$m_F f = \avint_F f\,dx.$$

\vv
To state the Geometric Lemma we need some additional notation.
Given a cube $R'\in\tree_\WSBC(R)$,
we denote by $\wt\tree_\WSBC(R')$ the family of cubes from $\DD$ with side length at most $\ell(R')$ which 
are contained in $100\ball_{R'}$ and are
not contained in any cube from $\eend(R)$. We also denote by $\wt\eend(R')$ the subfamily of the cubes from $\eend(R)$ which 
are contained in some cube from $\wt\tree_\WSBC(R')$. Note that $\wt\tree_\WSBC(R')$ is not a tree, in general, but a union of trees. Further, from Lemma \ref{lem74}(a), it follows easily that
$$\tree_\WSBC(R)\cup \Stop_\WSBC(R)\subset \wt\tree_\WSBC(R)\cap \DD(R).$$

\vv

\begin{lemma}[Geometric Lemma]\label{lemgeom}
Let $0<\gamma<1$, and assume that the constant $\Gamma=\Gamma(\gamma)$ in the definition of $\WSBC$ is big enough.
Let $R\in\ttt^{(N)}_b\cap\WSBC(\Gamma)$ and let $R'\in\tree_\WSBC(R)$ be such that
$\ell(R')=2^{-k_0}\ell(R)$, with $k_0=k_0(\gamma)\geq1$ big enough. 
Then there are two open sets $V_1,V_2\subset C\ball_{R'}\cap\Omega$ with disjoint closures which satisfy the following properties: 
\begin{itemize}
\item[(a)] There are subfamilies $\WW_i\subset\WW$ such that
$V_i=\bigcup_{I\in \WW_i} 1.1\interior(I).$ 

\item[(b)] Each $V_i$ contains a ball $B_i$ with $r(B_i)\approx\ell(R')$, and each corkscrew point for $R'$ contained in $2\ball_{R'}\cap V_i$ can  be joined
to the center $z_i$ of $B_i$ by a good Harnack chain contained in $V_i$. Further, any point $x\in V_i$
can be joined to $z_i$ by a good Harnack chain (not necessarily contained in $V_i$).

\item[(c)] For each $Q\in\big(\tree_\WSBC(R)\cup\Stop_\WSBC(R)\big) \cap\DD(R')$ there are big corkscrews $\exe_Q^1\in V_1\cap 2\ball_Q$ and $\exe_Q^2\in V_2\cap 2\ball_Q$, and if $\wh Q$ is an
ancestor of $Q$ which also belongs to $\tree_\WSBC(R)\cap\DD(R')$, then $\exe_Q^i$ can be joined to $\exe_{\wh Q}^i$ by a good Harnack
chain, for each $i=1,2$.

\item[(d)] 
$(\partial V_1\cup\partial V_2)\cap 10\ball_{R'}
\subset \bigcup_{P\in\wt\eend(R')}2\ball_P$.

\item[(e)]  For each $P\in\wt\eend(R')$  such that $2\ball_P\cap10\ball_{R'}\neq\varnothing$,
let $\WW_P$ be the family of Whitney cubes $I\subset V_1\cup V_2$ such that $1.1\overline I\cap\partial (V_1\cup V_2)\cap 2\ball_P\neq\varnothing$, so that 
$$\partial V_i\cap 2\ball_P \subset \bigcup_{I\in\WW_P}1.1\overline I.$$ 
Then
\begin{itemize}
\item[(i)]  $$m_{4I} g(p,\cdot)\leq \gamma\,\frac{\ell(P)}{\sigma(R_0)}\quad \mbox{ for each $I\in\WW_P$,}$$ and
\item[(ii)] $$\sum_{I\in\WW_P} \ell(I)^n \lesssim \ell(P)^n
\quad\text{ and }\quad \sum_{I\in\WW_P} \omega^p(B^I) \lesssim \omega^p(C\ball_P),$$
for some universal constant $C>1$.
\end{itemize}
\end{itemize} 
The constants involved in the Harnack chain and corkscrew conditions may depend on $\eps$, $\Gamma$, and $\gamma$.\footnote{To guarantee the existence of the sets $V_i$ and the fact that they are
contained in $\Omega$ we use the assumption that $\Omega=\ree\setminus\partial\Omega$.}
\end{lemma}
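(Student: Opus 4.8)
\section*{Plan for the proof of the Geometric Lemma}

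\emph{Overview of the strategy.}  The plan is to construct the two open sets $V_1,V_2$ by a layered stopping-time argument over $\wt\tree_\WSBC(R')$, peeling off a collar of Whitney cubes that sit ``between'' the two big corkscrews and on which the Green function is small.  The key structural input is the partition
$$\tree^{(N)}(R) = \tree_\WSBC(R)\cup\bigcup_{Q\in\Stop_\WSBC(R)}\stree(Q),$$
together with the defining property of $\WSBC(\Gamma)$: for every $Q\in\tree_\WSBC(R)$ one has $b\beta(Q)\le C_4\eps$ \emph{and} the two big corkscrews $\exe_Q^1,\exe_Q^2$ cannot be joined by a $\Gamma$-good Harnack chain.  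Because $\eps\ll\kappa_1$ and the approximating plane $L_Q$ satisfies $\sup_{y\in\partial\Omega\cap 100\ball_Q}\dist(y,L_Q)\le C_4\eps\,\ell(Q)$, the slab $\{y\in\ball_Q: |\dist(y,L_Q)|\le \tfrac14\kappa_1\ell(Q)\}$ is ``mostly empty'' of $\partial\Omega$; the $\WSBC$ condition says that nevertheless the holes in $\partial\Omega$ that would allow a short passage between the two sides are absent at scale $\ell(Q)$.  First I would make this dichotomy quantitative: for $\Gamma$ large (depending on $\gamma$), every would-be connecting chain through $\ball_{R'}$ staying at distance $\gtrsim_\gamma \ell(P)$ from $\partial\Omega$ near a cube $P\in\wt\eend(R')$ is forced to pass through a region where $g(p,\cdot)$ has decayed by a definite factor relative to its size at scale $\ell(R')$; this is where the weak-$A_\infty$/good-corkscrew normalization $g(p,\exe_{R'})\gtrsim \ell(R')/\sigma(R_0)$ and boundary Hölder continuity (Lemma \ref{lem333}) enter.

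\emph{Construction of $V_1,V_2$.}  Working above and below $L_{R'}$, partition the Whitney cubes $I\in\WW$ with $I\subset 100\ball_{R'}\cap\Omega$ and $\ell(I)\lesssim\ell(R')$ that are \emph{not} contained in any cube of $\eend(R)$ into two sets according to which side of (a Lipschitz regularization of) the union of the planes $L_Q$, $Q\in\tree_\WSBC(R)\cap\DD(R')$, they lie on; set $V_i=\bigcup_{I\in\WW_i}1.1\,\interior(I)$.  The regularity properties of $\eend(R)$ from Lemma \ref{lem74}(a)--(d) (comparability of sizes of neighboring cubes, the $d_R$-Lipschitz control, and the containment in $22Q$ for some $Q\in\tree_\WSBC(R)$) guarantee that $\partial V_i\cap 10\ball_{R'}\subset\bigcup_{P\in\wt\eend(R')}2\ball_P$, giving (d), and that $\WW_P$ in (e) is a finite family with $\sum_{I\in\WW_P}\ell(I)^n\lesssim\ell(P)^n$; the second inequality in (e)(ii), $\sum_{I\in\WW_P}\omega^p(B^I)\lesssim\omega^p(C\ball_P)$, then follows from bounded overlap of the $B^I$ among cubes of comparable size and from $P\in\fG$-type density control inherited from $Q\in\tree_\WSBC(R)\subset\fG$ via Lemma \ref{lem5.1}.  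For (b) I would take $B_i$ to be a fixed fattened Whitney cube of size $\approx\ell(R')$ adjacent to the big corkscrew $\exe_{R'}^i$; connectivity of any corkscrew of $R'$ on side $i$ to $z_i$ inside $V_i$ is immediate since $V_i$ is a union of fattened Whitney cubes fitting together in a chain (and the last sentence of (b) uses only that $\Omega=\ree\setminus\partial\Omega$ and that any two corkscrew points lie in the single connected component of a superlevel set of $g(p,\cdot)$, as in Lemma \ref{lemclosejumps}).  Property (c) is then a bookkeeping statement: for $Q\in(\tree_\WSBC(R)\cup\Stop_\WSBC(R))\cap\DD(R')$ the big corkscrews $\exe_Q^i$ lie in $\ball_Q\subset 100\ball_{R'}$ on side $i$, hence in $V_i\cap 2\ball_Q$ after the usual relabeling, and the chain to $\exe_{\wh Q}^i$ is built cube by cube using Lemma \ref{lembigcork} along the ancestors in $\tree_\WSBC(R)\cap\DD(R')$.

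\emph{The Green-function decay estimate (e)(i) --- the main obstacle.}  The heart of the matter is $m_{4I}g(p,\cdot)\le\gamma\,\ell(P)/\sigma(R_0)$ for $I\in\WW_P$.  Suppose not: then there is $I\in\WW_P$ with $m_{4I}g(p,\cdot)>\gamma\,\ell(P)/\sigma(R_0)$, and $I$ touches $\partial(V_1\cup V_2)\cap 2\ball_P$, so by construction $I$ lies at distance $\approx\ell(P)$ from $\partial\Omega$ straddling the plane $L_Q$ of the cube $Q\in\tree_\WSBC(R)\cap\DD(R')$ with $P\subset 22Q$, $\ell(Q)\le 2000\,\ell(P)$ (Lemma \ref{lem74}(d)).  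Harnack's inequality then propagates the lower bound $g(p,\cdot)\gtrsim_\gamma\ell(P)/\sigma(R_0)$ to a ball of radius $\approx\ell(P)$ centered at (essentially) a point on $L_Q$ at comparable distance from $\partial\Omega$ on \emph{both} sides.  Rescaling $g(p,\cdot)$ by $\sigma(R_0)/\ell(Q)$ and restricting to $\ball_Q$, and recalling $b\beta(Q)\le C_4\eps$ so $\partial\Omega\cap\ball_Q$ hugs $L_Q$, I would run the Alt--Caffarelli--Friedman machinery (Theorem \ref{t:ACF}, Lemma \ref{lem:ACF2}) on the two restrictions $u_1,u_2$ of the rescaled Green function to the two components of $\Omega\cap\ball_Q$ determined by the two sides: the quantitative rigidity case (b) of Lemma \ref{lem:ACF2}, combined with the fact that the small $b\beta$ forces the level sets of the $u_i$ to be nearly half-spaces aligned with $L_Q$, produces a $\Gamma_0$-good Harnack chain joining $\exe_Q^1$ to $\exe_Q^2$ with $\Gamma_0$ depending only on $\gamma$ and allowable parameters (this is the passage through a ``hole''); choosing $\Gamma=\Gamma(\gamma)>\Gamma_0$ contradicts $Q\in\WSBC(\Gamma)$.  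The delicate points I expect to fight with are: (1) getting the ACF input hypotheses --- the Lipschitz and $L^\infty$ bounds on $u_i$ at the right scale --- which come from Lemma \ref{lem333} and Lemma \ref{lem1'} using that $Q\in\fG$; (2) ensuring the two superlevel sets really are the components needed, i.e.\ that $g(p,\cdot)$ is not already tiny on one side for an unrelated reason, which is handled by the good-corkscrew hypothesis $g(p,\exe_Q^i)\gtrsim\ell(Q)/\sigma(R_0)$ that is propagated down the tree via Lemma \ref{lemshortjumps2} (this is why all corkscrews are taken to be $\kappa_1$-corkscrews); and (3) keeping the dependence of $\Gamma$ on $\gamma$ \emph{only}, not on $\eps$ --- which is automatic because the ACF normalization only sees $C_4\eps\le$ a fixed absolute constant and $\gamma$, and the chain length $\Gamma_0$ emerging from the rigidity statement is a function of $\gamma$ and $n,C_0,\kappa_1$ alone.
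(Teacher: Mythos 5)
Your plan takes a genuinely different route from the paper, and unfortunately the central step---the proof of (e)(i)---has a logical inversion that would make it collapse. You define $V_1,V_2$ purely geometrically (Whitney cubes sorted by sidedness with respect to a Lipschitz regularization of the planes $L_Q$), and then try to prove (e)(i) by contradiction: big $m_{4I}g(p,\cdot)$ on the boundary of $V_1\cup V_2$ near $P\in\wt\eend(R')$, followed by ACF rigidity (Lemma \ref{lem:ACF2}), supposedly ``produces a $\Gamma_0$-good Harnack chain joining $\exe_Q^1$ to $\exe_Q^2$'', contradicting $\WSBC$. But the rigidity alternative (b) of Lemma \ref{lem:ACF2} says the two rescaled Green functions are $\eps$-close to complementary linear half-space profiles $k_i\,((y-x)\cdot e)^{\pm}$; it asserts that $\partial\Omega$ fills out a near-hyperplane between the two positivity sets, which is \emph{exactly} the configuration where the corkscrews cannot be joined by a short chain. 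ACF rigidity therefore supports the $\WSBC$ conclusion rather than contradicting it; it never furnishes a ``hole''. Moreover, even granting that you wanted the non-rigidity branch, Lemma \ref{lem:ACF2} requires near-constancy of $J(x,\cdot)$ at two comparable scales, which is not free: in the paper's use of ACF (inside Lemma \ref{lemshortjumps}) that input is obtained by a pigeonholing over $\log(\kappa)$ dyadic scales, and the rigidity conclusion is then used only to manufacture a \emph{new} good corkscrew one scale higher --- not to build a chain between $\exe_Q^1$ and $\exe_Q^2$.

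The paper circumvents all of this by making the construction of $V_i$ \emph{dynamic}: for end cubes $P$, $U_i(P)$ is defined not by sidedness but as the union of those connected components $D_j^i(P)$ of the (size-truncated) Whitney fattening that either contain $\exe_P^i$, or contain a point $y$ with $g(p,y)>\gamma\,\ell(P)/\sigma(R_0)$ \emph{and} are joined to $\exe_P^i$ by a good chain. Property (e)(i) is then essentially built in, and the nontrivial content is Lemma \ref{lemei}: any $y\in\wt\ball_P$ with $g(p,y)>\gamma\,\ell(P)/\sigma(R_0)$ actually lies in $U_1(P)\cup U_2(P)$. Its proof uses boundary H\"older continuity to push $y$ a definite distance $\gtrsim\gamma^{1/\alpha}\ell(P)$ off $\partial\Omega$, then Lemma \ref{lemshortjumps} (where ACF is used to climb scales) to reach a good corkscrew in a larger cube $Q''\in\tree_\WSBC(R)$, and finally Lemma \ref{lembigcork} iterated down to $\exe_P^1$ or $\exe_P^2$. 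The $\WSBC$ hypothesis enters a different place entirely: it is what guarantees $\overline{V_1}\cap\overline{V_2}=\varnothing$ (if the closures met, the constructed curves would glue into a $C(\theta)$-good chain between big corkscrews, contradicting $\WSBC(c_6\Gamma)$). In short: your proposal inverts the logic of ACF (rigidity implies a chain is false --- rigidity implies separation), uses $\WSBC$ for the wrong property (you aim at (e)(i); the paper aims at disjointness), and your purely geometric definition of $V_i$ would leave no mechanism to force $g(p,\cdot)$ small on $\partial V_i$ near end cubes, which is precisely what the paper's dynamic component selection is designed to guarantee.
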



\vv

\subsection{Proof of the Geometric Lemma \ref{lemgeom}}

In this whole subsection we fix $R\in\ttt^{(N)}_b$ and we assume $\tree_\WSBC(R)\neq \varnothing$, as in 
Lemma \ref{lemgeom}. We let $R'\in\tree_\WSBC(R)$ be such that
$\ell(R')=2^{-k_0}\ell(R)$, with $k_0=k_0(\gamma)\geq1$ big enough, as in Lemma \ref{lemgeom},
and we consider the associated families 
 $\wt\tree_\WSBC(R')$ and  $\wt\eend(R')$.

\begin{remark}\label{rem**}
By arguments analogous to the ones in Lemma \ref{lemc6}, it follows easily that if $Q\in\wt\tree_\WSBC(R')$, for
$R'\in\tree_\WSBC(R)$ such that $\ell(R')=2^{-k_0}\ell(R)$,
 then there exists some cube $S\in \tree_\WSBC(R)$ such that $Q\subset 22S$ and $\ell(S)\leq2000\ell(Q)$. This implies that 
 $b\beta(Q)\leq C\,\eps$ and $Q\in \WSBC(c_6\Gamma)$ too.
\end{remark}

In order to define the open sets $V_1$, $V_2$ described in the lemma, first we need to associate
some open sets $U_1(Q)$, $U_2(Q)$ to each $Q\in\wt\tree_\WSBC(R')\cup\wt\eend(R')$. We distinguish two cases:
\begin{itemize}
\item For $Q\in\wt\tree_\WSBC(R')$, we let $\mathcal J_i(Q)$ be the family of Whitney cubes $I\in\WW$ which intersect
$$\{y\in\ 20\ball_Q:\dist(y,L_Q)>\eps^{1/4}\,\ell(Q)\}$$
and are contained in the same connected component of $\RR^{n+1}\setminus L_Q$ as $\exe_Q^i$, and then we set
$$U_i(Q) = \bigcup_{I\in \mathcal J_i(Q)} 1.1\interior(I).$$

\item For $Q\in\wt\eend(R')$ the definition of $U_i(Q)$ is more elaborated. First we consider an auxiliary ball
$\wt \ball_Q$, concentric with $\ball_Q$, such that $19\ball_Q\subset \wt \ball_Q\subset 20\ball_Q$ and having thin boundaries for $\omega^p$. This means that, for some absolute constant $C$,
\begin{equation}\label{eqthinbd}
\omega^p\bigl(\bigl\{x\in 2\wt \ball_Q:\dist(x,\,\partial \wt \ball_Q)\leq t\,r(\wt \ball_Q)\bigr\}\bigr) \leq C\, t\,\omega^p(2\wt \ball_Q)
\quad\mbox{ for all $t>0$.}
\end{equation}
The existence of such ball $\wt \ball_Q$ follows by well known arguments (see for example \cite[p.370]{Tolsa-llibre}).

Next we denote by $\mathcal J(Q)$ the family of Whitney cubes $I\in\WW$ which
 intersect $\wt \ball_Q$ and satisfy $\ell(I)\geq \theta\,\ell(Q)$ for $\theta\in(0,1)$ depending on $\gamma$
(the reader should think that $\theta\ll\eps$ and that $\theta=2^{-j_1}$ for some $j_1\gg1$), and we set
\begin{equation}\label{eqdefuq}
U(Q) = \bigcup_{I\in \mathcal J(Q)} 1.1\interior(I).
\end{equation}
For a fixed $i=1$ or $2$,
let $\{D_j^i(Q)\}_{j\geq0}$ be the connected components of $U(Q)$ which satisfy one of the following properties:
\begin{itemize}
\item either $\exe_Q^i\in D_j^i(Q)$ (recall that $\exe_Q^i$ is a big corkscrew for $Q$), or
\item there exists some $y\in D_j^i(Q)$ such that $g(p,y)> \gamma\,\ell(Q)\,\sigma(R_0)^{-1}$ and there is a $C_6(\gamma,\theta)$-good
Harnack chain that joins $y$ to $\exe_Q^i$, for some constant $C_6(\gamma,\theta)$ to be chosen below.
\end{itemize}
Then we let $U_i(Q)=\bigcup_j D_j^i(Q)$. After reordering the sequence, we assume that $\exe_Q^i\in D_0^i(Q)$.
We let $\mathcal J_i(Q)$ be the subfamily of cubes from $\mathcal J(Q)$ contained in $U_i(Q)$.
\end{itemize}

In the case $Q\in\wt\tree_\WSBC(R')$, from the definitions, it is clear that the sets $U_i(Q)$ are open and connected and 
\begin{equation}\label{eqinter1}
\overline{U_1(Q)}\cap \overline{U_2(Q)}=\varnothing.
\end{equation}
 In the case $Q\in\wt\eend(R')$, the sets $U_i(Q)$ may fail to be connected. However, \rf{eqinter1} still holds if $\Gamma$ is chosen big enough (which will be the case). Indeed, if some component $D_j^i$ can be joined by $C_6(\gamma,\theta)$-good
Harnack chains both to $\exe_Q^1$ and $\exe_Q^2$, then there is a $C(\gamma,\theta)$-good Harnack chain that joins $\exe_Q^1$ to 
$\exe_Q^2$, and thus  $Q$ does not belong to $\WSBC(c_6\Gamma)$ if $\Gamma$ is taken big enough, which cannot happen by Lemma \ref{lemc6}.
Note also that the two components of
$$\{y\in\wt \ball_Q:\dist(y,L_Q)>\eps^{1/2}\,\ell(Q)\}$$
are contained in $D_0^1(Q)\cup D_0^2(Q)$, because  $b\beta(Q)\leq C\eps$ and we assume $\theta\ll\eps$.

\vv

The following is immediate:

\begin{lemma}\label{lemgeom3}
Assume that we relabel appropriately the sets $U_i(P)$ and corkscrews $\exe_P^i$ for $P\in\wt\tree_\WSBC(R')\cup\wt\eend(R')$. Then for all $Q,\wh Q\in\wt\tree_\WSBC(R')\cup\wt\eend(R')$ such that $\wh Q$ is the parent of $Q$ we have
\begin{equation}\label{eqlabel}
\bigl[\exe_Q^1,\exe_{\wh  Q}^1\bigr]\subset U_1(Q) \cap 
U_1(\wh Q)\quad \text{ and }\quad \bigl[\exe_Q^2,\exe_{\wh  Q}^2\bigr]\subset U_2(Q) \cap 
U_2(\wh Q).
\end{equation}
Further, 
$$\dist\bigl([\exe_Q^i,\exe_{\wh  Q}^i],\partial\Omega\bigr)\geq c\,\ell(Q)\quad\mbox{ for $i=1,2$,}$$
where $c$ depends at most on $n$ and $C_0$.
\end{lemma}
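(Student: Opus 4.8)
The statement has three assertions, and the plan is to extract all of them from the already-established Lemma \ref{lem74} together with the construction of the sets $U_i(Q)$ and the properties of corkscrews. First I would address the relabeling: since the sets $U_1(Q),U_2(Q)$ and the big corkscrews $\exe_Q^1,\exe_Q^2$ are only defined up to swapping the two components of $\RR^{n+1}\setminus L_Q$ (or the two ``sides'' near $Q$), one processes the relevant cubes in decreasing order of side length along each chain of ancestors inside $\wt\tree_\WSBC(R')\cup\wt\eend(R')$, and at each step chooses the labeling of $Q$ so that $\exe_Q^1$ lies on the same side of $L_{\wh Q}$ (equivalently, is joined by a short good Harnack chain to $\exe_{\wh Q}^1$) as its parent's first corkscrew. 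This is exactly the mechanism already invoked in Lemma \ref{lembigcork} and Remark \ref{rem**}: for $Q,\wh Q$ with $b\beta(Q),b\beta(\wh Q)\le C_4\eps$ and $\wh Q$ the parent of $Q$, after relabeling, $\exe_Q^i$ and $\exe_{\wh Q}^i$ lie on the same component and can be joined by a $C$-good Harnack chain with $C$ depending only on $n,C_0,\kappa_1$. Since consecutive cubes in the relabeling process differ in side length by a bounded factor (by Lemma \ref{lem74}(b) for cubes in $\wt\eend(R')$, and trivially for dyadic ancestors within $\wt\tree_\WSBC(R')$), the relabeling is consistent along the whole forest.

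Next, for the inclusion \eqref{eqlabel}: once $\exe_Q^1$ and $\exe_{\wh Q}^1$ are on the same side of $L_Q$ and of $L_{\wh Q}$, I would check that the entire segment $[\exe_Q^1,\exe_{\wh Q}^1]$ stays at distance $\gtrsim\ell(Q)$ from both $L_Q$ and $L_{\wh Q}$, using that each big corkscrew satisfies $\dist(\exe_Q^i,L_Q)=r(\ball_Q)/2$ while $\exe_{\wh Q}^1$ is a corkscrew for $\wh Q$ (hence $\delta_\Omega(\exe_{\wh Q}^1)\ge\kappa_1\ell(\wh Q)\gg\eps\,\ell(\wh Q)$, so it is far from $L_Q$ as well since $b\beta(Q)\le C\eps$ forces $\partial\Omega$ — and $L_{\wh Q}$ relative to the scale of $Q$ — to be within $C\eps\,\ell(Q)$ of $L_Q$ on $100\ball_Q$). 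Combined with the fact that both corkscrews lie in $\ball_Q\cup\ball_{\wh Q}\subset 20\ball_Q$, the segment lies inside the slab $\{\dist(\cdot,L_Q)>\eps^{1/4}\ell(Q)\}\cap 20\ball_Q$ (for $\eps$ small), so by the very definition of $\mathcal J_i(Q)$ (when $Q\in\wt\tree_\WSBC(R')$) it is covered by cubes of $\mathcal J_i(Q)$, hence $[\exe_Q^1,\exe_{\wh Q}^1]\subset U_1(Q)$; the same for $\wh Q$. For $Q\in\wt\eend(R')$ one uses instead that the segment lies in $U(Q)$ (the Whitney cubes of size $\ge\theta\ell(Q)$ meeting $\wt\ball_Q$ cover a neighborhood of it, since both endpoints are at distance $\gtrsim\ell(Q)\gg\theta\ell(Q)$ from $\partial\Omega$) and in a component which contains $\exe_Q^1$, hence in $D_0^1(Q)\subset U_1(Q)$. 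The symmetric statements for the index $2$ are identical.

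Finally, the distance bound $\dist([\exe_Q^i,\exe_{\wh Q}^i],\partial\Omega)\ge c\,\ell(Q)$ is immediate from the above: every point $z$ on the segment lies in the interior of a fattened Whitney cube $1.1\,I$ with $\ell(I)\approx\ell(Q)$ (for $Q\in\wt\tree_\WSBC(R')$, from membership in $\mathcal J_i(Q)$ and the constraint that $I$ meets $\{\dist(\cdot,L_Q)>\eps^{1/4}\ell(Q)\}\cap 20\ball_Q$ while being a Whitney cube of $\Omega$; for $Q\in\wt\eend(R')$, from the lower bound $\ell(I)\ge\theta\ell(Q)$ in the definition of $\mathcal J(Q)$ — though here one gets $c=c(\theta)$ rather than an absolute constant, but since both endpoints are genuine corkscrews with $\delta_\Omega\gtrsim\ell(Q)$ one can also just invoke that the segment never enters the $\tfrac1{100}\dist(\cdot,\partial\Omega)$-neighborhood of $\partial\Omega$), so $\delta_\Omega(z)\gtrsim\ell(I)\approx\ell(Q)$ with the implicit constant depending only on $n$ and $C_0$. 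The main obstacle I anticipate is bookkeeping the relabeling coherently across the two different types of cubes (tree cubes versus end cubes) so that \eqref{eqlabel} holds simultaneously for all parent–child pairs in $\wt\tree_\WSBC(R')\cup\wt\eend(R')$; this is precisely where one must be careful that the ``same side'' relation is transitive along the forest, which follows because the relevant $b\beta$ and $\WSBC$ constants are uniformly controlled (Remark \ref{rem**}) so the short Harnack chains composing two steps still have bounded length.
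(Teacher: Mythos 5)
Your proposal is correct and matches the paper's intent: the paper labels this lemma ``immediate,'' gives only the top-down inductive relabeling scheme (fix labels for each maximal cube $T$, then for its sons, grandsons, and so on), and explicitly leaves the verification to the reader, which is precisely what you have supplied. The only small point to tighten is the distance bound: rather than appealing to Whitney cube size (which for end cubes gives a $\theta$-dependent constant) or the somewhat circular phrasing about a $\tfrac1{100}\dist(\cdot,\partial\Omega)$-neighborhood, one should argue that both endpoints have signed distance $\gtrsim\ell(Q)$ from $L_Q$ on the same side, that linearity of the signed distance propagates this bound along the whole segment, and that $b\beta(Q)\leq C\eps$ (with $\eps$ fixed small) then forces $\dist(\cdot,\partial\Omega)\gtrsim\ell(Q)$ on the segment, with a constant depending only on $n$ and $C_0$ as claimed.
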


The labeling above can be chosen inductively. First we fix the sets $U_i(T)$ and corkscrews $x_{T}^i$
for every maximal cube $T$ from $\wt\tree_\WSBC(R')$ (contained in $100\ball_{R'}$ and with side length equal to $\ell(R')$). Further we assume
that, for any maximal cube $T$, the corkscrew $x^i_T$ is at the same side of $L_{R'}$ as $\exe_{R'}^i$, for each $i=1,2$ (this property will be used below).
Later we label the sons of each $T$ so that \rf{eqlabel} holds for any son $Q$ of $T$. Then we proceed with the grandsons of
$T$, and so on. We leave the details for the reader. 

The following result will be used later to prove the property (e)(i).

\begin{lemma}\label{lemei}
Suppose that the constant $k_0(\gamma)$ in  Lemma \ref{lemgeom} is big enough.
Let $Q\in\wt\eend(R')$ and assume $\theta$ small enough and $C_6(\gamma,\theta)$ big enough in the definition of $U_i(Q)$. If $y\in \wt \ball_Q$ satisfies
$g(p,y)> \gamma\,\ell(Q)\,\sigma(R_0)^{-1}$, then $y\in U_1(Q)\cup U_2(Q)$.
\end{lemma}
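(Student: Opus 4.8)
The statement of Lemma~\ref{lemei} asks us to show that if $Q\in\wt\eend(R')$ and $y\in\wt\ball_Q$ satisfies the lower bound $g(p,y)>\gamma\,\ell(Q)\,\sigma(R_0)^{-1}$, then $y$ lies in one of the two distinguished open sets $U_1(Q)$ or $U_2(Q)$. The first step is to reduce the membership question to membership in $U(Q)=\bigcup_{I\in\mathcal J(Q)}1.1\,\interior(I)$, the union of (fattened) Whitney cubes intersecting $\wt\ball_Q$ of side length at least $\theta\ell(Q)$. For this we must rule out that $y$ is too close to $\partial\Omega$: if $\delta_\Omega(y)<c\,\theta\ell(Q)$ for a suitable small absolute constant, then the Whitney cube containing $y$ is smaller than $\theta\ell(Q)$ and is not one of the cubes defining $U(Q)$. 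But a quantitative boundary H\"older estimate for the Green function — this is exactly Lemma~\ref{lem333} combined with the mass bound $g(p,x)\lesssim\omega^p(C\ball_{R'})/\ell(R')^{n-1}\lesssim A\,\sigma(R')/(\sigma(R_0)\ell(R')^{n-1})\approx A\,\ell(R')/\sigma(R_0)$ on a slightly larger ball (using Lemma~\ref{lem5.1} and $Q\subset 22S$, $\ell(S)\lesssim\ell(Q)\le\ell(R')$ via Remark~\ref{rem**}) — forces
\[
g(p,y)\;\lesssim\;\Big(\frac{\delta_\Omega(y)}{\ell(Q)}\Big)^{\alpha}\,\frac{\ell(R')}{\sigma(R_0)}\;\lesssim\;\Big(\frac{\delta_\Omega(y)}{\ell(Q)}\Big)^{\alpha}\,2^{k_0}\,\frac{\ell(Q)}{\sigma(R_0)}\,.
\]
Choosing $k_0=k_0(\gamma)$ large and then $\theta=\theta(\gamma,k_0)$ small, this contradicts the hypothesis $g(p,y)>\gamma\,\ell(Q)\,\sigma(R_0)^{-1}$ unless $\delta_\Omega(y)\ge c\,\theta\ell(Q)$; since also $\delta_\Omega(y)\lesssim\ell(Q)$ for $y\in\wt\ball_Q\subset 20\ball_Q$ with $Q\in\wt\eend(R')\subset\DD$, the Whitney cube $I\ni y$ has $\ell(I)\approx_\theta\ell(Q)\ge\theta\ell(Q)$, so $I\in\mathcal J(Q)$ and hence $y\in U(Q)$.

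**Identifying the right component.** Now $y$ lies in some connected component $D$ of $U(Q)$. To conclude $y\in U_1(Q)\cup U_2(Q)$ it suffices, by the very definition of $U_i(Q)$, to produce a $C_6(\gamma,\theta)$-good Harnack chain inside $\Omega$ joining $y$ to one of the big corkscrews $\exe_Q^1$ or $\exe_Q^2$ (the condition $g(p,y)>\gamma\ell(Q)\sigma(R_0)^{-1}$ is already in hand). For this I would use the connectedness of the superlevel set $U=\{x\in\Omega: g(p,x)>\gamma\,\ell(Q)\,\sigma(R_0)^{-1}\}$, established (as in the proof of Lemma~\ref{lemshortjumps}) by the maximum principle: any connected component of $U$ contains $p$. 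Thus there is a curve in $U$ from $y$ to $p$; following it until it first exits $\wt\ball_Q$ (or, if it never exits, all the way to a point near the center where $\exe_Q^i$ sit), and using Lemma~\ref{lem333} together with the mass bound as above to see that $U\cap\wt\ball_Q$ stays at distance $\gtrsim_\gamma\theta'\ell(Q)$ from $\partial\Omega$ for an appropriate small $\theta'$ — hence is covered by Whitney cubes of comparable size, i.e.\ is contained in $U(Q)$ — one finds that $y$ is connected within $U(Q)\cap\{g(p,\cdot)>\gamma\ell(Q)\sigma(R_0)^{-1}\}$ to a point arbitrarily close to the plane-orthogonal corkscrew points. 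Since the two big corkscrews $\exe_Q^1,\exe_Q^2$ are the (unique up to good chains) corkscrews for $Q$ and $b\beta(Q)\le C\eps$ with $\theta\ll\eps$, the curve terminates on the $\exe_Q^1$-side or the $\exe_Q^2$-side, giving the desired good chain with constant $C_6$ depending only on $\gamma,\theta$ (and allowable parameters). Counting balls in a Besicovitch cover of this curve, as in Lemma~\ref{lemclosejumps}, controls the chain length uniformly in $N$.

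**The main obstacle.** The delicate point is the bookkeeping of constants: we need $k_0$ large (to make $2^{k_0}$ beat the loss in the mass bound through the factor $\ell(R')/\ell(Q)$), then $\theta$ small depending on $k_0$ and $\gamma$ (to make the H\"older power $(\delta_\Omega(y)/\ell(Q))^\alpha$ small enough), and then $C_6(\gamma,\theta)$ large — and all of this must be compatible with the earlier choice $\Gamma\gg\eps^{-1}$ and with $\eps$ being fixed \emph{independently} of $\gamma$. The order of quantifiers is: fix $\eps$, then choose $\gamma$, then $k_0(\gamma)$, then $\theta(\gamma,k_0)$, then $C_6(\gamma,\theta)$, then $\Gamma(\gamma)$ large enough. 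One must check that the connectedness argument for $U(Q)$ genuinely traps the curve in Whitney cubes of size $\gtrsim\theta\ell(Q)$ \emph{throughout} $\wt\ball_Q$, not just near $y$; this is where the uniform mass estimate $g(p,x)\lesssim\omega^p(C\ball_Q)/\ell(Q)^{n-1}$ on $\wt\ball_Q$ (from Lemmas~\ref{lem1'} and~\ref{lem5.1}) together with the H\"older estimate Lemma~\ref{lem333} does the work, exactly paralleling the derivation of \eqref{eqvt5} in the proof of Lemma~\ref{lemshortjumps}. I expect the routine verification that the resulting chain meets the \emph{finite}-overlap and monotone-radius requirements of a good Harnack chain (rather than just being a curve far from the boundary) to be the most tedious, but it is entirely analogous to Lemma~\ref{l:johntochain} and Lemma~\ref{lemclosejumps}.
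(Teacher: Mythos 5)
There is a genuine gap in the second half of your argument. Your first step, despite a minor slip, lands in the right place: the mass of $g(p,\cdot)$ on $30\ball_Q$ should be estimated directly at scale $\ell(Q)$ via Lemma~\ref{lem1'} together with Lemma~\ref{lem5.1} and Remark~\ref{rem**}, giving $m_{30\ball_Q}g(p,\cdot)\lesssim A\,\ell(Q)/\sigma(R_0)$; there is no need to pass through the scale $\ell(R')$, and the inequality $\ell(R')\lesssim 2^{k_0}\ell(Q)$ that you use to absorb that detour is in fact false, since for $Q\in\wt\eend(R')$ the ratio $\ell(R')/\ell(Q)$ is unbounded (it is $\ell(R)/\ell(R')$, not $\ell(R')/\ell(Q)$, that equals $2^{k_0}$). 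In any case the corrected estimate gives $\delta_\Omega(y)\gtrsim\gamma^{1/\alpha}\ell(Q)$, so for $\theta$ small enough $y\in U(Q)$; this part matches the paper.

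The serious problem is in the step of ``identifying the right component.'' You propose to follow a curve inside the superlevel set $\{g(p,\cdot)>\gamma\,\ell(Q)\,\sigma(R_0)^{-1}\}$ and argue that it eventually arrives at, or on the same side as, one of the big corkscrews $\exe_Q^1,\exe_Q^2$. But every point $z$ of that superlevel set inside $\wt\ball_Q$ is only guaranteed $\delta_\Omega(z)\gtrsim\gamma^{1/\alpha}\ell(Q)$, a distance which shrinks with $\gamma$. Since the order of quantifiers fixes $\eps$ first and then lets $\gamma\to0$, such a point can lie well within $\eps^{1/2}\ell(Q)$ of the approximating plane $L_Q$, where no ``side'' can be assigned uniformly in $\gamma$. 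Your phrase that the big corkscrews are ``unique up to good chains'' presupposes precisely what the $\WSBC$ stopping time withholds: for $Q\in\WSBC(\Gamma)$, $\exe_Q^1$ and $\exe_Q^2$ are \emph{not} joinable by a $\Gamma$-good chain, so a point near $L_Q$ with small $\delta_\Omega$ cannot be routed to one of them with a chain constant independent of $\gamma$. The paper resolves this by applying the Short Paths Lemma~\ref{lemshortjumps} starting from $y$ with $r_0\approx r(\ball_Q)$: the Alt--Caffarelli--Friedman mechanism promotes $y$ to a point $y'\in C(\gamma)\ball_Q$ with $\delta_\Omega(y')\geq\kappa\,r$ where $\kappa$ depends only on $\Lambda,C_0$ and is \emph{independent of $\gamma$} (hence $\kappa\gg\eps$), together with a $C'(\gamma)$-good chain from $y$ to $y'$. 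The requirement that $k_0(\gamma)$ be large is what ensures this jump remains inside the region where the tree $\tree_\WSBC(R)$ still has cubes, so that $y'$ lands in $2\ball_{Q''}$ for some $Q''\in\tree_\WSBC(R)$ with $22Q''\supset Q$, and one may descend to $\exe_Q^1$ or $\exe_Q^2$ by repeated application of Lemma~\ref{lembigcork}; it has nothing to do with balancing a factor $2^{k_0}$ in a mass estimate. Without an ACF-type jump to a $\gamma$-independent corkscrew your argument cannot close, and it is not, as you suggest, merely a tedious bookkeeping exercise.
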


Recall $\wt \ball_Q$ is the ball with thin boundary appearing in \rf{eqthinbd}.

\begin{proof}
By the definition of $U_i(Q)$, it suffices to show that $y$ belongs to some component $D_j^i(Q)$  and that there is a $C_6(\gamma,\theta)$-good
Harnack chain that joins $y$ to $\exe_Q^i$.
To this end, observe that by the boundary H\"older continuity of $g(p,\cdot)$,
$$\gamma\,\frac{\ell(Q)}{\sigma(R_0)} \leq g(p,y)\leq C\,\left(\frac{\delta_\Omega(y)}{\ell(Q)}\right)^\alpha\,m_{30\ball_Q}g(p,\cdot)
\leq C\,\left(\frac{\delta_\Omega(y)}{\ell(Q)}\right)^\alpha\,\frac{\ell(Q)}{\sigma(R_0)},$$
where in the last inequality we used Lemma \ref{lem1'}. Thus,
$$\delta_\Omega(y)\geq c\,\gamma^{1/\alpha}\,\ell(Q),$$
and if $\theta$ is small enough, then $y$ belongs to some connected component of the set $U(Q)$ in \rf{eqdefuq}. By Lemma \ref{lem74}(d) there is a cube $Q'\in\tree_\WSBC(R)$
such that $Q\subset 22Q'$ and $\ell(Q')\approx\ell(Q)$. In particular, $\WA(p,\Lambda)\cap Q'\supset G_0\cap Q'\neq\varnothing$
and thus, by applying Lemma
\ref{lemshortjumps} with $q=y$ and $r_0=Cr(\ball_Q)$ (for a suitable $C>1$), 
it follows that there exists a $\kappa_1$-corkscrew $y'\in C(\gamma)\,\ball_Q$, with $C(\gamma)>20$ say, such that
$y$ can be joined to $y'$ by a $C'(\gamma)$-good Harnack chain. Assuming that the constant $k_0(\gamma)$ in Lemma \ref{lemgeom} is big enough,
it turns out that $y'\in 2\ball_{Q''}$ for some $Q''\in \tree_\WSBC(R)$ such that $22Q''\supset Q$. Since all the cubes $S$ such that $Q\subset S\subset 22Q''$ satisfy
$b\beta(S)\leq C\,\eps$, by applying Lemma \ref{lembigcork} repeatedly, it follows that $y'$ can be joined either to $\exe_Q^1$
or $\exe_Q^2$ by a $C''(\gamma)$-good Harnack chain. Then, joining both Harnack chains, it follows that $y$ can be joined either
to $\exe_Q^1$ or $\exe_Q^2$ by a $C'''(\gamma)$-good Harnack chain. So $y$ belongs to one of the components $D_j^i$, assuming
$C_6(\gamma,\theta)$ big enough.
\end{proof}

\vv

From now on we assume $\theta$ small enough and $C_6(\gamma,\theta)$ big enough so that the preceding
lemma holds. Also, we assume $\theta\ll \eps^4$.
We define
$$V_1 = \bigcup_{Q\in\wt\tree_\WSBC(R')\cup\wt\eend(R')} U_1(Q),\qquad V_2 = \bigcup_{Q\in\wt\tree_\WSBC(R')\cup\wt\eend(R')} U_2(Q).$$
Next we will show that 
$$\overline {V_1}\cap \overline{V_2}=\varnothing.$$
Since the number of cubes $Q\in\wt\tree_\WSBC(R')\cup\wt\eend(R')$ is finite (because of the truncation in the
corona decomposition), this is a consequence of the following:

\begin{lemma}
Suppose $\Gamma$ is big enough in the definition of $\WSBC$ (depending on $\theta$). For all $P,Q\in\wt\tree_\WSBC(R')\cup\wt\eend(R')$, we have
$$\overline{U_1(P)} \cap \overline{U_2(Q)}=\varnothing.$$
\end{lemma}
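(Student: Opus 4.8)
The plan is to prove the stronger pointwise separation statement that $\overline{U_1(P)}\cap\overline{U_2(Q)}=\varnothing$ for every pair $P,Q\in\wt\tree_\WSBC(R')\cup\wt\eend(R')$, from which $\overline{V_1}\cap\overline{V_2}=\varnothing$ follows since both unions are finite (thanks to the truncation $\ttt^{(N)}$). The argument is by contradiction: suppose there is a point $w\in\overline{U_1(P)}\cap\overline{U_2(Q)}$. Since each $U_i(\cdot)$ is a union of fattened Whitney cubes $1.1\interior(I)$, the point $w$ lies in $1.1\overline I$ for some $I\in\mathcal J_1(P)$ and in $1.1\overline J$ for some $J\in\mathcal J_2(Q)$; by the bounded overlap property (iii) of the Whitney decomposition and the fact that $1.1\overline I\cap 1.1\overline J\neq\varnothing$, we get $\ell(I)\approx\ell(J)$ and $\delta_\Omega(w)\approx\ell(I)\approx\ell(J)$, with all implicit constants absolute.

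The next step is to bound the sizes $\ell(P),\ell(Q)$ against $\delta_\Omega(w)$ from both sides, using the two types of cubes. If $P\in\wt\tree_\WSBC(R')$, then $I$ meets $\{y\in 20\ball_P:\dist(y,L_P)>\eps^{1/4}\ell(P)\}\subset 20\ball_P$, so $\ell(P)\lesssim\delta_\Omega(w)\lesssim\ell(P)$ (the lower bound using $\ell(I)\lesssim\ell(P)$ by the Whitney property combined with $20\ball_P$, and the upper bound requires a touching argument: the point realizing $\dist(\cdot,L_P)>\eps^{1/4}\ell(P)$ sits at definite distance from $\partial\Omega$ comparable to $\ell(P)$). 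If $P\in\wt\eend(R')$, then $I\in\mathcal J(P)$ satisfies $\ell(I)\geq\theta\ell(P)$ and $I$ meets $\wt\ball_P\subset 20\ball_P$, which again gives $\ell(P)\approx_\theta\delta_\Omega(w)$. The same holds for $Q$, so in all cases $\ell(P)\approx\ell(Q)\approx\delta_\Omega(w)$ up to constants depending only on $n,C_0,\theta$. Moreover $w\in C\ball_P\cap C\ball_Q$, so $\dist(P,Q)\lesssim\ell(P)\approx\ell(Q)$; by Remark \ref{rem**} both $P$ and $Q$ live in cubes $S\in\tree_\WSBC(R)$ of comparable size, and one checks that $P$ and $Q$ are comparable and nearby cubes, so there is a cube $S\in\tree_\WSBC(R)$ with $P\cup Q\subset 22S$, $\ell(S)\approx\ell(P)\approx\ell(Q)$, $b\beta(S)\leq C\eps$, and $S\in\WSBC(\Gamma)$.

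Finally I would derive the contradiction with $S\in\WSBC(\Gamma)$ by producing a short good Harnack chain joining the two big corkscrews $\exe_S^1$ and $\exe_S^2$ through the point $w$. Since $w\in 1.1\overline I$ with $\delta_\Omega(w)\approx\ell(I)\approx\ell(S)$, a bounded number of Whitney cubes connects $w$ to the center of $I$ and then, by Lemma \ref{lemgeom3} applied along the chain of ancestors inside $\wt\tree_\WSBC(R')\cup\wt\eend(R')$ (or directly, since $I\in\mathcal J_1(P)$ lies in the same component of $\RR^{n+1}\setminus L_P$ as $\exe_P^1$ and $b\beta(P),b\beta(Q)\leq C\eps$ with $\theta\ll\eps^4$), one joins $w$ to $\exe_P^1$ by a $C(\theta)$-good chain, hence to $\exe_S^1$ via Lemma \ref{lembigcork} applied to the at most boundedly many intermediate cubes between $P$ and $22S$; symmetrically $w$ is joined to $\exe_S^2$ by a $C(\theta)$-good chain. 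Concatenating, $\exe_S^1$ and $\exe_S^2$ are joined by a $\Gamma_0$-good Harnack chain for some $\Gamma_0=\Gamma_0(n,C_0,\theta)$. Choosing $\Gamma>\Gamma_0$ in the definition of $\WSBC$ then contradicts $S\in\WSBC(\Gamma)$. The main obstacle is the bookkeeping of constants — in particular making sure that $\Gamma_0$ depends only on $n,C_0,\theta$ (and not on $\Gamma$ itself, nor on $\gamma,\eps$ in a circular way), so that $\Gamma$ can legitimately be chosen afterward large enough to close the argument; this forces a careful separation of the construction of the chains (which may use $\theta$) from the choice of $\Gamma$.
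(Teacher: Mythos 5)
Your strategy is at heart the same as the paper's: compare scales ($\ell(P)\approx_\theta\ell(Q)\approx_\theta\delta_\Omega(w)$, the constant also implicitly involving $\eps^{1/4}$ in the $\wt\tree_\WSBC$ case), lift from $w$ up to two big corkscrews at the top scale, and contradict $\WSBC$ by producing a too-short Harnack chain between them. The paper dispenses with your auxiliary cube $S$ by simply using the larger of $P,Q$ as the anchor (and in its Case 1, where both $P,Q\in\wt\tree_\WSBC(R')$, it closes the argument directly against $b\beta(Q)\le\eps$ rather than $\WSBC$, which is a little cleaner since it doesn't need $\Gamma$ at all), but this is a cosmetic difference and your version also works once the constants are tracked.

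There is, however, one genuine gap: you assert that the chain through $w$ starting from $\mathcal{J}_1(P)$ lands at $\exe_S^1$ \emph{and} the chain through $\mathcal{J}_2(Q)$ lands at $\exe_S^2$, ``symmetrically.'' Nothing in your write-up rules out the possibility that both end on the same side of $L_S$, in which case the concatenation joins $\exe_S^1$ to itself and yields no contradiction. What is needed — and what the paper states explicitly and proves by induction on the generation $j$ using the consistent relabeling of Lemma \ref{lemgeom3} — is that for cubes $P, Q$ of the same generation in $\wt\tree_\WSBC(R')\cup\wt\eend(R')$, the corkscrews $\exe_P^i$ and $\exe_Q^i$ lie on the same side of the common approximating plane $L_Q$ (equivalently $L_S$) for each $i$. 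Only then does the segment $[\exe_Q^1,\exe_{\wh P}^1]$ (resp.\ your segment from $\exe_S^1$ down to $\exe_P^1$) stay far from $\partial\Omega$, and only then does your concatenation actually connect $\exe_S^1$ to $\exe_S^2$ rather than back to itself. You invoke Lemma \ref{lemgeom3}, which is indeed the right tool, but the inductive propagation of the labeling across all cubes of a fixed generation — not merely along a single parent–child chain — must be spelled out, and it is precisely the point the paper flags at the start of its proof (``Notice first that if $\ell(P)=\ell(Q)=2^{-j}\ell(R')$ \ldots then the corkscrews $\exe_P^i$ and $\exe_Q^i$ are at the same side of $L_Q$'').
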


\begin{proof}
We suppose that $\ell(Q)\geq \ell(P)$
We also assume that $\overline{U_1(P)} \cap \overline{U_2(Q)}\neq\varnothing$ and then we will get a contradiction. Notice first that if $\ell(P)=\ell(Q)=2^{-j}\ell(R')$ for some $j\geq0$, then the corkscrews $\exe_P^i$ and $\exe_Q^i$ are at the same side of $L_Q$ for each $i=1,2$. This follows easily by induction on $j$.

\vv
\noi {\bf Case 1.}
Suppose first that $P,Q\in\wt\tree_\WSBC(R')$. 
Since the cubes from 
$\mathcal J_2(Q)$ have side length at least $c\,\eps^{1/4}\,\ell(Q)$, it follows that at least one of the cubes 
from $\mathcal J_1(P)$ has side length at least $c'\,\eps^{1/4}\,\ell(Q)$, which implies that
$\ell(P)\geq c''\,\eps^{1/4}\,\ell(Q)$, by the construction of $U_1(P)$.

Since $U_1(P) \cap U_2(Q)\neq\varnothing$, there exists some curve $\gamma=\gamma(\exe_P^1,\exe_Q^2)$ that joins $\exe_P^1$ and $\exe_Q^2$
such that $\dist(\gamma,\partial\Omega)\geq c\,\eps^{1/2}\,\ell(Q)$ because all the cubes from 
$\mathcal J_2(Q)$ have side length at least $c\,\eps^{1/4}\,\ell(Q)$, and the ones from $\mathcal J_1(P)$ 
have side length $\geq c\,\eps^{1/4}\,\ell(P)\geq c\,\eps^{1/2}\,\ell(Q)$. 

Let $\wh P$ be the ancestor of $P$ such that $\ell(\wh P)=\ell(Q)$. From the fact that 
$\overline{U_1(P)} \cap \overline{U_2(Q)}\neq\varnothing$, we deduce that $20\ball_P\cap 20\ball_Q\neq\varnothing$ and thus 
$20\ball_{\wh P}\cap 20\ball_Q\neq\varnothing$, and so $20\ball_{\wh P}\subset 60\ball_Q$.
This implies that $\exe_{\wh  P}^1$ is in the same connected
component as $\exe_Q^1$ and also that $\dist([\exe_Q^1,\exe_{\wh  P}^1],\partial\Omega)\gtrsim\ell(Q)$, because $b\beta(100\ball_Q)\leq\eps\ll1$
and they are at the same side of $L_Q$.

Consider now the chain $P=P_1\subset P_2\subset\ldots\subset P_m=\wh P$, so that $P_{i+1}$ is the parent
of $P_i$. Form the curve $\gamma'= \gamma'(\exe_{\wh  P}^1,\exe_P^1)$ with endpoints $\exe_{\wh  P}^1$ and $\exe_P^1$ by joining the segments
$[\exe_{P_i}^1,\exe_{P_{i+1}}^1]$. Since these segments
satisfy
$$\dist\bigl([\exe_{P_i}^1,\exe_{P_{i+1}}^1],\partial\Omega\bigr)\geq c\,\ell(P_i)\geq c\,\ell(P)\geq c\,\eps^{1/4}\,\ell(Q),$$
it is clear that $\dist(\gamma',\partial\Omega)\geq c\,\eps^{1/4}\,\ell(Q)$. 

Next 
we form  a curve $\gamma''= \gamma''(\exe_Q^1,\exe_Q^2)$ which joins  $\exe_Q^1$ to $\exe_Q^2$
by joining $[\exe_Q^1,\exe_{\wh  P}^1]$, 
$\gamma'(\exe_{\wh  P}^1,\exe_P^1)$, and
 $\gamma(\exe_P^1,\exe_Q^2)$. It follows easily that this is contained in $90\ball_Q$ and that $\dist(\gamma'',\partial\Omega)\geq c\,\eps^{1/2}\,\ell(Q)$. However, this is not possible because 
$\exe_Q^1$ and $\exe_Q^2$ are in different connected components of $\RR^{n+1}\setminus L_Q$ and 
$b\beta(Q)\leq\eps\ll\eps^{1/2}$ (since we assume
$\eps\ll1$).

\vv
\noi {\bf Case 2.}
Suppose now that $Q\in \wt\eend(R')
$. The arguments are quite similar to the ones above. In this case,
the cubes from
$\mathcal J_2(Q)$ have side length at least $\theta\,\ell(Q)$ and thus 
 at least one of the cubes 
from $\mathcal J_1(P)$ has side length at least $c\,\theta\,\ell(Q)$, which implies that
$\ell(P)\geq c'\,\theta\,\ell(Q)$.

Now there exists a curve $\gamma=\gamma(\exe_P^1,\exe_Q^2)$ that joints $\exe_P^1$ and $\exe_Q^2$
such that $\dist(\gamma,\partial\Omega)\geq c\,\theta^2\,\ell(Q)$ because all the cubes from 
$\mathcal J_2(Q)$ have side length at least $\theta\,\ell(Q)$, and the ones from $\mathcal J_1(P)$ 
have side length $\theta\,\ell(P)\geq c\,\theta^2\,\ell(Q)$. 

We consider again cubes $\wh P$ and $P_1,\ldots,P_m$ defined exactly as above. By the same reasoning as above, $\dist([\exe_Q^1,\exe_{\wh  P}^1],\partial\Omega)\gtrsim\ell(Q)$. We also define the curve $\gamma'
=\gamma'(\exe_{\wh  P}^1,\exe_P^1)$ which 
joins  $\exe_{\wh  P}^1$ to $\exe_P^1$ in the same way. In the present case we have
$$\dist(\gamma',\partial\Omega)\gtrsim \ell(P)\geq c\,\theta\,\ell(Q).$$
Again construct a curve $\gamma''= \gamma''(\exe_Q^1,\exe_Q^2)$ which joins  $\exe_Q^1$ to $\exe_Q^2$
by gathering $[\exe_Q^1,\exe_{\wh  P}^1]$, 
$\gamma'(\exe_{\wh  P}^1,\exe_P^1)$, and
 $\gamma(\exe_P^1,\exe_Q^2)$.  This is contained in $C\ball_Q$ (for some $C>1$ possibly depending on $\gamma$) and satisfies  $\dist(\gamma'',\partial\Omega)\geq c\,\theta^2\,\ell(Q)$. From this fact we deduce that $\exe_Q^1$ and $\exe_Q^2$
can be joined by $C(\theta)$-good Harnack chain. Taking $\Gamma$ big enough (depending on $C(\theta)$),
this implies that the big corkscrews for $Q$ can be joined by a $(c_6\Gamma)$-good Harnack chain, which contradicts
Lemma \ref{lemc6}.

\vv
\noi {\bf Case 3.}
Finally suppose that $P\in\wt\eend(R')$. We consider the same auxiliary cube $\wh P$ and the same curve
$\gamma=\gamma(\exe_P^1,\exe_Q^2)$ satisfying $\dist(\gamma,\partial\Omega)\geq c\,\theta\,\ell(P)$.
 By joining the segments $[\exe_{P_i}^2,\exe_{P_{i+1}}^2]$, we construct a curve $\gamma'_2=\gamma'_2(\exe_{\wh  P}^2,\exe_P^2)$ analogous
to $\gamma'=\gamma'(\exe_{\wh  P}^1,\exe_P^1)$ from the case 2, so that this joins $\exe_{\wh  P}^2$ to $\exe_P^2$
and satisfies $\dist(\gamma'_2,\partial\Omega)\gtrsim \ell(P)$.

We construct a curve $\gamma'''$ that joins $\exe_P^1$ to $\exe_P^2$ by joining $\gamma(\exe_P^1,\exe_Q^2)$,
$[\exe_Q^2,\exe_{\wh  P}^2]$, and $\gamma'_2(\exe_{\wh  P}^2,\exe_P^2)$. Again this is contained in 
 $C\ball_Q$ and it holds $\dist(\gamma''',\partial\Omega)\geq c\,\theta\,\ell(P)$.
This implies that $\exe_P^1$ and $\exe_P^2$
can be joined by $C(\theta)$-good Harnack chain. Taking $\Gamma$ big enough,
we deduce the big corkscrews for $P$ can be joined by a $(c_6\Gamma)$-good Harnack chain, which
is a contradiction.
\end{proof}

\vv
By the definition of $V_1$ and $V_2$ it is clear that the properties (a), (b) and (c) in Lemma \ref{lemgeom} hold. So to complete the
proof of the lemma it just remains to prove (d) and (e). 
\vv

\begin{proof}[\bf Proof of Lemma \ref{lemgeom}(d)]
Let $x\in(\partial V_1\cup\partial V_2)\cap 10\ball_{R'}$. We have to show that there exists some $S\in\wt\eend(R')$
such that $x\in 2\ball_S$.
To this end we consider $y\in\partial\Omega$ such that $|x-y|=\delta_\Omega(x)$. Since $\zed_{R'}\in\partial
\Omega$, it follows that $y\in 20\ball_{R'}$. Let $S\in\wt\eend(R')$ be such that $y\in S$. 
Observe that 
\begin{equation}\label{eqls9}
\ell(S)\leq\frac1{300}\,d_R(y)\leq \frac1{300}\,\bigl(\ell(R') + 20\,r(\ball_{R'})\bigr) = \frac{81}{300}
\,\ell(R')\leq \frac13\,\ell(R').
\end{equation}

We claim that
$x\in 2\ball_S$. 
Indeed, if $x\not\in 2\ball_S$, taking also into account \rf{eqls9}, 
there exists some ancestor $Q$ of $S$ contained in $100\ball_{R'}$  such that $x\in 2\ball_Q$ and  
$\delta_\Omega(x)=|x-y|\approx \ell(Q)$. 
From the fact that $S\subsetneq Q\subset 100 \ball_{R'}$ we deduce that 
$Q\in\wt\tree_\WSBC(R')$. By the construction of the sets $U_i(Q)$, it is immediate to check that the condition that $\delta_\Omega(x)\approx \ell(Q)$ implies that $x\in U_1(Q)\cup U_2(Q)$. Thus $x\in V_1\cup V_2$ and so $x\not \in\partial (V_1\cup V_2) = \partial V_1 \cup \partial V_2$ (for this identity we use that $\dist(V_1,V_2)>0$), which is a contradiction.
\end{proof}
\vv

To show (e), first we need to prove the next result:

\begin{lemma}\label{lembdr1}
For each $i=1,2$, we have
$$\partial V_i \cap 10\ball_{R'}
\subset \bigcup_{Q\in\wt\eend(R')} \partial U_i(Q).$$
\end{lemma}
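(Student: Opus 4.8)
The statement to prove is Lemma~\ref{lembdr1}: for each $i=1,2$, one has $\partial V_i\cap 10\ball_{R'}\subset\bigcup_{Q\in\wt\eend(R')}\partial U_i(Q)$. Recall that $V_i=\bigcup_{Q\in\wt\tree_\WSBC(R')\cup\wt\eend(R')}U_i(Q)$, a finite union of open sets (finite because of the truncation in the corona decomposition). The strategy is to take a boundary point $x\in\partial V_i\cap 10\ball_{R'}$ and to show two things: first, $x\in\partial U_i(Q)$ for \emph{some} index cube $Q$ in the union defining $V_i$; second, that cube $Q$ can in fact be taken in $\wt\eend(R')$, i.e.\ a boundary point of $V_i$ inside $10\ball_{R'}$ cannot arise as a boundary point coming purely from the ``tree part'' $\wt\tree_\WSBC(R')$.

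\textbf{Step 1: reduce to boundary points of individual pieces.} Since $V_i$ is a \emph{finite} union $\bigcup_Q U_i(Q)$ of open sets, we have $\overline{V_i}=\bigcup_Q\overline{U_i(Q)}$ and hence $\partial V_i\subset\bigcup_Q\partial U_i(Q)$. So for $x\in\partial V_i\cap 10\ball_{R'}$ there is at least one cube $Q_0\in\wt\tree_\WSBC(R')\cup\wt\eend(R')$ with $x\in\partial U_i(Q_0)$. If $Q_0\in\wt\eend(R')$ we are done, so assume $Q_0\in\wt\tree_\WSBC(R')$ and aim for a contradiction (or rather, for the production of an $\wt\eend(R')$-cube with the same property).

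\textbf{Step 2: analyze a boundary point of $U_i(Q)$ with $Q\in\wt\tree_\WSBC(R')$.} Here $U_i(Q)$ is, by construction, the interior of the union of $1.1$-dilates of those Whitney cubes $I$ that meet $\{y\in 20\ball_Q:\dist(y,L_Q)>\eps^{1/4}\ell(Q)\}$ and lie on the $\exe_Q^i$-side of $L_Q$. A point $x\in\partial U_i(Q)$ therefore either (i) lies on $\partial(20\ball_Q)$, or (ii) lies near the plane $L_Q$ (at distance comparable to $\eps^{1/4}\ell(Q)$ from it), or (iii) is a point where $\delta_\Omega(x)$ is small, i.e.\ $x$ is close to $\partial\Omega$ relative to $\ell(Q)$ — these are the places where the ``fattened-Whitney-cube'' region $U_i(Q)$ can terminate. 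Take $y\in\partial\Omega$ with $|x-y|=\delta_\Omega(x)$; then $y\in 20\ball_{R'}$ (as in the proof of part (d), using $\zed_{R'}\in\partial\Omega$ and $x\in 10\ball_{R'}$), and we pick $S\in\wt\eend(R')$ with $y\in S$. As in part (d), $\ell(S)\le\frac13\ell(R')$. The key claim, exactly parallel to part (d), is that $x\in 2\ball_S$: if not, there is an ancestor $Q'$ of $S$, contained in $100\ball_{R'}$, with $x\in 2\ball_{Q'}$ and $\delta_\Omega(x)\approx\ell(Q')$, so $Q'\in\wt\tree_\WSBC(R')$; but the construction of $U_1(Q'),U_2(Q')$ shows that $\delta_\Omega(x)\approx\ell(Q')$ forces $x\in\interior(U_1(Q')\cup U_2(Q'))\subset\interior(V_1\cup V_2)$, so $x\notin\partial(V_1\cup V_2)=\partial V_1\cup\partial V_2$ (using $\dist(V_1,V_2)>0$), a contradiction. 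Hence $x\in 2\ball_S$ with $S\in\wt\eend(R')$.

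\textbf{Step 3: upgrade from $x\in 2\ball_S$ to $x\in\partial U_i(S)$.} This is the step I expect to be the main obstacle. Knowing $x\in 2\ball_S$ is not yet $x\in\partial U_i(S)$; one must check that the boundary of $V_i$ near $x$ is in fact realized by the piece $U_i(S)$ attached to the end-cube $S$. The argument should go: $x\in\partial V_i$, so every neighborhood of $x$ meets $V_i$; any Whitney cube $I$ with $x\in 1.1\overline I$ satisfies $\ell(I)\approx\delta_\Omega(x)$, and since $x\in 2\ball_S$ and $\delta_\Omega(x)\lesssim\ell(S)$ (because $x$ is a boundary point of $V_i$ close to $S$ — one must rule out $\delta_\Omega(x)\gg\ell(S)$, which again forces $x$ into the interior of $V_1\cup V_2$ via an ancestor argument as in Step 2), while also $\delta_\Omega(x)\gtrsim\theta\ell(S)$ is false in general — so actually the relevant Whitney cubes near $x$ that belong to $V_i$ must have side length $\ge\theta\ell(S)$, hence $\ell(I)\gtrsim\theta\ell(S)$, which places $I$ among the cubes of $\mathcal J(S)$ (those meeting $\wt\ball_S\supset 19\ball_S\ni x$ with $\ell(I)\ge\theta\ell(S)$); and the fact that $x\in\partial V_i$ rather than $\partial V_{3-i}$, together with the relabeling in Lemma~\ref{lemgeom3} and $\overline{V_1}\cap\overline{V_2}=\varnothing$, identifies the relevant component of $U(S)$ as one of the $D_j^i(S)$. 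Thus $I\in\mathcal J_i(S)$, so a neighborhood of $x$ meets $U_i(S)$, and since $x$ is a boundary point of $V_i\supset U_i(S)$ and not interior to $V_i$ it is in particular not interior to $U_i(S)$; therefore $x\in\partial U_i(S)$, with $S\in\wt\eend(R')$, which is exactly what we need. I would organize the write-up so that the dichotomy ``$\delta_\Omega(x)$ comparable to $\ell(S)$ vs.\ much larger'' is handled once and cited from the proof of part (d), and so that the side-of-$L$ bookkeeping is deferred to Lemma~\ref{lemgeom3}; the only genuinely new point over part (d) is the last paragraph, matching $x$ to the correct component $D_j^i(S)$ using the disjointness of $\overline{V_1}$ and $\overline{V_2}$.
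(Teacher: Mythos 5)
Your Step 1 (reduce to showing no boundary point of $V_i$ inside $10\ball_{R'}$ can lie on $\partial U_i(P)$ with $P$ a tree cube) matches the paper's opening reduction, and Step 2's re-derivation of the fact that $x\in 2\ball_S$ for some end cube $S$ is just a replay of the already-proved Lemma \ref{lemgeom}(d), which the paper simply cites. Where you diverge — and where the gap lies — is Step 3. You try to argue \emph{constructively} that $x\in\partial U_i(S)$ by locating Whitney cubes of $\mathcal J_i(S)$ near $x$; the paper instead proceeds by contradiction and never needs to identify $S$ or its component structure at all.

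The paper's argument is a short scale-mismatch computation. Assuming $x\in\partial U_i(P)\cap\partial V_i\cap 10\ball_{R'}$ with $P\in\wt\tree_\WSBC(R')$: (1) since $U_i(P)$ is built from Whitney cubes of side length $\gtrsim\eps^{1/4}\ell(P)$, one has $\delta_\Omega(x)\gtrsim\eps^{1/4}\ell(P)$; (2) by part (d), $x\in 2\ball_Q$ for some $Q\in\wt\eend(R')$, and because $\{y\in 2\ball_Q:\delta_\Omega(y)>\eps^{1/2}\ell(Q)\}\subset V_1\cup V_2$ (using $\theta\ll\eps$ and $b\beta(Q)\lesssim\eps$), any boundary point of $V_i$ in $2\ball_Q$ has $\delta_\Omega(x)\leq\eps^{1/2}\ell(Q)$; (3) since $x\in 30\ball_P\cap 2\ball_Q$, taking any end cube $S\subset P$ and invoking Lemma~\ref{lem74}(b) gives $\ell(Q)\lesssim\ell(P)$. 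Chaining these yields $\eps^{1/4}\ell(P)\lesssim\eps^{1/2}\ell(P)$, impossible for small $\eps$. That's it — no component bookkeeping, no need to argue $\delta_\Omega(x)\gtrsim\theta\ell(S)$.

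Your Step 3 is where the proposal genuinely breaks down. You write that ``$\delta_\Omega(x)\gtrsim\theta\ell(S)$ is false in general — so actually the relevant Whitney cubes near $x$ that belong to $V_i$ must have side length $\geq\theta\ell(S)$,'' which is a non sequitur: you assert the conclusion without an argument, immediately after conceding that the hypothesis it requires need not hold. Establishing that lower bound does in fact need input you never invoke — namely that any cube in $\wt\tree_\WSBC(R')$ near $S$ contains some end cube $S'$ with $\ell(S')\approx\ell(S)$ (by Lemma~\ref{lem74}), hence has $\ell\gtrsim\ell(S)$, so Whitney cubes from its $\mathcal J_i$ have side $\gtrsim\eps^{1/4}\ell(S)\gg\theta\ell(S)$. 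And even granting the size bound, you still need to show the relevant Whitney cube lies in $\mathcal J_i(S)$ specifically (as opposed to $\mathcal J_i(P')$ for some \emph{other} nearby tree or end cube $P'$), which you dispatch with an appeal to ``relabeling'' and disjointness of closures that does not actually produce the inclusion. These gaps are avoidable, but only by, in effect, re-deriving the two-sided control on $\delta_\Omega(x)$ that the paper uses — at which point the by-contradiction argument is both shorter and logically cleaner. I'd recommend abandoning the constructive framing of Step 3 and recasting the proof as the paper does: bound $\delta_\Omega(x)$ from below using the tree cube, from above using the end cube, compare scales, contradict.
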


\begin{proof}
Clearly, we have 
$$\partial V_i \cap 10\ball_{R'}
\subset \bigcup_{\substack{P\in\wt\tree_\WSBC(R'):\\P\cap 10\ball_{R'}\neq\varnothing}} \partial U_i(P) \cup 
\bigcup_{\substack{Q\in\wt\eend(R'):\\Q\cap 10\ball_{R'}\neq\varnothing}} \partial U_i(Q).$$
So it suffices to show that 
\begin{equation}\label{eqfg66}
\bigcup_{\substack{P\in\wt\tree_\WSBC(R'):\\P\cap 10\ball_{R'}\neq\varnothing}} \partial U_i(P)\cap 
\partial V_i\cap 10\ball_{R'}=\varnothing.
\end{equation}

Let $x\in \partial U_i(P)\cap \partial V_i\cap 10\ball_{R'}$, with $P\in\wt\tree_\WSBC(R')$, $P\cap 10\ball_{R'}\neq\varnothing$. From the definition
of $U_i(P)$, it follows easily that 
\begin{equation}\label{eqfg67}
\delta_\Omega(x)\gtrsim\eps^{1/4}\ell(P).
\end{equation}
On the other hand, by Lemma \ref{lemgeom}(d), there exists some $Q\in\wt\eend(R')$ such that $x\in 2\ball_Q$.
By the definition of $U_i(Q)$, since $\theta\ll \eps$, it also follows easily that 
$$\bigl\{y\in 2\ball_Q:\delta_\Omega(y)>\eps^{1/2}\ell(Q)\bigr\}\subset V_1\cup V_2.$$
Hence,
$\dist(\partial V_i\cap 2\ball_Q,\partial\Omega)\leq  \eps^{1/2}\,\ell(Q),$
and so 
\begin{equation}\label{eqfg68}
\delta_\Omega(x)\leq  \eps^{1/2}\,\ell(Q).
\end{equation}

 We claim that $\ell(Q)\lesssim\ell(P)$. Indeed, from the fact that $x\in \partial U_i(P)\subset 30\ball_P$, we infer that
$$30\ball_P\cap 2\ball_Q\neq \varnothing.$$
Suppose that $\ell(Q)\geq\ell(P)$. This implies that $\ball_P\subset33\ball_Q$. Consider now a cube $S\subset P$ 
belonging to $\wt\eend(R')$. Since $\ball_S\cap 33\ball_Q\neq \varnothing$, by Lemma \ref{lem74} (b) we have
$$\ell(Q)\approx \ell(S)\leq \ell(P),$$
which proves our claim. Together with \rf{eqfg67} and \rf{eqfg68}, this yields
$$
\eps^{1/4}\ell(P)\lesssim \delta_\Omega(x)\lesssim \eps^{1/2}\,\ell(Q)\lesssim \eps^{1/2}\,\ell(P),$$
which is a contradiction for $\eps$ small enough.
So there does not exist any $x\in \partial U_i(P)\cap \partial V_i\cap 10\ball_{R'}$, which proves \rf{eqfg66}.
\end{proof}

\vv

\begin{proof}[\bf Proof of Lemma \ref{lemgeom}(e)]
Let $P\in\wt\eend(R')$ be such that $2\ball_P\cap10\ball_{R'}\neq\varnothing$. 
The statement (i) is an immediate consequence of 
Lemma \ref{lemei}. In fact, this lemma implies that any $y\in 2\ball_P$ such that
$g(p,y)> \gamma\,\ell(P)\,\sigma(R_0)^{-1}$
is contained in $U_1(P)\cup U_2(P)$ and thus in $V_1\cup V_2$. In particular, $y\not \in\partial (V_1\cup V_2)=
\partial V_1 \cup\partial V_2$. Thus, if $y\in2\ball_P\cap\partial V_i$, then
$$g(p,y)\leq\gamma\,\frac{\ell(P)}{\sigma(R_0)}.$$
 It is easy to check that this implies the statement (i) in Lemma \ref{lemgeom}(e) (possibly after replacing $\gamma$ by $C\gamma$).

Next we turn our attention to (ii). To this end,
denote by $J_P$ the subfamily of the cubes $Q\in\wt\eend(R')$ such that $30\ball_Q\cap2\ball_P\neq\varnothing$.
By Lemma \ref{lembdr1},
\begin{equation}\label{eqsk54}
\partial V_i \cap 2\ball_P
\subset \bigcup_{Q\in J_P} \partial U_i(Q)\cap 2\ball_P.
\end{equation}
We will show that 
\begin{equation}\label{eq2eq}
\sum_{I\in\WW_P} \ell(I)^n \lesssim \ell(P)^n
\quad\text{ and }\quad \sum_{I\in\WW_P} \omega^p(B^I) \lesssim \omega^p(C\ball_P),
\end{equation}
where $\WW_P$ is the family of Whitney cubes $I\subset V_1\cup V_2$ such that $1.1\overline I\cap\partial (V_1\cup V_2)\cap 2\ball_P\neq\varnothing$.
To this end, observe that, by \rf{eqsk54} and the construction of $U_i(Q)$, for each $I\in\WW_P$ there exists some $Q\in J_P$ such that 
$I\subset 30\ball_Q$ and either $\ell(I)\approx\theta\ell(Q)$ or $1.1\overline I\cap\partial\wt \ball_Q\neq\varnothing$. Using the $n$-ADRity of $\sigma$, it is immediate to check
that for each $Q\in J_P$,
$$\sum_{\substack{I\subset 30\ball_Q:\\ \ell(I)=\theta\ell(Q)}}\ell(I)^n \lesssim \ell(Q)^n.$$
Also,
$$\sum_{\substack{I\in\WW:\\ 1.1\overline I\cap \partial\wt \ball_Q\neq\varnothing}}\ell(I)^n\lesssim \sum_{\substack{I\in \WW\\ 1.1\overline I\cap \partial\wt \ball_Q\neq\varnothing}}\HH^n(2I\cap \partial\wt \ball_Q) \lesssim \HH^n(\partial\wt \ball_Q)\lesssim \ell(Q)^n.$$
Since the number of cubes $Q\in J_P$ is uniformly bounded (by Lemma \ref{lem74}(b)) and $\ell(Q)\approx\ell(P)$,
the above inequalities yield the first estimate in \rf{eq2eq}. 

To prove the second one we also distinguish among the two types of cubes $I\in J_P$ above. First, by the
bounded overlap of the balls $B^I$ such that $\ell(I)=\theta\,\ell(Q)$, we get
\begin{equation}\label{eq1p9}
\sum_{\substack{I\subset 30\ball_Q\\ \ell(I)\approx\theta\ell(Q)}}\omega^p(B^I) \lesssim \omega^p(C\ball_P),
\end{equation}
since the balls $B^I$ in the sum are contained $C\ball_P$ for a suitable universal constant $C>1$.
To deal with the cubes $I\in \WW$ such that $1.1\overline I\cap \partial\wt \ball_Q\neq\varnothing$ we intend to use the thin boundary property of $\wt \ball_Q$ in \rf{eqthinbd}. To this end, we write
\begin{align*}
\sum_{\substack{I\in \WW:\\ 1.1\overline I\cap \partial\wt \ball_Q\neq\varnothing}}\omega^p(B^I) = 
\sum_{k\geq0}\sum_{\substack{I\in \WW:\\ 1.1\overline I\cap \partial\wt \ball_Q\neq\varnothing\\
\ell(I)=2^{-k}\ell(Q)}}\omega^p(B^I) \lesssim \sum_{k\geq0}\omega^p(\mathcal U_{2^{-k+1}\diam(Q)}(\partial\wt \ball_Q)),
\end{align*}
where $\mathcal U_d(A)$ stands for the $d$-neighborhood of $A$.
By \rf{eqthinbd} it follows that 
$$\omega^p(\mathcal U_{2^{-k}\ell(Q)}(\partial\wt \ball_Q))\lesssim 2^{-k}\omega^p(C'\ball_Q),$$
and thus
$$\sum_{\substack{I\in\WW:\\ 1.1\overline I\cap \partial\wt \ball_Q\neq\varnothing}}\omega^p(B^I)\lesssim \omega^p(C'\ball_Q)\lesssim \omega^p(C\ball_P),$$
for a suitable $C>1$. Together with \rf{eq1p9}, this yields the second inequality in \rf{eq2eq}, which completes the proof of Lemma \ref{lemgeom}(e).
\end{proof}

\vv


\section{Proof of the Key Lemma}\label{sec7}

We fix $R_0\in\DD$ and a corkscrew point $p\in\Omega$ as in the preceding sections.
We consider
$R\in\ttt^{(N)}_b$ and we assume $\tree_\WSBC(R)\neq \varnothing$, as in 
Lemma \ref{lemgeom}. We let $R'\in\tree_\WSBC(R)$ be such that
$\ell(R')=2^{-k_0}\ell(R)$, with $k_0=k_0(\gamma)\geq1$ big enough. 
Given $\lambda>0$ and $i=1,2$, we set
\begin{equation}\label{defhi}
\fH_i(R')=\bigl\{Q\in \Stop_\WSBC(R)\cap\DD(R')\cap\fG: g(p,\exe_{Q}^i)> \lambda\,\ell(Q)\,\sigma(R_0)^{-1}\bigr\},
\end{equation}
so that by Lemma \ref{lemcork1}, $\Stop_\WSBC(R)\cap\DD(R')\cap\fG = \fH_1(R')\cup \fH_2(R')$.
Here we are assuming that the corkscrews $\exe_Q^i$ belong to the set $V_i$ from Lemma \ref{lemgeom}, that $\lambda$ is small enough,  and we are taking into account that, by the arguments in
Section \ref{subsepcork}, any corkscrew for $Q$ can be joined to one of the big corkscrews $\exe_Q^1$ by some $C$-good Harnack chain.


\vv

\begin{lemma}[Baby Key Lemma]\label{keylemma}
Let $p,R_0,R,R'$ be as above.
Given $\lambda>0$, define also $\fH_i(R')$ as above.
For a given $\tau>0$, suppose that
$$\sigma\biggl(\,\bigcup_{Q\in \fH_i(R')}Q\biggr) \geq \tau\,\sigma(R').$$
If $\gamma$ is small enough in the definition of $V_i$ in Lemma \ref{lemgeom} (depending on $\tau$ and $\lambda$), then
$$g(p,\exe_{R'}^i)\geq c(\lambda,\tau)\,\frac{\ell(R')}{\sigma(R_0)}.$$
\end{lemma}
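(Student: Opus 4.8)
The plan is to exploit the Green function estimate on the pieces $\partial V_i$ furnished by the Geometric Lemma \ref{lemgeom}(e)(i), combined with a representation of $g(p,\exe_{R'}^i)$ as an integral of $g(p,\cdot)$ against harmonic measure of the domain $V_i$ (or of a slightly enlarged/regularized version of it), plus a lower bound for that harmonic measure coming from the ampleness hypothesis on $\bigcup_{Q\in\fH_i(R')}Q$. First I would fix the domain: take $\Omega_i$ to be the union of fattened Whitney cubes comprising $V_i$ intersected with, say, $5\ball_{R'}$, so that $\Omega_i$ is a bounded open set with $\exe_{R'}^i\in\Omega_i$ at distance $\approx\ell(R')$ from its boundary. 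Since $g(p,\cdot)$ is a non-negative harmonic function in $\Omega_i$ (the pole $p$ lies outside $C\ball_{R_0}\supset 5\ball_{R'}$), the maximum principle / Poisson representation in $\Omega_i$ gives
\begin{equation*}
g(p,\exe_{R'}^i)=\int_{\partial\Omega_i} g(p,\xi)\,d\omega_{\Omega_i}^{\exe_{R'}^i}(\xi)\,\geq\,\int_{F_i} g(p,\xi)\,d\omega_{\Omega_i}^{\exe_{R'}^i}(\xi),
\end{equation*}
where $F_i$ is chosen to be that part of $\partial\Omega_i$ lying in $\bigcup_{Q\in\fH_i(R')}2\ball_Q$ but \emph{away} from $\partial V_i$, i.e.\ the fattened-Whitney-cube faces that are ``interior'' to the construction near the cubes $Q\in\fH_i(R')$. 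On such faces $g(p,\cdot)$ is genuinely large: by definition of $\fH_i(R')$ one has $g(p,\exe_Q^i)>\lambda\ell(Q)\sigma(R_0)^{-1}$, and by Harnack in $V_i$ (property (b)--(c) of Lemma \ref{lemgeom}, the corkscrews $\exe_Q^i$ being joined to $z_i$ and the cubes $Q$ being $\WSBC$ with big corkscrews) this lower bound propagates, up to a constant depending on the Harnack-chain constants, to a ball around a point of $F_i$ of radius $\approx\ell(Q)$; so $g(p,\xi)\gtrsim_{\lambda}\ell(Q)\sigma(R_0)^{-1}$ there.

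The second ingredient is a lower bound $\omega_{\Omega_i}^{\exe_{R'}^i}(F_i)\gtrsim_{\tau}1$. Here is where the hypothesis $\sigma(\bigcup_{Q\in\fH_i(R')}Q)\geq\tau\sigma(R')$ enters, together with the fact that $\partial\Omega$ is $n$-ADR and $\Omega_i$ is a chord-arc-like domain (a finite union of fattened Whitney cubes with the separation and regularity built into the Geometric Lemma). I would argue: by Bourgain's estimate (Lemma \ref{l:bourgain}) applied in $\Omega_i$, each cube $Q\in\fH_i(R')$ contributes $\omega_{\Omega_i}^{\exe_Q^i}(2\ball_Q\cap\partial\Omega_i)\gtrsim1$ from the corkscrew $\exe_Q^i$; then Harnack-chain connect $\exe_Q^i$ to $\exe_{R'}^i$ inside $V_i$ (again Lemma \ref{lemgeom}(c), since $Q$ and its ancestors up to $R'$ are all $\WSBC$ in $\tree_\WSBC(R)$), losing only a constant, to get $\omega_{\Omega_i}^{\exe_{R'}^i}(2\ball_Q\cap\partial\Omega_i)\gtrsim1$ for \emph{each} such $Q$. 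That by itself is not summable, so instead I would use the $n$-ADR/maximum-principle comparison of $\omega_{\Omega_i}^{\exe_{R'}^i}$ with $\sigma$ near $R'$ — or, more robustly, the weak-$A_\infty$ hypothesis is \emph{not} available here, so the cleanest route is: the $\partial\Omega$-portion of $\partial\Omega_i$ that lies in $\bigcup_{Q\in\fH_i}Q$ has surface measure $\geq\tau\sigma(R')\approx_\tau\ell(R')^n$, and for a chord-arc domain harmonic measure from a corkscrew point of $\ball_{R'}$ is mutually absolutely continuous with surface measure with uniform $A_\infty$ constants on that scale (David--Jerison \cite{DJe}/Semmes \cite{Se}); hence $\omega_{\Omega_i}^{\exe_{R'}^i}$ of that set is $\gtrsim_\tau1$. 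Combining, and writing $\bigcup_{Q\in\fH_i}Q=\bigcup_k\{Q\in\fH_i:\ell(Q)=2^{-k}\ell(R')\}$, a weighted sum gives $g(p,\exe_{R'}^i)\gtrsim_{\lambda,\tau}\ell(R')\sigma(R_0)^{-1}$ provided the ``error'' coming from the part of $\partial\Omega_i$ on $\partial V_i$ — where, by Lemma \ref{lemgeom}(e)(i), $g(p,\cdot)\leq\gamma\,\ell(P)\sigma(R_0)^{-1}$ on each $I\in\WW_P$ — is negligible; this forces the choice $\gamma=\gamma(\tau,\lambda)$ small, and the packing estimates of Lemma \ref{lemgeom}(e)(ii) ($\sum_{I\in\WW_P}\omega^p(B^I)\lesssim\omega^p(C\ball_P)$ and $\sum\ell(I)^n\lesssim\ell(P)^n$) are exactly what make that error controllable after summing over $P\in\wt\eend(R')$.

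\textbf{The main obstacle.} The delicate point is the bookkeeping on $\partial V_i$: one must show that the ``leakage'' of $g(p,\cdot)$ through the lateral boundary $\partial V_i\cap 10\ball_{R'}$ — which is covered by the balls $2\ball_P$, $P\in\wt\eend(R')$ — contributes at most, say, half of the main term, \emph{uniformly in $N$} (the truncation parameter) and with the smallness paid for only by $\gamma$, not by $\lambda$ or $\tau$. This requires carefully pairing the pointwise bound $g(p,\cdot)\lesssim\gamma\,\ell(P)\sigma(R_0)^{-1}$ on $\WW_P$ with the harmonic-measure packing $\sum_{I\in\WW_P}\omega^p(B^I)\lesssim\omega^p(C\ball_P)$, then summing over $P$ and comparing $\sum_P\ell(P)\,\omega^p(C\ball_P)$ against the main contribution, using that the $P$'s have bounded overlap (Lemma \ref{lem74}(b),(c)) and $\ell(P)\lesssim\ell(R')$. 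One also has to check that the Green function of $\Omega$ and the ``Poisson kernel'' of the auxiliary domain $\Omega_i$ interact correctly — i.e.\ that passing from $\Omega$ to $\Omega_i$ only loses absolute constants — which is where the assumption $\Omega=\RR^{n+1}\setminus\partial\Omega$ (reducing to $V_i\subset\Omega$ automatically) and the chord-arc structure of the fattened-Whitney-cube domains are used. Everything else is Harnack chains and summation of geometric series, routine given the Geometric Lemma.
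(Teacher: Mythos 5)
Your proposal has a fundamental gap: the set $F_i$ you need does not exist. Taking $\Omega_i=V_i\cap 5\ball_{R'}$, you have $\partial\Omega_i\subset(\partial V_i\cap\overline{5\ball_{R'}})\cup\partial(5\ball_{R'})$, so the only portion of $\partial\Omega_i$ that is ``away from $\partial V_i$'' lies on the sphere $\partial(5\ball_{R'})$, far from the small cubes $Q\subset R'$; conversely, any portion of $\partial\Omega_i$ near a cube $Q\in\fH_i(R')$ lies on $\partial V_i$. The Geometric Lemma \ref{lemgeom}(e)(i) says precisely that $g(p,\cdot)$ is \emph{small} on $\partial V_i\cap10\ball_{R'}$ --- at most $\gamma\,\ell(P)/\sigma(R_0)$ with $\gamma$ chosen tiny --- and it vanishes on $\partial V_i\cap\partial\Omega$. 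So on $\partial\Omega_i$ near the cubes $Q$, the boundary values of $g(p,\cdot)$ are small, not large; your proposed lower bound $g(p,\xi)\gtrsim_\lambda\ell(Q)/\sigma(R_0)$ for $\xi\in F_i$ is in the wrong direction, and Harnack from $\exe_Q^i$ to $\partial V_i$ is not available because $\exe_Q^i$ sits at distance $\gtrsim\eps^{1/4}\ell(Q)$ from $\partial V_i$ (indeed this is exactly why large interior values at $\exe_Q^i$ and small boundary values on $\partial V_i$ can coexist). Independently, Lemma \ref{lemgeom} does not assert that $V_i$ or $\Omega_i$ is chord-arc --- there is no exterior corkscrew and no ADR bound on $\partial V_i$ --- and David--Jerison would in any case compare $\omega_{\Omega_i}^{\exe_{R'}^i}$ against surface measure on $\partial\Omega_i$, a mixture of Whitney faces and a sphere, not against $\sigma$ on $\partial\Omega$, so it would not connect to the ampleness hypothesis $\sigma(\bigcup_{Q\in\fH_i}Q)\geq\tau\sigma(R')$.

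The paper uses the smallness of $g(p,\cdot)$ on $\partial V_i$ in the opposite direction: as an \emph{error} estimate, not as the main term. Lemma \ref{lemgg} establishes, by integration by parts of $g(p,\cdot)\nabla g(q,\cdot)-\nabla g(p,\cdot)g(q,\cdot)$ against a cutoff adapted to $V_i\cap B(q,r)$ with $r\approx\ell(R')$, that for each corkscrew $q=\exe_Q^i$ with $Q\in\fH_i(R')$ one has $g(p,q)\lesssim I_1(Q)+I_2(Q)+I_3(Q)$, where $I_1$ involves $\sup_{A_r^\delta\cap V_i}g(p,\cdot)/\delta_\Omega(\cdot)$, $I_2$ carries a $\delta^{\alpha/2}$ factor, and $I_3$ is supported on the Whitney cubes of $\bdy(R')$. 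Weighting by $\ell(Q)^{n-1}\sigma(R_0)$ and summing over a Vitali subfamily of $\fH_i(R')$, the hypothesis forces $\lambda\tau\,\sigma(R')\lesssim\sum_j\sum_Q I_j(Q)\ell(Q)^{n-1}\sigma(R_0)$; properties (e)(i) and (e)(ii) of the Geometric Lemma are used to show that the summed $I_3$-contribution is $\lesssim\gamma^{\alpha/2}\sigma(R')/\sigma(R_0)$ (absorbable once $\gamma$ is small), and $I_2$ is absorbed by taking $\delta$ small. What survives is the $I_1$ term, producing a point $y_0\in 2\ball_{R'}\cap V_i$ at scale $\geq\delta\ell(R')$ with $g(p,y_0)/\delta_\Omega(y_0)\gtrsim\lambda\tau/\sigma(R_0)$, which is then joined to $\exe_{R'}^i$ by a good Harnack chain. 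A representation of $g(p,\exe_{R'}^i)$ via harmonic measure of $V_i$ cannot reproduce this, because the large values of $g(p,\cdot)$ you want to exploit live in the interior of $V_i$, not on $\partial V_i$.
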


Remark that $\Gamma$ depends on $\gamma$ (see Lemma \ref{lemgeom}), and thus the families $\WSBC(\Gamma)$,
$\Stop_\WSBC(R)$, $\fH_i(R')$ also depend on $\gamma$. The reader should thing that $\Gamma\to\infty$
as $\gamma\to0$.

A key fact in this lemma is that the constants $\lambda,\tau$ can be taken arbitrarily small, without requiring
$\eps\to0$ as $\lambda\tau\to0$. Instead, the lemma requires $\gamma\to0$, which does not affect the packing condition in Lemma \ref{lempack}.

We denote
$$\bdy(R')=\bigcup_{P\in \wt\eend(R'):2\ball_P\cap 10\ball_{R'}\neq\varnothing}\WW_P,$$
with $\WW_P$ as in the Lemma \ref{lemgeom}. That is, 
$\WW_P$ is the family of Whitney cubes $I\subset V_1\cup V_2$ such that $1.1\overline I\cap\partial (V_1\cup V_2)\cap 2\ball_P\neq\varnothing$.
So the family $\bdy(R')$ contains Whitney cubes which intersect the boundaries of $V_1$ or $V_2$ and are close to $10\ball_{R'}$. 

Let us introduce some extra piece of notation. Given $q\in\ree$ and $0<r<s$ we let 
\[
A(q,r,s)=B(q,s)\setminus \overline{B(q,r)}.
\]
To prove Lemma \ref{keylemma}, first we need the following auxiliary result.

\begin{lemma}\label{lemgg}
Let $p,R_0,R,R'$ be as above and, for $i=1$ or $2$, let $Q\in\fH_i(R')$.
Let $V_i$ be as in Lemma \ref{lemgeom} and let $q\in \Omega$ be a corkscrew point for $Q$ which belongs to $V_i$.
Denote $r=2\ell(R')$ and for $\delta\in (0,1/100)$ set
$$A_r^\delta = \bigl\{x\in A(q,r,2r)\cap\Omega:\delta_\Omega (x)>\delta\,r\bigr\}.$$
Then we have
\begin{align*}
g(p,q)&\lesssim \frac{1}{r} \sup_{y\in A_r^\delta\cap V_i} \frac{g(p,y)}{\delta_\Omega (y)}\,\,\int_{A_r^\delta}  g(q,x)\,dx \\
&\quad + \frac{\delta^{\alpha/2}}{r^{n+3}} \int_{A(q,r,2r)}g(p,x)\,dx\,
\int_{A(q,r,2r)} g(q,x)\,dx\\
&\quad + \sum_{I\in\bdy(R')}\frac1{\ell(I)}\int_{2I} \bigl| g(p,x)\,\nabla  g(q,x)- \nabla g(p,x)\, g(q,x)\bigr|\,dx.
\end{align*}
\end{lemma}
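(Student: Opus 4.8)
The plan is to obtain \eqref{lemgg}-type control from a \emph{localized Green identity}, by using the divergence-free structure of the ``Wronskian'' vector field $F:=g(p,\cdot)\,\nabla g(q,\cdot)-g(q,\cdot)\,\nabla g(p,\cdot)$. Fix a radial cutoff $\phi$ with $\phi\equiv 1$ on $B(q,r)$, $\supp\phi\subset B(q,2r)$, $|\nabla\phi|\lesssim r^{-1}$, $|\Delta\phi|\lesssim r^{-2}$, so that $\supp\nabla\phi\subset A(q,r,2r)$. Since $\dist(p,\partial\Omega)\approx\ell(R_0)\gg r$ while $q$ lies near $R'$, we have $p\notin \overline{B(q,2r)}$, hence $g(p,\cdot)$ is harmonic (so smooth) on a neighbourhood of $\supp\phi$, whereas $\phi\equiv1$ near the pole $q\in V_i$. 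Testing the (distributional) equations $-\Delta g(q,\cdot)=\delta_q$ and $-\Delta g(p,\cdot)=\delta_p$ on $V_i$ against $\phi\,g(p,\cdot)$ and $\phi\,g(q,\cdot)$ respectively, and subtracting, one is led to
\[
g(p,q)=\int_{A(q,r,2r)\cap V_i}\nabla\phi\cdot\bigl(g(p,\cdot)\,\nabla g(q,\cdot)-g(q,\cdot)\,\nabla g(p,\cdot)\bigr)\,dx-\int_{\partial V_i}\phi\,\bigl(g(p,\cdot)\,\partial_\nu g(q,\cdot)-g(q,\cdot)\,\partial_\nu g(p,\cdot)\bigr)\,d\HH^n.
\]
The contribution of the part of $\partial V_i$ lying on $\partial\Omega$ vanishes because $g(p,\cdot)=g(q,\cdot)=0$ there; the integration by parts across the rough set $\partial\Omega$ is justified because both Green functions, extended by $0$, lie in $W^{1,2}_{\rm loc}$ and have vanishing Sobolev trace on $\partial\Omega$. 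The first task is to make this identity rigorous, and to note that the Whitney cubes $I$ whose faces enter the boundary term satisfy $q\notin\overline{2I}$ (so that $F$ is divergence-free on $2I$); this holds because $q$ is a good corkscrew lying well inside its component of $V_i$, and the at most boundedly many cubes $I$ of size $\approx\ell(Q)$ within $O(\ell(Q))$ of $q$, if any, are handled directly using $\delta_\Omega(q)\approx\ell(Q)$.

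For the surface term: since $\phi$ is supported in $B(q,2r)\subset 10\ball_{R'}$, only faces of Whitney cubes $I\in\bdy(R')$ appear. On each such $I$ the field $F$ is divergence-free on $2I$ (both Green functions are harmonic there, away from $p$ and $q$), so by the divergence theorem on a slab the flux of $F$ through the relevant face equals its flux through any parallel slice of $2I$; averaging over slices of thickness up to $\ell(I)$ gives $\bigl|\int_{\text{face}(I)}F\cdot\nu\bigr|\lesssim \ell(I)^{-1}\int_{2I}|F|$. Summing over $I\in\bdy(R')$, and using $|\phi|\le1$, produces exactly the third term on the right-hand side of \eqref{lemgg}.

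For the volume term, split $A(q,r,2r)\cap V_i$ according to whether $\delta_\Omega>\delta r$ (the set $A_r^\delta$) or not. On $A_r^\delta$ one first observes that $\delta_\Omega(x)\lesssim r$ for every $x\in A(q,r,2r)$: if $\delta_\Omega(x)>\delta_\Omega(q)+2r$ then $B(x,\delta_\Omega(x))\subset\Omega$ would contain $q$ and would have radius larger than $|x-q|+\delta_\Omega(q)$, forcing $\delta_\Omega(q)\gtrsim r$, contradicting $\delta_\Omega(q)\approx\ell(Q)\le\ell(R')=r/2$. Then the interior gradient estimate together with Harnack give $|\nabla g(q,x)|\lesssim \delta_\Omega(x)^{-1}g(q,x)$ and $|\nabla g(p,x)|\lesssim\delta_\Omega(x)^{-1}g(p,x)$ on $A_r^\delta$ (the scale being $\min(\delta_\Omega(x),|x-q|)\approx\delta_\Omega(x)$); bounding $|\nabla\phi|\lesssim r^{-1}$ and writing $g(p,x)\le\delta_\Omega(x)\,\sup_{A_r^\delta\cap V_i}\tfrac{g(p,\cdot)}{\delta_\Omega}$ yields precisely the first term $r^{-1}\,\sup_{A_r^\delta\cap V_i}\tfrac{g(p,\cdot)}{\delta_\Omega}\int_{A_r^\delta}g(q,\cdot)\,dx$.

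The near-boundary part $\{x\in A(q,r,2r)\cap V_i:\delta_\Omega(x)\le\delta r\}$ is the main obstacle, since there the gradients of $g(p,\cdot)$ and $g(q,\cdot)$ are no longer pointwise controlled. The plan is to exploit boundary H\"older continuity: by Lemma \ref{lem333}, since both functions vanish on $\partial\Omega$ and $\delta_\Omega\le\delta r$ on this set, $g(p,x)\lesssim\delta^{\alpha}\,\fint_{B(q,Cr)}g(p,\cdot)$ and $g(q,x)\lesssim\delta^{\alpha}\,\fint_{B(q,Cr)}g(q,\cdot)$, while Lemma \ref{lem1'} and the control of $\omega^p$ at scale $r$ (via the fact that $R'\in\fG$) let one replace $\fint_{B(q,Cr)}g$ by a constant times $r^{-(n+1)}\int_{A(q,r,2r)}g$. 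To turn these pointwise bounds into an estimate for $\int\nabla\phi\cdot F$ one uses again that $\div F=0$ in $\Omega$: integrating by parts moves the integral onto the level set $\{\delta_\Omega=\delta r\}$ (of $\HH^n$-measure $\lesssim r^n$), where $g(p,\cdot)$ and $g(q,\cdot)$ are small, plus lower-order $\Delta\phi$ contributions which are handled by the same H\"older bound and the $O(\delta r)$ thickness of the region; alternatively one covers the region by Whitney cubes and combines Caccioppoli with the H\"older decay. Either route gains a power $\delta^{\alpha/2}$ and leads to the second term $\delta^{\alpha/2}r^{-(n+3)}\bigl(\int_{A(q,r,2r)}g(p,\cdot)\bigr)\bigl(\int_{A(q,r,2r)}g(q,\cdot)\bigr)$. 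The delicate points are the scale bookkeeping — comparing $\int_{A(q,r,2r)}g$ with averages over $B(q,Cr)$ in the absence of doubling for $\omega^p$ — and the careful treatment of the blow-up of $\nabla g$ near $\partial\Omega$; the rest is routine.
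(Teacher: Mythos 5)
Your framework tracks the paper's quite closely---the divergence-free ``Wronskian'' field $F=g(p,\cdot)\,\nabla g(q,\cdot)-g(q,\cdot)\,\nabla g(p,\cdot)$, a cutoff confining the test function to $V_i$, interior gradient estimates on $A_r^\delta$, and H\"older decay for the near-boundary contribution---but two of the three pieces have genuine gaps.

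First, the slab-averaging argument meant to convert the surface term $\int_{\partial V_i}\phi\,F\cdot\nu$ into $\sum_{I}\ell(I)^{-1}\int_{2I}|F|$ does not hold up. The divergence theorem on a slab bounded by two parallel faces also picks up the flux through the \emph{lateral} faces of the slab, so the assertion that ``the flux of $F$ through the relevant face equals its flux through any parallel slice of $2I$'' is false in general. Averaging over slab positions controls only the mean slice-flux and leaves the lateral contributions unestimated, and since those lateral faces are themselves parts of $\partial V_i$, the argument circles back to the quantity it is trying to bound. The paper never writes a surface integral at all: the cutoff is built as $\vphi=\eta\,\vphi_0$, where $\eta=\sum_{I\in\WW_1}\eta_I$ is a smooth sum of bump functions adapted to the Whitney cubes of $V_1$ with $\eta\equiv1$ on $V_1\cap10\ball_{R'}\setminus\bigcup_{I\in\bdy(R')}2I$. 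By construction $\supp\nabla\vphi\subset\bigl(A(q,r,2r)\cap V_1\bigr)\cup\bigcup_{I\in\bdy(R')}2I$ and $|\nabla\vphi|\lesssim\ell(I)^{-1}$ on each $2I$, so the third term on the right-hand side falls out immediately from $|\nabla\vphi|\lesssim\ell(I)^{-1}$. No divergence theorem on $V_i$ and no trace estimate for $\partial V_i$ is needed---which is fortunate, since $V_i$ is a finite union of dilated Whitney cubes and one would otherwise have to verify it is a set of locally finite perimeter with controlled boundary measure.

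Second, your primary plan for the near-boundary piece---integrate by parts to move the integral onto the level set $\{\delta_\Omega=\delta r\}$---is unworkable: $\delta_\Omega$ is merely Lipschitz, so for a generic value of $\delta$ this level set need not be an $n$-rectifiable hypersurface of $\HH^n$-measure $\lesssim r^n$, and even when it is, there is no available control on $F\cdot\nu$ there (the whole difficulty is that $\nabla g(p,\cdot)$ and $\nabla g(q,\cdot)$ are not pointwise controlled near $\partial\Omega$). The alternative you mention in passing is closer to the truth, but the paper's $J_2$ estimate is both simpler and cleaner: apply Cauchy--Schwarz to $r^{-1}\int_{\wt A_{r,\delta}}|F|$, control $\int g(p,\cdot)^2$ and $\int g(q,\cdot)^2$ using boundary H\"older decay $g(\cdot,x)\lesssim\delta^\alpha\fint_{A(q,r,2r)}g(\cdot,y)\,dy$ on $\{\delta_\Omega\le\delta r\}$, and control $\int|\nabla g|^2$ by Caccioppoli on balls of radius $r/100$ together with the subharmonicity of the zero-extended Green functions. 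No passage to level sets, no Whitney-scale decomposition near $\partial\Omega$, and the $\delta^{\alpha/2}$ gain comes out of the H\"older-decay factor after taking square roots.

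A smaller point: the regularization of the singularity at $q$ needs more care than ``carve out a tiny ball.'' The paper inserts a second cutoff $\vphi_\eps$ near $q$, identifies the main term $g(p,q)$ from the pairing, and shows the error terms vanish as $\eps\to0$ using $|\nabla g(q,\cdot)|\lesssim|x-q|^{-n}$ near $q$ and $|\nabla g(p,\cdot)|\lesssim g(p,q)/\delta_\Omega(q)$; the pairing $\int\nabla g(p,\cdot)\cdot\nabla(\psi g(q,\cdot))$ is killed by observing $\psi g(q,\cdot)\in W_0^{1,2}(\Omega)$. Worth spelling out in any written-up version.
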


Let us note that the fact that $q$ is a corkscrew for $Q$ contained in $V_i$ implies that
$\dist(q,\partial V_i)\approx \ell(Q)$, by the construction of the sets $V_i$ in Lemma
\ref{lemgeom}.

\begin{proof}
 We fix $i=1$, for definiteness. Recall that 
$V_1=\bigcup_{I\in \WW_1} 1.1\interior(I)$. 
For each $I\in\WW_1$, consider a smooth function $\eta_I$ such that $\chi_{0.9I}\leq \eta_I\leq \chi_{1.09I}$
with $\|\nabla\eta_I\|_\infty\lesssim \ell(I)^{-1}$ and
$$\eta:=\sum_{I\in\WW_1} \eta_I\equiv 1\quad \text{ on $V_1\cap 10\ball_{R'}\setminus \bigcup_{I\in\bdy(R')} 2I$}.$$
It follows that $\supp\eta\subset V_1$ and so
$\supp\eta \cap V_2=\varnothing,$
and also
$$\supp(\nabla\eta)\cap10\ball_{R'}\subset \bigcup_{I\in\bdy(R')} 2I.$$

 Let $\vphi_0$ be a smooth function such that $\chi_{B(q,1.2r)}\leq \vphi_0\leq \chi_{B(q,1.8r)}$, with $\|\nabla\vphi_0\|_\infty
\lesssim 1/r$. Then we set
$$\vphi = \eta\,\vphi_0.$$
So $\vphi$
is smooth, and it satisfies
$$\supp\nabla\vphi\subset \bigl(A(q,r,2r)\cap V_1\bigr)\cup \bigcup_{I\in \bdy(R')} 2I.$$
Observe that, in a sense, 
$\vphi$ is a smooth version of the function $\chi_{B(q,r)\cap V_1}$.

Since $g(p,q) = g(p,q)\,\vphi(q)$ and $g(p,\cdot)\,\vphi$ is a continuous function from $W_0^{1,2}(\Omega)$, we have
\begin{align*}
g(p,q)   & = \int_{\Omega} \nabla (g(p,\cdot)\,\vphi)(x)\,\nabla  g(q,x)\,dx\\
& = \int_{\Omega} g(p,x)\,\nabla\vphi(x)\,\nabla  g(q,x)\,dx
+ \int_{\Omega} \vphi(x)\,\nabla g(p,x)\,\nabla  g(q,x)\,dx\\
&=: I_1+ I_2.
\end{align*}

First we estimate $I_2$. For $\eps$ with $0<\eps<1/10$, we consider a smooth function $\vphi_\eps$ such that $\chi_{B(q,\eps \delta_\Omega (q))}\leq \vphi_\eps\leq \chi_{B(q,2\eps \delta_\Omega (q))}$, with $\|\nabla\vphi_\eps\|_\infty\lesssim 1/(\eps \delta_\Omega (q))$.
Since $\vphi_\eps\,\vphi = \vphi_\eps$, we have
\begin{multline*}
I_2= \int_{\Omega} \vphi_\eps(x)\,\nabla g(p,x)\,\nabla  g(q,x)\,dx + \int_{\Omega} \vphi(x)(1-\vphi_\eps(x))\,\nabla g(p,x)\,\nabla  g(q,x)\,dx 
\\ =: I_{2,a} + I_{2,b}.
\end{multline*}
To deal with $I_{2,a}$ we use the fact that for $x\in B(q,2\eps \delta_\Omega (q))$ we have
$$|\nabla  g(q,x)|\lesssim \frac1{|x-q|^n} \quad \text{ and }\quad |\nabla g(p,x)|\lesssim \frac{g(p,q)}{\delta_\Omega(q)}.$$
Then we get
$$|I_{2,a}|\lesssim \frac{g(p,q)}{\delta_\Omega(q)}\int_{B(q,2\eps \delta_\Omega(q))} \frac1{|x-q|^n}\,dx
\lesssim \frac{g(p,q)}{\delta_\Omega(q)}\,\eps\,\delta_\Omega(q) = \eps \,g(p,q).
$$

Let us turn our attention to $I_{2,b}$. We denote $\psi = \vphi(1-\vphi_\eps)$. Integrating by parts, we get
$$I_{2,b} = \int  \nabla g(p,x)\,\nabla (\psi\, g(q,\cdot))(x)\,dx - \int \nabla g(p,x)\,\nabla \psi(x)\,\, g(q,x)\,dx.$$
Observe now that the first integral vanishes because $\psi\, g(q,\cdot)\in W_0^{1,2}(\Omega)\cap C(\overline
\Omega)$ and vanishes at $\partial
\Omega$ and at $p$. Hence, since $\nabla\psi = \nabla \vphi - \nabla\vphi_\eps$, we derive
$$I_{2,b} = - \int \nabla g(p,x)\,\nabla \vphi(x)\,\, g(q,x)\,dx
+ \int \nabla g(p,x)\,\nabla \vphi_\eps(x)\,\, g(q,x)\,dx = I_3+ I_4.$$

To estimate $I_4$ we take into account that
$|\nabla \vphi_\eps|\lesssim \chi_{A(q,\eps \delta_\Omega(q),2\eps \delta_\Omega(q))}/ (\eps \delta_\Omega(q))$, and then we derive
$$|I_4|\lesssim  \frac1{\eps\,\delta_\Omega(q)}\int_{A(q,\eps \delta_\Omega(q),2\eps \delta_\Omega(q))} |\nabla g(p,x)|\, g(q,x)\,dx.$$
Using now that, for $x$ in the domain of integration,
$$ g(q,x)\lesssim \frac1{(\eps\,\delta_\Omega (q))^{n-1}} \quad \text{ and }\quad |\nabla g(p,x)|\lesssim \frac{g(p,q)}{\delta_\Omega (q)},$$
we obtain
$$|I_4|\lesssim  \frac1{\eps\,\delta_\Omega (q)}\,\frac1{(\eps\,\delta_\Omega (q))^{n-1}}\,\frac{g(p,q)}{\delta_\Omega (q)}\,(\eps \,\delta_\Omega (q))^{n+1}\lesssim \eps\,g(p,q).$$

From the above estimates we infer that
$$g(p,q) \leq |I_1 + I_3| + c\,\eps\,g(p,q).$$
Since neither $I_1$ nor $I_3$ depend on $\eps$, letting $\eps\to0$ we get
\begin{align*}
g(p,q)   & \leq |I_1 + I_3|\\
& \leq\left|\int g(p,x)\,\nabla\vphi(x)\,\nabla  g(q,x)\,dx
 - \!\int \nabla g(p,x)\,\nabla \vphi(x)\, g(q,x)\,dx\right|\\
 &\leq \int |\nabla\vphi(x)|\bigl| g(p,x)\,\nabla  g(q,x)- \nabla g(p,x)\, g(q,x)\bigr|\,dx.
\end{align*} 
We denote 
$$\wt F= \bigcup_{I\in\bdy(R')} 2I,$$
$$ \wt A_r^\delta = \bigl\{x\in A(q,1.2r,1.8r)\cap V_1\setminus \wt F:\delta_\Omega (x)>\delta\,r\bigr\},$$
and
$$\wt A_{r,\delta} = \bigl\{x\in A(q,1.2,1.8r)\cap V_1\setminus \wt F:\delta_\Omega (x)\leq\delta\,r\bigr\}.$$
Next we split the last integral as follows:
\begin{align}\label{eqj1j2}
g(p,q) &\leq \int_{\wt A_r^\delta} |\nabla\vphi(x)|\,\bigl| g(p,x)\,\nabla  g(q,x)- \nabla g(p,x)\, g(q,x)\bigr|\,dx\\
&\quad +
\int_{\wt A_{r,\delta}} |\nabla\vphi(x)|\,\bigl| g(p,x)\,\nabla  g(q,x)- \nabla g(p,x)\, g(q,x)\bigr|\,dx\nonumber
\\ 
&\quad+ \int_{\wt F} |\nabla\vphi(x)|\,\bigl| g(p,x)\,\nabla  g(q,x)- \nabla g(p,x)\, g(q,x)\bigr|\,dx\nonumber\\
&=: J_1 + J_2+ J_3.
\nonumber
\end{align}

Concerning $J_1$, we have
$$|\nabla  g(p,x)|\lesssim \frac{ g(p,x)}{\delta_\Omega (x)} \quad\text{ and }\quad |\nabla g(q,x)|\lesssim \frac{g(q,x)}{\delta_\Omega (x)}\quad
\text{ for all $x\in \wt A_r^\delta$.}$$
Thus, using also that $|\nabla\vphi|\lesssim 1/r$ outside $\wt F$,
\begin{equation}\label{eqj1*}
J_1 \lesssim \frac{1}{r} \sup_{x\in A_r^\delta\cap V_1} \frac{g(p,x)}{\delta_\Omega (x)}\,\,\int_{A_r^\delta}  g(q,x)\,dx.
\end{equation}

Regarding $J_2$, using  Cauchy-Schwarz, we get
\begin{align}\label{eqcasw1}
J_2&\lesssim \frac1r\int_{\wt A_{r,\delta}} \bigl| g(p,x)\,\nabla  g(q,x)- \nabla g(p,x)\, g(q,x)\bigr|\,dx\\
& \leq \frac1r
\left(\int_{\wt A_{r,\delta}} g(p,x)^2\,dx\right)^{1/2} \,\left(\int_{\wt A_{r,\delta}} |\nabla  g(q,x)|^2\,dx\right)^{1/2}
\nonumber
\\ &\,+\frac{1}{r}
\left(\int_{\wt A_{r,\delta}} |\nabla g(p,x)|^2\,dx\right)^{1/2} \left(\int_{\wt A_{r,\delta}} g(q,x)^2\,dx\right)^{1/2}.
\nonumber
\end{align}
To estimate the integral $\int_{\wt A_{r,\delta}} g(p,x)^2\,dx$, we take into account that, for all $x\in \wt A_{r,\delta}$,
$$g(p,x)\lesssim \delta^\alpha\;\avint_{A(q,r,2r)}g(p,y)\,dy.$$
Then we deduce
\begin{equation*}
\int_{\wt A_{r,\delta}} g(p,x)^2\,dx \lesssim \frac{\delta^\alpha}{r^{n+1}} \left(\int_{A(q,r,2r)}g(p,x)\,dx\right)^2.
\end{equation*}

Next we estimate the integral $\int_{\wt A_{r,\delta}} |\nabla  g(q,x)|^2\,dx$.
By covering $\wt A_{r,\delta}$ by a finite family of balls of radius $r/100$ and applying Caccioppoli's inequality to each one, it follows that
$$\int_{\wt A_{r,\delta}} |\nabla  g(q,x)|^2\,dx \lesssim \frac1{r^2}\int_{A(q,1.1r,1.9r)}  g(q,x)^2\,dx.$$
Since
$$ g(q,x)\lesssim \;\avint_{A(q,r,2r)} g(q,y)\,dy\quad \mbox{ for all $x\in A(q,1.1r,1.9r)$,}$$ we get
$$\int_{\wt A_{r,\delta}} |\nabla  g(q,x)|^2\,dx \lesssim
\frac1{r^2}\int_{A(q,1.1r,1.9r)}  g(q,x)^2\,dx\lesssim
\frac{1}{r^{n+3}}
 \left(\int_{A(q,r,2r)} g(q,x)\,dx\right)^2.$$
So we obtain
\begin{multline*}
\left(\int_{\wt A_{r,\delta}} \!\!g(p,x)^2\,dx\right)^{1/2} \,\left(\int_{\wt A_{r,\delta}} |\nabla  g(q,x)|^2\,dx\right)^{1/2}
\\ \lesssim \frac{\delta^{\alpha/2}}{r^{n+2}} \int_{A(q,r,2r)}g(p,x)\,dx\,
\int_{A(q,r,2r)} g(q,x)\,dx.
\end{multline*}
By interchanging, $p$ and $q$, it is immediate to check that an analogous estimate holds for the second summand on the
right hand side of \rf{eqcasw1}.
Thus we get
\begin{equation}\label{eqfi8}
J_2
\lesssim \frac{\delta^{\alpha/2}}{r^{n+3}} \int_{A(q,r,2r)}g(p,x)\,dx\,
\int_{A(q,r,2r)} g(q,x)\,dx.
\end{equation}

Concerning $J_3$, we just take into account that $|\nabla\vphi|\lesssim 1/\ell(I)$ in $2I$, and then we obtain
$$J_3\lesssim \sum_{I\in\bdy(R')} \frac1{\ell(I)}\int_{2I} \bigl| g(p,x)\,\nabla  g(q,x)- \nabla g(p,x)\, g(q,x)\bigr|\,dx.$$
Together with \rf{eqj1j2}, \rf{eqj1*}, and \rf{eqfi8}, this yields the lemma.
\end{proof}
\vv

\begin{proof}[\bf Proof of Lemma \ref{keylemma}]
 We fix $i=1$, for definiteness.
By a Vitali type covering theorem, there exists a subfamily $\wt\fH_1(R')\subset \fH_1(R')$ such that the balls
$\{8\ball_Q\}_{Q\in  \wt\fH_1(R')}$ are disjoint and 
$$\sum_{Q\in \fH_1(R')}\sigma(Q) \lesssim \sum_{Q\in \wt\fH_1(R')}\sigma(Q).$$

 By Lemma \ref{lemgg}, for each $Q\in\wt\fH_1(R')$  we have
\begin{align*}
g(p,\exe_Q^1)&\lesssim \frac{1}{r} \sup_{y\in 2\ball_{R'}\cap V_1:\delta_\Omega(y)\geq \delta\ell(R')} \frac{g(p,y)}{\delta_\Omega (y)}\,\,\int_{A(\exe_Q^1,r,2r)}  g(\exe_Q^1,x)\,dx \\
&\quad + \frac{\delta^{\alpha/2}}{r^{n+3}} \int_{A(\exe_Q^1,r,2r)}g(p,x)\,dx\,
\int_{A(\exe_Q^1,r,2r)} g(\exe_Q^1,x)\,dx\\
&\quad + 
\sum_{I\in\bdy(R')}\frac1{\ell(I)}\int_{2I}
 \bigl| g(p,x)\,\nabla  g(\exe_Q^1,x)- \nabla g(p,x)\, g(\exe_Q^1,x)\bigr|\,dx\\
& =: I_1(Q) + I_2(Q) + I_3(Q),
\end{align*}
with $r=2\ell(R')$.
Since $g(p,\exe_{Q}^1)> \lambda\,\ell(Q)/\sigma(R_0)$, we derive
\begin{multline}\label{eqi123}
\lambda\tau\,\sigma(R')\lesssim \lambda\!\sum_{Q\in \wt\fH_1(R')}\!\sigma(Q) \lesssim 
\sum_{Q\in \wt\fH_1(R')} \!g(p,\exe_Q^1)\,\ell(Q)^{n-1}\,\sigma(R_0)
\\
\lesssim \sum_{j=1}^3 \sum_{Q\in \wt\fH_1(R')}  I_j(Q)\,\ell(Q)^{n-1}\,\sigma(R_0).
\end{multline}
\vv
\subsection*{Estimate of $\sum_{Q\in \wt\fH_1(R')}  I_1(Q)\,\ell(Q)^{n-1}$}

We have
\begin{multline*}\sum_{Q\in \wt\fH_1(R')}  I_1(Q)\,\ell(Q)^{n-1}
\\ \leq 
\frac{1}{r} \sup_{y\in 2\ball_{R'}\cap V_1:\delta_\Omega(y)\geq \delta\ell(R')} \frac{g(p,y)}{\delta_\Omega (y)}\,\sum_{Q\in \wt\fH_1(R')}\int_{A(\exe_Q^1,r,2r)}  g(\exe_Q^1,x)\,dx\,\ell(Q)^{n-1}.
\end{multline*}
Note now that
\begin{multline*}
\sum_{Q\in \wt\fH_1(R')}\int_{A(\exe_Q^1,r,2r)}  g(\exe_Q^1,x)\,dx \,\ell(Q)^{n-1}
\lesssim \int_{2\ball_{R'}} \sum_{Q\in \wt\fH_1(R')} \omega^x(4Q)\,dx 
\\
\leq \int_{2\ball_{R'}} 1\,dx\lesssim \ell(R')^{n+1},
\end{multline*}
where we used the fact that the cubes $4Q$, with $Q\in\wt\fH_1(R')$, are pairwise disjoint.
Since $r\approx\ell(R')$, we derive
$$\sum_{Q\in \wt\fH_1(R')}  I_1(Q)\,\ell(Q)^{n-1}\lesssim \sup_{y\in 2\ball_{R'}\cap V_1:\delta_\Omega(y)\geq \delta\ell(R')} \frac{g(p,y)}{\delta_\Omega (y)} \,\sigma(R').$$

\vv
\subsection*{Estimate of $\sum_{Q\in \wt\fH_1(R')}  I_2(Q)\,\ell(Q)^{n-1}$}

First we estimate $\int_{A(\exe_Q^1,r,2r)}g(p,x)\,dx$ by applying Lemma \ref{lem1'}:
\begin{multline*}
\int_{A(\exe_Q^1,r,2r)}g(p,x)\,dx\leq \int_{2\ball_{R'}}g(p,x)\,dx\lesssim 
\ell(R')^{n+1}\,\frac{\omega^p(8\ball_{R'})}{\ell(R')^{n-1}}
\\
 \lesssim \ell(R')^2\,\frac{\sigma(R')}{\sigma(R_0)}\approx
\frac{r^{n+2}}{\sigma(R_0)}.
\end{multline*}
So we have
\begin{align*}
\sum_{Q\in \wt\fH_1(R')}  I_2(Q)\,\ell(Q)^{n-1} & \lesssim
\frac{\delta^{\alpha/2}}{r\,\sigma(R_0)} \sum_{Q\in \wt\fH_1(R')}\int_{A(\exe_Q^1,r,2r)} g(\exe_Q^1,x)\,dx\,\ell(Q)^{n-1}\\
 & \lesssim\frac{\delta^{\alpha/2}}{r\,\sigma(R_0)}\int_{2\ball_{R'}} \sum_{Q\in \wt\fH_1(R')} \omega^x(4Q)\,dx\\
 & \lesssim
\frac{\delta^{\alpha/2}}{r\,\sigma(R_0)} \int_{2\ball_{R'}} 1\,dx \lesssim
\frac{\delta^{\alpha/2}\,\sigma(R')}{\sigma(R_0)}.
\end{align*}

\vv
\subsection*{Estimate of $\sum_{Q\in \wt\fH_1(R')}  I_3(Q)\,\ell(Q)^{n-1}$}
Note first that, for each $I\in\bdy(R')$, since $\exe_Q^1\not \in 4I$, using the subharmonicity of $g(p,\cdot)$ and
$g(\exe_Q^1,\cdot)$ in $4I$, and Caccioppoli's inequality,
\begin{align*}
\frac1{\ell(I)}\int_{2I}\bigl| g(p,x)\,\nabla  g(\exe_Q^1,x)\bigr|\,dx&\lesssim
\frac1{\ell(I)}\sup_{x\in 2I} g(p,x) \int_{2I} |\nabla  g(\exe_Q^1,x)|\,dx\\
&\lesssim \ell(I)^{n-1}\,m_{4I}g(p,\cdot) \,\, m_{4I}g(\exe_Q^1,\cdot).
\end{align*}
By very similar estimates, we also get
$$\frac1{\ell(I)}\int_{2I}\bigl| \nabla g(p,x)\, g(\exe_Q^1,x)\bigr|\,dx\lesssim\ell(I)^{n-1}\,m_{4I}g(p,\cdot) \,\, m_{4I}g(\exe_Q^1,\cdot).$$
Recall now that, by Lemma \ref{lemgeom}(e)(i),
$$
m_{4I} g(p,\cdot)\leq \gamma\,\frac{\ell(P)}{\sigma(R_0)}\quad $$ 
for each $I\in\WW_P$, 	with $P\in \wt\eend(R')$ such that $2\ball_P\cap10\ball_{R'}\neq\varnothing$.

We distinguish two types of Whitney cubes $I\in\bdy(R')$. We write $I\in T_1$ if $\ell(I)\geq \gamma^{1/2}\ell(P)$ for some $P$ such that $I\in\WW_P$ and $2\ball_P\cap 10\ball_{R'}\neq\varnothing$, and we write $I
\in T_2$ otherwise (there may exist more than one $P$ such that
$I\in\WW_P$, but if $\WW_P\cap\WW_{P'}\neq\varnothing$, then $\ell(P)\approx\ell(P')$).
So we split
\begin{align}\label{eqs1s2}
\sum_{Q\in \wt\fH_1(R')}  I_3(Q)\,\ell(Q)^{n-1} & 
\leq 
\sum_{Q\in \wt\fH_1(R')} \sum_{I\in\bdy(R')}\ell(I)^{n-1}\,m_{4I}g(p,\cdot) \,\, m_{4I}g(\exe_Q^1,\cdot)\,\ell(Q)^{n-1}\nonumber\\
&= \sum_{Q\in \wt\fH_1(R')} \sum_{I\in T_1}\ldots + \sum_{Q\in \wt\fH_1(R')} \sum_{I\in T_2}\ldots =: S_1+S_2.
\end{align}

Concerning the sum $S_1$ we have
\begin{align*}
S_1 &
\lesssim  \gamma
\sum_{Q\in \wt\fH_1(R')} \sum_{\substack{P\in \wt\eend(R'):\\2\ball_P\cap10\ball_{R'}\neq\varnothing}} \sum_{I\in \WW_P\cap T_1}\frac{\ell(P)}{\sigma(R_0)}\,
\ell(I)^{n-1}\, m_{4I}g(\exe_Q^1,\cdot)\,\ell(Q)^{n-1}\\
&\lesssim\gamma^{1/2}
\sum_{Q\in \wt\fH_1(R')} \sum_{\substack{P\in \wt\eend(R'):\\2\ball_P\cap10\ball_{R'}\neq\varnothing}} \sum_{I\in \WW_P}\frac{\ell(I)^{n}}{\sigma(R_0)}\,
 m_{4I}g(\exe_Q^1,\cdot)\,\ell(Q)^{n-1}
\end{align*}
Next we take into account that
$$\ell(Q)^{n-1}\, m_{4I}g(\exe_Q^1,\cdot)\lesssim \omega^{x_I}(4Q),$$ 
where $x_I$ stands for the center of $I$ and $C>1$ is some absolute constant. 
This follows from Lemma \ref{lem1'} if $x_I$ is far from $Q$, and it can be deduced from Lemma \ref{l:bourgain} when $x_I$ is close to $Q$ (in this case, $\omega^{x_I}(4Q)\approx1$).
Then we derive
$$S_1
\lesssim \gamma^{1/2}
\sum_{Q\in \wt\fH_1(R')} \sum_{\substack{P\in \wt\eend(R'):\\2\ball_P\cap10\ball_{R'}\neq\varnothing}} \sum_{I\in \WW_P}
\omega^{x_I}(4Q)\,\frac{\ell(I)^n}{\sigma(R_0)}.$$
Since $\sum_{Q\in \wt\fH_1(R')}\omega^{x_I}(4Q)\lesssim 1$ for each $I$, we get
$$S_1
\lesssim \gamma^{1/2}
 \sum_{\substack{P\in \wt\eend(R'):\\2\ball_P\cap10\ball_{R'}\neq\varnothing}} \sum_{I\in \WW_P}
\frac{\ell(I)^n}{\sigma(R_0)}.$$
By Lemma \ref{lemgeom}(e)(ii), we have $\sum_{I\in \WW_P} \ell(I)^n\lesssim\ell(P)^n$, and so we deduce
$$S_1\lesssim \gamma^{1/2}
 \sum_{\substack{P\in \wt\eend(R'):\\2\ball_P\cap10\ball_{R'}\neq\varnothing}} 
\frac{\sigma(P)}{\sigma(R_0)}\lesssim \gamma^{1/2}\frac{\sigma(R')}{\sigma(R_0)}.$$

\vv

Next we turn our attention to the sum $S_2$ in \rf{eqs1s2}. Recall that
\begin{align*}
S_2& =\sum_{Q\in \wt\fH_1(R')} 
 \sum_{I\in  T_2}
\ell(I)^{n-1}\,m_{4I}g(p,\cdot) \,\, m_{4I}g(\exe_Q^1,\cdot)\,\ell(Q)^{n-1}.
\end{align*}
Let us remark that we assume the condition that $I\in\WW_P$ for some 
$2P\in \wt\eend(R')$ such that $2\ball_P\cap10\ball_{R'}\neq\varnothing$ to be part of the definition of $I\in T_2$.
Using the estimate
$m_{4I}g(p,\cdot) \lesssim \omega^p(B^I)\,\ell(I)^{1-n}$,
we derive
\begin{align*}
S_2& \lesssim\sum_{Q\in \wt\fH_1(R')} 
 \sum_{I\in T_2}
\omega^p(B^I) \, m_{4I}g(\exe_Q^1,\cdot)\,\ell(Q)^{n-1}\\
& = 
\sum_{Q\in \wt\fH_1(R')} 
 \sum_{I\in T_2:20I\cap 20 \ball_Q\neq\varnothing}
\ldots +
\sum_{Q\in \wt\fH_1(R')} 
 \sum_{I\in T_2:20I\cap 20 \ball_Q =\varnothing}
\ldots =: A+ B.
\end{align*}
To estimate the term $A$ we take into account that if $20I\cap 20\ball_Q\neq\varnothing$ and $I\in\WW_P$, then
$\ell(P)\lesssim \ell(Q)$ and thus $\ell(I)\lesssim\gamma^{1/2}\,\ell(Q)$ because $I\in T_2$.
As a consequence, $I\subset 21\ball_Q$ and also, by the H\"older 
continuity of $g(\exe_Q^1,\cdot)$, if we let $B$ be a ball concentric with $B^I$ with radius comparable to $\ell(Q)$ and such
that $\dist(\exe_Q^1,B)\approx \ell(Q)$, 
we obtain
$$m_{2B^I}g(\exe_Q^1,\cdot)\lesssim \biggl(\frac{r(B^I)}{r(B)}\biggr)^\alpha\,m_B g(\exe_Q^1,\cdot)\lesssim \gamma^{\alpha/2}\,\frac1{\ell(Q)^{n-1}},$$
where $\alpha>0$ is the exponent of H\"older continuity.
Hence,
$$A\lesssim \gamma^{\alpha/2} \sum_{Q\in \wt\fH_1(R')} \sum_{\substack{P\in \wt\eend(R'):\\2\ball_P\cap10\ball_{R'}\neq\varnothing\\
20\ball_P\cap20 \ball_Q\neq\varnothing}} 
\sum_{I\in \WW_P\cap T_2}
\omega^p(B^I).$$
By Lemma \ref{lemgeom}(e)(ii), we have $\sum_{I\in \WW_P}
\omega^p(B^I)\lesssim \omega^p(C\ball_P)$,
and using also that, for $P$ as above, $C\ball_P\subset C'\ball_Q$ for some absolute constant $C'$, we obtain
$$A\lesssim  \gamma^{\alpha/2} \sum_{Q\in \wt\fH_1(R')} 
\omega^p(C' \ball_Q)\lesssim \gamma^{\alpha/2} \sum_{Q\in \wt\fH_1(R')} \frac{\sigma(Q)}{\sigma(R_0)}\lesssim \gamma^{\alpha/2} \frac{\sigma(R')}{\sigma(R_0)}
.$$

Finally, we turn our attention to the term $B$. We have
\begin{align*}
B&= \sum_{Q\in \wt\fH_1(R')} 
 \sum_{I\in T_2:20I\cap 20 \ball_Q =\varnothing} \omega^p(B^I) \, m_{4I}g(\exe_Q^1,\cdot)\,\ell(Q)^{n-1}\\
& = \sum_{I\in T_2} \omega^p(B^I)\; \avint_{4I} \sum_{Q\in \wt\fH_1(R'):20I\cap 20\ball_Q = \varnothing}
g(\exe_Q^1,x)\,\ell(Q)^{n-1}\,dx\\
&\lesssim \sum_{I\in T_2} \omega^p(B^I)\; \avint_{4I} \sum_{Q\in \wt\fH_1(R'):20I\cap 20\ball_Q = \varnothing}\omega^x(8\ball_Q)\,dx.
\end{align*}
We claim now that, in the last sum, if one assumes that $20I\cap 20\ball_Q=\varnothing$, then $\dist(I,8\ball_Q)\geq c\,\gamma^{-1/2}\,\ell(I)$.
To check this, take $P\in\wt\eend(R')$ such that $I\in\WW_P$. Then note that
\begin{align*}
\ell(P)\leq \frac1{300}\,d_R(\zed_P)&\leq\frac1{300}\,\bigl(\dist(\zed_P,Q) + \ell(Q)\bigr)\\
&\leq \frac1{300}\,\bigl(\dist(\zed_P,I)+
\diam(I)+\dist(I,8\ball_Q) + C\ell(Q)\bigr).
\end{align*}
Using that $I\cap 2\ball_P\neq\varnothing$, $\diam(I)\leq C\gamma^{1/2}\ell(P)\ll\ell(P)$, and $\ell(Q)\leq \dist(I,8\ball_Q)$, we get
$$\ell(P) \leq\frac1{300}\,\bigl(\dist(I,8\ball_Q)+3r(\ball_P)+C\,\ell(Q)\bigr)
\leq C\,\dist(I,8\ball_Q) + \frac{12}{300}\,\ell(P),$$
which implies  that
$$\ell(I) \leq C\gamma^{1/2}\,\ell(P)\leq C\,\gamma^{1/2}\,\dist(I,8\ball_Q),$$
and yields our claim.

Taking into account that the balls $\{8\ball_Q\}_{Q\in  \wt\fH_1(R')}$ are disjoint and the H\"older continuity of $\omega^{(\cdot)}(\partial\Omega\setminus c\gamma^{-1/2} I)$, for all $x\in 4I$
we get
$$\sum_{Q\in \wt\fH_1(R'):20I\cap 20\ball_Q = \varnothing}\omega^x(8\ball_Q)\lesssim \omega^x(\partial\Omega\setminus c\gamma^{-1/2} I)
\lesssim\gamma^{\alpha/2}.$$
Thus,
$$B
\lesssim \gamma^{\alpha/2} \sum_{I\in T_2}\omega^p(B^I) \leq \gamma^{\alpha/2}\sum_{\substack{P\in \wt\eend(R'):\\2\ball_P\cap10\ball_{R'}\neq\varnothing}} 
\sum_{I\in \WW_P\cap T_2}\omega^p(B^I)
.$$
Recalling again that $\sum_{I\in \WW_P}
\omega^p(B^I)\lesssim \omega^p(C\ball_P)$, we deduce
$$B
\lesssim \gamma^{\alpha/2} \sum_{\substack{P\in \wt\eend(R'):\\2\ball_P\cap10\ball_{R'}\neq\varnothing}} \omega^p(C\ball_P)
\lesssim\gamma^{\alpha/2}\sum_{\substack{P\in \wt\eend(R'):\\2\ball_P\cap10\ball_{R'}\neq\varnothing}}\frac{\sigma(P)}{\sigma(R_0)}\lesssim \gamma^{\alpha/2}\,\frac{\sigma(R')}{\sigma(R_0)}
.$$
Remark that for the second inequality we took into account that $P$ is contained in a cube of the form $22P'$ with $P'\in\tree_\WSBC(R)$ and $\ell(P')\approx\ell(P)$, by Lemma \ref{lem74}. This implies that $\omega^p(C\ball_P)\leq
\omega^p(C'\ball_{P'})\lesssim \sigma(P')\,\sigma(R_0)^{-1}\lesssim  \sigma(P)\,\sigma(R_0)^{-1}$.

\vv

Gathering the estimates above and recalling \rf{eqi123}, we deduce
$$\lambda\tau\,\sigma(R')\lesssim \sup_{y\in 2\ball_{R'}\cap V_1:\delta_\Omega(y)\geq \delta\ell(R')} \frac{g(p,y)}{\delta_\Omega (y)} \,\sigma(R')\,\sigma(R_0)+
\delta^{\alpha/2}\,\sigma(R') + \gamma^{\alpha/2}\,\sigma(R').$$
So, if $\delta$ and $\gamma$ are small enough (depending on $\lambda,\tau$), we infer that
$$\lambda\,\tau\,\sigma(R')\lesssim \sup_{y\in 2\ball_{R'}\cap V_1:\delta_\Omega(y)\geq \delta\ell(R')} \frac{g(p,y)}{\delta_\Omega (y)} \,\sigma(R')\,\sigma(R_0).$$
That is, there exists some $y_0\in 2\ball_{R'}\cap V_1$ with $\delta_\Omega(y_0)\geq \delta\,\ell(R')$ such that
$$ \frac{g(p,y_0)}{\delta_\Omega (y)}\gtrsim \frac{\lambda\tau}{\sigma(R_0)},$$ 
with $\delta$ depending on $\lambda,\tau$. Since $\exe_{R'}^1$ and $y_0$ can be joined by a $C$-good 
Harnack chain (for some $C$ depending on $\delta$ and $\gamma$, and thus on $\lambda,\tau$), we deduce that
$$ \frac{g(p,\exe_{R'}^1)}{\ell(R')} \gtrsim \frac{c(\lambda,\tau)}{\sigma(R_0)},$$
as wished. 
\end{proof}

\vv
\begin{lemma}\label{keylemma2}
Let $\eta\in (0,1)$ and $\lambda>0$. Choose $\gamma=\gamma(\lambda,\tau)$ small enough as in Lemma \ref{keylemma} with $\tau=\eta/2$.
Assume that the family $\WSBC(\Gamma)$ is defined by choosing $\Gamma$ big enough depending on $\gamma$ (and thus on $\lambda$ and $\eta$) as in Lemma
\ref{lemgeom}.
Let $R\in\ttt^{(N)}_b$ and suppose that $\tree_\WSBC(R)\neq \varnothing$.
Then, there exists an exceptional family $\mathsf{Ex}_\WSBC(R)\subset \Stop_\WSBC(R)\cap\fG$ 
satisfying
$$\sum_{P\in\mathsf{Ex}_\WSBC(R)}\sigma(P)\leq \eta\,\sigma(R)$$
such that,
for every $Q\in \Stop_\WSBC(R)\cap\fG\setminus \mathsf{Ex}_\WSBC(R)$, 
 any $\lambda$-good corkscrew for $Q$ can be joined to some $\lambda'$-good corkscrew for $R$ by a $C(\lambda,\eta)$-good Harnack chain, with $\lambda'$ depending on $\lambda,\eta$.
\end{lemma}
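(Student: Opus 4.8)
The plan is to deduce Lemma \ref{keylemma2} from the Baby Key Lemma (Lemma \ref{keylemma}) by a stopping-time argument within $\tree_\WSBC(R)$, so that from each cube in $\Stop_\WSBC(R)\cap\fG$ we may travel, through a controlled chain of ``big-corkscrew jumps'' supplied by the Geometric Lemma (Lemma \ref{lemgeom}(c)), up to $R$ while controlling how small the Green function can become along the way. The key quantitative input is that Lemma \ref{keylemma} converts an \emph{ampleness} assumption on the family $\fH_i(R')$ (that a fixed proportion $\tau$ of $R'$ is covered by stopping cubes whose $i$-th big corkscrew carries Green function $\gtrsim \lambda\ell(\cdot)/\sigma(R_0)$) into a \emph{quantitative lower bound} $g(p,\exe_{R'}^i)\gtrsim_{\lambda,\tau}\ell(R')/\sigma(R_0)$; crucially the stopping parameter $\eps$ does not degrade as $\lambda\tau\to 0$, only $\gamma$ (hence $\Gamma$) does, and this affects neither the packing bound of Lemma \ref{lempack} nor the corona construction. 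So first I would fix $\tau=\eta/2$, choose $\gamma=\gamma(\lambda,\eta)$ as in Lemma \ref{keylemma}, and then $\Gamma=\Gamma(\gamma)$ as in Lemma \ref{lemgeom}, thereby freezing the families $\WSBC(\Gamma)$, $\Stop_\WSBC(R)$, $\tree_\WSBC(R)$ and $\fH_i(R')$.

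Next I would set up a secondary stopping-time inside $\tree_\WSBC(R)$. For each cube $R'\in\tree_\WSBC(R)$ with $\ell(R')=2^{-k_0}\ell(R)$ (the scale at which the Geometric Lemma applies), consider the two families $\fH_1(R'),\fH_2(R')$ from \eqref{defhi}; since $\Stop_\WSBC(R)\cap\DD(R')\cap\fG=\fH_1(R')\cup\fH_2(R')$, at least one of them has $\sigma$-measure $\geq \tfrac12\sigma(R')$. I would declare $R'$ \emph{good} (and record the corresponding index $i(R')$) if the larger family satisfies the ampleness hypothesis of Lemma \ref{keylemma} with $\tau=\eta/2$; the conclusion then yields a $\lambda'$-good corkscrew at $R'$, namely $\exe_{R'}^{i(R')}$, with $\lambda'=\lambda'(\lambda,\eta)$. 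The exceptional set $\mathsf{Ex}_\WSBC(R)$ is built from the stopping cubes lying under the complementary family in each good $R'$ together with \emph{all} of $\Stop_\WSBC(R)\cap\DD(R')\cap\fG$ for the non-good $R'$; in either case the offending subfamily has $\sigma$-measure $<\tfrac12\sigma(R')$ of the ``wrong'' type plus at most $\tfrac12\sigma(R')$ of the excluded type, and summing over the disjoint family of such $R'$ (whose union is all of $R$ up to the initial $2^{-k_0}$ layers, which contribute a dyadic-doubling factor absorbed into $\eta$ after possibly shrinking the constants) gives $\sum_{P\in\mathsf{Ex}_\WSBC(R)}\sigma(P)\leq\eta\,\sigma(R)$.

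For the connectivity conclusion: given $Q\in \Stop_\WSBC(R)\cap\fG\setminus\mathsf{Ex}_\WSBC(R)$, let $R'$ be the (good) cube at scale $2^{-k_0}\ell(R)$ containing $Q$ with $Q\in\fH_{i(R')}(R')$. A $\lambda$-good corkscrew $z$ for $Q$ is joined by a $C$-good Harnack chain to the big corkscrew $\exe_Q^{i(R')}$ (the argument in Section \ref{subsepcork}, using $b\beta(Q)\leq C_4\eps\ll\kappa_1$), which by Lemma \ref{lemgeom}(c) is joined through the consecutive big-corkscrew segments $[\exe_P^{i},\exe_{\wh P}^{i}]$, $Q=P_1\subset P_2\subset\cdots\subset P_m=R'$, each staying at distance $\gtrsim\ell(P_j)$ from $\partial\Omega$ and of controlled length, to $\exe_{R'}^{i(R')}$; by our good-ness of $R'$ the latter is a $\lambda'$-good corkscrew for $R'$. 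Finally one more application of Lemma \ref{lembigcork} (or of the chaining through cubes of $\tree_\WSBC(R)$ between $R'$ and $R$, all with $b\beta\leq\eps$) joins $\exe_{R'}^{i(R')}$ to a big, hence $\lambda''$-good, corkscrew for $R$; since the number of steps at each scale is bounded and $\ell(R')=2^{-k_0}\ell(R)$ with $k_0$ fixed depending only on $\gamma$ (hence on $\lambda,\eta$), the concatenation is a $C(\lambda,\eta)$-good Harnack chain, and $\lambda''=\lambda''(\lambda,\eta)$.

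I expect the main obstacle to be the bookkeeping that keeps the good-corkscrew constant $\lambda'$ (and the final $\lambda''$) \emph{uniform} through the iterated jumps: each application of Lemma \ref{lembigcork} or of Lemma \ref{keylemma} potentially shrinks the constant, so one must argue that only boundedly many (i.e.\ $O_{k_0}(1)=O_{\lambda,\eta}(1)$) such shrinkings occur along any chain from $Q$ up to $R$, which is where the truncation $\ttt^{(N)}$ and the fixed ratio $\ell(R')/\ell(R)=2^{-k_0}$ are essential. A secondary technical point is verifying that the exceptional family can indeed be taken inside $\Stop_\WSBC(R)\cap\fG$ (not just inside $\DD(R)$): this follows because $\fH_i(R')$ by definition consists of cubes in $\Stop_\WSBC(R)\cap\DD(R')\cap\fG$, so both the ``wrong-type'' remainders and the cubes discarded from non-good $R'$ are automatically of the required form. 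With these points in place the statement follows.
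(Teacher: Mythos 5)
Your overall scaffolding is right: apply the Baby Key Lemma \ref{keylemma} at the intermediate scale $\ell(R')=2^{-k_0}\ell(R)$, feed the resulting good corkscrew for $R'$ into the big‑corkscrew chaining from Lemma \ref{lemgeom}(c), and finish with $k_0$ applications of Lemma \ref{lembigcork} to reach $R$, noting that $k_0=k_0(\gamma)$ and hence the total number of shrinkings of the ``goodness'' constant is $O_{\lambda,\eta}(1)$. That part matches the paper.

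The genuine gap is in the construction of $\mathsf{Ex}_\WSBC(R)$. You fix \emph{one} index $i(R')$ per $R'$ (the one with the larger family $\fH_{i(R')}$) and throw the complementary family into the exceptional set. But $\fH_1(R')$ and $\fH_2(R')$ are not disjoint, and there is no a priori bound on $\sigma\big(\cup\fH_{3-i(R')}\big)$ better than $\sigma(R')$: both families can have measure comparable to $\sigma(R')$. Your own accounting (``$<\tfrac12\sigma(R')$ of the wrong type plus at most $\tfrac12\sigma(R')$ of the excluded type'') only yields $\lesssim\sigma(R')$ per $R'$, which after summing gives $\lesssim\sigma(R)$, \emph{not} $\eta\,\sigma(R)$. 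The bound $\eta\sigma(R)$ cannot follow from picking a single good index. Moreover, a cube $Q$ with both $Q\in\fH_1(R')$ and $Q\in\fH_2(R')$ must be kept if and only if \emph{every} index $i$ for which $Q\in\fH_i(R')$ has an ample family $\fH_i(R')$ — otherwise a $\lambda$-good corkscrew for $Q$ which happens to connect to the ``wrong'' big corkscrew $\exe_Q^{3-i(R')}$ cannot be chained upward.

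The paper fixes both issues at once by defining, for each $R'$,
$$\mathsf{Ex}_\WSBC(R') = \bigcup_{i=1}^2 \Bigl\{ Q\in\fH_i(R') : \textstyle\sum_{P\in\fH_i(R')}\sigma(P)\le\tau\,\sigma(R')\Bigr\},$$
so that the exclusion per $R'$ is automatically $\le 2\tau\sigma(R')=\eta\sigma(R')$ (at most two small families), and for any non-excluded $Q$ and any index $i$ with $Q\in\fH_i(R')$ the family $\fH_i(R')$ is ample, so Lemma \ref{keylemma} supplies a good $\exe_{R'}^i$ on the correct side. Note also that stopping cubes $Q$ with $\ell(Q)\ge 2^{-k_0}\ell(R)$ (not contained in any $R'\in\DD_{k_0}(R)$) are never excluded and contribute nothing to the exceptional measure: they are handled directly by at most $k_0$ applications of Lemma \ref{lembigcork}, rather than by absorbing a ``dyadic-doubling factor'' into $\eta$ as you suggested.
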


\begin{proof} 
 For any $R'\in  \DD_{k_0}(R)\cap\tree_\WSBC(R)$, with $k_0=k_0(\gamma)$, we define $\fH_i(R')$ as in \rf{defhi}, so that 
$$\Stop_\WSBC(R)\cap\fG\cap \DD(R') = \fH_1(R')\cup \fH_2(R').$$
For each $R'$, we set  
$$\mathsf{Ex}_\WSBC(R') = \bigcup_{i=1}^2\,\,\Bigl\{Q\in\fH_i(R'): \mbox{$\sum_{P\in \fH_i(R')} \sigma(P)\leq \tau\,\sigma(R')$}\Bigr\}.$$
That is, for fixed $i=1$ or $2$, if $\sum_{P\in \fH_i(R')} \sigma(P)\leq \tau\,\sigma(R')$, then all the cubes from
$\fH_i(R')$
belong to $\mathsf{Ex}_\WSBC(R')$.
In this way, it is clear that
\begin{equation}\label{eq1pp}
\sum_{P\in\mathsf{Ex}_\WSBC(R')}\sigma(P)\leq 2\tau\,\sigma(R) = \eta\,\sigma(R').
\end{equation}

We claim that the $\lambda$-good corkscrews of cubes from $\Stop_\WSBC(R)\cap\fG\cap \DD(R')\setminus
\mathsf{Ex}_\WSBC(R')$ can be joined to some $\wt\lambda$-good corkscrew for $R'$ by a $\wt C$-good Harnack chain, with $\wt\lambda$ depending on $\lambda,\eta$, and $\wt C$ depending on $\Gamma$ and thus on  $\lambda,\eta$ too. 
Indeed, if $Q\in \fH_i(R')\setminus \mathsf{Ex}_\WSBC(R')$ and $\exe_Q^i$ is $\lambda$-good corkscrew 
belonging to $V_i$ (we use the notation of Lemma \ref{keylemma} and \ref{lemgeom}), then $\sum_{P\in \fH_i(R')} \sigma(P)> \tau\,\sigma(R')$ by the definition above
and thus Lemma \ref{keylemma} ensures that $g(p,\exe_{R'}^i)\geq c(\lambda,\tau)\,\frac{\ell(R')}{\sigma(R_0)}$. So $\exe_{R'}^i$
is a $\wt \lambda$-good corkscrew, which by Lemma \ref{lemgeom}(c) can be joined to $\exe_Q^i$ by a $\wt C$-good Harnack chain.
In turn, this $\wt\lambda$-good corkscrew for $R'$ can be joined to some $\lambda'$-good corkscrew for $R$ by a $C'$-good Harnack chain, by applying Lemma \ref{lembigcork}
$k_0$ times, with $C'$ depending on $k_0$ and thus on $\lambda$ and $\eta$.

On the other hand, the cubes $Q\in \Stop_\WSBC(R)\cap\fG$ which are not contained in any cube $R'\in\DD_{k_0}(R)\cap\tree_\WSBC(R)$
satisfy $\ell(Q)\geq 2^{-k_0}\ell(R)$, and then, arguing as above, their associated $\lambda$-good corkscrews can be joined
 to some $\lambda'$-good corkscrew for $R$ by a $C'$-good Harnack chain, by applying Lemma \ref{lembigcork}
at most $k_0$ times.
Hence, if we define
$$\mathsf{Ex}_\WSBC(R) = \bigcup_{R'\in\DD_{k_0}(R)}\mathsf{Ex}_\WSBC(R'),$$
taking into account \rf{eq1pp}, the lemma follows.
\end{proof}

\vv

\begin{proof}[\bf Proof of the Key Lemma \ref{keylemma3}]

We choose $\Gamma=\Gamma(\lambda,\eta)$ as in Lemma \ref{keylemma2} and we consider the associated family $\WSBC(\Gamma)$.
In case that $\tree_\WSBC(R)= \varnothing$,  we set $\mathsf{Ex}(R)=\varnothing$. Otherwise,
we consider the family  
$\mathsf{Ex}_\WSBC(R)$  from Lemma \ref{keylemma2}, and we define
$$\mathsf{Ex}(R) = 
\bigl(\mathsf{Ex}_\WSBC(R)\cap \Stop(R)\bigr)\cup\!\! 
\bigcup_{Q\in \mathsf{Ex}_\WSBC(R)\setminus \Stop(R)}\!\!\bigl(\sstop(Q)\cap\fG\bigr).$$
It may be useful for the reader to compare the definition above with the partition of $\Stop(R)$ in \rf{eqstop221}.
By Lemma \ref{keylemma2} we have
$$\sum_{P\in\mathsf{Ex}(R)}\sigma(P)\leq
\sum_{Q\in\mathsf{Ex}_\WSBC(R)}\sigma(P)\leq \eta\,\sigma(R).$$

Next we show that for every $P\in \Stop(R)\cap\mathsf G\setminus \mathsf{Ex}(R)$, 
 any $\lambda$-good corkscrew for $P$ can be joined to some $\lambda'$-good corkscrew for $R$ by a $C(\lambda,\eta)$-good Harnack chain. In fact, if $P\in \Stop_\WSBC(R)$, then $P\in \Stop_\WSBC(R)\cap\mathsf G\setminus
 \mathsf{Ex}_\WSBC(R)$ since such cube $P$ cannot belong to $\sstop(Q)$ for any $Q\in\Stop_\WSBC(R)\setminus \Stop(R)$ (recall the partition
\rf{eqstop221}),
  and thus the existence of such Harnack chain is ensured by Lemma \ref{keylemma2}.
On the other hand, if $P\not\in \Stop_\WSBC(R)$, then $P$ is contained in some cube $Q(P)\in\Stop_\WSBC(R)\setminus\WSBC(\Gamma)$. 
Consider the chain $P=S_1\subset S_2\subset\cdots\subset S_m=Q(P)$, so that
each $S_i$ is the parent of $S_{i-1}$. For  $1\leq i\leq m$, choose inductively a big corkscrew $x_i$ for
$S_i$ in such a way that
$x_1$ is at the same side of $L_P$ as the  good $\lambda$ corkscrew $\exe_P$ for $P$, and $x_{i+1}$ is at the same side of $L_{S_i}$ as $x_i$ for each $i$.
Using that $b\beta(S_i)\leq C\eps\ll1$ for all $i$, it easy to check that the line obtained by joining
the segments $[\exe_P,x_1]$, $[x_1,x_2]$,\ldots,$[x_{m-1},x_m]$ is a good carrot curve and so gives rise to a good
Harnack chain that joins $\exe_P$ to $x_m$. It may happen that $x_m$ is not a $\lambda$-good corkscrew. However, since $Q(P)\not\in \WSBC(\Gamma)$, it turns out that $x_m$ can be joined to some $c_3$-good corkscrew $\exe_{Q(P)}$
for $Q(P)$ by some $C(\Gamma)$-good Harnack chain, with $c_3$ given by \rf{eqgg1} (and thus independent of $\lambda$ and $\eta$), because $Q(P)\in\fG$. Note that since $\lambda\leq c_3$, $\exe_{Q(P)}$ is also a $\lambda$-good
corkscrew.
In turn, since $Q(P)\not\in\mathsf{Ex}_\WSBC(R)$, $\exe_{Q(P)}$ can be joined to some $\lambda'$-good corkscrew for $R$ by another $C'(\lambda,\eta)$-good Harnack chain.
Altogether, this shows that $\exe_P$ can  be connected to some $\lambda'$-good corkscrew for $R$ by a $C''(\lambda,\eta)$-good Harnack chain, which completes the proof of the lemma.
\end{proof}

\vv

Below we will write $\mathsf{Ex}(R,\lambda,\eta)$ instead of $\mathsf{Ex}(R)$ to keep track of the dependence of this family on the parameters $\lambda$ and $\eta$.

\vv

\section{Proof of the Main Lemma \ref{lemhc}}\label{sec8}

\subsection{Notation}

Recall that by the definition of $G_0^K$ in \rf{defg0k}, $\sum_{R\in\ttt}\chi_R(x) \leq K$ for all $x\in G_0^K$.
For such $x$, let $Q$ be the smallest cube from $\ttt$ that contains $x$, and denote 
$n_0(x) = \log_2\frac{\ell(R_0)}{\ell(Q)}$, so that
$Q\in\DD_{n_0(x)}(R_0)$.
Next let $N_0\in\ZZ$ be such that
$$\sigma\bigl(\bigl\{x\in G_0^K:n_0(x)\leq N_0-1\bigr\}\bigr) \geq \frac12\,\sigma(G_0^K),$$
and denote 
$$\wt G_0^K = \bigl\{x\in G_0^K:n_0(x)\leq N_0-1\bigr\}.$$

Fix $$N=N_0-1,$$ and set
$$\TT_a' = \DD_{N}(R_0)\cup \ttt^{(N)}_a,$$
and also
$$\TT_b' = \ttt^{(N)}_b\setminus\DD_{N}(R_0)$$
(recall that $\ttt^{(N)}_a$ and $\ttt^{(N)}_b$ were defined in Section \ref{secnnn}).
So if $R\in\TT_a'\setminus \DD_{N}(R_0)$, then $\Stop^{N}(R)$ coincides the family of sons of $R$, and it $R\in \TT_b'$ this will not be the case, in general. Next we denote by $\TT_a$ and $\TT_b$ the respective subfamilies of cubes from $\TT_a'$ and $\TT_b'$ which intersect $\wt G_0^K$. 

For $j\geq0$, we set
$$\TT_b^j = \Bigl\{R\in\TT_b:\sum_{Q\in\TT_b:Q\supset R} \chi_Q=j\text{ on $R$}\Bigr\}.$$
We also denote
$$\fS_b^j = \bigl\{Q\in\DD:Q\in\Stop^{N}(R)\text{ for some $R\in\TT_b^j$}\bigr\},\qquad \fS_b=\bigcup_j \fS_b^j,$$
and we let $\TT_a^j$ be the subfamily of cubes $R\in\TT_a$ such that there exists some $Q\in\fS_b^{j-1}$
such that $Q\supset R$ and $R$ is not contained in any cube from $\fS_b^{k}$ with $k\geq j$.

\vv

\subsection{Two auxiliary lemmas}

\vv

\begin{lemma}\label{lem7.1}
The following properties hold for the family $\TT_b^1$:
\begin{itemize}
\item[(a)] The cubes from $\TT_b^1$ are pairwise disjoint and cover  $\wt G_0^K$, assuming
$N_0$ big enough.
\item[(b)] If $R\in\TT_b^1$, then
$\ell(R)\approx_K\ell(R_0)$.
\item[(c)] Given $R\in\DD(R_0)$ with $\ell(R)\geq c\,\ell(R_0)$ (for example, $R\in\TT_b^1$) and
$\lambda>0$, if $\exe_R$ is a $\lambda$-good corkscrew point 
for $R$, then there is a $C(\lambda,c)$-good Harnack chain that joins $\exe_R$ to $p$.
\end{itemize}
\end{lemma}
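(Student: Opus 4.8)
The three assertions are essentially bookkeeping consequences of the constructions in Sections \ref{s9}--\ref{sec8}, so the plan is to unwind the definitions carefully and then invoke the connectivity results already established.

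For part (a), recall that $\TT_b^1$ consists of those $R\in\TT_b$ which are not strictly contained in any other cube of $\TT_b$; since all cubes in $\TT_b=\TT_b'\cap\{R:R\cap\wt G_0^K\neq\varnothing\}$ sit inside $R_0$ and $\TT_b'\subset\ttt^{(N)}_b\subset\ttt$, these maximal cubes are automatically pairwise disjoint (two dyadic cubes are nested or disjoint, and maximality rules out nesting). To see they cover $\wt G_0^K$, fix $x\in\wt G_0^K\subset G_0^K$. By definition of $\wt G_0^K$, the smallest cube $Q\in\ttt$ containing $x$ satisfies $Q\in\DD_{n_0(x)}(R_0)$ with $n_0(x)\le N_0-1=N$, so $Q\in\ttt^{(N)}$. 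If $Q\in\ttt^{(N)}_b$ then $x$ is already covered. If instead $Q\in\ttt^{(N)}_a$, then $Q\in\TT'_a$; one then has to locate a cube of $\TT_b$ above $x$. Here I would use that $x\in G_0$ and $x$ is not in any cube of $\HD\cup\LD$, together with the fact that $R_0$ itself (or a descendant of $R_0$ of side length $2^{-10}\ell(R_0)$) belongs to $\ttt_1\subset\ttt$; one traces the chain of top cubes from $R_0$ down toward $x$ and checks that, since $x$ eventually lands in a cube of $\ttt^{(N)}_a$ (a single-cube tree) only after passing through cubes of $\ttt^{(N)}_b$, there is indeed a cube of $\ttt^{(N)}_b$ containing $x$ — provided $N_0$ is large enough that $\ell(R_0)2^{-10}>2^{-N}\ell(R_0)$, i.e. $N>10$, so that $R_0$'s immediate top cubes are not truncated away. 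This is the step I expect to require the most care: making sure the truncation level $N$ is large enough that the ``first'' relevant top cube below $R_0$ is of $b$-type and hence lies in $\TT_b^1$.

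For part (b), let $R\in\TT_b^1$. On one hand $R\subset R_0$, so $\ell(R)\le\ell(R_0)$. On the other hand, $R$ contains a point $x\in\wt G_0^K$, and by part (a)'s analysis $R$ is the maximal $\TT_b$-cube containing $x$; the obstruction to $R$ being larger is precisely that its dyadic parent either fails to be in $\ttt^{(N)}_b$ or has been truncated. But from the definition of $\ttt_1$ one has $\ell(R)\ge 2^{-10}\ell(R_0)$ for the topmost trees, and since $x\in G_0^K$ lies in at most $K$ cubes of $\ttt$, the depth of nesting of $b$-type top cubes above $x$ is at most $K$, which forces $\ell(R)\gtrsim_K\ell(R_0)$. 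Combined, $\ell(R)\approx_K\ell(R_0)$.

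For part (c), this is a direct application of Lemma \ref{lemclosejumps}: that lemma states that if $\exe_R$ is a $\lambda$-good $c_4$-corkscrew for a cube $R$ with $\ell(R)\ge c_5\,\ell(R_0)$, then $\exe_R$ and $p$ are joined by a $C$-good Harnack chain with $C$ depending on $\lambda$ and $c_5$. Here the corkscrews in question are $\kappa_1$-corkscrews by our standing convention (Section \ref{sechdld}), and the hypothesis $\ell(R)\ge c\,\ell(R_0)$ supplies $c_5=c$; for $R\in\TT_b^1$ one may take $c=c(K)$ from part (b). Hence the conclusion holds with $C=C(\lambda,c)$. The only point to note is that $p\notin 5I$-type issues do not arise since $p$ is the fixed pole with $\delta_\Omega(p)\approx\ell(R_0)$ and $R\subset R_0$, which is exactly the setting of Lemma \ref{lemclosejumps}.
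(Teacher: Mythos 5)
Part (c) of your proof is correct and matches the paper's: both invoke Lemma~\ref{lemclosejumps}. Parts (a) and (b), however, each contain a genuine gap, and the gap is the same one: you never use the crucial structural fact that \emph{$a$-type top cubes descend one dyadic generation at a time}. If $R\in\ttt_a$ then $\tree(R)=\{R\}$, which by the definition of $\tree(R)$ forces $\Stop(R)$ to consist exactly of the children of $R$; since $x\in G_0$ is not contained in any cube from $\HD\cup\LD$, the child of $R$ containing $x$ lies in $\fG$ and therefore in $\ttt$. Hence a maximal run of consecutive $a$-type top cubes above $x$ produces one top cube per scale. That is the counting engine driving both (a) and (b), and it is absent from your argument.

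Concretely: in part (a) you assert that ``$x$ eventually lands in a cube of $\ttt^{(N)}_a$ only after passing through cubes of $\ttt^{(N)}_b$,'' but this is precisely what must be proved, not an available fact — nothing prevents the entire chain of top cubes above $x$ (above scale $2^{-N}\ell(R_0)$) from being of type $a$. The correct argument is by contradiction: if $x$ were in no cube of $\TT_b^1$ (hence in no cube of $\TT_b$), then every dyadic cube $Q\ni x$ with $2^{-N}\ell(R_0)\le\ell(Q)\le 2^{-10}\ell(R_0)$ would lie in $\ttt^{(N)}_a$ (by the one-generation-per-step descent just mentioned), giving roughly $N-10$ top cubes through $x$; but $x\in G_0^K$ allows at most $K$, a contradiction once $N>K+10$. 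Your condition ``$N>10$'' is therefore far too weak. In part (b) the quantity you bound — ``the depth of nesting of $b$-type top cubes above $x$'' — is the wrong one; since $R$ is the maximal $\TT_b$ cube containing $x$, there are no $b$-type top cubes strictly above $R$ below scale $2^{-10}\ell(R_0)$, so the bound is vacuous. The correct argument counts the $a$-type top cubes strictly above $R$ (and below scale $2^{-10}\ell(R_0)$): there is one per scale, and their number is at most $K-1$ because $\wt G_0^K\cap R\neq\varnothing$, forcing $\ell(R)\gtrsim 2^{-K-10}\ell(R_0)$, which is what ``$\approx_K$'' means here.
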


\begin{proof} 
Concerning the statement (a), the cubes from $\TT_b^1$ are pairwise disjoint by construction. Suppose that
$x\in \wt G_0^K$ is not contained in any cube from $\TT_b^1$. By the definition of the family $\ttt^{N}$, this implies that all the cubes $Q\subset R_0$ with $2^{-N}\ell(R_0)\leq \ell(Q)\leq 2^{-10}\ell(R_0)$ containing $x$ belong to $\TT_a$. However, there are at most $K$ cubes $Q$ of this type, which is not possible
if $N$ is taken big enough. So the cubes from $\TT_b^1$ cover $\wt G_0^K$.

The proof of (b) is analogous. Given $R\in\TT_b^1$, all the cubes $Q$ which contain $R$ and
have side length smaller or equal that $2^{-10}\ell(R_0)$ belong to $\TT_a$.
Hence there at most $K-1$ cubes $Q$ of this type, because $\wt G_0^K \cap R\neq\varnothing$. Thus,
$\ell(R)\geq 2^{-K-10}\ell(R_0)$.

The statement (c) is an immediate consequence of (b) and Lemma \ref{lemclosejumps}.
\end{proof}

\vv
\begin{lemma}\label{lem7.2}
Let $Q\in\TT_a^j\cup\TT_b^j$ for some $j\geq 2$ and let $\exe_Q$ be a $\lambda$-good corkscrew for $Q$, with $\lambda>0$. There exists some constant $\gamma_0(\lambda,K)>0$ such if $\ell(Q)\leq \gamma_0(\lambda,K)\,\ell(R_0)$, then there exists some cube
$R\in\fS_b$ such that $R\supset Q$ with a $\lambda'$-good corkscrew $\exe_R$ for $R$ such that
 $\exe_R$ can be joined to $\exe_Q$ by a $C(\lambda,K)$-good Harnack chain, with $\lambda'$ depending on $\lambda$ and $K$.
\end{lemma}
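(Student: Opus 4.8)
The statement of Lemma \ref{lem7.2} asserts that a $\lambda$-good corkscrew for a cube $Q$ of type $\TT_a^j\cup\TT_b^j$ (with $j\geq 2$), provided $Q$ is small enough compared to $R_0$, can be connected by a good Harnack chain to a $\lambda'$-good corkscrew of some \emph{ancestor} cube $R\in\fS_b$, at the cost of a controlled deterioration $\lambda\rightsquigarrow\lambda'$. The plan is to argue by ``walking up the tree'', using the Key Lemma \ref{keylemma3} at each step to cross a corona block. First I would locate the smallest cube $R_1\in\fS_b$ containing $Q$; by the definition of $\fS_b^j$ and of $\TT_a^j,\TT_b^j$, such an $R_1$ exists as soon as $j\geq 2$ (there is some $Q'\in\fS_b^{j-1}$ with $Q'\supset Q$). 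Now $R_1\in\Stop^{(N)}(R)$ for some $R\in\TT_b^{j-1}$, and between $Q$ and $R_1$ the cube $Q$ lies in $\tree^{(N)}(R)$ for the top cube $R$ of the corona block whose bottom is $R_1$ — actually one should be careful: $Q$ itself may be a top cube of a higher block, so the passage from $Q$ up to $R_1$ may involve crossing \emph{several} consecutive corona blocks of the truncated decomposition, but by construction of the truncation the number of such blocks traversed is at most $K$ for points in $\wt G_0^K$.

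\textbf{Key steps, in order.} (1) Fix the smallness threshold $\gamma_0(\lambda,K)$; the point is that $\ell(Q)\le\gamma_0\,\ell(R_0)$ forces enough generations of ancestors below $R_0$ that the relevant chain of top cubes is long, so we can afford the $K$-fold iteration. (2) For each corona block encountered on the way from $Q$ to $R_1$, apply the Key Lemma \ref{keylemma3}: for the block with top $R\in\ttt^{(N)}$ and bottom cubes $\Stop^{(N)}(R)$, the Key Lemma (applied with parameters $\lambda$, $\eta$ chosen appropriately — and here we crucially use that $\eps$ in the corona construction does not depend on $\lambda,\eta$) produces an exceptional subfamily $\mathsf{Ex}(R,\lambda,\eta)\subset\Stop(R)\cap\fG$ of small $\sigma$-measure, and for cubes $S\in\Stop(R)\cap\fG\setminus\mathsf{Ex}(R)$, any $\lambda$-good corkscrew for $S$ connects to a $\lambda'$-good corkscrew for $R$ by a $C(\lambda,\eta)$-good Harnack chain. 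At this stage one must also invoke Lemma \ref{lembigcork} and the observations in Section \ref{subsepcork}, to move within a single corona block from the corkscrew $\exe_Q$ (for $Q$, which lies in $\tree^{(N)}(R)$) up to a good corkscrew of the bottom cube $S$, re-labelling corkscrews as necessary: since all cubes in $\tree^{(N)}(R)$ satisfy $b\beta\le\eps$ and all corkscrews are $\kappa_1$-corkscrews, each step of the ancestor chain inside the block is a $C$-good Harnack chain by Lemma \ref{lembigcork}. (3) Concatenate: each block costs one degradation $\lambda\rightsquigarrow\lambda'$, and since we traverse at most $K$ blocks to reach $R_1\in\fS_b$, after at most $K$ iterations the final corkscrew quality $\lambda'$ depends only on $\lambda$ and $K$, and the total Harnack chain has constant $C(\lambda,K)$ (a finite composition of $C(\lambda,\eta_i)$-good chains). (4) Take $R$ to be $R_1$ itself (or, if more convenient, the topmost $\fS_b$-cube reached), which lies in $\fS_b$ and contains $Q$.

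\textbf{Where the exceptional sets go.} Note that the conclusion of the lemma is formulated \emph{without} an exceptional set: it says ``there exists some cube $R\in\fS_b$ such that $R\supset Q$ with a $\lambda'$-good corkscrew...''. The reading is that for \emph{those} cubes $Q$ which do not fall into any of the exceptional families encountered on the way up, the chain is built; the cubes that do fall into some $\mathsf{Ex}(R,\lambda,\eta)$ will be handled separately in the proof of the Main Lemma (they will form a small-measure exceptional set, which is why the quantitative $\sum\sigma(P)\le\eta\sigma(R)$ bound in the Key Lemma is what one ultimately sums). So strictly I would state and prove: if $Q$ is not contained in any cube of the exceptional families associated (via the Key Lemma) with the corona blocks it passes through, then the desired $R$ and chain exist — and this is what the later argument uses. (Whether the paper's phrasing absorbs this caveat into ``for cubes not in an exceptional family'' implicitly, I'd match the surrounding text.)

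\textbf{Main obstacle.} The delicate point is \emph{bookkeeping the parameter degradation over the iteration}: each application of the Key Lemma requires choosing $\eta$ (and hence $\gamma,\Gamma$) in terms of the \emph{current} $\lambda$, and after a step $\lambda$ becomes $\lambda'=\lambda'(\lambda,\eta)$ which is strictly smaller; one must ensure that after $\le K$ steps the result is still bounded below by a positive quantity depending only on $\lambda$ and $K$, and that the exceptional sets, summed over all blocks and all iteration levels $j$, remain controllable — but the latter summation is what is actually carried out later in Section \ref{sec8}, not here. So within this lemma the real work is (i) verifying that the number of corona blocks between $Q$ and its $\fS_b$-ancestor is indeed $\le K$ (this uses $Q\cap\wt G_0^K\neq\varnothing$, or rather the defining property of $\TT_a^j,\TT_b^j$ together with the bound $\sum_{R\in\ttt}\chi_R\le K$ on $G_0^K$), and (ii) correctly threading the corkscrew re-labelling through Lemma \ref{lembigcork} so the chains actually concatenate at matching corkscrews. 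I expect (i) to be straightforward from the definitions in the Notation subsection, and (ii) to be the fiddly but routine part. The choice of $\gamma_0(\lambda,K)$ is dictated by requiring $\ell(Q)$ small enough that $Q$ sits at least, say, $2^{K+10}$ generations below $R_0$, guaranteeing room for the chain of top cubes; I would make this explicit at the start.
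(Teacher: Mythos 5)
Your approach has a genuine gap, and it also misses the structure of the paper's argument.

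The central problem is that Lemma \ref{lem7.2} is \emph{unconditional}: it must hold for \emph{every} $Q\in\TT_a^j\cup\TT_b^j$ with $\ell(Q)\le\gamma_0\ell(R_0)$ and every $\lambda$-good corkscrew for $Q$, with no exceptional set. You notice the difficulty and hedge ("whether the paper's phrasing absorbs this caveat\ldots"), but the answer is no --- there is no caveat. Your proposed tool, the Key Lemma \ref{keylemma3}, only produces Harnack chains for stopping cubes \emph{outside} the exceptional family $\mathsf{Ex}(R,\lambda,\eta)$, and iterating it across corona blocks would propagate exceptional sets at every $\TT_b$ block you cross. That would turn Lemma \ref{lem7.2} into a conditional statement, which is strictly weaker than what is asserted and what is used: in the boxed algorithm of Section \ref{sec8}, step~(1) invokes Lemma \ref{lem7.2} \emph{unconditionally} for every $Q\in\TT_a^{j+1}\cup\TT_b^{j+1}$ of small side length; the exceptional sets $\NC_j$ are introduced only in step~(2), which is where the Key Lemma is actually applied (from $\fS_b^j\setminus\NC_j$ to $\TT_b^j$). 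Your version would introduce additional exceptional sets in step~(1) that the bookkeeping there does not account for.

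The paper's proof does not use the Key Lemma at all, and it avoids the exceptional-set issue entirely by using a different ingredient: Lemma \ref{lemshortjumps2}, the ACF-monotonicity-based ``short paths'' lemma, which is unconditional. Concretely: apply Lemma \ref{lemshortjumps2} $K+1$ times starting from $\exe_Q$ to obtain a chain $Q\subsetneq R_1\subsetneq\cdots\subsetneq R_{K+1}\subset R_0$ of cubes in $\fG$ with $\ell(R_{i+1})\lesssim_\lambda\ell(R_i)$, each $R_i$ endowed with a $\lambda'$-good $\kappa_1$-corkscrew $\exe_{R_i}$ joined to $\exe_Q$ by a $C(\lambda,K)$-good Harnack chain. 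The requirement $\ell(Q)\le\gamma_0(\lambda,K)\ell(R_0)$ is precisely what guarantees $\ell(R_{K+1})\le 2^{-10}\ell(R_0)$. Now the pigeonhole: since $Q\cap\wt G_0^K\neq\varnothing$ and $\sum_{R\in\ttt}\chi_R\le K$ on $\wt G_0^K$, at most $K$ of the nested cubes $R_1,\ldots,R_{K+1}$ can belong to $\ttt$, so some $R_j\notin\ttt$, hence $R_j\in\tree^{(N)}(\wt R)$ for some $\wt R\in\TT_b$. Taking $R$ to be the stopping cube in $\Stop^{(N)}(\wt R)\subset\fS_b$ that contains $Q$ (so $Q\subset R\subset R_j$ and $\ell(R)\approx_{\lambda,K}\ell(R_j)\approx_{\lambda,K}\ell(Q)$), one connects $\exe_{R_j}$ to a good corkscrew $\exe_R$ for $R$ by a Harnack chain within the corona block, using the comparability of scales and the Green-function estimates to keep the corkscrew $\lambda'$-good. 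This is where $Q\cap\wt G_0^K\neq\varnothing$ is actually used --- not, as you proposed, to bound the number of corona blocks traversed (which is never counted directly), but to force one of the $K+1$ ancestors to lie strictly inside a tree.

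A secondary point: your initial description of the geometry is off. If $R_1$ is the smallest $\fS_b$-ancestor of $Q$ and $R_1\in\Stop^{(N)}(R)$ with $R\in\TT_b$, then $Q\subsetneq R_1$ is a \emph{descendant} of a stopping cube of $R$, hence $Q\notin\tree^{(N)}(R)$. You correct this later, but the corrected picture (``cross several blocks with the Key Lemma'') still runs into the exceptional-set problem above and into the fact that $\TT_a$ blocks carry no Key Lemma at all.
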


\begin{proof}
We assume $\gamma_0(\lambda,K)>0$ small enough. Then we can apply Lemma \ref{lemshortjumps2} $K+1$ times
to get cubes $R_1,\ldots,R_{K+1}$ satisfying:
\begin{itemize}
\item $Q\subsetneq R_1\subsetneq R_2\subsetneq\ldots \subsetneq R_{K+1}$ and  $\ell(R_{K+1})\leq 2^{-10}\ell(R_0)$,
\item each $R_j$ has an associated $\lambda'$-good corkscrew $\exe_{R_i}$ (with $\lambda'$ depending on $\lambda,K$) and there exists a $C(\lambda,K)$-good Harnack chain joining $\exe_Q$ and $\exe_{R_1},
\ldots,\exe_{R_{K+1}}$.
\end{itemize} 
Since $Q\cap\wt G_0^K\neq\varnothing$, at least one of the cubes $R_1,\ldots,R_{K+1}$, say $R_j$, does not belong
to $\ttt$. This implies that $R_j\in\tree^{(N)}(\wt R)$ for some $\wt R\in\TT_b$. Let $R\in\Stop^{(N)}(\wt R)$
be the stopping cube that contains $Q$.  Then Lemma \ref{lemgeom} ensures that there is a good Harnack chain
that connects $\exe_{R_j}$ to some corkscrew $\exe_R$ for $R$. Notice that $\ell(R_j)\approx_{\lambda,K}\ell(Q)\approx_{\lambda,K}
\ell(R)$ because
$Q\subset R\subset R_j$. This implies that $g(p,\exe_R)\approx_{K,\lambda} g(p,\exe_{R_j})\approx_{K,\lambda} g(p,\exe_Q)$. Further, gathering the Harnack chain that joins $\exe_Q$ to $\exe_{\wt R}$ and the one
that joins $\exe_{R_j}$ to $\exe_R$, we obtain the good Harnack chain required by the lemma.
\end{proof}

\vv

\subsection{The algorithm to construct good Harnack chains}

We will construct good Harnack chains that join good corkscrews from ``most'' cubes from $\DD_{N}(R_0)$
that intersect $\wt G_0^K$
to good corkscrews from cubes belonging to $R\in\TT_b^1$, and then we will join these latter good corkscrews to
$p$ using the fact that $\ell(R)\approx\ell(R_0)$.
To this end we choose $\eta>0$ such that
$$\eta \leq \frac1{2K}\,\frac{\sigma(\wt G_0^K)}{\sigma(R_0)},$$
and we
denote
$$m=\max_{x\in \wt G_0^K}\sum_{R\in\TT_b}\chi_R(x)$$
(so that $m\leq K$) and we apply the following algorithm: we set $a_{m+1}=c_3$, so that \rf{eqgg1} ensures that 
for each $Q\in\TT_a\cup\TT_b$ there exists some good $a_{m+1}$-good corkscrew $\exe_Q$. 
For $j=m,m-1,\ldots,1$, we perform the following procedure:
\vv

\noindent\fbox{\begin{minipage}{12cm}
\vv
\begin{list}{(\theenumi)}{\usecounter{enumi}\leftmargin=.8cm
		\labelwidth=.8cm\itemsep=0.3cm\topsep=.15cm
		\renewcommand{\theenumi}{\arabic{enumi}}}

\item Join $a_{j+1}$-good corkscrews of cubes $Q$ from $\TT_a^{j+1}\cup \TT_b^{j+1}$ such that $\ell(Q)\leq c_j'\,\ell(R_0)$ to $a_j'$-good corkscrews of cubes $R(Q)$ 
from $\fS_b^1\cup\ldots\cup \fS_b^j$ by $C_j'$-good Harnack chains, with $a_j'\leq a_{j+1}$, so that $R(Q)$ is an ancestor of $Q$. This step can be performed because
of Lemma \ref{lem7.2}, with $c_j'=\gamma_0(a_{j+1},K)$ in the lemma. The constants $a_j'$, $c_j'$, and $C_j'$ depend on $a_{j+1}$
and $K$.

\item Set 
$$\NC_j = \bigcup_{R\in\TT_b^j} \mathsf{Ex}(R,a_j',\eta),$$
and join $a_j'$-good corkscrews for all cubes $Q\in\fS_b^j\setminus \NC_j$ to $a_j$-good corkscrews for cubes $R(Q)\in\TT_b^j$ by $C_j$-good Harnack chains, with $a_j\leq a_j'$, so that $R(Q)$ is an ancestor of $Q$.  To this end, one applies Lemma \ref{keylemma3}, which ensures the existence
of such Harnack chains connecting $a_j'$-good corkscrew points for cubes from $\fS_b^j \setminus \NC_j$ to
$a_j$-good corkscrew points for cubes from $\TT_b^j$. The constants $a_j$ and $C_j$ depend on $a_{j}'$ and $K$.
\\
\end{list}
\end{minipage}
\rule{.4cm}{0pt}
\vv}
\vspace{0.5cm}

After iterating the procedure above for $j=m,m-1\ldots,1$ and joining some Harnack chains arisen in the different iterations, we will have constructed $C$-good Harnack chains that join  
$a_{m+1}$-good corkscrew points for all cubes $Q\in\TT_a$ not contained in $\bigcup_{j=1}^m\bigcup_{P\in\NC_j}P$
to $a_{1}$-good corkscrews of some ancestors $R(Q)$ belonging either $\TT_b^1$ or, more generally, such that $\ell(R(Q))\gtrsim \ell(R_0)$. The constants $c_j'$, $a_j'$, $a_j$, $C_j$ worsen at
each step $j$. However, this is not harmful because the number of iterations of the procedure is at most $m$, and $m\leq K$.

Denote by $I_N$ the cubes from   $\DD_{N}(R_0)$ which intersect $\wt G_0^K$ and are not contained in any cube from 
$\{P\in\NC_j:j=1,\ldots m\}$.
By the algorithm above we have constructed good Harnack chains that join $a_{m+1}$-good corkscrew points for all cubes $Q\in I_N$  
 to some $a_1$-good corkscrew for cubes $R(Q)\in\DD(R_0)$ with $\ell(R(Q))\approx \ell(R_0)$. Also, by applying Lemma \ref{lem7.1} (c) we can connect the $a_1$-good corkscrew for $R(Q)$ to $p$ by a good Harnack chain.
 
Consider now an arbitrary point $x\in \wt G_0^K\cap Q$, with $Q\in I_N$.
By the definition of $\wt G_0^K$ and the choice $N=N_0$, all the cubes $P\in\DD$ containing $x$ with side length smaller
or equal than $\ell(Q)$ satisfy $b\beta(P)\leq \eps$. Then, by an easy geometric argument (see the proof of Lemma \ref{keylemma3} for a related one) it is easy to check that there is a good Harnack chain joining any good corkscrew for $Q$ to $x$. Hence, for all the points $x\in \bigcup_{Q\in I_N}Q\cap \wt G_0^K$
there is a good Harnack chain that joins $x$ to $p$.

Finally, observe that, for each $j$, by Lemma \ref{keylemma3},
$$\sum_{P\in\NC_j}\sigma(P) = \sum_{R\in\TT_b^j}\sum_{P\in\mathsf{Ex}(R,a_j',\eta)}\sigma(P) \leq
\eta \sum_{R\in\TT_b^j}\sigma(R) \leq \eta\,\sigma(R_0) \leq  \frac1{2K}\,\sigma(\wt G_0^K).$$
Therefore,
$$\sum_{j=1}^m\sum_{P\in\NC_j}\sigma(P)\leq \frac m{2K}\,\sigma(\wt G_0^K)\leq \frac12\,\sigma(\wt G_0^K),$$
and thus
$$\sum_{Q\in I_N} \sigma(Q)\geq \sigma(\wt G_0^K) - \sum_{j=1}^m\sum_{P\in\NC_j}\sigma(P) \geq \frac12\,\sigma(\wt G_0^K) \approx \sigma(R_0).$$
This finishes the proof of the Main Lemma \ref{lemhc}.
\qed

\vv


\appendix




\section{Some counter-examples} \label{appa}

We shall discuss some counter-examples which show that
our background hypotheses  in Theorem \ref{tmain} (namely,
$n$-ADR and interior corkscrew condition)
are natural, and in some sense in the nature of best possible.  In the first two examples, 
$\Omega$ is a domain satisfying an interior corkscrew condition, such that $\pom$
satisfies exactly one
(but not both) of the upper or the lower $n$-ADR bounds, and for which harmonic measure $\hm$ 
fails to be weak-$A_\infty$ with respect to surface measure $\sigma$ on $\pom$.
In this setting, in which full $n$-ADR
fails, there is no established notion of uniform rectifiability, but in each case, the domain
will enjoy some substitute property which would imply uniform rectifiability of the boundary in the 
presence of full $n$-ADR.  Moreover, these examples may be constructed in such a way that the 
 failure of the condition (either upper or lower $n$-ADR) can be expressed quantitatively, with a bound that may be 
 taken arbitrarily close to a true $n$-ADR bound; see \eqref{upperadrfail} and \eqref{loweradrfail}
 below.

In the last example, we construct an open set $\Omega$ with $n$-ADR boundary, and for which
$\hm\in$ weak-$A_\infty$ with respect to surface measure, but for which the interior
corkscrew condition fails, and $\pom$ is not $n$-UR.

\smallskip

\noindent{\bf Example 1}. {\it Failure of the upper $n$-ADR bound.}
In \cite{AMT1}, the authors construct an example of a Reifenberg flat domain $\Omega \subset \ree$ 
for which surface measure $\sigma= H^n\lfloor_{\,\pom}$  is locally finite on $\pom$, but for 
which the
upper $n$-ADR bound
\begin{equation}\label{a1}
\sigma(\Delta(x,r)) \leq C r^n
\end{equation}
fails, and for which harmonic measure $\hm$ is not absolutely continuous with respect to $\sigma$.  Note that
the hypothesis of Reifenberg flatness implies in particular that $\Omega$ and 
$\Omega_{ext}:= \ree\setminus \overline{\Omega}$ are both NTA domains, hence both enjoy the
corkscrew condition, so by the relative isoperimetric inequality, the lower $n$-ADR bound
\begin{equation}\label{a2}
\sigma(\Delta(x,r)) \geq c r^n
\end{equation} holds.  
Thus, it is the failure of \eqref{a1} which causes the failure of absolute continuity: in the presence of
\eqref{a1}, the results of \cite{DJe} apply, and one has that $\hm \in A_\infty(\sigma)$, and that $\pom$ satisfies
a ``big pieces of Lipschitz graphs" condition (see \cite{DJe} for a precise statement), and hence is
$n$-UR.  We note that by a result of Badger \cite{Ba}, a version of the Lipschitz approximation
result of \cite{DJe} still holds for NTA domains with locally finite surface measure, even in the absence of the upper
$n$-ADR condition.

In addition, given any $\eps>0$, the construction in \cite{AMT1} can be made in such a way that
\eqref{a1} fails ``within $\eps$", i.e., so that
\begin{equation}\label{upperadrfail}
\sigma(\Delta(x,r) \leq C r^{n-\eps}\,, \quad \forall \, x\in\pom,\, r<1.
\end{equation}
Let us sketch an argument to explain why this is so; we refer the interested reader to 
\cite{AMT1} for more details.

The domain $\Omega$ in \cite{AMT1} is obtained by enlarging a Wolff snowflake, 
that we will denote here by $D$.  Both $\Omega$ and $D$ are $\delta$-Reifenberg flat, with $\delta$ 
as small as wished in the construction (recall that Wolff snowflakes can be taken 
$\delta$-Reifenberg flat, with $\delta$ as small as wished).

It is shown in \cite[Theorem 3.1]{AMT1} that for all $x\in \partial\Omega$ and $r<1$,
\begin{equation}\label{**}
H^n(B(x,r)\cap \partial\Omega) \lesssim \max( r^n , r^\alpha \mu(B(x,Cr)) ) \leq  \max( r^n ,  \mu(B(x,Cr)) )
\end{equation}
where  $\mu$ is some measure supported on $\partial D$ satisfying $\mu(B(x,r)) \gtrsim r^{n-\alpha}$ for all 
$x$ in some compact set $E \subset \partial\Omega \cap \partial D$, and some $\alpha>0.$
In the construction in \cite{AMT1} , the authors take $\mu=\hm_D$,  the harmonic measure for $D$. 
Further, from results of Kenig and Toro it follows that harmonic measure in a  $\delta$-Reifenberg flat domain $D$ satisfies
\[ \omega_D(B(x,r)) \lesssim r^{n-\varepsilon} \omega_D(B(x,1)),   \quad \forall\, x\in \partial D,
\, r<1,
\]
with $\varepsilon \to 0$ as $\delta\to 0$
  (see \cite[Theorem 4.1]{Kenig-Toro-duke}).
As a consequence, the measure $\mu$ satisfies 
\[ \mu(B(x,r)) \lesssim  r^{n-\varepsilon}, \quad \forall\, x\in \RR^{n+1},\, r<1,\]
with $\varepsilon$ as small as wished depending on $\delta$. From \eqref{**}, it follows that
\[ H^n(B(x,r)\cap \partial\Omega) \lesssim \max( r^n , r^{n-\varepsilon} ) \leq r^{n-\varepsilon}\,, \quad 
\forall\, x\in \partial\Omega \,, r<1.
\]

\smallskip

\noindent{\bf Example 2}. {\it Failure of the lower $n$-ADR bound.} In \cite[Example 5.5]{ABoHM}, the authors give an 
example of a domain satisfying
the interior corkscrew condition, whose boundary is rectifiable  (indeed, 
it is contained in a countable
union of hyperplanes), and 
satisfies the upper $n$-ADR condition \eqref{a1}, but 
not the lower $n$-ADR condition \eqref{a2},  but for which
surface measure $\sigma$ fails to be absolutely continuous with respect to harmonic measure, and in fact,
for which the non-degeneracy condition
\begin{equation}\label{a3}
A\subset \Delta_x:= B(x,10\delta_\Omega(x)) \cap\pom,\quad \sigma(A)\geq (1-\eta) \sigma(\Delta_x)  \, \, 
\implies \,\,  \hm^x(A) \geq c\,,
\end{equation}
fails to hold uniformly for $x\in\Omega$, for any fixed positive $\eta$ and $c$, and therefore $\hm$ cannot be weak-$A_\infty$ with respect to
$\sigma$.  We note that in the presence of the full $n$-ADR condition,
if $\pom$ were contained in a countable union of hyperplanes (as it is in the example), 
then in particular it would satisfy
the ``BAUP" condition of \cite{DS2}, and thus would be $n$-UR \cite[Theorem I.2.18, p. 36]{DS2}.

Moreover, given any $\eps>0$, the parameters in the example of \cite{ABoHM} can be chosen in such a way that
the lower ADR bound fails ``within $\eps$", i.e., so that 
\begin{equation} \label{loweradrfail}
H^n(\Delta(x,r))\gtrsim \min(r^{n+\eps},r^n)\,, \quad \forall x\in \pom.
\end{equation}
To see this, we proceed as follows.  
We follow closely the construction in 
 \cite[Example 5.5]{ABoHM}, with some modification of the parameters.
Fix $\eps>0$, and set
 \[ c_k:= 2^{-k(n+\eps)}\,.\]
 For $k\geq 1$, and $n\geq 2$, set
$$\Sigma_k:= \{(x,t)\in \reu:\, t=2^{-k},\, x\in \overline{\Delta(0,2^{-\eps k}c_k)}+ c_k \ZZ^n\} \,,$$
where for $x\in \rn$, $\Delta(x,r):= \{y\in\rn:\, |x-y|<r\}$ is the 
usual $n$-disk of radius $r$ centered at $x$.   Define
$$\Omega:= \reu\setminus \left(\cup_{k=1}^\infty\Sigma_k \right),\qquad \Omega_k:= 
\reu\setminus \Sigma_k  \,,$$
each of which is clearly open and connected. Notice that $\Omega$ satisfies the interior
Corkscrew condition (since the sets $\Sigma_k$ are located at heights which are sufficently separated).
Moreover, it is easy to see that $\pom$ satisfies the upper ADR condition  and that $\re^n\times\{0\}\subset\pom$. 
 
On the other hand, the lower ADR bound fails. To see this, let $X=(x,0)\in \pom$, 
and choose $\vec{m}_{k,x}\in\ZZ^n$ and $X_k=(c_k\,\vec{m}_{k,x}, 2^{-k})\in \Sigma_k\subset \pom$ such that  
 $X_k\to X$.  Set $B_k=B(X_k, 2^{-k-2})$, and observe that 
 $H^n(B_k\cap \pom)/(2^{-kn})\approx 2^{-kn\eps}\to 0$ as $k\to \infty$, 
 or equivalently 
 \[ H^n(B_k\cap \pom)\approx r_k^{n+\eps'}\,,\]
where $B_k$ has radius $r_k \approx 2^{-k}$, and $\eps'=n\eps$.
We shall show that this behavior is in fact typical, and that \eqref{loweradrfail} holds, with $\eps'$ in place of $\eps$.

Let $\hm^{(\cdot)}:= \hm^{(\cdot)}_\om$ and $\hm_k^{(\cdot)}:= \hm^{(\cdot)}_{\om_k}$ denote
harmonic measure for the domains $\om$ and $\om_k$ respectively.

\smallskip

\noindent{\bf Claim}.  $\hm^{(\cdot)}(F) = 0$, with $F:=\re^n\times\{0\}$.  Thus, in particular
\eqref{a3} fails.

It remains to verify \eqref{loweradrfail}, and the claim.  As regards the former,
note that for $X=(x,0)\in F$, we have the trivial standard lower $n$-ADR bound $H^n(\Delta(X,r))\gtrsim r^n$, whereas for
$X=(x,2^{-k}) \in \Sigma_k$, we have
\begin{equation}\label{eq.Hnbound}
H^n\lfloor_{\pom}\big(B(X,r)\big)\geq H^n\lfloor_{\pom_k}\big(B(X,r)\big)\gtrsim\left\{
\begin{array}{ll}
r^n\,, & r<2^{-\eps k} c_k \,,
\\[4pt]
2^{-kn\eps}c_k^{n}\,, & 2^{-\eps k} c_k \leq r\leq  c_k
\\[4pt]
2^{-kn\eps}r^n \,, &  c_k< r\leq 2^{-k+1}\\[4pt]
r^n \,,& r> 2^{-k+1}
\,.
\end{array}
\right.
\end{equation}
The first and fourth of these estimates are of course the standard lower $n$-ADR bound.  
For $r\leq c_k$, the second estimate is bounded below by $2^{-kn\eps}r^n$, and in turn,
with $r\lesssim 2^{-k}$, the second and third estimates are therefore bounded below by
\[2^{-kn\eps}r^n \gtrsim r^{n+n\eps}= r^{n+\eps'}\,,\]
which yields  \eqref{loweradrfail} with $\eps'=n\eps$ in place of $\eps$.

Let us now prove the claim.
We first recall some definitions. Given an open set 
$O\subset \ree$, and a compact set $K\subset O$, 
we define the capacity of  $K$ relative to $O$ as
$$
\mbox{cap}(K,O)=\inf\left\{\iint_O|\nabla\phi|^2\, dY:\ \phi\in C_0^\infty(O),
\ 
\phi\geq 1\mbox{ in }K\right\}.
$$
Also, the inhomogeneous capacity of $K$ is defined as 
$$
\mbox{Cap} (K)=\inf\left\{\iint_{\ree} \big(|\phi|^2+|\nabla\phi|^2\big)\, dY:\ \phi\in C_0^\infty(\re),
\ 
\phi\geq 1\mbox{ in }K\right\}.
$$
Combining \cite[Theorem 2.38]{HKM}, \cite[Theorem
 2.2.7]{AH} and \cite[Theorem 4.5.2] {AH} we have that if $K$ is a compact subset of $\overline{B}$, where $B$ is a ball with radius smaller than $1$, then
\begin{equation}\label{cap-dual}
\mbox{cap}(K,2B)
\gtrsim
\mbox{Cap}(K)
\gtrsim
\sup_{\mu} \mu(K) 
\end{equation}
where the implicit constants depend only on $n$,
the sup runs over all Radon positive measures $\mu$ supported on $K$, for which
$$
W(\mu)(X):=\int_0^1 \frac{\mu(B(X,t))}{t^{n-1}}\,\frac{dt}{t} \leq 1,
\qquad
\forall\, X\in\supp \mu.
$$

Fix $k\geq 2$, and set 
\[\beta=\beta_k:= 2^{k(n-1)}c_k= 2^{k(n-1)} 2^{-k(n+\eps)} = 2^{-k(1+\eps)},\]
by definition of $c_k$.   
Our next goal is to show that 
\begin{equation}\label{eq4.capbound}
\cp\big(\overline{B(X_0,s)}\cap\Sigma_k, B(X_0,2s)\big) \gtrsim s^{n-1}\,,\quad X_0:=(x_0,2^{-k})\in\Sigma_k,\ 
\beta\le s<1\,.
\end{equation}
For a fixed $X_0$ and $s$, write $K=\overline{B(X_0,s)}\cap\Sigma_k$, set $\mu = 2^{kn\eps} s^{-1} \, H^n\lfloor_K\,,$
and note that for $X\in K$, similarly to \eqref{eq.Hnbound}, we have
\begin{equation}\label{eq4.mubound}
\mu\big(B(X,r)\big)\approx 2^{kn\eps} s^{-1} \left\{
\begin{array}{ll}
r^n\,, & r<2^{-\eps k} c_k \,,
\\[4pt]
2^{-kn\eps}c_k^{n} \,,&2^{-\eps k} c_k \leq r\leq  c_k
\\[4pt]
2^{-kn\eps}r^n \,,&  c_k< r\leq s\\[4pt]
2^{-kn\eps }s^n \,, & r> s
\,.
\end{array}
\right.
\end{equation}
To compute $W(\mu)(X)$ for $X\in K$ write
\[
W(\mu)(X) = \int_0^1 \frac{\mu(B(X,t))}{t^{n-1}}\,\frac{dt}{t}= \int_0^{2^{-\eps k}c_k}  +\, \int_{2^{-\eps k}c_k}^{c_k}
+ \, \int_{c_k}^s +\, \int_s^1\, =: \,I + II+III+IV\,.
\]
Then, since $s\geq\beta = 2^{k(n-1)} c_k = 2^{-k(1+\eps)}$,
$$I + II \lesssim 2^{kn\eps} s^{-1} \left(2^{-\eps k}c_k \, + 2^{-kn\eps }c_k^{n}\int_{2^{-\eps k}c_k}^\infty\frac{dt}{t^n} \right)
\lesssim\, 2^{\eps  k(n-1)} c_k s^{-1} \lesssim 1\,.$$
Furthermore, the last two estimates in \eqref{eq4.mubound} easily imply that $III+IV\lesssim 1$ and hence $W(\mu)(X)\lesssim 1$ for every $X\in K$. This, \eqref{cap-dual}, and \eqref{eq4.mubound} imply
as desired \eqref{eq4.capbound}:
\[
\cp\big(\overline{B(X_0,s)}\cap\Sigma_k, B(X_0,2s)\big) \gtrsim \mu(K)
\gtrsim
s^{n-1}.
\]

Set 
\[
P_k:= \left\{\left(x,2^{-k}-\beta\right)\in\reu:\, x\in\rn\right\}\,,
\]
and observe that for $X\in P_k$,
$$
\beta\le \delta_k(X):= \dist(X,\pom_k) = \dist(X,\Sigma_k)\le 2\beta\,.$$
Recall that $F=\re^n\times\{0\}$, and define
$$
u(X):= \hm_k^X(F) \,,\qquad X\in \om_k\,.
$$
Observe that $u\in W^{1,2}(\Omega_k)\cap C(\overline{\Omega_k})$ since $\pom_k$ is ADR (constants depend on $k$ but we just use this qualitatively) and $\chi_F$ is a Lipschitz function on $\pom_k$. Fix $Z_0\in P_k$ and let $Z_0'\in \Sigma_k$ be such that  $|Z_0-Z_0'|=\dist(Z_0, \pom_k)\le 2\beta$.  Let $\Omega_{Z_0}=
\Omega_k\cap B(Z_0', \frac34 2^{-k})$, which is an open connected bounded set. 
We can now apply the usual capacitary estimates
(see, e.g., \cite[Theorem 6.18]{HKM}) to find a constant  $\alpha=\alpha(n)>0$  such that 
$$
u(Z_0) 
\lesssim 
\exp\left(-\alpha \int_{3\,\beta}^{2^{-k-2}}  \frac{ds}{s} \right)  
\approx \big(2^k \beta\big)^\alpha =2^{-\alpha \eps k}.
$$
where we have used \eqref{eq4.capbound}, the definition of $\beta$,
and the fact that $u\equiv 0$ on $\pom_k \cap B(Z_0',2^{-k-1})$.
Note that the last estimate holds for any $Z_0\in P_k$ and therefore, by the maximum principle,
$$u(x,t) \lesssim 2^{-\alpha \eps k}\,, \qquad (x,t) \in \om_k\,, \,\, t > 2^{-k}-\beta\,.$$
In particular, if we set $X_0:= (0,\dots,0,1) \in \reu$, then by another application
of the maximum principle,
$$\hm^{X_0}(F)\, \leq \, \hm_k^{X_0}(F)=
u(X_0) \, \lesssim 2^{-\alpha \eps k}\, \to 0\,,$$
as $k \to \infty$, and the claim is established.

\smallskip

\noindent{\bf Example 3}. {\it Failure of the interior corkscrew condition}. The example is based on the construction of Garnett's 4-corners
Cantor set $\mathcal{C}\subset\re^2$
(see, e.g., \cite[Chapter 1]{DS2}).   Let $I_0$ be a unit square
positioned with lower left corner
at the origin in the plane, and in general for each $k = 0, 1,2,\dots$, we let $I_k$ be the unit square positioned with lower left corner at the point $(2k,0)$ on the $x$-axis.
Set $\Omega_0:= I_0$.
Let $\Omega_1$ be the first stage of the 4-corners construction, i.e., a union of four squares
of side length 1/4, positioned in the corners of the unit square $I_1$, and similarly, for each $k$, let
$\Omega_k$ be the $k$-th stage of the 4-corners construction, positioned inside $I_k$.  Note that
$\dist(\Omega_k, \Omega_{k+1}) = 1$ for every $k$.  Set $\Omega := \cup_k \Omega_k$. It is easy to check that 
$\pom$ is $n$-ADR, and that the non-degeneracy condition
\eqref{a3} holds in $\Omega$ for some uniform positive $\eta$ and $c$, and thus by the criterion of \cite{BL},
$\hm\in$ weak-$A_\infty(\sigma)$.  On the other hand, the interior corkscrew condition clearly fails to hold
in $\Omega$ (it holds only for decreasingly small scales as $k$ increases), and certainly $\pom$ cannot
be $n$-UR:  indeed, if it were, then $\pom_k$ would be $n$-UR, with uniform constants, for each $k$,
and this would imply that $\mathcal{C}$ itself was $n$-UR, whereas in fact, as is well known, it is totally non-rectifiable. One can produce a similar set in 3 dimensions by simply taking the cylinder $\Omega'=\Omega\times[0,1]$. Details are left to the interested reader.

\end{document}